\newtheorem{Theorem}{Theorem}[section]
\newtheorem{Lemma}[Theorem]{Lemma}
\newtheorem{Proposition}[Theorem]{Proposition}
\newtheorem{Standard Theorem}[Theorem]{Standard Theorem}
\newtheorem{Corollary}[Theorem]{Corollary}
\newtheorem{Example}[Theorem]{Example}
\newtheorem{Remark}[Theorem]{Remark}
\newtheorem{Definition}[Theorem]{Definition}
\newtheorem{Definition and Lemma}[Theorem]{Definition and Lemma}
\newtheorem{Lemma and Definition}[Theorem]{Lemma/Definition}
\newtheorem{Notation}[Theorem]{Notation}
\newtheorem*{Proposition A}{Proposition A}
\newtheorem*{Theorem B}{Theorem B}
\newtheorem*{Theorem C}{Theorem C}
\newtheorem*{Theorem D}{Theorem D}
\newtheorem*{Theorem E}{Theorem E}
\begin{document}
 \author{Charlie Beil}
 \thanks{The author was supported in part by the Simons Foundation, a DOE grant, and the PFGW grant, which he gratefully acknowledges.}
 \address{Simons Center for Geometry and Physics, State University of New York, Stony Brook, NY 11794-3636, USA}
 \email{cbeil@scgp.stonybrook.edu}
 \title{On the noncommutative geometry of square superpotential algebras}
 \keywords{noncommutative crepant resolution, superpotential algebra, dimer model, Azumaya locus, Calabi-Yau algebra, noncommutative algebraic geometry\\
 \indent 2010 \textit{Mathematics Subject Classification.} 14A22, 16R20, 16G20.}
 \date{}

\begin{abstract}
A superpotential algebra is \textit{square} if its quiver admits an embedding into a two-torus such that the image of its underlying graph is a square grid, possibly with diagonal edges in the unit squares; examples are provided by dimer models in physics.  Such an embedding reveals much of the algebras representation theory through a device we introduce called an \textit{impression}.  Let $A$ be a square superpotential algebra, $Z$ its center, and $\mathfrak{m}$ the maximal ideal at the origin of $\operatorname{Spec}Z$.  Using an impression, we 
\begin{itemize}
 \item give a classification of all simple $A$-modules up to isomorphism, and give algebraic and homological characterizations of the simple $A$-modules of maximal $k$-dimension;
 \item show that $Z$ is a 3-dimensional normal toric domain and $Z_{\mathfrak{m}}$ is Gorenstein, by determining transcendence bases and $Z$-regular sequences; and
 \item show that $A_{\mathfrak{m}}$ is a noncommutative crepant resolution of $Z_{\mathfrak{m}}$, and thus a local Calabi-Yau algebra.
\end{itemize}

A particular class of square superpotential algebras, the $Y^{p,q}$ algebras, is considered in detail.  We show that the Azumaya and smooth loci of the centers coincide, and propose that each ramified maximal ideal sitting over the singular locus is the exceptional locus of a blowup shrunk to zero size.
\end{abstract}
\maketitle
\tableofcontents

\section{Introduction}

\subsection{Overview} \label{Overview}

Superpotential algebras are a class of quiver algebras that have arisen in string theory and have found mathematical interest in their own right.  We will consider a special class of these algebras, which we call \textit{square superpotential algebras}; the quiver of such an algebra admits an embedding into a two-torus such that the image of its underlying graph is a square grid, possibly with diagonal edges in the unit squares.  

We briefly state the main results of the paper.  In section \ref{Impressions} we introduce a device called an \textit{impression} and establish a few key properties.  An impression $(\tau,B)$ of a representable algebra $A$ is a closely related commutative algebra $B$ that contains the center $Z$ as a subalgebra, together with an algebra monomorphism $\tau: A \hookrightarrow M_d(B)$ with certain properties.
An impression is useful because, in contrast to the definition of an order, it explicitly determines both the center of $A$ and all simple $A$-module isoclasses of maximal $k$-dimension--what we call the \textit{large modules}.  In favorable cases the large module isoclasses are parameterized by the smooth locus of the algebras center.\footnote{If $A$ is a finitely-generated $k$-algebra, module-finite over its center $Z$ (hence noetherian \cite[Theorem 4.2.1]{Smith}), then the maximal $k$-dimension $d$ of the simple $A$-modules is finite \cite[Theorem 4.2.2]{Smith}.  If $A$ is also prime and $k$ is algebraically closed then the `Azumaya locus' parameterizes the isoclasses of large modules \cite[Proposition 3.1.a]{BGood}.  Le Bruyn \cite[Theorem 1]{Le Bruyn} and Brown and Goodearl \cite[Theorem 3.8]{BGood} showed that if $A$ is additionally Auslander-regular, Cohen-Macaulay, and if the compliment of the Azumaya locus has codimension at least 2 in $\operatorname{Max}Z$, then the Azumaya and smooth loci coincide.}  Specifically we show

\begin{Proposition A}
Let $(\tau, B)$ be an impression of a finitely-generated algebra $A$ module-finite over its center, with $B$ prime.  If $V$ is a large $A$-module then there is some $\mathfrak{q} \in \operatorname{Max}B$ such that $V \cong (B/\mathfrak{q})^d$, where $av := \tau_{\mathfrak{q}}(a) v$.
\end{Proposition A}

%
We then prove some general results that will be useful in our analysis of square superpotential algebras, such as the following theorem (see also Proposition \ref{strange} and Theorem \ref{projective dimensions}) .

\begin{Theorem B}
Let $A=kQ/I$ be a quiver algebra that admits a pre-impression $(\tau,B)$ such that $\tau(e_i) = E_{ii}$ and $\bar{\tau}(e_iAe_i) = \bar{\tau}(e_jAe_j) \subset B$ for each $i,j \in Q_0$.  
Then $A$ and its center $Z$ are noetherian rings, $A$ is a finitely-generated $Z$-module, and
$$Z = k \left[ \sum_{i \in Q_0} \gamma_i \in \bigoplus_{i \in Q_0}e_iAe_i \ | \ \bar{\tau}(\gamma_i)= \bar{\tau}(\gamma_j) \text{ for each } i,j \in Q_0 \right].$$
In particular, $Z \cong Ze_i = e_iAe_i$ for each $i \in Q_0$.
\end{Theorem B}

In section \ref{Impressions of square superpotential algebras} we determine an impression of square superpotential algebras.  In sections \ref{Gorenstein centers}, \ref{Classification of simples}, and \ref{Noncommutative crepant resolutions} we use this impression to prove the following two theorems.  Let $A$ be a square superpotential algebra, $Z$ its center, $\mathfrak{m}$ the origin of $\operatorname{Max}Z$, and $(\tau,B = k[x_1,x_2,y_1,y_2])$ an impression of $A$.

\begin{Theorem C}
$Z$ is a 3-dimensional normal toric domain and the localization $Z_{\mathfrak{m}}$ at the origin $\mathfrak{m}$ of $\operatorname{Max}Z$ is Gorenstein.  Furthermore, $A_{\mathfrak{m}} := Z_{\mathfrak{m}} \otimes_Z A$ is a noncommutative crepant resolution of $Z_{\mathfrak{m}}$, and consequently a local Calabi-Yau algebra of dimension 3.
\end{Theorem C}

\begin{Theorem D}
Let $A$ be a square superpotential algebra with impression $(\tau,B)$, and let $V$ be a simple $A$-module.  Set $\mathfrak{p} := \operatorname{ann}_AV$ and $\mathfrak{m} := \mathfrak{p} \cap Z \in \operatorname{Max}Z$.  Then $\operatorname{dim}_ke_iV \leq 1$ for each $i \in Q_0$.  
Furthermore, one of the following holds.
\begin{enumerate}
 \item $V$ is a vertex simple $A$-module, in which case $A/\mathfrak{p} \cong V$ as $A$-modules.
 \item $V$ is supported on a single cycle $c$ in $A$ up to cyclic permutation.  
   \begin{enumerate}
     \item If $Q$ is not McKay then $\bar{\tau}(c)$ is divisible by precisely two of $x_1,x_2,y_1,y_2$.
     \item If $Q$ is McKay with $\tau$ defined in Proposition \ref{triangular}, then $\bar{\tau}(c)$ divisible by precisely one of $x,y,z$.
   \end{enumerate}
 \item $V$ is a large $A$-module, in which case
  \begin{enumerate} 
   \item $A/\mathfrak{p} \cong V^{|Q_0|}$ as $A$-modules;
   \item there is a point $\mathfrak{q} \in \operatorname{Max}B$ such that $V \cong (B/\mathfrak{q})^{|Q_0|}$, where the module structure of $(B/\mathfrak{q})^{|Q_0|}$ is given by $av := \tau_{\mathfrak{q}}(a)v$; and
   \item the projective dimension of $V$ is determined by $\mathfrak{m}$: $$\operatorname{pd}_A(V) = \operatorname{pd}_{A_{\mathfrak{m}}}(A_{\mathfrak{m}}/\mathfrak{p}_{\mathfrak{m}}) = \operatorname{pd}_{Z_{\mathfrak{m}}}(Z_{\mathfrak{m}}/\mathfrak{m}_{\mathfrak{m}}).$$
  \end{enumerate}
\end{enumerate}
\end{Theorem D}

A special class of square superpotential algebras conjecturally related to Sasaki-Einstein manifolds, the $Y^{p,q}$ algebras, is considered in section \ref{The Ypq algebras}.  
Recall that the Azumaya locus $U$ of a prime finitely-generated algebra $A$ over an algebraically closed field $k$, module-finite over its center $Z$, is the open dense set of points $\mathfrak{m} \in \operatorname{Max}Z$ such that $A/A\mathfrak{m} \cong M_d(k)$, where $d$ is the $k$-dimension of the large modules \cite[Theorem 4.2.7]{Smith} (or equivalently the PI degree of $A$ \cite[Proposition 13.7.14]{MR}).  $U$ then consists of the points in $\operatorname{Max}Z$ whose `noncommutative residue fields' have full rank.

\begin{Theorem E}
Let $A$ be a (non-localized) $Y^{p,q}$ algebra.  Then the following hold.
\begin{enumerate}
 \item If $p \not = q$ and $V$ is a simple $A$-module, then $V$ is either a vertex simple module or a large module.
 \item The Azumaya locus of $A$ coincides with the smooth locus of $Z$.
 \item $A$ is homologically homogeneous of global dimension 3.
\end{enumerate}
\end{Theorem E}

Finally, we introduce a proposal regarding `point-like' exceptional loci by using symplectic reduction on the impression of the $Y^{p,q}$ algebras.

Section \ref{Endomorphism rings} is based on joint work with Alex Dugas, and I thank him for kindly allowing me to publish it here.  Questions regarding dimer models and noncommutative crepant resolutions have also been studied \cite{UY}, \cite{Mozgovoy}, \cite{Bocklandt}, and \cite{Broomhead}.

I would like to give very special thanks to my advisors David Morrison and David Berenstein for all of their encouragement and guidance.  
I would also like to thank Alex Dugas and Ken Goodearl for many useful discussions.  
I am grateful to a long list of people who have made helpful comments: Tom Howard, Birge Huisgen-Zimmermann, Paul Smith, Alastair King, James McKernan, Bill Jacob, Raphael Flauger, James Sparks, Bernhard Keller, Susan Siera, and Alastair Craw.
I am especially grateful to an anonymous referee for their careful reading and valuable comments.  
Also thanks to Coral, Aidan, Kael, Tea Rose, Leonard, and my parents and family for their wonderful support. 

\textbf{Conventions:} $k$ denotes an algebraically closed field of characteristic zero.  All algebras are unital and finitely-generated over $k$.  By module we mean \textit{left} module.  For brevity the term \textit{quiver algebra} is used in place of path algebra modulo relations.  By a cycle in a quiver we mean an oriented cycle.  The set of paths of length $n$ in a quiver $Q$ is denoted $Q_n$, and the set of all ($k$-linear combinations of) paths of length greater than or equal to $n$ is denoted $Q_{\geq n}$ (respectively $kQ_{\geq n}$).  $\operatorname{h}(p)$ and $\operatorname{t}(p)$ denote the head vertex and tail vertex of a path $p$, respectively.  Path concatenation is read right to left (following the composition of maps).  By a \textit{cyclic proper subpath} we mean a subpath of nonzero length.  The term \textit{superpotential algebra} is synonymous with \textit{vacualgebra} and \textit{quiver with potential}.

\subsection{Square superpotential algebras}

A superpotential algebra is a type of quiver algebra where the relations are derived from certain equations of motion in a physical theory.  A quiver algebra is a quotient of a path algebra, which is an algebra whose basis consists of all paths in a quiver, including the vertices, and multiplication is given by path concatenation: the product of two paths is their concatenation if it is defined, and zero otherwise.  A representation of (or module over) a quiver algebra is obtained by associating a vector space to each vertex of the quiver, representing each arrow by a linear map from the vector space at its tail to the vector space at its head, and requiring these linear maps satisfy the relations of the algebra.

We now define a superpotential algebra.  Let $Q$ be a quiver and $kQ$ its path algebra.  Two paths $p$ and $p'$ are \textit{cyclically equivalent} if $p$ is a cyclic permutation of the arrows of $p'$, so all non-cyclic paths are cyclically equivalent to zero.  The \textit{trace space} of $kQ$, denoted $\operatorname{tr}(kQ)$, is the $k$-vector space spanned by the paths of $Q$ up to cyclic equivalence, and an element of $\operatorname{tr}(kQ)$ is called a \textit{superpotential}.  For each $a \in Q_1$, define a $k$-linear map $\partial_a : \operatorname{tr}(kQ) \rightarrow kQ$ as follows: for each path $b_n \cdots b_1 \in Q_{\geq 1}$ with $b_1, \ldots, b_n \in Q_1$, set
$$\partial_a\left(b_n \cdots b_1 \right) := \sum_{1 \leq j \leq n} \delta(a,b_j) e_{\operatorname{t}(b_j)} b_{j-1} \cdots b_1 b_n \cdots b_{j+1},$$
for each $e \in Q_0$, set $\partial_a e := 0$, and extend $k$-linearly to $\operatorname{tr}(kQ)$.  For $W \in \operatorname{tr}(kQ)$, set
$$\partial W := \left\langle \partial_a W \ | \ a \in Q_1 \right\rangle.$$
The \textit{superpotential algebra} with quiver $Q$ and superpotential $W$ is then the quiver algebra $kQ/\partial W$.  In this paper we are interested in a particularly simple class of superpotential algebras that arise from what are called brane tilings--specifically, brane boxes and brane diamonds--in string theory (see \cite{FHHU} and references therein).  Embedding these relatively simple superpotential algebras into a two-torus is standard; what we introduce here that is new is a relationship between a particular choice of embedding (when it exists) and the representation theory of the corresponding algebras.

\begin{Definition} \label{square} \rm{
Let $Q$ be a quiver.  Suppose there are non-colinear elements $u,v \in \mathbb{Z}^2 \subset \mathbb{R}^2$ such that the underlying graph $\bar{Q}$ of $Q$ embeds into the two-torus $\mathbb{R}^2 / (\mathbb{Z}u \oplus \mathbb{Z}v)$ with the property that the preimage of $\bar{Q}$ under the quotient map
$$\pi: \mathbb{R}^2 \rightarrow \mathbb{R}^2/(\mathbb{Z}u\oplus \mathbb{Z}v)$$
is a square grid with vertex set $\pi^{-1}\left(\bar{Q_0}\right) = \mathbb{Z}^2$, and with at most one diagonal edge in each unit square.
Further suppose that $Q$ has an orientation where each unit square with no diagonal and each triangle with two unit length sides forms an oriented cycle; 
we call these the \textit{unit cycles} of $Q$.  Let $\Gamma_{c}$ (resp.\ $\Gamma_{cc}$) denote the clockwise (resp.\ counterclockwise) unit cycles up to cyclic equivalence.  We then call the quiver algebra $A = kQ/\partial W$ with superpotential
\begin{equation}\label{square superpotential}
W = \sum_{d \in \Gamma_c} d - \sum_{d' \in \Gamma_{cc}} d' \in \operatorname{tr}(kQ)
\end{equation}
a \textit{square} superpotential algebra.
} \end{Definition} 

It will be useful to consider the covering quiver $\widetilde{Q}$ (or \textit{periodic quiver} in the physics literature) of $Q$, whose underlying graph is $\pi^{-1}\left(\bar{Q}\right)$.  By abuse of notation we will write $\pi: \widetilde{Q} \rightarrow Q$ for the corresponding projection of quivers.

\begin{figure}
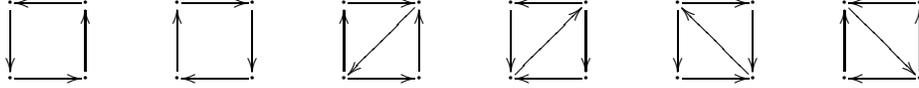

$$\xy (-5,-5)*{\cdot}="1";(5,-5)*{\cdot}="2";(-5,5)*{\cdot}="3";(5,5)*{\cdot}="4";
{\ar"1";"2"};{\ar@{->}"2";"4"};{\ar@{->}"4";"3"};{\ar@{->}"3";"1"};
\endxy
\ \ \ \ \ \ \ \
\xy (-5,-5)*{\cdot}="1";(5,-5)*{\cdot}="2";(-5,5)*{\cdot}="3";(5,5)*{\cdot}="4";
{\ar"1";"3"};{\ar"3";"4"};{\ar"4";"2"};{\ar"2";"1"};
\endxy
\ \ \ \ \ \ \ \
\xy (-5,-5)*{\cdot}="1";(5,-5)*{\cdot}="2";(-5,5)*{\cdot}="3";(5,5)*{\cdot}="4";
{\ar"1";"2"};{\ar"2";"4"};{\ar"1";"3"};{\ar"3";"4"};{\ar"4";"1"};
\endxy
\ \ \ \ \ \ \ \
\xy (-5,-5)*{\cdot}="1";(5,-5)*{\cdot}="2";(-5,5)*{\cdot}="3";(5,5)*{\cdot}="4";
{\ar"4";"2"};{\ar"2";"1"};{\ar"4";"3"};{\ar"3";"1"};{\ar"1";"4"};
\endxy
\ \ \ \ \ \ \ \
\xy (-5,-5)*{\cdot}="1";(5,-5)*{\cdot}="2";(-5,5)*{\cdot}="3";(5,5)*{\cdot}="4";
{\ar"1";"2"};{\ar"3";"1"};{\ar"3";"4"};{\ar"4";"2"};{\ar"2";"3"};
\endxy
\ \ \ \ \ \ \ \
\xy (-5,-5)*{\cdot}="1";(5,-5)*{\cdot}="2";(-5,5)*{\cdot}="3";(5,5)*{\cdot}="4";
{\ar"2";"1"};{\ar"1";"3"};{\ar"2";"4"};{\ar"4";"3"};{\ar"3";"2"};
\endxy
$$
\caption{ 
The 6 possible `building blocks' for the quiver of a square superpotential algebra.}
\label{squares}
\end{figure}

\begin{Example} \label{conifold} \rm{The two quiver algebras given in figure \ref{1} are perhaps the simplest square superpotential algebras.  The center of the second example is the coordinate ring for the conifold.  The quivers on the right are drawn in the plane.
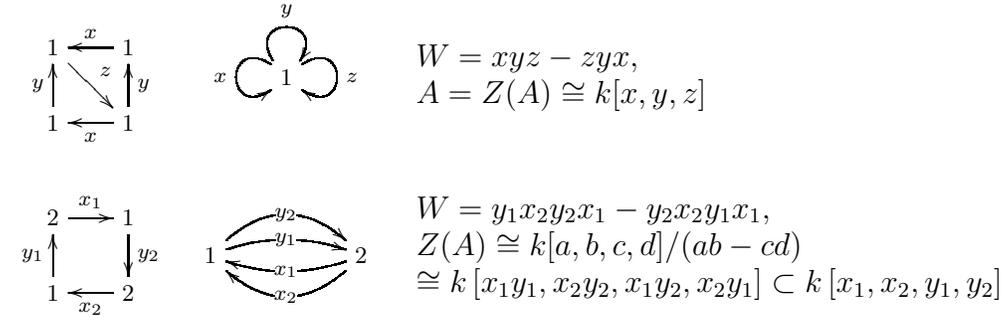
\begin{figure}
$$\begin{array}{ccl}
\xy (-5,-5)*+{\text{\scriptsize{$1$}}}="1";(-5,5)*+{\text{\scriptsize{$1$}}}="2";(5,-5)*+{\text{\scriptsize{$1$}}}="3";(5,5)*+{\text{\scriptsize{$1$}}}="4";
{\ar^y"1";"2"};{\ar^z"2";"3"};{\ar^x"3";"1"};{\ar_y"3";"4"};{\ar_x"4";"2"};\endxy
& \xymatrix{\text{\scriptsize{$1$}} \ar@(ul,dl)[]_x \ar@(ul,ur)[]^y \ar@(ur,dr)[]^z}
& \begin{array}{l} W = xyz -zyx, \\ A = Z(A) \cong k[x,y,z] \end{array}
\\ \\
\xy (-5,-5)*+{\text{\scriptsize{$1$}}}="1";(-5,5)*+{\text{\scriptsize{$2$}}}="2";(5,-5)*+{\text{\scriptsize{$2$}}}="3";(5,5)*+{\text{\scriptsize{$1$}}}="4";
{\ar^{y_1}"1";"2"};{\ar^{x_1}"2";"4"};{\ar^{y_2}"4";"3"};{\ar^{x_2}"3";"1"};
\endxy
& \xy (-10,0)*+{\text{\scriptsize{$1$}}}="1";(10,0)*+{\text{\scriptsize{$2$}}}="2";
{\ar@/^/|-{y_1}"1";"2"};{\ar@/^1.3pc/|-{y_2}"1";"2"};
{\ar@/^/|-{x_1}"2";"1"};{\ar@/^1.3pc/|-{x_2}"2";"1"};
\endxy
& \begin{array}{l} W = y_1x_2y_2x_1 - y_2x_2y_1x_1, \\ Z(A) \cong k[a,b,c,d]/(ab-cd) \\ \cong k\left[x_1y_1,x_2y_2,x_1y_2,x_2y_1 \right] \subset k\left[x_1,x_2,y_1,y_2 \right] \end{array}
\end{array}$$
\caption{Examples of square superpotential algebras: $\mathbb{A}_k^3$ and the conifold.}
\label{1}
\end{figure}
}\end{Example} 

\begin{Example} \label{Ypq} \rm{The $Y^{p,q}$ algebras form a class of square superpotential algebras.  
In string theory they are conjecturally related to a class of Sasaki-Einstein manifolds, namely the $Y^{p,q}$ manifolds.
This conjecture is based on a matching of symmetries, where certain `global symmetries' of the algebras are identified with isometries of the manifolds (see \cite{BFHMS}).
The $Y^{p,q}$ quivers are constructed by vertically stacking $p$ of any the three graphs given in figure \ref{2},
\begin{figure}
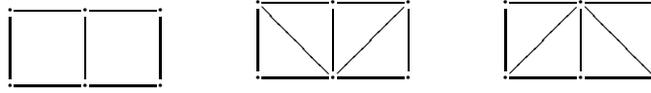

$$\begin{array}{ccc}
\xy (-10,-5)*{\cdot}="1";(-10,5)*{\cdot}="2";(0,-5)*{\cdot}="3";(0,5)*{\cdot}="4";(10,-5)*{\cdot}="5";(10,5)*{\cdot}="6";
{\ar@{-}"1";"2"};{\ar@{-}"3";"4"};{\ar@{-}"5";"6"};{\ar@{-}"1";"3"};{\ar@{-}"3";"5"};{\ar@{-}"2";"4"};{\ar@{-}"4";"6"};
\endxy \ \ \
&
\ \ \
\xy (-10,-5)*{\cdot}="1";(-10,5)*{\cdot}="2";(0,-5)*{\cdot}="3";(0,5)*{\cdot}="4";(10,-5)*{\cdot}="5";(10,5)*{\cdot}="6";
{\ar@{-}"1";"2"};{\ar@{-}"3";"4"};{\ar@{-}"5";"6"};{\ar@{-}"1";"3"};{\ar@{-}"3";"5"};{\ar@{-}"2";"4"};{\ar@{-}"4";"6"};
{\ar@{-}"2";"3"};{\ar@{-}"3";"6"};
\endxy
\ \ \
&
\ \ \
\xy (-10,-5)*{\cdot}="1";(-10,5)*{\cdot}="2";(0,-5)*{\cdot}="3";(0,5)*{\cdot}="4";(10,-5)*{\cdot}="5";(10,5)*{\cdot}="6";
{\ar@{-}"1";"2"};{\ar@{-}"3";"4"};{\ar@{-}"5";"6"};{\ar@{-}"1";"3"};{\ar@{-}"3";"5"};{\ar@{-}"2";"4"};{\ar@{-}"4";"6"};
{\ar@{-}"4";"1"};{\ar@{-}"4";"5"};
\endxy
\end{array}$$
\caption{The building blocks for the $Y^{p,q}$ quivers.}
\label{2}
\end{figure}
identifying vertices $(0,j) = (2,j)$ and $(i,0)=(i+i_0,p)$ for each $i,j$ and some $i_0 \in \{0,1\}$, and choosing a compatible orientation.  The label $q$ is given by
$$q = p- \# \left\{ 
\xy (-6,-3)*{\cdot}="1";(-6,3)*{\cdot}="2";(0,-3)*{\cdot}="3";(0,3)*{\cdot}="4";(6,-3)*{\cdot}="5";(6,3)*{\cdot}="6";
{\ar@{-}"1";"2"};{\ar@{-}"3";"4"};{\ar@{-}"5";"6"};{\ar@{-}"1";"3"};{\ar@{-}"3";"5"};{\ar@{-}"2";"4"};{\ar@{-}"4";"6"};
\endxy
\right\} - 2 \cdot \# \left\{ 
\xy (-6,-6)*{\cdot}="1";(-6,0)*{\cdot}="2";(0,-6)*{\cdot}="3";(0,0)*{\cdot}="4";(6,-6)*{\cdot}="5";(6,0)*{\cdot}="6";
(-6,6)*{\cdot}="7";(0,6)*{\cdot}="8";(6,6)*{\cdot}="9";
{\ar@{-}"1";"2"};{\ar@{-}"3";"4"};{\ar@{-}"5";"6"};{\ar@{-}"1";"3"};{\ar@{-}"3";"5"};{\ar@{-}"2";"4"};{\ar@{-}"4";"6"};
{\ar@{-}"3";"2"};{\ar@{-}"3";"6"};
{\ar@{-}"2";"7"};{\ar@{-}"4";"8"};{\ar@{-}"6";"9"};{\ar@{-}"7";"8"};{\ar@{-}"8";"9"};
{\ar@{-}"2";"8"};{\ar@{-}"6";"8"};
\endxy
\right\}.$$
Some examples are given in figure \ref{Ypq figure}.
}\end{Example}
\begin{figure}[t]
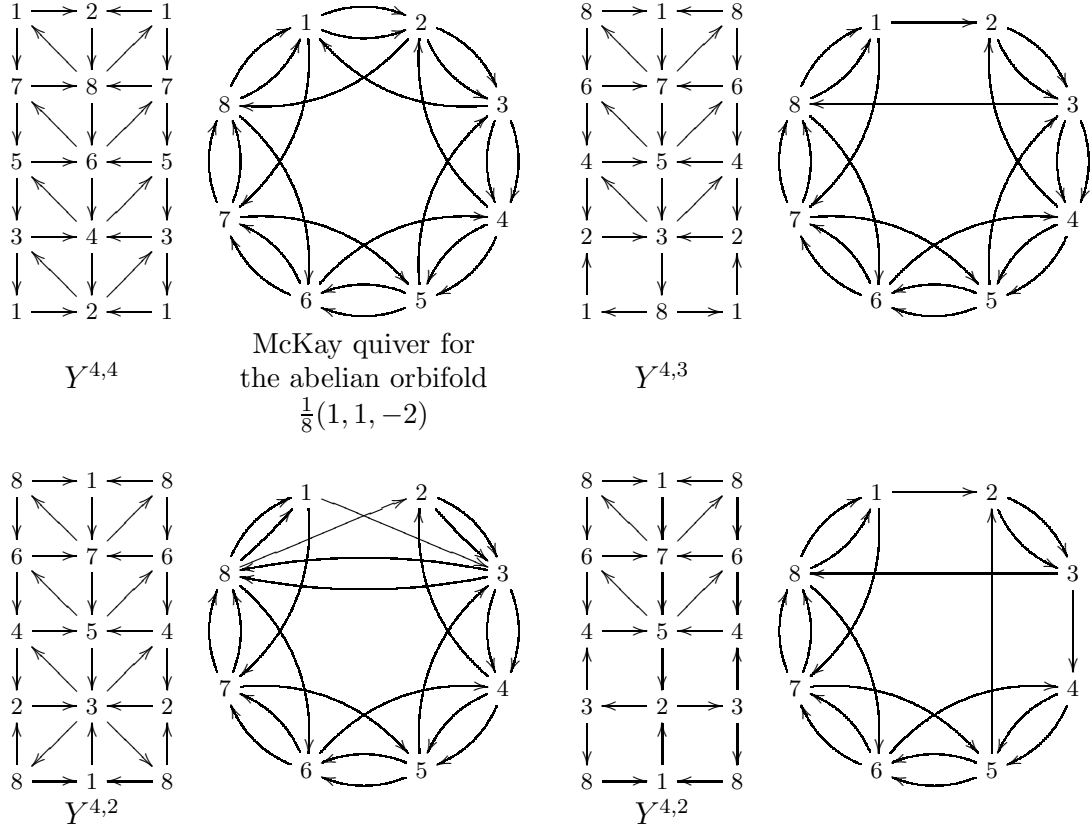

$$\begin{array}{ccccc}
\xy
(-10,-20)*+{\text{\scriptsize{$1$}}}="a1";(-10,-10)*+{\text{\scriptsize{$3$}}}="a2";(-10,0)*+{\text{\scriptsize{$5$}}}="a3";(-10,10)*+{\text{\scriptsize{$7$}}}="a4";(-10,20)*+{\text{\scriptsize{$1$}}}="a5";
(0,-20)*+{\text{\scriptsize{$2$}}}="b1";(0,-10)*+{\text{\scriptsize{$4$}}}="b2";(0,0)*+{\text{\scriptsize{$6$}}}="b3";(0,10)*+{\text{\scriptsize{$8$}}}="b4";(0,20)*+{\text{\scriptsize{$2$}}}="b5";
(10,-20)*+{\text{\scriptsize{$1$}}}="c1";(10,-10)*+{\text{\scriptsize{$3$}}}="c2";(10,0)*+{\text{\scriptsize{$5$}}}="c3";(10,10)*+{\text{\scriptsize{$7$}}}="c4";(10,20)*+{\text{\scriptsize{$1$}}}="c5";
{\ar@{->}"a1";"b1"};{\ar@{<-}"b1";"c1"};{\ar@{->}"a2";"b2"};{\ar@{<-}"b2";"c2"};{\ar@{->}"a3";"b3"};{\ar@{<-}"b3";"c3"};{\ar@{->}"a4";"b4"};{\ar@{<-}"b4";"c4"};{\ar@{->}"a5";"b5"};{\ar@{<-}"b5";"c5"};
{\ar@{<-}"a1";"a2"};{\ar@{<-}"b1";"b2"};{\ar@{<-}"c1";"c2"};
{\ar@{<-}"a2";"a3"};{\ar@{<-}"b2";"b3"};{\ar@{<-}"c2";"c3"};
{\ar@{<-}"a3";"a4"};{\ar@{<-}"b3";"b4"};{\ar@{<-}"c3";"c4"};
{\ar@{<-}"a4";"a5"};{\ar@{<-}"b4";"b5"};{\ar@{<-}"c4";"c5"};
{\ar@{->}"b1";"a2"};{\ar@{->}"b1";"c2"};
{\ar@{->}"b2";"a3"};{\ar@{->}"b2";"c3"};
{\ar@{->}"b3";"a4"};{\ar@{->}"b3";"c4"};
{\ar@{->}"b4";"a5"};{\ar@{->}"b4";"c5"};
\endxy
&
\xy (-18.478,7.654)*+{\text{\scriptsize{$8$}}}="8";(-18.478,-7.654)*+{\text{\scriptsize{$7$}}}="7";(-7.654,-18.478)*+{\text{\scriptsize{$6$}}}="6";(7.654,-18.478)*+{\text{\scriptsize{$5$}}}="5";(18.478,-7.654)*+{\text{\scriptsize{$4$}}}="4";(18.478,7.654)*+{\text{\scriptsize{$3$}}}="3";(7.654,18.478)*+{\text{\scriptsize{$2$}}}="2";(-7.654,18.478)*+{\text{\scriptsize{$1$}}}="1";
{\ar@/^/"1";"2"};{\ar@/_/"1";"2"};{\ar@/^1pc/"1";"7"};
{\ar@/^/"2";"3"};{\ar@/_/"2";"3"};{\ar@/^1pc/"2";"8"};
{\ar@/^/"3";"4"};{\ar@/_/"3";"4"};{\ar@/^1pc/"3";"1"};
{\ar@/^/"4";"5"};{\ar@/_/"4";"5"};{\ar@/^1pc/"4";"2"};
{\ar@/^/"5";"6"};{\ar@/_/"5";"6"};{\ar@/^1pc/"5";"3"};
{\ar@/^/"6";"7"};{\ar@/_/"6";"7"};{\ar@/^1pc/"6";"4"};
{\ar@/^/"7";"8"};{\ar@/_/"7";"8"};{\ar@/^1pc/"7";"5"};
{\ar@/^/"8";"1"};{\ar@/_/"8";"1"};{\ar@/^1pc/"8";"6"};
\endxy
&
&
\xy
(-10,-20)*+{\text{\scriptsize{$1$}}}="a1";(-10,-10)*+{\text{\scriptsize{$2$}}}="a2";(-10,0)*+{\text{\scriptsize{$4$}}}="a3";(-10,10)*+{\text{\scriptsize{$6$}}}="a4";(-10,20)*+{\text{\scriptsize{$8$}}}="a5";
(0,-20)*+{\text{\scriptsize{$8$}}}="b1";(0,-10)*+{\text{\scriptsize{$3$}}}="b2";(0,0)*+{\text{\scriptsize{$5$}}}="b3";(0,10)*+{\text{\scriptsize{$7$}}}="b4";(0,20)*+{\text{\scriptsize{$1$}}}="b5";
(10,-20)*+{\text{\scriptsize{$1$}}}="c1";(10,-10)*+{\text{\scriptsize{$2$}}}="c2";(10,0)*+{\text{\scriptsize{$4$}}}="c3";(10,10)*+{\text{\scriptsize{$6$}}}="c4";(10,20)*+{\text{\scriptsize{$8$}}}="c5";
{\ar@{<-}"a1";"b1"};{\ar@{->}"b1";"c1"};{\ar@{->}"a2";"b2"};{\ar@{<-}"b2";"c2"};{\ar@{->}"a3";"b3"};{\ar@{<-}"b3";"c3"};{\ar@{->}"a4";"b4"};{\ar@{<-}"b4";"c4"};{\ar@{->}"a5";"b5"};{\ar@{<-}"b5";"c5"};
{\ar@{->}"a1";"a2"};{\ar@{<-}"b1";"b2"};{\ar@{->}"c1";"c2"};
{\ar@{<-}"a2";"a3"};{\ar@{<-}"b2";"b3"};{\ar@{<-}"c2";"c3"};
{\ar@{<-}"a3";"a4"};{\ar@{<-}"b3";"b4"};{\ar@{<-}"c3";"c4"};
{\ar@{<-}"a4";"a5"};{\ar@{<-}"b4";"b5"};{\ar@{<-}"c4";"c5"};
{\ar@{->}"b2";"a3"};{\ar@{->}"b2";"c3"};
{\ar@{->}"b3";"a4"};{\ar@{->}"b3";"c4"};
{\ar@{->}"b4";"a5"};{\ar@{->}"b4";"c5"};
\endxy 
&
\xy (-18.478,7.654)*+{\text{\scriptsize{$8$}}}="8";(-18.478,-7.654)*+{\text{\scriptsize{$7$}}}="7";(-7.654,-18.478)*+{\text{\scriptsize{$6$}}}="6";(7.654,-18.478)*+{\text{\scriptsize{$5$}}}="5";(18.478,-7.654)*+{\text{\scriptsize{$4$}}}="4";(18.478,7.654)*+{\text{\scriptsize{$3$}}}="3";(7.654,18.478)*+{\text{\scriptsize{$2$}}}="2";(-7.654,18.478)*+{\text{\scriptsize{$1$}}}="1";
{\ar@{->}"1";"2"};{\ar@/^1pc/"1";"7"};
{\ar@/^/"2";"3"};{\ar@/_/"2";"3"};
{\ar@/^/"3";"4"};{\ar@/_/"3";"4"};{\ar@{->}"3";"8"};
{\ar@/^/"4";"5"};{\ar@/_/"4";"5"};{\ar@/^1pc/"4";"2"};
{\ar@/^/"5";"6"};{\ar@/_/"5";"6"};{\ar@/^1pc/"5";"3"};
{\ar@/^/"6";"7"};{\ar@/_/"6";"7"};{\ar@/^1pc/"6";"4"};
{\ar@/^/"7";"8"};{\ar@/_/"7";"8"};{\ar@/^1pc/"7";"5"};
{\ar@/^/"8";"1"};{\ar@/_/"8";"1"};{\ar@/^1pc/"8";"6"};
\endxy
\\
Y^{4,4} & \text{\small{\begin{tabular}{c} McKay quiver for \\ the abelian orbifold \\ $\frac18 (1,1,-2)$ \end{tabular}}} & & Y^{4,3} & 
\end{array}$$

$$\begin{array}{ccccc}
\xy
(-10,-20)*+{\text{\scriptsize{$8$}}}="a1";(-10,-10)*+{\text{\scriptsize{$2$}}}="a2";(-10,0)*+{\text{\scriptsize{$4$}}}="a3";(-10,10)*+{\text{\scriptsize{$6$}}}="a4";(-10,20)*+{\text{\scriptsize{$8$}}}="a5";
(0,-20)*+{\text{\scriptsize{$1$}}}="b1";(0,-10)*+{\text{\scriptsize{$3$}}}="b2";(0,0)*+{\text{\scriptsize{$5$}}}="b3";(0,10)*+{\text{\scriptsize{$7$}}}="b4";(0,20)*+{\text{\scriptsize{$1$}}}="b5";
(10,-20)*+{\text{\scriptsize{$8$}}}="c1";(10,-10)*+{\text{\scriptsize{$2$}}}="c2";(10,0)*+{\text{\scriptsize{$4$}}}="c3";(10,10)*+{\text{\scriptsize{$6$}}}="c4";(10,20)*+{\text{\scriptsize{$8$}}}="c5";
{\ar@{->}"a1";"b1"};{\ar@{<-}"b1";"c1"};{\ar@{->}"a2";"b2"};{\ar@{<-}"b2";"c2"};{\ar@{->}"a3";"b3"};{\ar@{<-}"b3";"c3"};{\ar@{->}"a4";"b4"};{\ar@{<-}"b4";"c4"};{\ar@{->}"a5";"b5"};{\ar@{<-}"b5";"c5"};
{\ar@{->}"a1";"a2"};{\ar@{->}"b1";"b2"};{\ar@{->}"c1";"c2"};
{\ar@{<-}"a2";"a3"};{\ar@{<-}"b2";"b3"};{\ar@{<-}"c2";"c3"};
{\ar@{<-}"a3";"a4"};{\ar@{<-}"b3";"b4"};{\ar@{<-}"c3";"c4"};
{\ar@{<-}"a4";"a5"};{\ar@{<-}"b4";"b5"};{\ar@{<-}"c4";"c5"};
{\ar@{->}"b2";"a1"};{\ar@{->}"b2";"c1"};
{\ar@{->}"b2";"a3"};{\ar@{->}"b2";"c3"};
{\ar@{->}"b3";"a4"};{\ar@{->}"b3";"c4"};
{\ar@{->}"b4";"a5"};{\ar@{->}"b4";"c5"};
\endxy
&
\xy (-18.478,7.654)*+{\text{\scriptsize{$8$}}}="8";(-18.478,-7.654)*+{\text{\scriptsize{$7$}}}="7";(-7.654,-18.478)*+{\text{\scriptsize{$6$}}}="6";(7.654,-18.478)*+{\text{\scriptsize{$5$}}}="5";(18.478,-7.654)*+{\text{\scriptsize{$4$}}}="4";(18.478,7.654)*+{\text{\scriptsize{$3$}}}="3";(7.654,18.478)*+{\text{\scriptsize{$2$}}}="2";(-7.654,18.478)*+{\text{\scriptsize{$1$}}}="1";
{\ar@/^1pc/"1";"7"};{\ar@{->}"1";"3"};
{\ar@/^/"2";"3"};{\ar@/_.1pc/"2";"3"};
{\ar@/^/"3";"4"};{\ar@/_/"3";"4"};{\ar@/_/"3";"8"};{\ar@/^/"3";"8"};
{\ar@/^/"4";"5"};{\ar@/_/"4";"5"};{\ar@/^1pc/"4";"2"};
{\ar@/^/"5";"6"};{\ar@/_/"5";"6"};{\ar@/^1pc/"5";"3"};
{\ar@/^/"6";"7"};{\ar@/_/"6";"7"};{\ar@/^1pc/"6";"4"};
{\ar@/^/"7";"8"};{\ar@/_/"7";"8"};{\ar@/^1pc/"7";"5"};
{\ar@/^/"8";"1"};{\ar@/_.1pc/"8";"1"};{\ar@/^1pc/"8";"6"};{\ar@{->}"8";"2"};
\endxy
&
&
\xy
(-10,-20)*+{\text{\scriptsize{$8$}}}="a1";(-10,-10)*+{\text{\scriptsize{$3$}}}="a2";(-10,0)*+{\text{\scriptsize{$4$}}}="a3";(-10,10)*+{\text{\scriptsize{$6$}}}="a4";(-10,20)*+{\text{\scriptsize{$8$}}}="a5";
(0,-20)*+{\text{\scriptsize{$1$}}}="b1";(0,-10)*+{\text{\scriptsize{$2$}}}="b2";(0,0)*+{\text{\scriptsize{$5$}}}="b3";(0,10)*+{\text{\scriptsize{$7$}}}="b4";(0,20)*+{\text{\scriptsize{$1$}}}="b5";
(10,-20)*+{\text{\scriptsize{$8$}}}="c1";(10,-10)*+{\text{\scriptsize{$3$}}}="c2";(10,0)*+{\text{\scriptsize{$4$}}}="c3";(10,10)*+{\text{\scriptsize{$6$}}}="c4";(10,20)*+{\text{\scriptsize{$8$}}}="c5";
{\ar@{->}"a1";"b1"};{\ar@{<-}"b1";"c1"};{\ar@{<-}"a2";"b2"};{\ar@{->}"b2";"c2"};{\ar@{->}"a3";"b3"};{\ar@{<-}"b3";"c3"};{\ar@{->}"a4";"b4"};{\ar@{<-}"b4";"c4"};{\ar@{->}"a5";"b5"};{\ar@{<-}"b5";"c5"};
{\ar@{<-}"a1";"a2"};{\ar@{->}"b1";"b2"};{\ar@{<-}"c1";"c2"};
{\ar@{->}"a2";"a3"};{\ar@{<-}"b2";"b3"};{\ar@{->}"c2";"c3"};
{\ar@{<-}"a3";"a4"};{\ar@{<-}"b3";"b4"};{\ar@{<-}"c3";"c4"};
{\ar@{<-}"a4";"a5"};{\ar@{<-}"b4";"b5"};{\ar@{<-}"c4";"c5"};
{\ar@{->}"b3";"a4"};{\ar@{->}"b3";"c4"};
{\ar@{->}"b4";"a5"};{\ar@{->}"b4";"c5"};
\endxy 
&
\xy (-18.478,7.654)*+{\text{\scriptsize{$8$}}}="8";(-18.478,-7.654)*+{\text{\scriptsize{$7$}}}="7";(-7.654,-18.478)*+{\text{\scriptsize{$6$}}}="6";(7.654,-18.478)*+{\text{\scriptsize{$5$}}}="5";(18.478,-7.654)*+{\text{\scriptsize{$4$}}}="4";(18.478,7.654)*+{\text{\scriptsize{$3$}}}="3";(7.654,18.478)*+{\text{\scriptsize{$2$}}}="2";(-7.654,18.478)*+{\text{\scriptsize{$1$}}}="1";
{\ar@{->}"1";"2"};{\ar@/^1pc/"1";"7"};
{\ar@/^/"2";"3"};{\ar@/_/"2";"3"};
{\ar@{->}"3";"4"};{\ar@{->}"3";"8"};
{\ar@/^/"4";"5"};{\ar@/_/"4";"5"};
{\ar@/^/"5";"6"};{\ar@/_/"5";"6"};{\ar@{->}"5";"2"};
{\ar@/^/"6";"7"};{\ar@/_/"6";"7"};{\ar@/^1pc/"6";"4"};
{\ar@/^/"7";"8"};{\ar@/_/"7";"8"};{\ar@/^1pc/"7";"5"};
{\ar@/^/"8";"1"};{\ar@/_/"8";"1"};{\ar@/^1pc/"8";"6"};
\endxy
\\
Y^{4,2} & & & Y^{4,2} & 
\end{array}$$
\caption{Some examples of $Y^{p,q}$ quivers.}
\label{Ypq figure}
\end{figure}
\section{Impressions} \label{Impressions}

\subsection{Definition and utility}

The definition we introduce in this section, called an impression, will serve as our main tool for analyzing square superpotential algebras.  Recall that an algebra $A$ is representable if there is an algebra monomorphism $A \hookrightarrow M_d(B)$ for some commutative noetherian algebra $B$.  An impression may be thought of as a way of placing (commutative) coordinates within a representable algebra.

\begin{Definition and Lemma} \label{impression}
Let $A$ be a representable algebra over an algebraically closed field $k$ and denote by $Z$ its center.  
Suppose there exists a commutative finitely-generated $k$-algebra $B$, an open dense subset $U \subseteq \operatorname{Max}B$, and an algebra monomorphism $\tau: A \rightarrow \operatorname{End}_B \left(B^d \right)$ with $d < \infty$, such that the composition with the evaluation map
$$\tau_{\mathfrak{q}}: A \stackrel{\tau}{\longrightarrow} \operatorname{End}_B\left( B^d \right) \longrightarrow \operatorname{End}_B\left( (B/\mathfrak{q})^d \right) \cong \operatorname{End}_k \left( k^d \right)$$
is a simple representation for each $\mathfrak{q} \in U$. Then
\begin{equation} \label{U}
Z \cong R := \left\{f \in B \ | \ f1_d \in \operatorname{im}\tau \right\} \subset B.
\end{equation}
If the map
\begin{equation} \label{U'} 
\operatorname{Max}B \stackrel{\phi}{\rightarrow} \operatorname{Max}R, \ \ \ \mathfrak{q} \mapsto \mathfrak{q} \cap R,
\end{equation}
is surjective then we call $(\tau,B)$ an \rm{impression} \textit{of $A$.}
\end{Definition and Lemma}

\begin{proof}
$k$ is algebraically closed, hence infinite, and $B$ is finitely-generated over $k$, and therefore $B/\mathfrak{q} \cong k$ for each $\mathfrak{q} \in \operatorname{Max}B$ by \cite[9.1.12]{MR}.  We first prove (\ref{U}).  Suppose $a \in Z$.  Identify $\operatorname{End}_B\left( (B/\mathfrak{q})^d \right) \cong M_d(B/\mathfrak{q})$, so $\tau_{\mathfrak{q}}(a) = \left( b_{ij}(\mathfrak{q}) \right)$ is a matrix with entries $b_{ij}(\mathfrak{q})$ in $k$.  For each $\mathfrak{q} \in U$, Shur's lemma implies $\tau_{\mathfrak{q}}(a) \in k1_d$.   Thus $b_{ij}(\mathfrak{q})= 0$ whenever $i \not = j$, and $b_{ii}(\mathfrak{q})=b_{jj}(\mathfrak{q})$ for each $i,j$.  Since $U$ is dense in $\operatorname{Max}B$ it follows that $b_{ij} \sim 0$ for $i \not = j$ and $b_{ii} \sim b_{jj}$ for each $i,j$, that is, $\tau(a) = b_{11} 1_d$.

Conversely, suppose $f1_d \in \operatorname{im}\tau$, say $\tau(a) = f1_d$ for some $a \in A$.  For any $b \in A$, $\tau(ab-ba) = \tau(a)\tau(b)-\tau(b)\tau(a)=0$, so $ab=ba$ since $\tau$ is a monomorphism, and thus $a \in Z$.

We now prove $\phi$ is well-defined.  Recall our standing assumption that $A$ and $B$ are unital.  Since $\tau_{\mathfrak{q}}$ is a simple representation for each $\mathfrak{q} \in U$, $\tau(1_A) = 1_d$.  Thus $1_B \in R$, so for any $\mathfrak{q} \in \operatorname{Max}B$ the composition $\psi: R \hookrightarrow B \rightarrow B/\mathfrak{q} \cong k$ is an epimorphism.  It follows that $R/ \operatorname{ker}\psi \cong k$, and so $\mathfrak{q} \cap R = \operatorname{ker}\psi \in \operatorname{Max}R$ since $R$ is a unital commutative ring.
\end{proof}

We call $(\tau,B)$ a \textit{pre-impression} of $A$ if $\phi$ is not assumed to be surjective.  As we will see, a (pre-)impression of an algebra $A$ may be useful because
\begin{itemize}
  \item it determines the center of $A$;
  \item it may enable symplectic geometric concepts to be related to the representation theory of $A$;
  \item if $B$ is prime and $A$ is noetherian and module-finite over its center, then its impression explicitly determines all simple $A$-modules of maximal $k$-dimension up to isomorphism.
\end{itemize}

\begin{Lemma} \label{morphism of var}
If $B$ is reduced and $Z$ is finitely-generated then $\phi: \operatorname{Max}B \rightarrow \operatorname{Max}R$ is a morphism of varieties.
\end{Lemma}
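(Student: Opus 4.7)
The plan is to verify that both $\operatorname{Max}B$ and $\operatorname{Max}R$ carry natural affine variety structures over $k$, and then to identify $\phi$ with the standard pullback on maximal spectra induced by the inclusion $\iota: R \hookrightarrow B$.

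First I would observe that $B$ is finitely-generated over $k$ by the definition of (pre-)impression, and reduced by hypothesis; since $k$ is algebraically closed, $\operatorname{Max}B$ acquires its usual structure as an affine variety with coordinate ring $B$. Turning to $R$: as a subring of the reduced ring $B$, the ring $R$ is automatically reduced, and by Definition and Lemma \ref{impression} there is an isomorphism $R \cong Z$; since $Z$ is finitely-generated by hypothesis, $R$ is a finitely-generated reduced $k$-algebra, so $\operatorname{Max}R$ is an affine variety with coordinate ring $R$.

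Next I would invoke the well-definedness argument at the end of the proof of Definition and Lemma \ref{impression}, which identifies $\phi(\mathfrak{q}) = \mathfrak{q} \cap R$ with $\iota^{-1}(\mathfrak{q})$, the pullback of $\mathfrak{q}$ along $\iota$. To make the morphism concrete, choose generators $r_1, \ldots, r_n$ of $R$ and $b_1, \ldots, b_m$ of $B$; these give closed immersions $\operatorname{Max}R \hookrightarrow \mathbb{A}^n_k$ and $\operatorname{Max}B \hookrightarrow \mathbb{A}^m_k$. Writing each $r_i = p_i(b_1,\ldots,b_m)$ for some polynomial $p_i \in k[t_1,\ldots,t_m]$ exhibits the $i$th coordinate of $\phi$ as the restriction to $\operatorname{Max}B$ of the polynomial function $p_i$. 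Hence $\phi$ is a polynomial map in coordinates, i.e., a morphism of affine varieties.

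I do not expect a substantive obstacle here: the entire content is to check the two finite-generation and reducedness conditions required so that both $\operatorname{Max}B$ and $\operatorname{Max}R$ are genuine affine varieties, after which functoriality of $\operatorname{Max}$ applied to the inclusion $\iota$ yields the result. The only mild subtlety is verifying that $R$ inherits reducedness from $B$, which is automatic for any subring, and that $R$ is genuinely finitely-generated, which uses precisely the hypothesis $Z \cong R$ together with the finite-generation of $Z$.
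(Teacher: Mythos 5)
Your proposal is correct and follows essentially the same route as the paper: the paper simply cites Hilbert's Nullstellensatz to say that the inclusion $R \hookrightarrow B$ induces the morphism $\mathfrak{q} \mapsto \mathfrak{q} \cap R$ on maximal spectra, using that $B$ reduced forces $R \cong Z$ reduced and that $k$ is algebraically closed. Your version merely makes the standard functoriality explicit by choosing generators and exhibiting $\phi$ as a polynomial map, which is a fine (if more detailed) rendering of the same argument.
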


\begin{proof}
By Hilbert's Nullstellensatz \cite[Corollary 1.10]{E}, the algebra monomorphism $R \hookrightarrow B$ induces the morphism $\phi$ defined by $\mathfrak{q} \mapsto \mathfrak{q} \cap R$ since $B$ is reduced, whence $Z$ is reduced, and $k$ is algebraically closed.
\end{proof}

Recall that a ring $R$ is prime if its zero ideal is prime.

\begin{Lemma} \label{prime} 
Let $(\tau, B)$ be a pre-impression of an algebra $A$.  If $B$ is a prime ring then $A$ and its center $Z$ are both prime rings.
\end{Lemma}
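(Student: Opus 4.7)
The plan is to establish primeness of $A$ first, then deduce primeness of the center via a standard observation: if $x,y \in Z$ satisfy $xy = 0$ then $xAy = Axy = 0$, so primeness of $A$ forces $x = 0$ or $y = 0$. It therefore suffices to prove $A$ is prime.

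To prove $A$ is prime, I would take $a,b \in A$ with $aAb = 0$ and try to conclude $a = 0$ or $b = 0$. Since $B$ is commutative and prime it is an integral domain; as a finitely generated $k$-algebra over an algebraically closed field, it is moreover a Jacobson ring whose Jacobson radical coincides with the nilradical and is therefore zero, and $B/\mathfrak{q} \cong k$ for every $\mathfrak{q} \in \operatorname{Max}B$ by the Nullstellensatz (already noted in the paper). The key input is the family of evaluations $\tau_{\mathfrak{q}}\colon A \to M_d(k)$ for $\mathfrak{q} \in U$: each is by hypothesis a simple representation on $k^d$, so Burnside's theorem (Jacobson density over an algebraically closed field) forces $\tau_{\mathfrak{q}}(A) = M_d(k)$. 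Consequently $\tau_{\mathfrak{q}}(a)\,M_d(k)\,\tau_{\mathfrak{q}}(b) = \tau_{\mathfrak{q}}(aAb) = 0$, and since $M_d(k)$ is simple (a fortiori prime), either $\tau_{\mathfrak{q}}(a) = 0$ or $\tau_{\mathfrak{q}}(b) = 0$ at every $\mathfrak{q} \in U$.

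The next step is to globalize these pointwise vanishings. Let $I_a, I_b \trianglelefteq B$ be the ideals generated by the matrix entries of $\tau(a), \tau(b) \in M_d(B)$, and write $V(I) \subseteq \operatorname{Max}B$ for the set of maximal ideals containing $I$. Then $\tau_{\mathfrak{q}}(a) = 0$ iff $\mathfrak{q} \in V(I_a)$, so the dichotomy above reads $U \subseteq V(I_a) \cup V(I_b) = V(I_a I_b)$. Density of $U$ in $\operatorname{Max}B$ forces $V(I_a I_b) = \operatorname{Max}B$, whence $I_a I_b \subseteq \operatorname{Jac}(B) = 0$. Since $B$ is a domain, one of $I_a, I_b$ vanishes, i.e.\ $\tau(a) = 0$ or $\tau(b) = 0$, and injectivity of $\tau$ concludes.

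The main conceptual obstacle is that a subring of a prime ring need not be prime: even though $M_d(B)$ is prime when $B$ is a domain, the subring $\tau(A)$ does not inherit primeness for free, since one cannot in general promote $\tau(a)\tau(A)\tau(b) = 0$ to $\tau(a)M_d(B)\tau(b) = 0$. The role of the pre-impression hypothesis is precisely to supply the missing flexibility via Burnside at a dense family of closed points, after which primeness transfers from $M_d(k)$ to $\tau(A)$. Everything else (Nullstellensatz, the Jacobson property of $B$, the center-of-a-prime-ring argument) is routine.
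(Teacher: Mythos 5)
Your proof is correct, but it takes a genuinely different, and in fact more careful, route than the paper's. The paper disposes of the lemma in two lines: since $B$ is prime, $M_d(B)\cong\operatorname{End}_B(B^d)$ is prime (citing Lam), and $A$ is then declared prime ``since $\tau$ is an algebra monomorphism''; $Z$ is handled separately, being a domain because by (\ref{U}) it is isomorphic to the subring $R$ of the domain $B$. Note that the paper's argument for $A$ uses nothing about the pre-impression beyond injectivity of $\tau$, and, as you rightly flag, primeness does not in general pass to subrings (upper-triangular matrices inside $M_2(k)$ already fail), so the paper's transfer step is at best left implicit. Your argument supplies exactly the justification that is missing there: simplicity of $\tau_{\mathfrak{q}}$ at every point of the dense open set $U$ gives $\tau_{\mathfrak{q}}(A)=M_d(k)$ by Burnside, so $aAb=0$ forces $\tau_{\mathfrak{q}}(a)=0$ or $\tau_{\mathfrak{q}}(b)=0$ pointwise; the dichotomy then globalizes because the closed set $V(I_aI_b)$ contains $U$, hence equals $\operatorname{Max}B$, and $\operatorname{Jac}(B)=0$ together with $B$ being a domain yields $\tau(a)=0$ or $\tau(b)=0$, after which injectivity of $\tau$ finishes. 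The price of your route is that it genuinely uses the pre-impression data (the dense family of simple evaluations), whereas the paper's one-liner, if it could be completed, would need only the embedding; conversely, for the center the paper's direct identification $Z\cong R\subseteq B$ is marginally more economical than your reduction of $Z$ to the center of the prime ring $A$, though both steps are standard and correct.
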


\begin{proof}
Since $B$ is prime, $M_d(B) \cong \operatorname{End}_B(B^d)$ is prime \cite[Proposition 10.20]{Lam}, and thus $A$ is prime since $\tau$ is an algebra monomorphism.  $Z$ is an integral domain since by (\ref{U}) it is a subring of an integral domain.
\end{proof}

We call a simple module (resp.\ representation) of maximal $k$-dimension a \textit{large module} (resp.\ \textit{large representation}).  Under suitable hypotheses given in section \ref{Overview}, the large modules are parameterized by the smooth locus of the algebras center.

\begin{Lemma} \label{maximal}
For each $\mathfrak{q} \in U$, the composition $\tau_{\mathfrak{q}}$ is a large representation of $A$.
\end{Lemma}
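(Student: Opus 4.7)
The plan is to show two things: that $\tau_{\mathfrak{q}}$ is a simple representation of $k$-dimension exactly $d$, and that $d$ is the maximum $k$-dimension attained by any simple $A$-module. The first assertion is essentially built into the hypotheses of Definition and Lemma \ref{impression}: by definition of a pre-impression, $\tau_{\mathfrak{q}}$ is simple for $\mathfrak{q} \in U$, and since $B/\mathfrak{q} \cong k$ (as established in the proof of Definition and Lemma \ref{impression} via the Nullstellensatz), the underlying module $(B/\mathfrak{q})^d \cong k^d$ has $k$-dimension $d$.

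For the upper bound, I would invoke PI-theoretic machinery. The embedding $A \hookrightarrow \operatorname{End}_B(B^d) \cong M_d(B)$ forces $A$ to satisfy every polynomial identity of $M_d(B)$; in particular, by the Amitsur--Levitzki theorem, $A$ satisfies the standard identity $s_{2d}$ of degree $2d$. Now let $V$ be any simple $A$-module. Then $A/\operatorname{ann}_A(V)$ is a primitive PI algebra, so by Kaplansky's theorem it is simple Artinian of the form $M_n(D)$ for some division ring $D$ (finite-dimensional over its center), with $V \cong D^n$ as an $A$-module. Since $A$ is affine over the algebraically closed field $k$ (by the paper's conventions) and satisfies a PI, a standard Nullstellensatz argument forces $D = k$, giving $V \cong k^n$ and $A/\operatorname{ann}_A(V) \cong M_n(k)$. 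Because $M_n(k)$ is a quotient of $A$, it too satisfies $s_{2d}$, and a second application of Amitsur--Levitzki yields $n \leq d$. Hence $\dim_k V \leq d$, so $\tau_{\mathfrak{q}}$ is large.

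The main obstacle is the identification $D = k$, where algebraic closure of $k$, affine-ness of $A$, and the PI hypothesis must all be used together; every other step is essentially bookkeeping. A cleaner alternative, available whenever one already knows $A$ to be module-finite over its center (which will hold for the square superpotential algebras studied in the rest of the paper), is to invoke \cite[Theorem 4.2.2]{Smith} directly, as cited in the overview, bypassing the general PI detour.
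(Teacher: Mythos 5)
Your proof is correct and follows essentially the same route as the paper: both embed $A$ into $M_d(B)$ with $B$ commutative to get a PI bound, apply Kaplansky's theorem to the primitive quotient $A/\operatorname{ann}_A V$, use algebraic closure of $k$ to identify the division ring (equivalently $\operatorname{End}_{A/\mathfrak{p}}V$) with $k$, and conclude $\dim_k V \leq d$. Your use of the standard identity $s_{2d}$ and Amitsur--Levitzki is just a hands-on substitute for the paper's citations of PI-degree facts from \cite{MR}, so the difference is cosmetic.
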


\begin{proof}
Let $n$ be the maximal $k$-dimension of the simple $A$-modules; we claim that $d = n$.  Since $\tau_{\mathfrak{q}}$ is simple, $n \geq d$.  Conversely, let $V$ be a large $A$-module and set $\mathfrak{p}:= \operatorname{ann}_AV$.  $A$ is a PI ring of PI degree $d' \leq d$ since $A \cong \tau(A) \subset M_d(B)$ and $B$ is commutative \cite[13.3.3.ii]{MR}, so $A/\mathfrak{p}$ is a primitive PI ring of PI degree $r \leq d'$ \cite[13.7.2.i]{MR}, and thus $A/\mathfrak{p}$ is a central simple algebra isomorphic to $M_r(\operatorname{End}_{A/\mathfrak{p}}V)$ with center $\operatorname{End}_{A/\mathfrak{p}}V$ \cite[13.3.8]{MR}.  But $\operatorname{End}_{A/\mathfrak{p}}V \cong k$ since $k$ is algebraically closed.  It follows that there is a faithful simple representation $M_r(k) \cong A/\mathfrak{p} \hookrightarrow M_n(k)$, so $r = n$, whence $n = r \leq d$.
\end{proof}

\begin{Proposition} \label{(B/r)^d} 
Let $(\tau, B)$ be an impression of a finitely-generated algebra $A$ module-finite over its center, with $B$ prime.  If $V$ is a large $A$-module then there is some $\mathfrak{q} \in \operatorname{Max}B$ such that $V \cong (B/\mathfrak{q})^d$, where $av := \tau_{\mathfrak{q}}(a) v$.
\end{Proposition}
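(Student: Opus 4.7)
The plan is to show that any $\mathfrak{q} \in \operatorname{Max}B$ lying over the central annihilator of $V$ does the job: at such a $\mathfrak{q}$, the composition $\tau_\mathfrak{q}$ realizes $V$ as $(B/\mathfrak{q})^d$. Set $\mathfrak{p} := \operatorname{ann}_A(V)$ and $\mathfrak{m} := \mathfrak{p} \cap Z$. The proof of Lemma \ref{maximal} already gives $A/\mathfrak{p} \cong M_d(k)$ with $V \cong k^d$ as the unique simple $A/\mathfrak{p}$-module; in particular $\mathfrak{p}$ and $\mathfrak{m}$ are maximal.

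The first and most delicate step is to upgrade $A\mathfrak{m} \subseteq \mathfrak{p}$ to equality. Lemma \ref{prime} gives that $A$ is prime, and $A$ is affine and module-finite over its center by hypothesis, so the Brown--Goodearl result cited in the introductory footnote (\cite[Proposition 3.1.a]{BGood}) applies: the central annihilator $\mathfrak{m}$ of the large simple $V$ automatically lies in the Azumaya locus of $Z$, so $A/A\mathfrak{m} \cong M_d(k)$ and hence $A\mathfrak{m} = \mathfrak{p}$. This is the substantive obstacle; a self-contained replacement would require pushing the PI-theoretic semicontinuity-of-rank argument through by hand, or equivalently showing directly that the existence of a large simple at $\mathfrak{m}$ forces $A_\mathfrak{m}$ to be Azumaya over $Z_\mathfrak{m}$.

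Granted this, the remainder is a direct unwinding of the impression data. By the surjectivity of $\phi$ built into the definition of an impression, and the identification $Z \cong R$ afforded by (\ref{U}), pick $\mathfrak{q} \in \operatorname{Max}B$ with $\mathfrak{q} \cap R$ the image of $\mathfrak{m}$. For any $z \in \mathfrak{m}$, write $\tau(z) = f \cdot 1_d$ with $f \in R$; then $f \in \mathfrak{q} \cap R$, forcing $f(\mathfrak{q}) = 0$ and hence $\tau_\mathfrak{q}(z) = 0$. Since $\ker \tau_\mathfrak{q}$ is a two-sided ideal of $A$ containing $\mathfrak{m}$, it contains $A\mathfrak{m} = \mathfrak{p}$, so $\tau_\mathfrak{q}$ factors through a unital $k$-algebra map $A/\mathfrak{p} \cong M_d(k) \to M_d(k)$, which must be an isomorphism by simplicity of $M_d(k)$. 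Therefore $(B/\mathfrak{q})^d$ equipped with the $\tau_\mathfrak{q}$-action is the unique simple module of $M_d(k) = A/\mathfrak{p}$, which is $V$.
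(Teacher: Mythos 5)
Your proof is correct and takes essentially the same route as the paper: both arguments pick $\mathfrak{q} \in \operatorname{Max}B$ lying over $\mathfrak{m} = \operatorname{ann}_Z V$ via the surjectivity of $\phi$ in the definition of an impression, both invoke the Brown--Goodearl Azumaya-locus result at exactly the same point (it is the sole substantive external input in either version, and your hypotheses check -- $A$ prime by Lemma \ref{prime}, affine, module-finite over $Z$, $k$ algebraically closed -- is valid), and both finish by uniqueness of the simple module over $A/\mathfrak{m}A \cong M_d(k)$. The only organizational difference is the endgame: you show $\tau_{\mathfrak{q}}$ kills $A\mathfrak{m} = \mathfrak{p}$ and induces an isomorphism $A/\mathfrak{p} \to \operatorname{End}_k\left((B/\mathfrak{q})^d\right)$, making $(B/\mathfrak{q})^d$ visibly the unique simple, whereas the paper first establishes simplicity of $(B/\mathfrak{q})^d$ by showing a simple submodule $W$ has central annihilator $\mathfrak{m}$ and is therefore large, then identifies $V \cong W$ using that $A/\mathfrak{m}A$ is central simple -- the same Azumaya fact, repackaged.
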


\begin{proof} 
First note that $A$ and its center $Z$ are noetherian since $A$ is finitely-generated over $k$ and module-finite over $Z$ \cite[13.9.7]{MR}, and $A$ is prime by Lemma \ref{prime}.

Let $V$ be a large $A$-module and $\mathfrak{p} = \operatorname{ann}_AV$ its annihilator.  
Since $A$ is module-finite over its center, $\mathfrak{m} := \mathfrak{p} \cap Z$ is a maximal ideal of $Z$ \cite[Theorem 4.2.2(2)]{Smith}.  
Since $\phi$ is surjective there is some $\mathfrak{q} \in \operatorname{Max}B$ such that $\phi(\mathfrak{q})= \mathfrak{m}$.  
But then $\mathfrak{m} = \phi(\mathfrak{q})=\operatorname{ann}_Z\left((B/\mathfrak{q})^d \right)$. 
 
Let $W$ be a nonzero simple submodule of $(B/\mathfrak{q})^d$ and $\mathfrak{p}'$ its annihilator.  
Then $\operatorname{ann}_ZW \supseteq \operatorname{ann}_Z(B/\mathfrak{q})^d =m$, and so since $\mathfrak{m}$ is a maximal ideal it must be that $\mathfrak{p}' \cap Z = \operatorname{ann}_ZW = \mathfrak{m}$.  
Since $\mathfrak{m}$ is in the Azumaya locus of $A$ (that is, $A_{\mathfrak{m}}$ is Azumaya over $Z_{\mathfrak{m}}$) and $A$ is prime, noetherian, and module-finite over its finitely-generated center, $W$ must be of maximal $k$-dimension \cite[Proposition 3.1]{BGood}, namely $d$, so $W \cong (B/\mathfrak{q})^d$.  
Moreover, since $A_{\mathfrak{m}}$ is Azumaya over $Z_{\mathfrak{m}}$ and $\mathfrak{m}$ is a maximal ideal of $Z_{\mathfrak{m}}$, $A_{\mathfrak{m}}/\mathfrak{m}A_{\mathfrak{m}}$ is a central simple algebra \cite[Proposition 13.7.11]{MR}.
But $A_{\mathfrak{m}}/\mathfrak{m}A_{\mathfrak{m}} \cong A/\mathfrak{m}A$ as algebras \cite[Lemma 13.7.12]{MR}, so $A/\mathfrak{m}A$ is central simple.
Therefore, since $V$ and $W$ are both simple modules over $A/\mathfrak{m}A$, we have $V \cong W$ as $A/\mathfrak{m}A$-modules.  
Since $V$ and $W$ are both annihilated by $\mathfrak{m}$, we also have $V \cong W$ as $A$-modules.
Therefore $V \cong W \cong (B/\mathfrak{q})^d$ as $A$-modules.
\end{proof}

\begin{Remark} \label{prime PI} \rm{ 
Suppose $B$ is a field and $(\tau,B)$ is an impression of $A$.  Then $\tau: A \stackrel{\cong}{\rightarrow} \operatorname{End}_B(B^d)$ is an isomorphism, $U$ consists of the zero ideal, and $Z \cong B$.  
} \end{Remark}

\begin{Remark} \label{order} \rm{
Supposing $R$ is a normal noetherian domain, an $R$-order is a particular type of prime PI ring: an $R$-order is an $R$-algebra $A \subset M_n(\operatorname{Frac}(R))$ that is a finitely-generated $R$-module satisfying $\operatorname{Frac}(R) \otimes_R A \cong M_n(\operatorname{Frac}(R))$, and thus $R$ and $A$ are in some sense birationally equivalent.  We will show in section \ref{Endomorphism rings} that any square superpotential algebra $A$ with center $Z$ is indeed a $Z$-order, but the impression ring $B$ will be quite different from $\operatorname{Frac}(Z)$. 
}\end{Remark}

\begin{Notation} \label{notation} \rm{
Let $A = kQ/I$ be a quiver algebra and let $\tau: A \rightarrow \operatorname{End}_B(B^d)$ be an algebra homomorphism.  For each $i,j \in Q_0$ set $d_i := \operatorname{rank}\tau(e_i)$ and define the $k$-linear map
$$\bar{\tau}: e_jAe_i \rightarrow \operatorname{Hom}_B\left(B^{d_i}, B^{d_j}\right),$$
where for each $a \in e_jAe_i$, $\bar{\tau}(a)$ is the restriction of $\tau(a)$ to 
$$B^{d_i} \cong \tau(e_i)B^{d} \rightarrow B^{d_j} \cong \tau(e_j)B^{d}.$$

Our interest here will be in the case $d_i = d_j =1$, so $\bar{\tau}\left(e_jAe_i\right) \subseteq \operatorname{Hom}_B(B,B) \cong B$.  If $a \in e_jkQe_i$ is a representative of an element $a+I \in A$, then set $\bar{\tau}(a):= \bar{\tau}(a+I)$.
}\end{Notation}

\subsection{The case when $\bar{\tau}(e_iAe_i) = \bar{\tau}(e_jAe_j) \subset B$} \label{Dimension vectors and noetherian centers}

Throughout, denote by $E_{ji}$ the matrix with a $1$ in the $(ji)$-th slot and zeros elsewhere.  
In this section we collect results for a general quiver algebra $A = kQ/I$ that admits a pre-impression $(\tau,B)$, where $d = |Q_0|$, $\tau(e_i) = E_{ii}$ for each $i \in Q_0$, and $\bar{\tau}(e_iAe_i) = \bar{\tau}(e_jAe_j) \subset B$ for each $i,j \in Q_0$.  In the next section we will show that square superpotential algebras admit such impressions.  
We identify $\operatorname{End}_B(B^d) \cong M_d(B)$.

\subsubsection{Noetherianity} \label{Noetherianity}

\begin{Lemma} \label{bartau}
Let $A=kQ/I$ be a quiver algebra, and suppose there exists an algebra monomorphism $\tau: A \rightarrow \operatorname{End}_B\left( B^{|Q_0|} \right)$ such that 
\begin{equation} \label{ei}
\tau(e_i) = E_{ii} \ \ \text{ and } \ \ \bar{\tau}(e_iAe_i) = \bar{\tau}(e_jAe_j) \subset B \ \ \text{ for each } i,j \in Q_0.
\end{equation}
Then for each $i \in Q_0$, there is an algebra isomorphism
$$e_iAe_i \cong \bar{\tau}(e_iAe_i) \subset B.$$
Therefore there is an isomorphism of corner rings $e_iAe_i \cong e_jAe_j$.
\end{Lemma}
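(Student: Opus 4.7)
The plan is to exhibit $\bar{\tau}_i := \bar{\tau}|_{e_iAe_i} : e_iAe_i \to B$ as an injective $k$-algebra homomorphism onto $\bar{\tau}(e_iAe_i)$; the corner-ring isomorphism then follows from the hypothesis $\bar{\tau}(e_iAe_i) = \bar{\tau}(e_jAe_j)$.

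First I would unpack the matrix form of $\tau(a)$ for $a \in e_iAe_i$. Since $a = e_i a e_i$, we have $\tau(a) = \tau(e_i)\tau(a)\tau(e_i) = E_{ii}\tau(a)E_{ii}$, so $\tau(a)$ is the matrix with $\bar{\tau}(a) \in B$ in the $(i,i)$-slot and zeros elsewhere. In particular, the map $\bar{\tau}_i : e_iAe_i \to B$ takes values in $B$ and is well-defined.

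Next I would verify that $\bar{\tau}_i$ is a $k$-algebra homomorphism. For $a, b \in e_iAe_i$, the product $\tau(a)\tau(b)$ is a matrix whose only possibly nonzero entry is in the $(i,i)$-slot and equals $\bar{\tau}(a)\bar{\tau}(b)$; since $\tau(ab) = \tau(a)\tau(b)$ and $ab \in e_iAe_i$, this forces $\bar{\tau}_i(ab) = \bar{\tau}_i(a)\bar{\tau}_i(b)$. Compatibility with addition and with $k$-scaling is immediate from linearity of $\tau$, and $\bar{\tau}_i(e_i) = 1$ since $\tau(e_i) = E_{ii}$. For injectivity, if $\bar{\tau}_i(a) = 0$ then the only possibly nonzero entry of $\tau(a)$ vanishes, so $\tau(a) = 0$; because $\tau$ is a monomorphism, $a = 0$.

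Thus $\bar{\tau}_i$ is an isomorphism of $k$-algebras onto its image $\bar{\tau}(e_iAe_i) \subset B$. Using the standing hypothesis (\ref{ei}) that $\bar{\tau}(e_iAe_i) = \bar{\tau}(e_jAe_j)$ for all $i,j \in Q_0$, one then concludes
\[
e_iAe_i \;\cong\; \bar{\tau}(e_iAe_i) \;=\; \bar{\tau}(e_jAe_j) \;\cong\; e_jAe_j,
\]
giving the claimed isomorphism of corner rings. There is no serious obstacle: everything reduces to the observation that $\tau(e_i) = E_{ii}$ sandwiches $\tau(e_iAe_i)$ into a single diagonal slot, so that $\bar{\tau}_i$ captures the full content of $\tau$ on $e_iAe_i$. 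The hypothesis (\ref{ei}) on equality of the images in $B$ is exactly what is needed to transport this across vertices.
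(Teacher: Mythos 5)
Your proposal is correct and follows essentially the same route as the paper: multiplicativity and additivity of $\bar{\tau}$ on $e_iAe_i$ are extracted from $\tau(ab)=\tau(a)\tau(b)$ via the identity $\tau(a)=\bar{\tau}(a)E_{ii}$, injectivity comes from $\bar{\tau}(a)=0 \Rightarrow \tau(a)=\bar{\tau}(a)E_{ii}=0$ together with $\tau$ being a monomorphism, and the corner-ring isomorphism is obtained by composing through the common image $\bar{\tau}(e_iAe_i)=\bar{\tau}(e_jAe_j)\subset B$. No gaps.
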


\begin{proof}
For $c,d \in e_iAe_i$ we have
$$\bar{\tau}(dc)E_{ii} = \tau(dc) = \tau(d)\tau(c) = \bar{\tau}(d)E_{ii}\bar{\tau}(c)E_{ii}=\bar{\tau}(d)\bar{\tau}(c)E_{ii},$$
so $\bar{\tau}(dc)= \bar{\tau}(d)\bar{\tau}(c)$, and similarly $\bar{\tau}(c+d)= \bar{\tau}(c)+ \bar{\tau}(d)$.  If $c \in e_iAe_i$ satisfies $\bar{\tau}(c)=0$ then $\tau(c) = \bar{\tau}(c)E_{ii}=0$, whence $c = 0$, and so the restriction $\bar{\tau}: e_iAe_i \rightarrow B$ is also an algebra monomorphism.  It follows that $e_iAe_i \cong \bar{\tau}(e_iAe_i)$ as algebras.
\end{proof}

\begin{Lemma} \label{without cyclic proper subpaths}
Let $A=kQ/I$ be a quiver algebra, and suppose there exists an algebra monomorphism $\tau: A \rightarrow \operatorname{End}_B\left( B^{|Q_0|} \right)$ such that (\ref{ei}) holds.  Then each corner ring $e_iAe_i$ is a finitely-generated subalgebra.
\end{Lemma}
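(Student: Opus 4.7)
The plan is to exhibit an explicit finite generating set for $e_iAe_i$ that reflects the combinatorics of cycles in $Q$. By Lemma \ref{bartau}, $\bar{\tau}$ restricts to an algebra isomorphism $e_iAe_i \cong T := \bar{\tau}(e_iAe_i) \subset B$, and hypothesis (\ref{ei}) further gives $T = \bar{\tau}(e_jAe_j)$ for every $j \in Q_0$. It therefore suffices to show that $T$ is finitely generated as a $k$-subalgebra of the commutative ring $B$.

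First I would let $S$ denote the set of cycles of $Q$ (at any vertex) that have no cyclic proper subpath. A combinatorial observation shows $S$ is finite: such a cycle cannot revisit an intermediate vertex, for otherwise the segment between the two visits would be a shorter cyclic subpath; hence $|c| \leq |Q_0|$ for every $c \in S$, and $S$ is finite since $Q$ is. By the hypothesis, $\bar{\tau}(S) \subset T$, so I would set $R := k[\bar{\tau}(S)] \subseteq T$, a visibly finitely generated $k$-subalgebra of $T$.

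Then I would prove by strong induction on length that $\bar{\tau}(c) \in R$ for every cycle $c$ of $Q$ at every vertex. The base case is $|c|=0$ (giving $\bar{\tau}(e_j)=1 \in R$) together with $c \in S$ (giving $\bar{\tau}(c) \in \bar{\tau}(S) \subseteq R$). For the inductive step I would factor $c = c_3 c_2 c_1$ where $c_2$ is a cycle at some vertex $j'$ of positive length, $c_1$ runs from the starting vertex $j$ to $j'$, and $c_3$ runs from $j'$ back to $j$; since $c_2$ is a proper subpath, $c_3 c_1$ is a cycle at $j$ of length strictly less than $|c|$. Commutativity of $B$ then yields
$$\bar{\tau}(c) \;=\; \bar{\tau}(c_3)\bar{\tau}(c_2)\bar{\tau}(c_1) \;=\; \bar{\tau}(c_2)\bar{\tau}(c_3 c_1),$$
and the inductive hypothesis places both factors in $R$. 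Applied to cycles at $i$ this gives $T \subseteq R$, and the reverse inclusion is built into the construction of $R$, so $T = R$ is finitely generated over $k$.

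The main conceptual obstacle I would flag is the legitimacy of the rearrangement in the displayed equation: the juxtaposition $c_2 c_3 c_1$ is not even a composable path of $Q$, so the identity has no meaning back in $A$. The argument lives entirely inside the commutative ring $B$, using only that $\bar{\tau}$ is multiplicative whenever composition is defined (established in the proof of Lemma \ref{bartau}) together with commutativity of $B$. That commutativity-enabled trick is precisely what lets me trade the subcycle $c_2$ for the shorter cycle $c_3 c_1$ at $i$ and feed the induction.
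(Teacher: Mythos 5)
Your proof is correct, and it reaches the conclusion by a route that is parallel to, but structurally different from, the paper's. The paper stays inside $A$: given a cycle $c = c_2 d c_1$ at $i$ with a cyclic proper subpath $d$ at $j$, it uses the hypothesis $\bar{\tau}(e_iAe_i)=\bar{\tau}(e_jAe_j)$ to choose $d' \in e_ikQe_i$ with $\bar{\tau}(d')=\bar{\tau}(d)$, checks $\bar{\tau}(d'c_2c_1)=\bar{\tau}(c_2dc_1)$ via the rank-one condition and commutativity of $B$, and then invokes injectivity of $\bar{\tau}$ on $e_iAe_i$ (Lemma \ref{bartau}) to conclude $c \equiv d'c_2c_1$ modulo $I$; iterating shows $e_iAe_i$ is generated by cycles \emph{at $i$} without cyclic proper subpaths. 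You never pull identities back to $A$: you argue entirely in $B$, showing by induction that the $\bar{\tau}$-image of every cycle (at any vertex) lies in $k[\bar{\tau}(S)]$, and you use the hypothesis (\ref{ei}) only to get $\bar{\tau}(S)\subseteq \bar{\tau}(e_iAe_i)$ --- a step you rightly cannot skip, since a subalgebra of a finitely generated algebra need not be finitely generated, so the equality $T=R$ rather than the inclusion $T\subseteq R$ is what carries the argument. Two small points: the multiplicativity $\bar{\tau}(c_3c_2c_1)=\bar{\tau}(c_3)\bar{\tau}(c_2)\bar{\tau}(c_1)$ for composable paths passing through different vertices is not literally the statement of Lemma \ref{bartau} (which treats a single corner ring), but it follows from the same one-line matrix computation using $\tau(e_\ell)=E_{\ell\ell}$, and the paper performs exactly that computation inside its proof of this lemma, so your appeal is harmless. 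Also, the paper's version buys a slightly sharper output that is implicitly reused later (e.g.\ in Corollary \ref{square prime} and Proposition \ref{center prop}): $e_iAe_i$ is generated by cycles based at $i$ without cyclic proper subpaths, whereas your argument gives generation of $\bar{\tau}(e_iAe_i)$ by images of such cycles based at arbitrary vertices --- fully sufficient for the lemma as stated, though not the precise form invoked downstream.
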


\begin{proof}
The corner ring $e_iAe_i$ is generated by representative cycles $c \in e_ikQe_i$ such that $c \not = b_2e_ib_1$ for any cycles $b_1, b_2$ of nonzero length.  We claim that any cycle $c = c_2dc_1 \in e_ikQe_i$, with a cyclic proper subpath $d \in e_jkQe_j$, is equal to some cycle $b_2e_ib_1 \in e_ikQe_i$ modulo $I$.  Since $\bar{\tau}(e_iAe_i) = \bar{\tau}(e_jAe_j)$, there exists a $d' \in e_ikQe_i$ such that $\bar{\tau}(d')=\bar{\tau}(d)$.  Since $\operatorname{rank}\tau(e_{\ell}) =1$ for each $\ell \in Q_0$, we have
$$\bar{\tau}(d'c_2c_1)E_{ii} = \tau(d'c_2c_1) = \tau(d')\tau(c_2)\tau(c_1)$$
$$= \bar{\tau}(d') E_{ii} \bar{\tau}(c_2)E_{ij} \bar{\tau}(c_1)E_{ji} = \bar{\tau}(d') \bar{\tau}(c_2)\bar{\tau}(c_1)E_{ii},$$
so $\bar{\tau}(d'c_2c_1) = \bar{\tau}(d') \bar{\tau}(c_2) \bar{\tau}(c_1)$.  Similarly $\bar{\tau}(c_2dc_1) = \bar{\tau}(c_2) \bar{\tau}(d) \bar{\tau}(c_1)$.  But then 
$$\bar{\tau}(d'c_2c_1) = \bar{\tau}(c_2dc_1),$$ 
and the claim follows since $\bar{\tau}: e_iAe_i \rightarrow B$ is an algebra monomorphism by Lemma \ref{bartau}.  $e_iAe_i$ is therefore generated by cycles without cyclic proper subpaths, and the lemma follows since $|Q_0|< \infty$.
\end{proof}

\begin{Theorem} \label{center}
Let $A=kQ/I$ be a quiver algebra that admits a pre-impression $(\tau,B)$ such that (\ref{ei}) holds.  
Then $A$ and its center $Z$ are noetherian rings, $A$ is a finitely-generated $Z$-module, and
\begin{equation} \label{Z}
Z = k \left[ \sum_{i \in Q_0} \gamma_i \in \bigoplus_{i \in Q_0}e_iAe_i \ | \ \bar{\tau}(\gamma_i)= \bar{\tau}(\gamma_j) \text{ for each } i,j \in Q_0 \right].
\end{equation}
In particular, $Z \cong Ze_i = e_iAe_i$ for each $i \in Q_0$.
\end{Theorem}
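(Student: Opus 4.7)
The backbone of the argument is the identification $Z \cong R = \{f \in B \mid f \cdot 1_d \in \operatorname{im}\tau\}$ established in the proof of Definition and Lemma \ref{impression}; surjectivity of $\phi$ plays no role in that derivation, so it holds under the pre-impression hypothesis. My plan is to turn this abstract identification into the explicit description (\ref{Z}) by writing central elements in block-diagonal form, then deduce the corner-ring isomorphism and noetherianity of $Z$, and finally extend the manipulation used in Lemma \ref{without cyclic proper subpaths} from corner rings to all hom-spaces $e_jAe_i$ in order to obtain module-finiteness of $A$ over $Z$.

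First I would establish (\ref{Z}). For any $z \in Z$, commutation with each $e_i$ forces $e_i z e_j = 0$ for $i \neq j$, so $z = \sum_i \gamma_i$ with $\gamma_i := e_i z e_i \in e_i A e_i$. Applying $\tau$ and using $\tau(e_i) = E_{ii}$ gives $\tau(z) = \sum_i \bar{\tau}(\gamma_i) E_{ii}$, which lies in $R \cdot 1_d$ if and only if every $\bar{\tau}(\gamma_i)$ coincides; this yields (\ref{Z}) and simultaneously exhibits the map $Z \to e_i A e_i$, $z \mapsto ze_i = \gamma_i$, as well defined. Injectivity is immediate, as $\gamma_i = 0$ forces each $\bar{\tau}(\gamma_j) = 0$, hence $\tau(z) = 0$ and then $z = 0$. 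For surjectivity, given $\alpha \in e_i A e_i$, the hypothesis $\bar{\tau}(e_iAe_i) = \bar{\tau}(e_jAe_j)$ together with the injectivity of $\bar{\tau}$ on each corner ring (Lemma \ref{bartau}) produces a unique $\alpha_j \in e_j A e_j$ with $\bar{\tau}(\alpha_j) = \bar{\tau}(\alpha)$; the sum $z = \sum_j \alpha_j$ lies in $Z$ by (\ref{Z}) and projects to $\alpha$ at vertex $i$. Hence $Z \cong e_i A e_i$ as rings, and since $e_i A e_i$ is a finitely-generated commutative $k$-algebra by Lemma \ref{without cyclic proper subpaths}, $Z$ is noetherian by the Hilbert basis theorem.

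For module-finiteness, the idea is to run the argument of Lemma \ref{without cyclic proper subpaths} verbatim on arbitrary hom-spaces: given a path $p = p_2 d p_1 \in e_j kQ e_i$ with cyclic proper subpath $d \in e_k kQ e_k$, choose $d' \in e_j A e_j$ with $\bar{\tau}(d') = \bar{\tau}(d)$ (existence by the hypothesis). The same diagonal rank-one calculation that appears in Lemma \ref{without cyclic proper subpaths} gives $\bar{\tau}(d' p_2 p_1) = \bar{\tau}(d') \bar{\tau}(p_2) \bar{\tau}(p_1)$ and $\bar{\tau}(p_2 d p_1) = \bar{\tau}(p_2) \bar{\tau}(d) \bar{\tau}(p_1)$; commutativity of $B$ and injectivity of $\bar{\tau}$ on $e_j A e_i$ (inherited from injectivity of $\tau$) then force $p = d' p_2 p_1$ modulo $I$. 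Induction on the length of the ambient path $p_2 p_1$, which is strictly shorter than $p$, reduces every element of $e_j A e_i$ to a left $e_j A e_j$-linear combination of the finitely many paths in $e_j k Q e_i$ without cyclic proper subpaths, so $e_j A e_i$ is a finitely-generated $e_j A e_j$-module. Summing over $i, j$ shows $A$ is finitely generated as a $Z$-module, and since left ideals of $A$ are $Z$-submodules of the noetherian $Z$-module $A$, the ring $A$ is left (and symmetrically right) noetherian. The main subtle point, and the principal technical check, is that the replacement $d \leadsto d'$ need not preserve length; what powers the induction is instead the strict decrease in the length of the ambient path $p_2 p_1$, not of $d'$ itself.
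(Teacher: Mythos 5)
Your proposal is correct and takes essentially the same route as the paper: (\ref{Z}) is read off from Lemma \ref{impression}(\ref{U}) (which indeed does not need surjectivity of $\phi$), the isomorphisms $Z \cong Ze_i = e_iAe_i$ come from the hypothesis $\bar{\tau}(e_iAe_i)=\bar{\tau}(e_jAe_j)$ together with Lemmas \ref{bartau} and \ref{without cyclic proper subpaths}, and module-finiteness is obtained by stripping cyclic subpaths with a strict decrease in the length of the remaining path. The only (harmless) variation is in that last step: the paper lifts the cyclic subpath $c$ to a central element $\tilde{c}$ with $\tilde{c}e_{\operatorname{t}(c)}=c$ and commutes it out, reducing to paths of length at most $|Q_0|$, whereas you rerun the rank-one $\tau$-computation of Lemma \ref{without cyclic proper subpaths} inside the hom-spaces $e_jAe_i$ and land on the paths without cyclic proper subpaths as generators; both arguments are valid and give the same conclusion.
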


\begin{proof}
(\ref{Z}) follows from Lemma \ref{impression} (\ref{U}).  In particular, for each $i \in Q_0$ there is an algebra epimorphism $Z \rightarrow Ze_i$ given by $z \mapsto ze_i$.  This map is injective: first note that $ze_j = ze_j^2 = e_jze_j$ for each $j \in Q_0$ since $z \in Z$.  Suppose $ze_i = 0$; then $\bar{\tau}(e_ize_i) = 0$, so $\bar{\tau}(e_jze_j) = 0$ for each $j \in Q_0$, so $\tau(z) = 0$ since $z = z\sum_{j \in Q_0}e_j = \sum_{j \in Q_0}e_jze_j$, whence $z = 0$ since $\tau$ is injective.  Therefore $Z \cong Ze_i$.  Furthermore, (\ref{Z}) implies $e_iAe_i \subseteq Ze_i$ since by assumption $\bar{\tau}(e_iAe_i) = \bar{\tau}(e_jAe_j)$ for each $i,j \in Q_0$, so $e_iAe_i = Ze_i$.  By Lemma \ref{without cyclic proper subpaths}, $Z \cong Ze_i = e_iAe_i$ is finitely-generated, and so $Z$ is noetherian.

We claim that $_ZA$ is generated by all paths in $Q$ of length $\leq m:= |Q_0|$, modulo $I$.  Let $\ell(p)$ denote the length of a path $p \in kQ$.  If $a \in Q_{>m}$ is a path, then $a$ must have a (non-vertex) cyclic subpath, say $a = b'cb$ where $c$ is a cycle in $Q$ and $b,b'$ are paths.  Since $c$ is not a vertex and $\ell(a) < \infty$, we have that $\ell(b'b)< \ell(a)$.  Since $e_{\operatorname{t}(c)}Ae_{\operatorname{t}(c)} = Ze_{\operatorname{t}(c)}$, there exists a $\tilde{c} \in Z$ such that $\tilde{c}e_{\operatorname{t}(c)}=c + I$, so 
$$a +I= b'cb +I= b'\tilde{c}e_{\operatorname{t}(c)}b +I =b'\tilde{c}b+I= \tilde{c}b'b+I.$$
But $b'b \in kQ$ is a path representative of $b'b +I \in A$, so we may repeat this process with $b'b$ in place of $a$, and then do so a finite number of times until $a +I \in Z(b'' +I)$ with $\ell(b'') \leq m$.  We may then extend this argument $k$-linearly to $kQ$, proving our claim.  $A$ is therefore module-finite over its center $Z$.  Since $A$ is also finitely-generated, $A$ is noetherian by the Artin-Tate lemma \cite[4.2.1]{Smith}.
\end{proof}

The following corollary will be used in later sections.

\begin{Corollary} \label{ba not 0} 
Suppose a quiver algebra $A=kQ/I$ admits a pre-impression $(\tau,B)$, where $B$ is an integral domain and (\ref{ei}) holds.  If $a \in e_iA$ and $b \in Ae_i$ are nonzero, then $ba$ is nonzero as well.
\end{Corollary}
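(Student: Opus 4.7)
The plan is to exploit the matrix structure forced by $\tau(e_i)=E_{ii}$: elements of $e_iA$ have image concentrated in a single row and elements of $Ae_i$ have image concentrated in a single column, so the product $ba$ maps to a rank-one tensor in $M_d(B)$, and integrality of $B$ prevents this tensor from vanishing unless one of the factors does.

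More precisely, I would first decompose $a = \sum_{j\in Q_0} e_iae_j$ and $b = \sum_{k\in Q_0} e_kbe_i$, and set $\alpha_j := \bar\tau(e_iae_j)\in B$ and $\beta_k := \bar\tau(e_kbe_i)\in B$ using Notation \ref{notation}. Since $\tau(e_i)=E_{ii}$ and $\operatorname{rank}\tau(e_\ell)=1$ for each $\ell\in Q_0$, the hypothesis (\ref{ei}) forces $\tau(e_iae_j)=\alpha_j E_{ij}$ and $\tau(e_kbe_i)=\beta_k E_{ki}$ (this is exactly the kind of computation carried out in the proofs of Lemmas \ref{bartau} and \ref{without cyclic proper subpaths}). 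Summing gives
$$\tau(a)=\sum_{j\in Q_0}\alpha_j E_{ij},\qquad \tau(b)=\sum_{k\in Q_0}\beta_k E_{ki}.$$

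Next I would compute the product in $M_d(B)$:
$$\tau(ba)=\tau(b)\tau(a)=\Bigl(\sum_{k}\beta_k E_{ki}\Bigr)\Bigl(\sum_{j}\alpha_j E_{ij}\Bigr)=\sum_{j,k}\beta_k\alpha_j\, E_{kj},$$
using $E_{ki}E_{ij}=E_{kj}$. Because $a\neq 0$ and $\tau$ is a monomorphism, there is some index $j_0$ with $\alpha_{j_0}\neq 0$; similarly there is some $k_0$ with $\beta_{k_0}\neq 0$. The $(k_0,j_0)$ entry of $\tau(ba)$ is $\beta_{k_0}\alpha_{j_0}$, which is nonzero because $B$ is an integral domain. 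Hence $\tau(ba)\neq 0$, and injectivity of $\tau$ gives $ba\neq 0$.

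There is no real obstacle here: the only subtlety is keeping track of which rows and columns are populated, and that is pinned down by $\tau(e_i)=E_{ii}$ together with the rank-one condition on the idempotents already used earlier in the section. The integrality of $B$ is used in exactly one place, to pass from "some $\alpha_j$ and some $\beta_k$ are nonzero" to "some product $\beta_k\alpha_j$ is nonzero," which is what makes this statement fail in general when $B$ has zero divisors.
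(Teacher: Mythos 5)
Your proof is correct, but it takes a genuinely different route from the paper. You argue directly with matrix entries: $\tau(e_i)=E_{ii}$ forces $\tau(a)=\sum_j \alpha_j E_{ij}$ and $\tau(b)=\sum_k \beta_k E_{ki}$, so $\tau(ba)$ has $(k,j)$ entry $\beta_k\alpha_j$, and since $\tau$ is injective some $\alpha_{j_0}$ and some $\beta_{k_0}$ are nonzero, whence the $(k_0,j_0)$ entry $\beta_{k_0}\alpha_{j_0}$ is nonzero because $B$ is a domain. The paper instead argues by contradiction using the machinery already built in the section: by Lemma \ref{prime} the algebra $A$ is prime, so if $ba=0$ there is still $r\in e_iAe_i$ with $bra\neq 0$; then Theorem \ref{center} identifies $e_iAe_i=Ze_i$, so $r=ze_i$ for a central $z$, and $bra=zba=0$, a contradiction. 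Your computation is more elementary and in fact uses weaker hypotheses: it needs only that $\tau$ is an injective algebra map into $M_{|Q_0|}(B)$ with $\tau(e_\ell)=E_{\ell\ell}$ and that $B$ is a domain, not the corner-ring equality $\bar{\tau}(e_iAe_i)=\bar{\tau}(e_jAe_j)$ or the pre-impression condition that Lemma \ref{prime} and Theorem \ref{center} rely on; it also yields slightly more, namely that $e_kbae_j\neq 0$ whenever $e_kb$ and $ae_j$ are nonzero. What the paper's argument buys is economy within its own framework (it reuses primality and the identification of the corner rings with the center rather than redoing an entry-level computation), but as a self-contained proof of this corollary yours is cleaner and more general.
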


\begin{proof} 
Suppose $ba =0$.  By Lemma \ref{prime} $A$ is prime and by Theorem \ref{center} $e_iAe_i = Ze_i$ for each $i \in Q_0$, so $bra \not = 0$ for some $r \in e_iAe_i = Ze_i$.  But then there is a $z \in Z$ such that $ze_i = r$, so $bra = bza = zba=0$, a contradiction.
\end{proof}

\subsubsection{Large modules} \label{Large modules}
\ \\
Suppose $A = kQ/I$ is a quiver algebra that admits a pre-impression $(\tau,B)$ with $d = |Q_0|$ and $\bar{\tau}(e_iAe_i) = \bar{\tau}(e_jAe_j) \subset B$ for each $i,j \in Q_0$.  By Lemma \ref{maximal}, the large $A$-modules (that is, the simple $A$-modules of maximal $k$-dimension) have $k$-dimension $|Q_0|$.  In this subsection we give algebraic and homological characterizations of these modules.

Recall that if a $k$-algebra $A$ is finitely-generated and module-finite over its center $Z$, and $V$ is a simple $A$-module, then $\operatorname{ann}_ZV$ will be a maximal ideal of $Z$ \cite[Theorem 4.2.2]{Smith}.

\begin{Lemma} \label{Azumaya locus of square algebras}  
Suppose a quiver algebra $A=kQ/I$ admits a pre-impression $(\tau,B)$, where $B$ is an integral domain and (\ref{ei}) holds.  Then a simple $A$-module $V$ is large if and only if $\operatorname{ann}_ZV \in \operatorname{Max}Z$ is contained in the Azumaya locus of $A$.
\end{Lemma}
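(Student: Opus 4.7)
The plan is to reduce the claim to the Brown–Goodearl characterization of the Azumaya locus \cite[Proposition 3.1]{BGood}, which, under the appropriate hypotheses on a PI ring, describes exactly which maximal ideals of the center carry simple modules of maximal $k$-dimension. The work of the lemma is then simply to verify that those hypotheses are in force in our situation.

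First I would assemble the standing hypotheses. Since $B$ is an integral domain it is in particular prime, so Lemma \ref{prime} guarantees that $A$ and its center $Z$ are both prime rings. By Theorem \ref{center} (which applies precisely because (\ref{ei}) holds), $A$ is noetherian, finitely generated as a $k$-algebra, and module-finite over its noetherian center $Z$. By Lemma \ref{maximal}, the maximal $k$-dimension of a simple $A$-module is exactly $d = |Q_0|$, the rank of the impression. Combined with the standing assumption that $k$ is algebraically closed, these are precisely the conditions under which \cite[Proposition 3.1]{BGood} is stated.

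Next I would invoke that result: for such an $A$, a maximal ideal $\mathfrak{m}$ of $Z$ lies in the Azumaya locus if and only if every simple $A$-module annihilated by $\mathfrak{m}$ has $k$-dimension equal to $d$ (equivalently, is large). Now let $V$ be any simple $A$-module and put $\mathfrak{m} := \operatorname{ann}_Z V$; by \cite[Theorem 4.2.2]{Smith} (already cited in the excerpt), module-finiteness of $A$ over $Z$ forces $\mathfrak{m}$ to be a maximal ideal of $Z$. Since $V$ is a simple $A$-module with $\mathfrak{m} \subseteq \operatorname{ann}_A V$, the Brown–Goodearl equivalence applied at this particular $\mathfrak{m}$ reads: $V$ is large $\iff$ $\mathfrak{m} = \operatorname{ann}_Z V$ lies in the Azumaya locus, which is exactly the statement of the lemma.

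There is essentially no real obstacle, since all substantive input has already been packaged in the preceding lemmas; the only point to be careful about is confirming that the hypothesis list of \cite[Proposition 3.1]{BGood} is satisfied, and this has been arranged by Lemma \ref{prime}, Theorem \ref{center}, and Lemma \ref{maximal}. If one wanted to be fully self-contained one could instead argue directly: the forward direction replicates the argument in the proof of Proposition \ref{(B/r)^d} showing that $A/\mathfrak{p} \cong M_d(k)$ is central simple, from which Azumaya-ness of $A_{\mathfrak{m}}$ over $Z_{\mathfrak{m}}$ is standard, while the converse uses that $A_{\mathfrak{m}}/\mathfrak{m} A_{\mathfrak{m}} \cong M_d(k)$ has every simple module of dimension $d$.
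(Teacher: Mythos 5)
Your proposal is correct and matches the paper's own proof, which likewise assembles primeness from Lemma \ref{prime}, noetherianity and module-finiteness over $Z$ from Theorem \ref{center}, and then cites \cite[Proposition 3.1]{BGood} together with $k$ being algebraically closed. Your extra remarks (Lemma \ref{maximal} for the value of $d$ and \cite[Theorem 4.2.2]{Smith} for $\operatorname{ann}_ZV$ being maximal) simply make explicit details the paper leaves implicit.
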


\begin{proof} 
$A$ and $Z$ are prime noetherian algebras and $A$ is a finitely-generated $Z$-module by Theorem \ref{center} and Lemma \ref{prime}.  The lemma then follows from \cite[Proposition 3.1]{BGood} since $k$ is assumed to be algebraically closed.
\end{proof}

We will consider the Ore localizations $A_{\mathfrak{m}}:= Z_{\mathfrak{m}} \otimes_Z A$ with $\mathfrak{m} \in \operatorname{Max}Z$.  When $\mathfrak{m}$ is in the Azumaya locus, $A_{\mathfrak{m}}$ is local with unique maximal ideal $\mathfrak{m}_{\mathfrak{m}}A_{\mathfrak{m}}$ \cite[13.7.9]{MR}.  A simple $A$-module $V$ can be localized to an $A_{\mathfrak{m}}$-module by setting $V_{\mathfrak{m}}:= Z_{\mathfrak{m}}/\mathfrak{m}_{\mathfrak{m}} \otimes_k V$, and is only nonzero if $\mathfrak{m} = \operatorname{ann}_ZV$.  There is an obvious $A$-module isomorphism $\phi: V \rightarrow V_{\mathfrak{m}}$ defined by $\phi(v) = 1 \otimes v$,\footnote{$\phi$ is injective since $\frac 1t \cdot Z_{\mathfrak{m}}/\mathfrak{m}_{\mathfrak{m}} \otimes tv = 1 \otimes v =\phi(v)=0$ implies $tv= 0$, which implies $v = 0$ since $t \not \in \mathfrak{m} = \operatorname{ann}_ZV$.} so $V$ may be viewed as an $A_{\mathfrak{m}}$-module by setting $b v := \phi^{-1}(b \phi(v))$, and the inverse map $\phi^{-1}: V_{\mathfrak{m}} \rightarrow V$ is then an $A_{\mathfrak{m}}$-module isomorphism since $b \phi^{-1}(w) = bv = \phi^{-1}(b \phi(v)) = \phi^{-1}(bw)$.  $V$ and $V_{\mathfrak{m}}$ are thus isomorphic modules over both $A$ and $A_{\mathfrak{m}}$.

\begin{Lemma} \label{A/p cong V} 
Let $A$ be a prime noetherian algebra, module-finite over its center, let $V$ be a large $A$-module with annihilator $\mathfrak{p}$, and set $\mathfrak{m} := \mathfrak{p} \cap Z$.  Then there are $A$-module and $A_{\mathfrak{m}}$-module isomorphisms
\begin{equation} \label{V cong V_m}
A_{\mathfrak{m}}/\mathfrak{p}_{\mathfrak{m}} \cong A/\mathfrak{p} \cong V^{\oplus d} \cong \left( V_{\mathfrak{m}} \right)^{\oplus d},
\end{equation}
where $d := \operatorname{dim}_k(V)$.
This holds in particular if $A= kQ/I$ is a quiver algebra that admits a pre-impression $(\tau,B)$, where $B$ is an integral domain and (\ref{ei}) holds; in this case $d = |Q_0|$.
\end{Lemma}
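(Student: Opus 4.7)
The plan is to reduce everything to the classical Azumaya/Wedderburn picture at the maximal ideal $\mathfrak{m}$ and then compare with the localisation. First I would invoke the fact (already used in the proof of Proposition \ref{(B/r)^d}, and recorded in \cite[Proposition 3.1]{BGood}) that since $V$ is large, $\mathfrak{m}=\mathfrak{p}\cap Z$ lies in the Azumaya locus of $A$. Hence $A_{\mathfrak{m}}$ is Azumaya over $Z_{\mathfrak{m}}$, and by \cite[Proposition 13.7.11]{MR} the quotient $A_{\mathfrak{m}}/\mathfrak{m}_{\mathfrak{m}}A_{\mathfrak{m}}$ is a central simple algebra over the residue field $Z_{\mathfrak{m}}/\mathfrak{m}_{\mathfrak{m}}\cong k$. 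Because $k$ is algebraically closed, Wedderburn forces $A_{\mathfrak{m}}/\mathfrak{m}_{\mathfrak{m}}A_{\mathfrak{m}}\cong M_d(k)$ for some integer $d$. The natural isomorphism $A/\mathfrak{m}A\cong A_{\mathfrak{m}}/\mathfrak{m}_{\mathfrak{m}}A_{\mathfrak{m}}$ from \cite[Lemma 13.7.12]{MR} then gives $A/\mathfrak{m}A\cong M_d(k)$ as well.

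Next I would pin down the annihilator. Since $\mathfrak{m}\subseteq\mathfrak{p}$ and $\mathfrak{p}$ is two-sided, $\mathfrak{m}A\subseteq\mathfrak{p}$; but $A/\mathfrak{m}A$ is simple as a ring, so $\mathfrak{p}/\mathfrak{m}A$ is either $0$ or all of $A/\mathfrak{m}A$. The latter is ruled out by $V\neq 0$, hence $\mathfrak{p}=\mathfrak{m}A$ and, localising, $\mathfrak{p}_{\mathfrak{m}}=\mathfrak{m}_{\mathfrak{m}}A_{\mathfrak{m}}$. Combining, $A/\mathfrak{p}\cong M_d(k)\cong A_{\mathfrak{m}}/\mathfrak{p}_{\mathfrak{m}}$ as algebras, and therefore also as left $A$- (resp.\ $A_{\mathfrak{m}}$-) modules. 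Now $V$ is a simple module over the central simple algebra $A/\mathfrak{p}\cong M_d(k)$, so up to isomorphism it is the unique simple $M_d(k)$-module, of $k$-dimension $d$, and $A/\mathfrak{p}\cong V^{\oplus d}$ as $A$-modules; the identical argument at the localised level yields $A_{\mathfrak{m}}/\mathfrak{p}_{\mathfrak{m}}\cong V_{\mathfrak{m}}^{\oplus d}$ as $A_{\mathfrak{m}}$-modules. The identification $V\cong V_{\mathfrak{m}}$ (valid simultaneously as $A$- and $A_{\mathfrak{m}}$-modules) was established in the paragraph immediately preceding the lemma, so stringing the isomorphisms together gives the four-fold chain.

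For the final assertion I would simply check the hypotheses in the pre-impression setting: Lemma \ref{prime} guarantees that $A$ is prime, Theorem \ref{center} that $A$ is noetherian and module-finite over its centre, and Lemma \ref{maximal} that the common $k$-dimension of the large modules is $d=|Q_0|$. These are exactly the assumptions of the first part of the lemma, so the conclusion carries over verbatim with this identification of $d$.

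There is no serious obstacle; the only mild subtlety is confirming that $\mathfrak{p}=\mathfrak{m}A$ (equivalently $\mathfrak{p}_{\mathfrak{m}}=\mathfrak{m}_{\mathfrak{m}}A_{\mathfrak{m}}$), which is where the Azumaya hypothesis does its real work by ensuring $A/\mathfrak{m}A$ is actually simple rather than merely semisimple.
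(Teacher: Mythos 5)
Your proposal is correct, but it follows a genuinely different route from the paper's own proof of this lemma. The paper argues entirely on the non-localized side via PI theory: the PI degree of $A$ is $d$ by \cite[3.1.a]{BGood}, the primitive PI factor $A/\mathfrak{p}$ is a central simple algebra by Kaplansky's theorem \cite[13.3.8]{MR} whose unique simple module is $V$, hence it has dimension $d^2$ over its center and Artin--Wedderburn gives $A/\mathfrak{p}\cong V^{\oplus d}$; the comparison with $A_{\mathfrak{m}}/\mathfrak{p}_{\mathfrak{m}}$ and $V_{\mathfrak{m}}$ is then left to the identification $V\cong V_{\mathfrak{m}}$ made just before the lemma. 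You instead feed in the Azumaya-locus membership of $\mathfrak{m}$ from \cite[Proposition 3.1]{BGood}, deduce that $A/\mathfrak{m}A\cong A_{\mathfrak{m}}/\mathfrak{m}_{\mathfrak{m}}A_{\mathfrak{m}}$ is central simple (\cite[Proposition 13.7.11, Lemma 13.7.12]{MR}), and use ring-simplicity to pin down $\mathfrak{p}=\mathfrak{m}A$ and $\mathfrak{p}_{\mathfrak{m}}=\mathfrak{m}_{\mathfrak{m}}A_{\mathfrak{m}}$ before applying Wedderburn; this is essentially the mechanism the paper deploys in the proof of Proposition \ref{(B/r)^d} rather than here. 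Your route requires the Azumaya input (which is available under the stated hypotheses, since the standing conventions make $Z$ affine), but it buys an explicit identification $A/\mathfrak{p}\cong A_{\mathfrak{m}}/\mathfrak{p}_{\mathfrak{m}}$ that the paper's proof leaves implicit. The only points worth spelling out are that the module isomorphism is realized by the canonical map $A\rightarrow A_{\mathfrak{m}}/\mathfrak{p}_{\mathfrak{m}}$ (an abstract algebra isomorphism would not suffice), and that elements of $Z\setminus\mathfrak{m}$ act by nonzero scalars on $A/\mathfrak{p}$, so the $A$-module structures extend to $A_{\mathfrak{m}}$, exactly as in the discussion preceding the lemma. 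Your handling of the special case (via Lemma \ref{prime}, Theorem \ref{center}, and Lemma \ref{maximal}) coincides with the paper's.
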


\begin{proof} First note that the PI-degree of $A$ is $d$ \cite[3.1.a]{BGood}.  The factor $A/\mathfrak{p}$ is then a primitive PI ring, so by Kaplansky's theorem \cite[13.3.8]{MR} it is a central simple algebra whose only simple is $V$, so the PI-degree of $A/\mathfrak{p}$ is also $d$, and thus it has dimension $d^2$ over its center.  By the Artin-Wedderburn theorem there is an isomorphism of $A$-modules, $A/\mathfrak{p} \cong V^{\oplus d}$.\footnote{Of course there is a maximal \textit{left} ideal $\mathfrak{r}$ such that $V \cong A/\mathfrak{r}$ as $A$-modules, namely $\mathfrak{r} = \operatorname{ker}\phi_v$ where $\phi_v: A \rightarrow V$ is given by $a \mapsto av$.}

For the special case, $A$ is noetherian and module-finite over its center by Theorem \ref{center}, prime by Lemma \ref{prime}, and $d=|Q_0|$ by Lemma \ref{maximal}.
\end{proof}

Recall that a ring is semiperfect if every finitely-generated left (right) module $V$ admits a projective cover $P$, that is, there is an epimorphism $P \rightarrow V$ such that, for any submodule $L \subset P$, $\operatorname{ker}\phi + L=P$ implies $L=P$.  If a projective resolution is constructed from projective covers then its length will give the precise projective dimension rather than just an upper bound, and in (\ref{proj cover}) we determine the (unique) projective covers of the large $A_{\mathfrak{m}}$-modules.  Also, recall that a set of idempotents in a ring $S$ is basic if it is a complete set of orthogonal idempotents $e_1, \ldots, e_n$ such that $Se_1, \ldots, Se_n$ is a complete irredundant set of representatives of the $S$-modules of the form $Se$ for some primitive idempotent $e$ \cite[section 27]{AF}.

\begin{Proposition} \label{strange} 
Suppose a quiver algebra $A=kQ/I$ admits a pre-impression $(\tau,B)$, where $B$ is an integral domain and (\ref{ei}) holds.  
Further suppose $I \subset kQ_{\geq 2}$.
Let $V$ be a large $A$-module, and set $\mathfrak{p} := \operatorname{ann}_AV$, $\mathfrak{m} := \mathfrak{p} \cap Z \in \operatorname{Max}Z$.  
Then the localization $A_{\mathfrak{m}}$ is semiperfect and
$$\begin{array}{ccc}
A_{\mathfrak{m}}e_i \cong A_{\mathfrak{m}} e_j, & \ \ \ \ \ & \mathfrak{p}_{\mathfrak{m}} e_i \cong \mathfrak{p}_{\mathfrak{m}} e_j,\\
Ae_i \not \cong Ae_j, & \ \ \ \ \ & \mathfrak{p}e_i \not \cong \mathfrak{p} e_j, 
\end{array}$$
while as $A$-modules,
\begin{equation} \label{bigoplus}
Ae_i/\mathfrak{p}e_i \cong Ae_j/\mathfrak{p}e_j \cong V.
\end{equation}
Consequently, any single vertex forms a basic set of idempotents for $A_{\mathfrak{m}}$, while the set of all vertices forms a basic set for $A$.
\end{Proposition}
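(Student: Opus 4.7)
The plan is to leverage the identification $A/\mathfrak{p}\cong M_d(k)$ with $d=|Q_0|$ supplied by Lemma \ref{A/p cong V} (so $\mathfrak{p}=\mathfrak{m}A$), together with the Azumaya structure at $\mathfrak{m}$. I would first show each image $\bar e_i$ of $e_i$ in $A/\mathfrak{p}$ is a primitive idempotent: if $e_i\in\mathfrak{m}A$, multiplying on the left and right by $e_i$ forces everything into $e_iAe_i\cong Z$ (Theorem \ref{center}), so $1_Z\in\mathfrak{m}$, contradicting $\mathfrak{m}\in\operatorname{Max}Z$. Since the $\bar e_i$ are orthogonal nonzero idempotents summing to $1$ in $M_d(k)$ with $d=|Q_0|$ terms, each must have rank one, hence is primitive; therefore $Ae_i/\mathfrak{p}e_i\cong(A/\mathfrak{p})\bar e_i\cong M_d(k)\bar e_i\cong V$ as left $A$-modules, giving (\ref{bigoplus}).

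For the statements over $A_{\mathfrak{m}}$: since $e_iA_{\mathfrak{m}}e_i=(e_iAe_i)_{\mathfrak{m}}\cong Z_{\mathfrak{m}}$ is a local commutative ring, the $e_i$ are primitive orthogonal idempotents with local corners summing to $1$, the standard criterion for $A_{\mathfrak{m}}$ to be semiperfect. Since $\mathfrak{m}$ lies in the Azumaya locus by Lemma \ref{Azumaya locus of square algebras}, $A_{\mathfrak{m}}/\mathfrak{m}_{\mathfrak{m}}A_{\mathfrak{m}}\cong M_d(k)$ has $V$ as its unique simple, and the surjection $A_{\mathfrak{m}}e_i\twoheadrightarrow V$ has kernel $\mathfrak{p}_{\mathfrak{m}}e_i=\mathfrak{m}_{\mathfrak{m}}A_{\mathfrak{m}}e_i$ lying in $\operatorname{rad}(A_{\mathfrak{m}})A_{\mathfrak{m}}e_i$. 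Thus each $A_{\mathfrak{m}}e_i$ is a projective cover of $V$, and uniqueness of projective covers in a semiperfect ring gives $A_{\mathfrak{m}}e_i\cong A_{\mathfrak{m}}e_j$, with the induced isomorphism on kernels yielding $\mathfrak{p}_{\mathfrak{m}}e_i\cong\mathfrak{p}_{\mathfrak{m}}e_j$.

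For the non-isomorphisms over $A$: the hypothesis $I\subset kQ_{\ge 2}$ gives $A/\langle Q_1\rangle\cong k^{|Q_0|}$, so $Ae_i$ admits the vertex simple $S_i$ as a quotient, and any isomorphism $Ae_i\cong Ae_j$ would descend to $S_i\cong S_j$, impossible for $i\ne j$. To separate $\mathfrak{p}e_i$ from $\mathfrak{p}e_j$, I would apply $\operatorname{Hom}_A(-,S_k)$ to $0\to\mathfrak{p}e_i\to Ae_i\to V\to 0$; projectivity of $Ae_i$ together with $\operatorname{Hom}_A(V,S_k)=0$ (since $V$ and $S_k$ are non-isomorphic simples, as $\dim V=d\ge 2$) collapses the long exact sequence to
\[0\to e_iS_k\to\operatorname{Hom}_A(\mathfrak{p}e_i,S_k)\to\operatorname{Ext}^1_A(V,S_k)\to 0,\]
so that
\[\dim_k\operatorname{Hom}_A(\mathfrak{p}e_i,S_k)-\dim_k\operatorname{Hom}_A(\mathfrak{p}e_j,S_k)=\delta_{ik}-\delta_{jk},\]
which is nonzero at $k=i\ne j$. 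The basic-set assertions then follow from the definition: in $A$ the $Ae_i$ form an irredundant list of indecomposable projectives, while in $A_{\mathfrak{m}}$ they all collapse to a single isomorphism class.

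The main obstacle is the non-isomorphism $\mathfrak{p}e_i\not\cong\mathfrak{p}e_j$: a naive Schanuel's lemma argument on the two short exact sequences yields only $\mathfrak{p}e_i\oplus Ae_j\cong\mathfrak{p}e_j\oplus Ae_i$, which does not reduce without a cancellation principle unavailable at this level of generality---indeed the isomorphism $\mathfrak{p}_{\mathfrak{m}}e_i\cong\mathfrak{p}_{\mathfrak{m}}e_j$ holds after localization. The $\operatorname{Hom}$-with-simples calculation above circumvents this by extracting a vertex-dependent invariant that survives only before localizing.
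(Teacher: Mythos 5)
Your proposal is correct, and it departs from the paper's proof in a couple of instructive places. The localized assertions are handled essentially as in the paper: local corners $e_iA_{\mathfrak{m}}e_i\cong Z_{\mathfrak{m}}$ give semiperfectness, $\mathfrak{p}_{\mathfrak{m}}=J(A_{\mathfrak{m}})$ makes each $A_{\mathfrak{m}}e_i\twoheadrightarrow V_{\mathfrak{m}}$ a projective cover, and uniqueness of covers yields $A_{\mathfrak{m}}e_i\cong A_{\mathfrak{m}}e_j$ with $\mathfrak{p}_{\mathfrak{m}}e_i\cong\mathfrak{p}_{\mathfrak{m}}e_j$ (the paper gets the latter via $J(A_{\mathfrak{m}})e_i$ being the unique maximal submodule, you via the induced map on kernels---same substance); your proof of $Ae_i\not\cong Ae_j$ by passing to the quotient modulo the arrow ideal is the same standard fact the paper quotes from Crawley-Boevey in the form $e_i\notin Ae_jA$. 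Where you differ is in proving (\ref{bigoplus}) directly inside $A/\mathfrak{p}\cong M_d(k)$ via the rank-one idempotents $\bar e_i$ rather than by localizing, and, more significantly, in actually proving $\mathfrak{p}e_i\not\cong\mathfrak{p}e_j$: the paper's written proof never addresses that claim, your remark that Schanuel only gives $\mathfrak{p}e_i\oplus Ae_j\cong\mathfrak{p}e_j\oplus Ae_i$ correctly identifies why a vertex-sensitive invariant is needed, and the computation $\dim_k\operatorname{Hom}_A(\mathfrak{p}e_i,S_k)=\delta_{ik}+\dim_k\operatorname{Ext}^1_A(V,S_k)$ settles it. Three small touch-ups: the subtraction needs $\operatorname{Ext}^1_A(V,S_k)$ to be finite-dimensional, which holds because $A$ is noetherian (Theorem \ref{center}), so $V$ admits a resolution by finitely generated projectives; the identity $\mathfrak{p}=\mathfrak{m}A$ is not literally part of Lemma \ref{A/p cong V} but follows because $\mathfrak{m}$ lies in the Azumaya locus (Lemma \ref{Azumaya locus of square algebras}), so $A/\mathfrak{m}A$ is central simple as recorded in the proof of Proposition \ref{(B/r)^d}, and $\mathfrak{m}A\subseteq\mathfrak{p}\subsetneq A$ forces equality; and for the basic-set claim over $A$ you should note that each $Ae_i$ is indecomposable because $\operatorname{End}_A(Ae_i)\cong e_iAe_i\cong Z$ has no nontrivial idempotents, $Z$ being a domain inside $B$.
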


\begin{proof}
By Lemma \ref{Azumaya locus of square algebras}, $\mathfrak{m}$ is in the Azumaya locus of $A$, so $A_{\mathfrak{m}}$ contains only one primitive ideal, namely $\mathfrak{p}_{\mathfrak{m}}$, and thus the Jacobson radical of $A_{\mathfrak{m}}$ is $J=\mathfrak{p}_{\mathfrak{m}}$ \cite[13.7.5,9]{MR}.  Moreover, $A_{\mathfrak{m}}$ has a complete set of orthogonal idempotents $e_1, \ldots, e_n$, and for each $i \in Q_0$, the corner ring $e_i A_{\mathfrak{m}} e_i$ is local: 
\begin{equation} \label{local} e_i A_{\mathfrak{m}} e_i = Z_{\mathfrak{m}} \otimes_Z e_i A e_i \cong Z_{\mathfrak{m}} \otimes_Z Z \cong Z_{\mathfrak{m}}. \end{equation}
It follows \cite[Theorem 27.6]{AF} that $A_{\mathfrak{m}}$ is semiperfect and the set 
$$A_{\mathfrak{m}} e_1/\mathfrak{p}_{\mathfrak{m}}e_1 , \ldots, A_{\mathfrak{m}}e_n/\mathfrak{p}_{\mathfrak{m}}e_n$$
is the set of all simple $A_{\mathfrak{m}}$-modules, with
$$A_{\mathfrak{m}}/\mathfrak{p}_{\mathfrak{m}} = A_{\mathfrak{m}} e_1/\mathfrak{p}_{\mathfrak{m}}e_1 \oplus \cdots \oplus A_{\mathfrak{m}}e_n/\mathfrak{p}_{\mathfrak{m}}e_n.$$
Since $A_{\mathfrak{m}}$ is Azumaya there is only one simple $A_{\mathfrak{m}}$-module,\footnote{Indeed, since $A_{\mathfrak{m}}$ is Azumaya, any simple $A_{\mathfrak{m}}$ has annihilator $\mathfrak{p}_{\mathfrak{m}}$.  
Thus any simple $A_{\mathfrak{m}}$-module is also a simple module over $A_{\mathfrak{m}}/\mathfrak{p}_{\mathfrak{m}} = A/\mathfrak{p}$.
But $A$ admits an embedding into a matrix ring, so $A/\mathfrak{p}$ is primitive PI, and thus a central simple algebra by Kaplansky's theorem.
Therefore $A/\mathfrak{p}$, hence $A_{\mathfrak{m}}$, has only one simple module up to isomorphism.} so
\begin{equation} \label{iso V_m}
A_{\mathfrak{m}}e_i/\mathfrak{p}_{\mathfrak{m}} e_i \cong A_{\mathfrak{m}} e_j/\mathfrak{p}_{\mathfrak{m}}e_j \cong V_{\mathfrak{m}}.
\end{equation}

The following characterizes projective covers \cite[27.13]{AF}: Suppose $S$ is a semiperfect ring with a basic set of idempotents $e_1, \ldots, e_n$ and Jacobson radical $J$, and let $M$ be a finitely-generated $S$-module.  Then if
$$M/JM \cong \left(Se_1/Je_1\right)^{(k_1)} \oplus \cdots \oplus \left(Se_n/Je_n \right)^{(k_n)},$$
there is a unique projective cover $Se_1^{(k_1)} \oplus \cdots \oplus Se_n^{(k_n)} \rightarrow M/JM \rightarrow 0$.  Consider the case $S = A_{\mathfrak{m}}$ and $M=A_{\mathfrak{m}}e$.  As mentioned above, $J(A_{\mathfrak{m}})=\mathfrak{p}_{\mathfrak{m}}$, so
\begin{equation} \label{proj cover}
A_{\mathfrak{m}} e_i \stackrel{\phi= \ \cdot 1}{\twoheadrightarrow} V_{\mathfrak{m}} \cong A_{\mathfrak{m}}e_i/\mathfrak{p}_{\mathfrak{m}}e_i
\end{equation}
is the unique projective cover of $V_{\mathfrak{m}}$.  Therefore by (\ref{iso V_m}), $\phi$ must factor through $A_{\mathfrak{m}} e_j$, so by symmetry
\begin{equation} \label{isom}
A_{\mathfrak{m}} e_i \cong A_{\mathfrak{m}} e_j.
\end{equation}
Of course, $Ae_i \not \cong Ae_j$ when $i \not = j$ (argument in \cite[p.\ 4]{CB}: otherwise there would be some $f \in e_iAe_j$ and $g \in e_jAe_i$ with $fg=e_i$, $gf=e_j$, so $e_i=fg \in Ae_jA$.
But by assumption $I \subset kQ_{\geq 2}$, and therefore $e_i \not \in Ae_jA$, a contradiction).  
We remark that $A_{\mathfrak{m}} e_i$ is indecomposable since its endomorphism ring $\operatorname{End}_{A_{\mathfrak{m}}}\left(A_{\mathfrak{m}} e_i \right) \cong e_i A_{\mathfrak{m}} e_i$ is local by (\ref{local}).  By \cite[Corollary 17.20]{AF} $J(A_{\mathfrak{m}})e_i$ is the unique maximal submodule of $A_{\mathfrak{m}} e_i$, and so by (\ref{isom}), 
$$\mathfrak{p}_{\mathfrak{m}} e_i = J(A_{\mathfrak{m}}) e_i \cong J(A_{\mathfrak{m}}) e_j = \mathfrak{p}_{\mathfrak{m}} e_j.$$

Now since $\mathfrak{m}= \operatorname{ann}_ZV$, it follows by (\ref{iso V_m}) that the following are isomorphic both as $A$-modules and $A_{\mathfrak{m}}$-modules:
$$Ae_i/\mathfrak{p}e_i \cong A_{\mathfrak{m}}e_i/\mathfrak{p}_{\mathfrak{m}} e_i \cong A_{\mathfrak{m}} e_j/\mathfrak{p}_{\mathfrak{m}}e_j \cong Ae_j/\mathfrak{p}e_j,$$
and these are also isomorphic to $V$ and $V_{\mathfrak{m}}$.
\end{proof}

Note that an alternative proof of (\ref{V cong V_m}), namely $A/\mathfrak{p} \cong V^{\oplus |Q_0|}$, is immediate from Proposition \ref{strange}.

\begin{Theorem} \label{projective dimensions} 
Suppose a quiver algebra $A=kQ/I$ admits an impression $(\tau,B)$, where $B$ is an integral domain and (\ref{ei}) holds.  Let $V$ be a large $A$-module, and set $\mathfrak{p} := \operatorname{ann}_AV$, $\mathfrak{m} := \mathfrak{p} \cap Z \in \operatorname{Max}Z$.  Then
\begin{equation} \label{equalities}
\operatorname{pd}_A(V) = \operatorname{pd}_{A_{\mathfrak{m}}}(V_{\mathfrak{m}}) = \operatorname{pd}_{A}(A/\mathfrak{p})=\operatorname{pd}_{A_{\mathfrak{m}}}(A_{\mathfrak{m}}/\mathfrak{p}_{\mathfrak{m}}) = \operatorname{pd}_{Z_{\mathfrak{m}}}\left(Z_{\mathfrak{m}}/\mathfrak{m}_{\mathfrak{m}} \right).
\end{equation}
\end{Theorem}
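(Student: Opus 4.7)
The plan is to establish the chain of five equalities by splitting it into three reductions.

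First, Proposition \ref{strange} gives an $A$-module isomorphism $A/\mathfrak{p} \cong V^{\oplus |Q_0|}$ and an $A_{\mathfrak{m}}$-module isomorphism $A_{\mathfrak{m}}/\mathfrak{p}_{\mathfrak{m}} \cong V_{\mathfrak{m}}^{\oplus |Q_0|}$, so the equalities $\operatorname{pd}_A(V) = \operatorname{pd}_A(A/\mathfrak{p})$ and $\operatorname{pd}_{A_{\mathfrak{m}}}(V_{\mathfrak{m}}) = \operatorname{pd}_{A_{\mathfrak{m}}}(A_{\mathfrak{m}}/\mathfrak{p}_{\mathfrak{m}})$ are immediate, since projective dimension is unchanged under taking finite direct sums of a single module. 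It thus remains to prove $\operatorname{pd}_A(V) = \operatorname{pd}_{A_{\mathfrak{m}}}(V_{\mathfrak{m}})$ and $\operatorname{pd}_{A_{\mathfrak{m}}}(A_{\mathfrak{m}}/\mathfrak{p}_{\mathfrak{m}}) = \operatorname{pd}_{Z_{\mathfrak{m}}}(Z_{\mathfrak{m}}/\mathfrak{m}_{\mathfrak{m}})$.

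For the first of these, I would compute projective dimension via the $\operatorname{Ext}$ characterization. By Theorem \ref{center}, $A$ is noetherian, so $V$ is finitely presented and localization at $\mathfrak{m}$ commutes with $\operatorname{Ext}^n_A(V, -)$. Since $V$ is annihilated by $\mathfrak{m}$ and $Z$ is central, each $\operatorname{Ext}^n_A(V, N)$ is also annihilated by $\mathfrak{m}$, hence equals its localization at $\mathfrak{m}$ as a $Z$-module; this gives $\operatorname{Ext}^n_A(V, N) = \operatorname{Ext}^n_{A_{\mathfrak{m}}}(V_{\mathfrak{m}}, N_{\mathfrak{m}})$. Since every $A_{\mathfrak{m}}$-module $N'$ arises as $N_{\mathfrak{m}}$ (indeed $N_{\mathfrak{m}} = N'$ when $N = N'$ is restricted through $A \to A_{\mathfrak{m}}$, as elements of $Z \setminus \mathfrak{m}$ already act invertibly on $N'$), the suprema
$$\operatorname{pd}_A(V) = \sup\{n : \operatorname{Ext}^n_A(V, -) \neq 0\} \quad \text{and} \quad \operatorname{pd}_{A_{\mathfrak{m}}}(V_{\mathfrak{m}}) = \sup\{n : \operatorname{Ext}^n_{A_{\mathfrak{m}}}(V_{\mathfrak{m}}, -) \neq 0\}$$
coincide.

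For the second, I would invoke that $\mathfrak{m}$ lies in the Azumaya locus (Lemma \ref{Azumaya locus of square algebras}), so $A_{\mathfrak{m}}$ is Azumaya over $Z_{\mathfrak{m}}$; in particular $A_{\mathfrak{m}}$ is a finitely generated projective (hence flat) $Z_{\mathfrak{m}}$-module, and $\mathfrak{p}_{\mathfrak{m}} = \mathfrak{m}_{\mathfrak{m}} A_{\mathfrak{m}}$. Tensoring a projective $Z_{\mathfrak{m}}$-resolution of $Z_{\mathfrak{m}}/\mathfrak{m}_{\mathfrak{m}}$ with the flat module $A_{\mathfrak{m}}$ produces a projective $A_{\mathfrak{m}}$-resolution of $A_{\mathfrak{m}}/\mathfrak{p}_{\mathfrak{m}} = A_{\mathfrak{m}} \otimes_{Z_{\mathfrak{m}}} (Z_{\mathfrak{m}}/\mathfrak{m}_{\mathfrak{m}})$, giving one inequality. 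For the reverse, any projective $A_{\mathfrak{m}}$-resolution of $A_{\mathfrak{m}}/\mathfrak{p}_{\mathfrak{m}}$ restricts to a projective $Z_{\mathfrak{m}}$-resolution (an $A_{\mathfrak{m}}$-direct summand of $A_{\mathfrak{m}}^n$ is a $Z_{\mathfrak{m}}$-direct summand of a $Z_{\mathfrak{m}}$-projective), and $A_{\mathfrak{m}}/\mathfrak{p}_{\mathfrak{m}}$ is a finite direct sum of copies of $Z_{\mathfrak{m}}/\mathfrak{m}_{\mathfrak{m}}$ as a $Z_{\mathfrak{m}}$-module, so its projective dimension over $Z_{\mathfrak{m}}$ equals that of $Z_{\mathfrak{m}}/\mathfrak{m}_{\mathfrak{m}}$.

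The main obstacle I anticipate is the first comparison: rigorously justifying that projective dimension can be computed using only $A_{\mathfrak{m}}$-modules requires both finite presentation of $V$ (to commute $\operatorname{Ext}$ with localization) and the observation that $\operatorname{Ext}^n_A(V, N)$ is $\mathfrak{m}$-torsion, a feature specific to modules annihilated by a maximal ideal of the center. The Azumaya step, by contrast, is essentially standard homological bookkeeping once one has the Azumaya property in hand.
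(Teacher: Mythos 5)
Your proposal is correct, but it follows a genuinely different route from the paper in two of the three comparisons, so a comparison is worthwhile. The reduction $\operatorname{pd}_A(V)=\operatorname{pd}_A(A/\mathfrak{p})$ and $\operatorname{pd}_{A_{\mathfrak{m}}}(V_{\mathfrak{m}})=\operatorname{pd}_{A_{\mathfrak{m}}}(A_{\mathfrak{m}}/\mathfrak{p}_{\mathfrak{m}})$ is the same as the paper's (its step (iii)), except that you should cite Lemma \ref{A/p cong V} rather than Proposition \ref{strange}: the latter carries the extra hypothesis $I\subset kQ_{\geq 2}$, which Theorem \ref{projective dimensions} does not assume, while Lemma \ref{A/p cong V} gives exactly the isomorphisms $A/\mathfrak{p}\cong V^{\oplus d}\cong A_{\mathfrak{m}}/\mathfrak{p}_{\mathfrak{m}}$ under the stated hypotheses. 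For $\operatorname{pd}_{A_{\mathfrak{m}}}(A_{\mathfrak{m}}/\mathfrak{p}_{\mathfrak{m}})\leq\operatorname{pd}_{Z_{\mathfrak{m}}}(Z_{\mathfrak{m}}/\mathfrak{m}_{\mathfrak{m}})$ you and the paper do essentially the same thing: tensor a $Z_{\mathfrak{m}}$-resolution of the residue field with a flat $Z_{\mathfrak{m}}$-module ($A_{\mathfrak{m}}$ for you, $A_{\mathfrak{m}}e$ in the paper) and use $\mathfrak{p}_{\mathfrak{m}}=\mathfrak{m}A_{\mathfrak{m}}$. For the reverse inequality the paper applies the corner idempotent $e$ to a projective $A_{\mathfrak{m}}$-resolution of $V_{\mathfrak{m}}$ and uses the impression-specific identifications $eA_{\mathfrak{m}}e\cong Z_{\mathfrak{m}}$ and $eV_{\mathfrak{m}}\cong Z_{\mathfrak{m}}/\mathfrak{m}_{\mathfrak{m}}$, whereas you restrict scalars along $Z_{\mathfrak{m}}\rightarrow A_{\mathfrak{m}}$, using only that $A_{\mathfrak{m}}$ is $Z_{\mathfrak{m}}$-projective (Azumaya) and that $A_{\mathfrak{m}}/\mathfrak{p}_{\mathfrak{m}}$ is a finite direct sum of copies of the residue field over $Z_{\mathfrak{m}}$; your argument is more standard and does not rely on the quiver structure. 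Finally, for $\operatorname{pd}_A(V)=\operatorname{pd}_{A_{\mathfrak{m}}}(V_{\mathfrak{m}})$ the paper simply localizes a projective resolution, which as written only yields $\operatorname{pd}_{A_{\mathfrak{m}}}(V_{\mathfrak{m}})\leq\operatorname{pd}_A(V)$; your Ext argument (Ext commutes with central localization for the finitely generated module $V$ over the noetherian ring $A$, each $\operatorname{Ext}^n_A(V,N)$ is killed by $\mathfrak{m}$ and so equals its localization, and every $A_{\mathfrak{m}}$-module is of the form $N_{\mathfrak{m}}$) delivers both inequalities at once, so on this point your route is in fact more complete than the paper's.
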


\begin{proof} 
(i) We first claim $\operatorname{pd}_{A_{\mathfrak{m}}}(A_{\mathfrak{m}}/\mathfrak{p}_{\mathfrak{m}}) \leq \operatorname{pd}_{Z_{\mathfrak{m}}}\left(Z_{\mathfrak{m}}/\mathfrak{m}_{\mathfrak{m}}\right)$.  Consider a projective resolution of the residue field $Z_{\mathfrak{m}}/\mathfrak{m}_{\mathfrak{m}}$ over the local ring $Z_{\mathfrak{m}}$,
\begin{equation} \label{sequence}
\cdots \longrightarrow \left( Z_{\mathfrak{m}} \right)^{\oplus n} \longrightarrow Z_{\mathfrak{m}} \stackrel{\cdot 1}{\longrightarrow} Z_{\mathfrak{m}}/\mathfrak{m}_{\mathfrak{m}} \longrightarrow 0.
\end{equation}
By Lemma \ref{Azumaya locus of square algebras}, $\mathfrak{m}$ is in the Azumaya locus of $A$, so the localization $A_{\mathfrak{m}}$ is an Azumaya algebra, and thus (by definition) $A_{\mathfrak{m}}$, and hence the direct summand $A_{\mathfrak{m}}e$ for any $e$ in a basic set of idempotents for $A_{\mathfrak{m}}$, is a free $Z_{\mathfrak{m}}$-module \cite[13.7.6]{MR}.  But then $A_{\mathfrak{m}}e$ is a flat $Z_{\mathfrak{m}}$-module as well, so the functor $ - \otimes_{Z_{\mathfrak{m}}}A_{\mathfrak{m}}e$ is exact.  Applying this functor to the resolution (\ref{sequence}) we obtain the exact sequence
\begin{equation} \label{sequence2}
\begin{array}{l}
\cdots \longrightarrow \left( Z_{\mathfrak{m}} \right)^{\oplus n} \otimes_{Z_{\mathfrak{m}}} A_{\mathfrak{m}}e \cong \left( A_{\mathfrak{m}}e \right)^{\oplus n} \longrightarrow Z_{\mathfrak{m}} \otimes_{Z_{\mathfrak{m}}} A_{\mathfrak{m}}e \cong A_{\mathfrak{m}}e \\
\stackrel{\cdot 1}{\longrightarrow} Z_{\mathfrak{m}}/\mathfrak{m}_{\mathfrak{m}} \otimes_{Z_{\mathfrak{m}}} A_{\mathfrak{m}}e \longrightarrow 0.
\end{array}
\end{equation}
The modules in this sequence are now over $Z_{\mathfrak{m}} \otimes_{Z_{\mathfrak{m}}} A_{\mathfrak{m}} \cong A_{\mathfrak{m}}$.  By \cite[13.7.9]{MR}, the ideal $\mathfrak{p}_{\mathfrak{m}} \subset A_{\mathfrak{m}}$ is generated by $\mathfrak{m}$, that is, $\mathfrak{p}_{\mathfrak{m}} = \mathfrak{m}A_{\mathfrak{m}}$, and so 
$$Z_{\mathfrak{m}}/\mathfrak{m}_{\mathfrak{m}} \otimes_{Z_{\mathfrak{m}}} A_{\mathfrak{m}}e \cong Z_{\mathfrak{m}} \otimes_{Z_{\mathfrak{m}}} A_{\mathfrak{m}}e/(\mathfrak{m}_{\mathfrak{m}}(A_{\mathfrak{m}}e)) \cong A_{\mathfrak{m}}e/\mathfrak{p}_{\mathfrak{m}}e.$$
Claim (i) then follows by the exactness of (\ref{sequence2}).

(ii) We now claim $\operatorname{pd}_{Z_{\mathfrak{m}}}\left(Z_{\mathfrak{m}}/\mathfrak{m}_{\mathfrak{m}} \right) \leq \operatorname{pd}_{A_{\mathfrak{m}}}\left( V_{\mathfrak{m}} \right)$.  Consider a projective resolution of $V_{\mathfrak{m}}$ over $A_{\mathfrak{m}}$,
\begin{equation} \label{sequence2.5}
\cdots \longrightarrow P_1 \stackrel{\delta_1}{\longrightarrow} P_0 \stackrel{\delta_0}{\longrightarrow} V_{\mathfrak{m}} \longrightarrow 0.
\end{equation}
Fix a vertex $e$ and consider the sequence of $eA_{\mathfrak{m}}e$-modules, 
\begin{equation} \label{sequence3}
\cdots \longrightarrow eP_1 \stackrel{\delta_1|_{eP_1}}{\longrightarrow} eP_0 \stackrel{\delta_0|_{eP_0}}{\longrightarrow} eV_{\mathfrak{m}} \longrightarrow 0.
\end{equation}
By Theorem \ref{center}, $eAe \cong Z$ as algebras, and by Proposition \ref{(B/r)^d} and (\ref{U}), $eV \cong Z/\mathfrak{m}$ as $eAe$-modules.  Thus $eA_{\mathfrak{m}}e \cong Z_{\mathfrak{m}}$ as algebras and $eV_{\mathfrak{m}} \cong Z_{\mathfrak{m}}/\mathfrak{m}_{\mathfrak{m}}$ as $eA_{\mathfrak{m}}e$-modules.  For each $i$ we have the following inclusions:

$\bullet$ $\operatorname{ker}\left(\delta_i|_{eP_i}\right) \subseteq e \left( \operatorname{ker} \delta_i \right)$:\\
If $v \in \operatorname{ker}\left( \delta_i|_{eP_i} \right)$ then $v \in eP_i$ and $\delta_i(v) = 0$, so $v \in \operatorname{ker}\delta_i \cap eP_i = e\left( \operatorname{ker}\delta_i \right)$.

$\bullet$ $\operatorname{ker}\left(\delta_i|_{eP_i}\right) \supseteq e \left( \operatorname{ker} \delta_i \right)$:\\
If $v \in e\left( \operatorname{ker}\delta_i \right)$ then $v \in eP_i$ and $\delta_i(v+w) = 0$ for some $w \in P_i$ satisfying $ew = 0$.  But $v \in eP_i$ implies $\delta_i(v) = \delta_i(ev) = e \delta_i(v) \in e P_{i-1}$, and similarly $\delta_i(w) \not \in eP_{i-1}$, so $\delta_i(v) + \delta_i(w) = \delta_i(v+w) = 0$ implies $\delta_i(v) = 0$, and thus $v \in \operatorname{ker}\left( \delta_i|_{eP_i} \right)$.

$\bullet$ $\operatorname{im}\left(\delta_i|_{eP_i}\right) \subseteq e \left( \operatorname{im} \delta_i \right)$:\\
If $v \in \operatorname{im}\left( \delta_i|_{eP_i} \right)$ then there is some $u \in eP_i$ such that $v = \delta_i(u) = \delta_i(eu) = e \delta_i(u) \in eP_{i-1}$, so $v \in \operatorname{im}\left( \delta_i \right) \cap eP_{i-1} = e \left( \operatorname{im} \delta_i \right)$.

$\bullet$ $\operatorname{im}\left(\delta_i|_{eP_i}\right) \supseteq e \left( \operatorname{im} \delta_i \right)$:\\
If $v \in e \left( \operatorname{im} \delta_i \right)$ then $v \in eP_{i-1}$ and $v+w = \delta_i(u)$ for some $w \in P_{i-1}$ satisfying $ew = 0$ and $u \in P_i$.  But then $v = e(v+w) = e\left( \delta_i(u) \right) = \delta_i(eu)$, so $v \in \operatorname{im}\left( \delta_i|_{eP_i} \right)$.

Since (\ref{sequence2.5}) is an exact sequence, it follows that for each $i$,
$$\operatorname{ker}\left( \delta_i|_{eP_i}\right) = e \left( \operatorname{ker} \delta_i \right) = e \left( \operatorname{im} \delta_{i+1} \right) = \operatorname{im}\left( \delta_{i+1}|_{eP_{i+1}} \right),$$
so (\ref{sequence3}) is also an exact sequence, and thus (\ref{sequence3}) is a projective resolution of $Z_{\mathfrak{m}}/\mathfrak{m}_{\mathfrak{m}} \cong eV_{\mathfrak{m}}$ over $Z_{\mathfrak{m}} \cong eA_{\mathfrak{m}}e$.

(iii) For any ring $S$ and family of $S$-modules $M_i$, $\operatorname{pd}_S\left(\bigoplus_i M_i \right) = \operatorname{sup}\{ \operatorname{pd}_S(M_i) \}$ \cite[Proposition 5.1.20]{R}.  Thus by Lemma \ref{A/p cong V},  
$$\operatorname{pd}_A(A/\mathfrak{p}) = \operatorname{pd}_A(V) \ \ \text{ and } \ \ \operatorname{pd}_{A_{\mathfrak{m}}}(A_{\mathfrak{m}}/\mathfrak{p}_{\mathfrak{m}}) = \operatorname{pd}_{A_{\mathfrak{m}}}(V_{\mathfrak{m}}).$$
Consequently, by claims (i) and (ii),
$$\operatorname{pd}_{Z_{\mathfrak{m}}}\left(Z_{\mathfrak{m}}/\mathfrak{m}_{\mathfrak{m}} \right) \leq \operatorname{pd}_{A_{\mathfrak{m}}}\left( V_{\mathfrak{m}} \right) = \operatorname{pd}_{A_{\mathfrak{m}}}( A_{\mathfrak{m}}/\mathfrak{p}_{\mathfrak{m}} ) \leq \operatorname{pd}_{Z_{\mathfrak{m}}}\left(Z_{\mathfrak{m}}/\mathfrak{m}_{\mathfrak{m}}\right),$$
so $\operatorname{pd}_{Z_{\mathfrak{m}}}\left( Z_{\mathfrak{m}}/\mathfrak{m}_{\mathfrak{m}} \right) = \operatorname{pd}_{A_{\mathfrak{m}}}(A_{\mathfrak{m}}/\mathfrak{p}_{\mathfrak{m}})$.

(iv) Finally, $\operatorname{pd}_A(V)= \operatorname{pd}_{A_{\mathfrak{m}}}(V_{\mathfrak{m}})$ since exactness is preserved under localization.\footnote{This follows since $Z_{\mathfrak{m}}$ is a projective $Z_{\mathfrak{m}}$-module, and by \cite[7.4.2.iii]{MR} $\operatorname{fd}_{Z}(Z_{\mathfrak{m}})= \operatorname{fd}_{Z_{\mathfrak{m}}}(Z_{\mathfrak{m}})=0$, so we may apply the exact functor $Z_{\mathfrak{m}} \otimes_{Z} -$ to a projective resolution of $V$ over $A$, giving a projective resolution of $Z_{\mathfrak{m}} \otimes_Z V \cong Z_{\mathfrak{m}}/\mathfrak{m}_{\mathfrak{m}} \otimes_Z V = V_{\mathfrak{m}}$ over $Z_{\mathfrak{m}} \otimes_Z A = A_{\mathfrak{m}}$.}
\end{proof}

\section{Impressions of square superpotential algebras} \label{Impressions of square superpotential algebras}

\subsection{An impression} \label{An impression}

In this section we give an explicit impression for all square superpotential algebras.  To do this, we first determine an algebra monomorphism $\tau: A \rightarrow \operatorname{End}_B\left( B^{|Q_0|} \right)$, and then we show that $\bar{\tau}(e_iAe_i) = \bar{\tau}(e_jAe_j) \subset B$ for each $i,j \in Q_0$ and apply the results of section \ref{Dimension vectors and noetherian centers}.  

\begin{Notation} \rm{
Let $A = kQ/\partial W$ be a square superpotential algebra with covering quiver $\widetilde{Q}$ and projection $\pi: \widetilde{Q} \rightarrow Q$.  For brevity we will write $p \sim p'$ in place of $p = p'$ modulo $\partial W$; similarly for $p,q \in k\widetilde{Q}$, we will write $p \sim q$ whenever $\pi(p) \sim \pi(q)$.  If $p$ is a path in $kQ$ then we will refer to $p + \partial W$ as a \textit{path} in $A$ since if $p' \sim p$ then clearly $p'$ must be a path as well.
} \end{Notation}

Throughout, set $B := k\left[x_1,x_2,y_1,y_2\right]$.  Recall that the underlying graph $\widetilde{Q}^{\circ}$ of the covering quiver $\pi^{-1}(Q) = \widetilde{Q}$ of $Q$ embeds into $\mathbb{R}^2$ as a square grid with vertex set $\mathbb{Z} \times \mathbb{Z}$, and with at most one diagonal arrow in each unit square.  For each $a \in \widetilde{Q}_1$, define $\bar{\tau}(a)$ to be the monomial corresponding to the orientation of $a$ given in figure \ref{labels}.
\begin{figure}
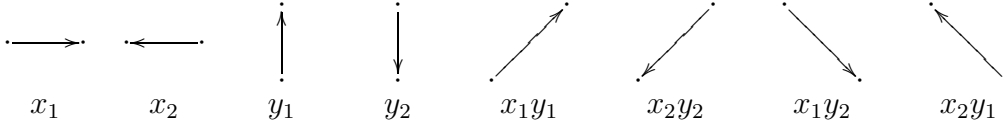

$$\begin{array}{cccccccc}
\xy 
(-5,0)*{\cdot}="1";(5,0)*{\cdot}="2";{\ar@{->}"1";"2"};
\endxy
&
\xy 
(-5,0)*{\cdot}="1";(5,0)*{\cdot}="2";{\ar@{<-}"1";"2"};
\endxy
&
\xy 
(0,-5)*{\cdot}="1";(0,5)*{\cdot}="2";{\ar@{->}"1";"2"};
\endxy
&
\xy 
(0,5)*{\cdot}="1";(0,-5)*{\cdot}="2";{\ar@{->}"1";"2"};
\endxy
&
\xy 
(-5,-5)*{\cdot}="1";(5,5)*{\cdot}="2";{\ar@{->}"1";"2"};
\endxy
&
\xy 
(-5,-5)*{\cdot}="1";(5,5)*{\cdot}="2";{\ar@{<-}"1";"2"};
\endxy
&
\xy 
(-5,5)*{\cdot}="1";(5,-5)*{\cdot}="2";{\ar@{->}"1";"2"};
\endxy
&
\xy 
(-5,5)*{\cdot}="1";(5,-5)*{\cdot}="2";{\ar@{<-}"1";"2"};
\endxy
\\
\ \ \ x_1 \ \ \ & \ \ \ x_2 \ \ \ & \ \ \ y_1 \ \ \ & \ \ \ y_2 \ \ \ & \ \ \ x_1y_1 \ \ \ & \ \ \ x_2y_2 \ \ \ & \ \ \ x_1y_2 \ \ \ & \ \ \ x_2y_1 \ \ \ 
\end{array}$$
\caption{A labeling of arrows in the quiver of a square superpotential algebra that specifies an impression.}
\label{labels}
\end{figure}

For each $a \in \widetilde{Q}_1$, set $\bar{\tau}(\pi(a)) := \bar{\tau}(a)$.  Let $E_{ji}$ denote the matrix with a $1$ in the $(ji)$-th slot and zeros elsewhere.  Define the $k$-algebra homomorphism
\begin{equation} \label{tau}
\tau: A \rightarrow M_{|Q_0|}(B) \cong \operatorname{End}_B\left(B^{|Q_0|} \right)
\end{equation}
on the generating set $Q_0 \cup Q_1$, by 
$$e_i \mapsto E_{ii} \ \text{ for each } \ i \in Q_0 \ \text{ and } \ a \mapsto \bar{\tau}(a) E_{\operatorname{h}(a),\operatorname{t}(a)} \ \text{ for each } \ a \in Q_1.$$
We will show that $(\tau,B)$ is an impression of $A$.  Note that $\tau$ is well-defined since the paths satisfy the same multiplication as the matrices $E_{ij}$, that is, $A$ is isomorphic to the matrix ring
$$A \cong \left[ \begin{array}{cccc} e_1Ae_1 & e_1Ae_2 & \cdots & e_1Ae_{|Q_0|} \\ e_2Ae_1 & e_2Ae_2 & & \\ \vdots & & \ddots & \\ e_{|Q_0|}Ae_1 & & & e_{|Q_0|}Ae_{|Q_0|} \end{array} \right],$$
and the labeling of arrows given in figure \ref{labels} is preserved under $\partial W$.  Also, note that the definition of $\bar{\tau}$ given above extends to the definition of $\bar{\tau}$ as a $k$-linear map given in notation \ref{notation}.

For the following lemma, denote a path $p \in kQe_i$ by its ordered monomial labeling in the non-commuting variables $\mathsf{x}_1,\mathsf{x}_2,\mathsf{y}_1,\mathsf{y}_2$.  If a subword $\mathsf{x}_{\alpha}\mathsf{y}_{\beta}$ corresponds to a diagonal arrow, then set $\mathsf{x}_{\alpha}\mathsf{y}_{\beta} = \mathsf{y}_{\beta}\mathsf{x}_{\alpha}$.  The proof is given in Appendix A.

\begin{Lemma} \label{appendix}
Consider a path $p=\mathsf{t}_n\cdots \mathsf{t}_2 \mathsf{t}_1 \mathsf{u} \mathsf{s}_{m} \cdots \mathsf{s}_1$
with $\mathsf{s}_{\ell}, \mathsf{t}_{\ell}, \mathsf{u} \in \{\mathsf{x}_1,\mathsf{x}_2,\mathsf{y}_1,\mathsf{y}_2 \}$.
Suppose there exists an arrow $a \not = \mathsf{u}$ whose head (resp.\ tail) is a vertex subpath of $\mathsf{t}_n \cdots \mathsf{t}_1$ (resp.\ $\mathsf{s}_{m} \cdots \mathsf{s}_1$) such that $u \mid \bar{\tau}(a)$ and $\bar{\tau}(a) \mid \bar{\tau}(p)$.
Then
\begin{equation} \label{t1t2}
p \sim \mathsf{t}_n \cdots \mathsf{t}_2(\mathsf{u} \mathsf{t}_1)\mathsf{s}_m \cdots \mathsf{s}_1 \ \ \text{ or } \ \ p \sim \mathsf{t}_n \cdots \mathsf{t}_3(\mathsf{u} \mathsf{t}_i \mathsf{t}_j)\mathsf{s}_m \cdots \mathsf{s}_1
\end{equation}
$$\left(\text{resp.\ } \ \ p \sim \mathsf{t}_n \cdots \mathsf{t}_1(\mathsf{s}_m \mathsf{u})\mathsf{s}_{m-1} \cdots \mathsf{s}_1 \ \ \text{ or } \ \ p \sim \mathsf{t}_n \cdots \mathsf{t}_1(\mathsf{s}_i \mathsf{s}_j \mathsf{u}) \mathsf{s}_{m-2} \cdots \mathsf{s}_1 \right),$$
where $i,j \in \{1,2\}$ (resp.\ $i,j \in \{m,m-1\}$) are distinct.
\end{Lemma}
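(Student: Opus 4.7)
The plan is to work in the covering quiver $\widetilde{Q}$, where the relations from $\partial W$ become \emph{local commutator moves}: each relation identifies two paths with the same pair of endpoints that together bound a unit square or pair of triangles in the grid. The proof is then essentially a geometric argument about rearranging a path inside a small rectangular region of $\widetilde{Q}$.

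First I would lift $p$ to a path $\tilde p$ in $\widetilde Q$ starting at a chosen preimage of $\operatorname{t}(\mathsf{s}_1)$, and lift $a$ to the unique arrow $\tilde a$ of $\widetilde Q$ whose head is the prescribed vertex of the lifted $\mathsf{t}_n \cdots \mathsf{t}_1$ segment. Because the image of $\bar Q$ is a square grid with at most one diagonal per unit square, each letter $\mathsf{x}_\alpha,\mathsf{y}_\beta$ records a unit step in $\mathbb{Z}^2$, so the two divisibility hypotheses $u \mid \bar\tau(a)$ and $\bar\tau(a) \mid \bar\tau(p)$ translate into the statement: the displacement vector of $\tilde a$ is the sum of the displacement vectors of some subset of the arrows $\{\tilde{\mathsf{t}}_1,\tilde{\mathsf{u}},\tilde{\mathsf{s}}_m,\ldots\}$ that contains $\tilde{\mathsf{u}}$. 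In particular, the tail of $\tilde a$ lies on $\tilde p$ just before or at $\tilde{\mathsf{u}}$, and the region enclosed between $\tilde a$ and the corresponding subpath of $\tilde p$ is either empty (when $\tilde a$ coincides with one of the arrows of $\tilde p$), a single triangle, or a single unit square.

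The bulk of the argument is then the local analysis of what happens near $\tilde{\mathsf{u}}\tilde{\mathsf{t}}_1$ using the building blocks of Figure~\ref{squares} and the labeling of Figure~\ref{labels}. The cases split naturally into: (i) $\mathsf{u}$ and $\mathsf{t}_1$ span two sides of a unit square with \emph{no} diagonal (or form the two non-diagonal sides of a triangle with diagonal), so that $\partial_{\mathsf{u}} W$ or $\partial_{\mathsf{t}_1} W$ yields $\mathsf{t}_1 \mathsf{u} \sim \mathsf{u} \mathsf{t}_1$ --- producing the first conclusion of (\ref{t1t2}); and (ii) the swap at position $1$ is blocked by a diagonal arrow pointing into the head of $\mathsf{t}_1$, in which case one must first apply a relation across the adjacent square involving $\mathsf{t}_2$, thereby passing $\mathsf{u}$ two steps to the left and possibly permuting $\mathsf{t}_1$ and $\mathsf{t}_2$, yielding the second conclusion $\mathsf{u}\mathsf{t}_i\mathsf{t}_j$ with $\{i,j\}=\{1,2\}$. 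The existence of the arrow $a$ with head on $\mathsf{t}_n\cdots\mathsf{t}_1$ and $u \mid \bar\tau(a)$ is precisely what guarantees that one of these two configurations is realized.

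The main obstacle is the bookkeeping in case (ii): when a diagonal arrow intervenes, one cannot simply transpose $\mathsf{u}$ and $\mathsf{t}_1$ using a single relation, because the relevant unit cycle is a triangle rather than a square, and the triangle relation must first be used to swap $\mathsf{t}_1$ with $\mathsf{t}_2$ (via the companion square on the opposite side of the diagonal). I would handle this by enumerating the six building blocks of Figure~\ref{squares} and, in each, verifying directly that the letter $\mathsf{u}$ can be pushed one or two positions to the left modulo $\partial W$; compatibility with the convention $\mathsf{x}_\alpha\mathsf{y}_\beta = \mathsf{y}_\beta \mathsf{x}_\alpha$ for diagonal two-letter words is exactly what makes the final expression well-defined as a path in~$A$.
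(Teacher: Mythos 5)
Your reduction to a purely local analysis around $\mathsf{u}\mathsf{t}_1$ has a genuine gap. The hypotheses $u \mid \bar{\tau}(a)$ and $\bar{\tau}(a) \mid \bar{\tau}(p)$ are divisibility statements about commutative monomials in $B$, so they carry no positional information about the word $p$: they do not force the tail of the lifted arrow $\tilde{a}$ to lie on $\tilde{p}$ adjacent to $\tilde{\mathsf{u}}$, nor that the region between $\tilde{a}$ and a subpath of $\tilde{p}$ is a single triangle or unit square (indeed $a$ need not be an arrow of $p$ at all; only its head is required to be a vertex of the $\mathsf{t}$-segment, possibly far from $\mathsf{u}$). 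Consequently your central claim --- that the existence of $a$ ``guarantees that one of these two configurations is realized'' near $\mathsf{u}\mathsf{t}_1$ --- is exactly the point that needs proof, and it is false as a purely local statement.

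The paper's proof shows where the difficulty sits. The routine cases are indeed dispatched by local moves of the type you describe (the relation $\mathsf{y}_{\alpha}\mathsf{x}_{\beta}\mathsf{y}_{\gamma} \sim \mathsf{y}_{\gamma}\mathsf{x}_{\beta}\mathsf{y}_{\alpha}$ of (\ref{diagonal}) together with the building blocks of figure \ref{squares}), but the case $\mathsf{t}_1 = \mathsf{x}_{\alpha}$, $\mathsf{t}_2 = \mathsf{x}_{\alpha}$ leads to the staircase configuration (\ref{configuration}), in which $p \sim \mathsf{x}_{\alpha}^{n}\mathsf{y}_1\mathsf{s}_m\cdots\mathsf{s}_1$ winds along diagonal building blocks and no relation pushes $\mathsf{y}_1$ past $\mathsf{t}_1$ or $\mathsf{t}_2\mathsf{t}_1$; a one- or two-step local push simply does not exist there. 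The arrow $a$ is used at that point in a global way: from the configuration one reads off $\bar{\tau}(a) = x_{\gamma}y_1$ with $\gamma \neq \alpha$, so $\bar{\tau}(a) \mid \bar{\tau}(p)$ forces a letter $\mathsf{x}_{\gamma}$ somewhere on the $\mathsf{s}$-side of $p$; that letter is then commuted leftward until it sits next to $\mathsf{u}$, and a separate planarity argument shows that $\mathsf{y}_1$ and $\mathsf{x}_{\gamma}$ cannot both be trapped in staircase configurations. Your proposal contains no substitute for this step (and the paper's remark that the lemma fails without the hypothesis on $a$ shows some such non-local use of $a$ is unavoidable), so the argument as written does not go through.
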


Referring to the proof of Lemma \ref{appendix}, we remark that when $\mathsf{u} = \mathsf{y}_1$ and $\mathsf{t}_2 \mathsf{t}_1 = \mathsf{x}_{\alpha} \mathsf{y}_2$ is a diagonal arrow, we have $\mathsf{t}_2 \mathsf{t}_1 \mathsf{u} \sim \mathsf{u} \mathsf{t}_2 \mathsf{t}_1$, whereas when $\mathsf{u} = \mathsf{y}_1$ is a vertical arrow and $\mathsf{t}_2 \mathsf{t}_1 = \mathsf{y}_2 \mathsf{x}_{\alpha}$, we have $\mathsf{t}_2 \mathsf{t}_1 \mathsf{u} \sim \mathsf{u} \mathsf{t}_1 \mathsf{t}_2$.  
These two cases illustrate how the order of $\mathsf{t}_1$ and $\mathsf{t}_2$ in (\ref{t1t2}) may depend on the path $p$.
We also remark that the lemma will fail in general without the assumption on the existence of the arrow $a$.

Any square superpotential algebra $A$ admits a $\mathbb{Z}$-grading determined by $\tau$: the horizontal and vertical arrows (the first four arrows in figure \ref{labels}) have degree 1 while the diagonal arrows (the latter four arrows in figure \ref{labels}) have degree 2.  Clearly if $p$ and $p'$ are two paths and $p \sim p'$ then $p$ and $p'$ have the same degree.  The following two lemmas will be proved by induction on degree.

\begin{Lemma} \label{injective}
If $p$ and $p'$ are two paths in $Q$ with the same tail such that $\bar{\tau}(p) = \bar{\tau}(p')$, then $p \sim p'$.  Consequently the map $\tau$ in (\ref{tau}) is an algebra monomorphism.
\end{Lemma}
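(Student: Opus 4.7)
The plan is to prove the main claim by induction on the common degree $n := \deg(p) = \deg(p')$ (these agree because $\bar\tau(p) = \bar\tau(p')$ and the $\mathbb{Z}$-grading on $A$ is read off the total monomial degree under $\bar\tau$). The consequent that $\tau$ is a monomorphism will then follow quickly: any element of $\ker\tau$ decomposes by corner rings $e_j A e_i$, and inside each such corner one expands in the spanning set of paths and groups summands with equal $\bar\tau$-value; linear independence of distinct monomials in $B$ forces each such group to sum to zero, and by the main claim every path within a group is equivalent to a single representative modulo $\partial W$, so the element was zero to begin with.

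For the base cases $n\in\{0,1\}$ the claim is immediate: either both paths are vertices (hence equal, since they share a tail), or both are single arrows, and the labeling prescribed in Figure \ref{labels} together with the grid embedding forces the arrow out of the common tail with a given label to be unique.

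For the inductive step I write $p = b_n\cdots b_1$ and $p' = b'_m\cdots b'_1$ with $\operatorname{t}(b_1) = \operatorname{t}(b'_1)$. If $b_1 = b'_1$ I factor it off: the subpaths $b_n\cdots b_2$ and $b'_m\cdots b'_2$ share the tail $\operatorname{h}(b_1)$ and have equal $\bar\tau$-images, so the induction hypothesis yields $b_n\cdots b_2 \sim b'_m\cdots b'_2$ and hence $p \sim p'$. Otherwise $b_1 \neq b'_1$; the strategy is to use Lemma \ref{appendix} repeatedly to ``slide'' a representative of $b_1$ all the way to the tail of $p'$. Concretely, let $\mathsf{u}$ denote a letter dividing $\bar\tau(b_1)$ (either the unique variable if $b_1$ is horizontal or vertical, or one of the two variables $\mathsf{x}_\alpha,\mathsf{y}_\beta$ if $b_1$ is diagonal). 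Since $\mathsf{u}\mid \bar\tau(p')$, a letter $\mathsf{u}$ appears somewhere in the monomial expansion of the factorization of $p'$, and the arrow $b_1$ itself sitting at the tail vertex supplies the witness arrow $a$ required by Lemma \ref{appendix}. One invocation of the lemma commutes that $\mathsf{u}$ one step closer to the tail (possibly dragging a partner letter with it to form a diagonal, per the second alternative of the lemma). Iterating produces $p'\sim b_1 q'$ for some path $q'$ starting at $\operatorname{h}(b_1)$, after which the already-handled case followed by the induction hypothesis on $b_n\cdots b_2$ versus $q'$ completes the proof.

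The main obstacle is maintaining the hypotheses of Lemma \ref{appendix} through the sliding procedure, especially when $b_1$ is diagonal. In that case I must verify two things: first, that whenever the sliding letter sits next to a differently-oriented arrow in $p'$, the grid structure still furnishes a witness arrow $a$ satisfying the divisibility and head/tail conditions of Lemma \ref{appendix}; and second, that the two constituent letters $\mathsf{x}_\alpha,\mathsf{y}_\beta$ of $\bar\tau(b_1)$ can ultimately be collected into $b_1$ itself at the tail, either simultaneously via the diagonal alternative of Lemma \ref{appendix} or sequentially followed by a $\partial W$-relation identifying the resulting L-shape with $b_1$. This is where the assumption of at most one diagonal per unit square, together with the explicit shape of the superpotential $W = \sum_{d\in\Gamma_c} d - \sum_{d'\in\Gamma_{cc}} d'$, is used in an essential way.
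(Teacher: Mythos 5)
Your proposal is essentially the paper's own argument: induction on the $\bar\tau$-degree, with Lemma \ref{appendix} applied repeatedly to slide the letter matching an extremal arrow of $p$ to the corresponding end of $p'$ (you slide toward the common tail using $b_1$ as the witness arrow, while the paper slides toward the head using the leftmost arrow of $p$ — a mirror image permitted by the ``resp.'' clauses of that lemma), followed by stripping the shared arrow and invoking the induction hypothesis, and the deduction that $\tau$ is a monomorphism by separating matrix entries and comparing monomial coefficients in $B$ is likewise the paper's, just written out more explicitly. The diagonal-arrow bookkeeping you flag as the main obstacle is precisely the step the paper compresses into its assertion that after commuting, the extremal arrow of the modified path coincides with that of $p$, so your outline sits at the same level of detail as the published proof rather than introducing a new gap.
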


\begin{proof}
If $p$ has degree 1 or 2 then clearly $p \sim p'$.  
Suppose the assertion holds for paths of degree $< n$, and that $p$ has degree $n$.  
Further suppose $u \in \{x_1,x_2,y_1,y_2\}$ divides the $\bar{\tau}$-image of the leftmost arrow subpath $a$ of $p$.  
Since $\bar{\tau}(p) = \bar{\tau}(p')$ we have $\bar{\tau}(a) \mid \bar{\tau}(p')$.
Therefore by Lemma \ref{appendix} we can `commute' the leftmost arrow subpath of $p'$ whose $\bar{\tau}$-image is divisible by $u$ to the left, to form a path $p'' \sim p'$ whose leftmost arrow coincides with the leftmost arrow of $p$, and satisfies $\bar{\tau}(p'')= \bar{\tau}(p') = \bar{\tau}(p)$.  
The proof then follows by induction.

$\tau$ is injective: Suppose $p,p' \in A$ satisfy $\tau(p) = \tau(p')$.  Then the corresponding matrix entries must be equal, so we may assume $p,p' \in e_jAe_i$ for some $i,j \in Q_0$.  In this case, $\tau(p) = \tau(p')$ is equivalent to $\bar{\tau}(p)=\bar{\tau}(p')$.  
\end{proof}

The following lemma will be essential throughout.

\begin{Lemma} \label{star}
If $p$ and $p'$ are two paths in $Q$ with the same tail such that $\bar{\tau}(p) = m \bar{\tau}(p')$ for some monomial $m \in B$, then there exists a path $q \in e_{\operatorname{h}(p)}kQe_{\operatorname{h}(p')}$ such that $\bar{\tau}(q) = m$ and $p \sim q p'$.
\end{Lemma}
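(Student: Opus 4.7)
The plan is to induct on $\deg m$, mirroring the argument of Lemma \ref{injective} but adapted to the asymmetric situation where $\bar{\tau}(p)$ carries extra monomial content beyond $\bar{\tau}(p')$. For the base case $\deg m = 0$, we have $m = 1$ and $\bar{\tau}(p) = \bar{\tau}(p')$. Lemma \ref{injective} gives $p \sim p'$, and since the common tail together with the common $\bar{\tau}$-image determines a common head in the covering grid $\widetilde{Q}$ (the displacement vector being encoded by $\bar{\tau}$), we also have $\operatorname{h}(p) = \operatorname{h}(p')$. Setting $q := e_{\operatorname{h}(p)}$ then satisfies $\bar{\tau}(q) = 1 = m$ and $qp' = p' \sim p$.

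For the inductive step with $\deg m \geq 1$, I would fix a variable $u \in \{x_1,x_2,y_1,y_2\}$ dividing $m$. Since $u \mid \bar{\tau}(p)$, some arrow subpath $a$ of $p$ satisfies $u \mid \bar{\tau}(a)$, and I would apply Lemma \ref{appendix} iteratively to commute $a$ to the leftmost position of $p$, producing $p \sim a \cdot p''$. If $a$ is horizontal or vertical, so $\bar{\tau}(a) = u$, then $\bar{\tau}(p'') = (m/u)\bar{\tau}(p')$; the inductive hypothesis applied to $(p'',p')$ with the monomial $m/u$ supplies $q'$ with $\bar{\tau}(q') = m/u$ and $p'' \sim q' p'$, and $q := aq'$ then satisfies $\bar{\tau}(q) = m$ and $p \sim qp'$. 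If $a$ is diagonal with $\bar{\tau}(a) = uv$ and $uv \mid m$, the same argument works with $m/(uv)$ in place of $m/u$. Otherwise $v \nmid m$, so $v \mid \bar{\tau}(p')$; in that subcase I would use the triangle unit cycle containing $a$ together with the relations from $\partial W$ to rewrite $a$ modulo $\partial W$ as a 2-arrow path $bc$ with $\bar{\tau}(b) = u$ and $\bar{\tau}(c) = v$. Then $p \sim b(cp'')$ with $\bar{\tau}(cp'') = (m/u)\bar{\tau}(p')$, and induction applied to $(cp'',p')$ yields $q'$ with $cp'' \sim q'p'$ and $\bar{\tau}(q') = m/u$; setting $q := bq'$ closes the case.

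The principal obstacle is the diagonal subcase with $v \nmid m$: one must verify that $\partial W$ genuinely supplies the relation $a \sim bc$ with the sides $b,c$ in the desired orientation, which rests on the axiom that each triangle with two unit-length sides forms an oriented cycle in $W$ together with a careful sign analysis of the relevant partial derivatives across both triangles sharing $a$. The iterative application of Lemma \ref{appendix} also requires that its hypotheses persist at each step, which I expect to follow from the uniform structure of the square-grid embedding and the fact that the variable $u$ continues to divide $\bar{\tau}(a)$ throughout the commutation process.
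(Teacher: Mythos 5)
Your induction on $\deg m$ breaks down at its very first move: the step ``apply Lemma \ref{appendix} iteratively to commute $a$ to the leftmost position of $p$'' is not licensed by that lemma. Lemma \ref{appendix} only lets you move the letter $\mathsf{u}$ toward the head of $p$ if there exists an \emph{auxiliary} arrow $a' \neq \mathsf{u}$ whose head is a vertex subpath of the segment to the left of $\mathsf{u}$, with $u \mid \bar{\tau}(a')$ and $\bar{\tau}(a') \mid \bar{\tau}(p)$ --- and the remark following the lemma states explicitly that it fails without this hypothesis (the configuration (\ref{configuration}) in Appendix A is precisely where an unanchored leftward commutation gets stuck). In Lemma \ref{injective} that auxiliary arrow is supplied for free: there $\bar{\tau}(p)=\bar{\tau}(p')$, the heads coincide, and the leftmost arrow of one path anchors the commutation in the other. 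In the present lemma the heads differ and $u$ divides $m$ but need not divide $\bar{\tau}(p')$, so nothing anchors your leftward move at $\operatorname{h}(p)$. This is exactly why the paper runs the induction the other way: it inducts on the length of $p'$, peeling arrows off $p'$ until $p'$ is a single arrow $b$ with $\operatorname{t}(b)=\operatorname{t}(p)$ and $\bar{\tau}(b)\mid\bar{\tau}(p)$; then $b$ itself serves as the auxiliary arrow of Lemma \ref{appendix} (on the tail side), the variable is commuted toward the common tail, and the final identification $p\sim qp'$ comes from Lemma \ref{injective}. Your hope that the hypotheses ``persist from the uniform structure of the square-grid embedding'' is precisely the unproved (and in general false) point.

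The diagonal subcase with $v \nmid m$ has a second, independent gap: you cannot rewrite a diagonal arrow $a$ with $\bar{\tau}(a)=uv$ as a two-arrow path $bc$ with $\bar{\tau}(b)=u$, $\bar{\tau}(c)=v$ modulo $\partial W$. The relations $\partial_{a'}W$ only equate the length-two complements of an arrow in the two unit cycles containing it; they never identify a single diagonal arrow with a path around its triangle. Worse, the required arrows $b,c$ need not even exist in $Q$: in the McKay example of figure \ref{1} (the $\mathbb{A}^3_k$ quiver) the diagonal has $\bar{\tau}$-image $x_1y_2$, but no arrow of the quiver has $\bar{\tau}$-image $x_1$ or $y_2$ at all. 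So even granting the commutation step, this branch of your argument cannot be completed as stated.
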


\begin{proof}
We proceed by induction.

First suppose $p'$ is an arrow, and so has degree 1 or 2, and suppose $u \in \{ x_1,x_2,y_1,y_2 \}$ divides $\bar{\tau}(p')$.  
Since $\bar{\tau}(p') \mid \bar{\tau}(p)$, Lemma \ref{appendix} (with $p' = a$) implies that we can `commute' the rightmost arrow subpath of $p$ whose $\bar{\tau}$-image is divisible by $u$ to the right, to form a path $qp' \sim p$.
Then $\bar{\tau}(q) = \bar{\tau}(p)/\bar{\tau}(p') = m$.

Now suppose the assertion holds for paths of degree $< n$, and that $p'$ has degree $n$.  
Let $p''$ be the subpath of $p'$ obtained by removing the leftmost arrow $b$ from $p'$.  
Since the degree of $p''$ is $< n$, by induction there is a path $q' \in e_{\operatorname{h}(p)}kQe_{\operatorname{h}(p'')}$ such that $\bar{\tau}(q') = \bar{\tau}(p)/\bar{\tau}(p'') = m \bar{\tau}(b)$.
Since $b$ is an arrow, its degree is also $< n$, so again by induction there is a path $q \in e_{\operatorname{h}(p)}kQe_{\operatorname{h}(b)}$ such that $\bar{\tau}(q) = \bar{\tau}(q')/\bar{\tau}(b) = m \bar{\tau}(b)/\bar{\tau}(b) = m$, proving our claim.

Finally, $p$ and $qp'$ have coincident tails and $\bar{\tau}(p) = \bar{\tau}(q) \bar{\tau}(p') = \bar{\tau}(qp')$, whence $p \sim qp'$ by Lemma \ref{injective}.
\end{proof}

\begin{Notation} \rm{
For each $i,j \in \widetilde{Q}_0$, denote by $\bar{\tau}: e_jk\widetilde{Q}e_i \rightarrow B$ the $k$-linear map defined by $\bar{\tau}(a) := \bar{\tau}(\pi(a))$.  Also, set $\sigma := x_1x_2y_1y_2$ (though later $\sigma$ will denote a cycle whose $\bar{\tau}$-image is $x_1x_2y_1y_2$).
} \end{Notation}

\begin{Lemma} \label{cycle in cover}
If $c$ is a cycle in $\widetilde{Q}$ then $\bar{\tau}(c) = \sigma^m$ for some $m \geq 0$.
\end{Lemma}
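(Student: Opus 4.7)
The plan is to interpret the statement as a homological fact about the planar CW-complex underlying $\widetilde{Q}$, combined with the direct observation that each unit cycle has $\bar{\tau}$-image equal to $\sigma$.

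First I would extend $\bar{\tau}$ to an additive homomorphism $v \colon \mathbb{Z}\widetilde{Q}_1 \to \mathbb{Z}^4$ by sending each arrow $a$ to the exponent vector of $\bar{\tau}(a)$ in the ordered basis $x_1, x_2, y_1, y_2$. Identifying the cycle $c$ with the 1-chain formed by summing its arrows with multiplicity, the exponent vector of $\bar{\tau}(c)$ equals $v(c) \in \mathbb{N}^4$, so it suffices to show $v(c) \in \mathbb{Z}(1,1,1,1)$. Directly from the labeling in figure \ref{labels} one verifies that for every unit cycle $c_F$ of $\widetilde{Q}$---a four-arrow cycle around a diagonal-free unit square or a three-arrow cycle around one of the triangles cut off by a diagonal---one has $\bar{\tau}(c_F) = x_1 x_2 y_1 y_2 = \sigma$, equivalently $v(c_F) = (1,1,1,1)$.

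Next I would invoke planarity: filling in each bounded region of $\widetilde{Q}$ as a 2-cell yields a CW-complex homeomorphic to $\mathbb{R}^2$ and hence contractible, so $H_1 = 0$. Therefore every 1-cycle lies in $\operatorname{im}(\partial_2)$, and $c$ may be written as an integer combination $c = \sum_F m_F c_F$ of oriented unit cycles, where the sign of each $m_F$ absorbs any discrepancy between the quiver orientation of $c_F$ and an auxiliary orientation of $F$ used in $\partial_2$. Applying $v$ yields $v(c) = \bigl(\sum_F m_F\bigr)(1,1,1,1) =: N(1,1,1,1)$; since the coefficients of $c$ in $C_1$ are non-negative, so are those of $v(c)$, forcing $N \geq 0$ and hence $\bar{\tau}(c) = \sigma^N$.

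The only piece of bookkeeping requiring attention is the sign convention in the decomposition $c = \sum_F m_F c_F$, since the quiver rigidly prescribes the direction of each unit cycle while $\partial_2$ refers to an arbitrarily chosen orientation of each face; once these conventions are pinned down, the entire proof collapses to the single additive identity $v(c_F) = (1,1,1,1)$ for all unit cycles $F$.
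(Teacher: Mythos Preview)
Your proof is correct and takes a genuinely different route from the paper's.

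The paper argues as follows: pick the largest $m$ with $\sigma^m \mid \bar{\tau}(c)$, then invoke Lemma~\ref{star} (which in turn rests on the technical commutation Lemma~\ref{appendix} and the injectivity Lemma~\ref{injective}) to produce a cycle $d$ in $\widetilde{Q}$ with $\bar{\tau}(d) = \bar{\tau}(c)\sigma^{-m}$; finally it observes that a nontrivial cycle in the planar grid must have $\sigma \mid \bar{\tau}(d)$, forcing $d$ to be a vertex. Your argument bypasses Lemma~\ref{star} entirely: you treat the statement as a fact about $1$-chains in the planar CW-complex underlying $\widetilde{Q}$, use $H_1(\mathbb{R}^2) = 0$ to write the cycle as an integer combination of face boundaries, and reduce everything to the single check $v(c_F) = (1,1,1,1)$ for each unit cell. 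This is more elementary and more conceptual---it never touches the relations $\partial W$ at all, since $\bar{\tau}(c)$ depends only on the multiset of arrows in $c$. The paper's route is shorter only because Lemma~\ref{star} is already in hand for other purposes; your route would allow Lemma~\ref{cycle in cover} to be proved independently of, and prior to, that machinery. The bookkeeping caveat you flag about orientations is the only point requiring care, and you have identified it correctly: once each $2$-cell is oriented so that $\partial_2 F = c_F$, the signs are absorbed into the integer coefficients and the conclusion follows from $v(c) \in \mathbb{N}^4$.
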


\begin{proof}
Suppose that $\sigma^m | \bar{\tau}(c)$ but $\sigma^{m+1} \nmid \bar{\tau}(c)$.  
By Lemma \ref{star} there is a cycle $d$ in $\widetilde{Q}$ at $\operatorname{t}(c)$ such that $\bar{\tau}(d) = \bar{\tau}(c)\sigma^{-m}$.  
But then $\sigma \nmid \bar{\tau}(d)$, and so $d$ must be the vertex $e_{\operatorname{t}(c)}$.
\end{proof}

We now prove that the labeling of arrows given in figure \ref{labels} determines an impression of any square superpotential algebra, and has the property that $\bar{\tau}(e_iAe_i) = \bar{\tau}(e_jAe_j) \subset B$ for each $i,j \in Q_0$.

\begin{Theorem} \label{square impression} 
Let $A$ be square superpotential algebra.  
Then $A$ admits an impression $(\tau,B = k\left[x_1,x_2,y_1,y_2 \right])$, where $\tau$ is given by the labeling of arrows in figure \ref{labels} and $\tau(e_i) = E_{ii}$ for each $i \in Q_0$.  
Furthermore, $\bar{\tau}(e_iAe_i) = \bar{\tau}(e_jAe_j) \subset B$ for each $i,j \in Q_0$.
\end{Theorem}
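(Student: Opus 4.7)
The plan is to check the three requirements of Definition \ref{impression} for $(\tau,B)$---that $\tau$ is an algebra monomorphism into $\operatorname{End}_B(B^{|Q_0|})$, that $\tau_{\mathfrak{q}}$ is a simple representation for $\mathfrak{q}$ in an open dense $U \subseteq \operatorname{Max}B$, and that $\phi: \operatorname{Max} B \to \operatorname{Max} R$ is surjective---together with the corner equality $\bar{\tau}(e_iAe_i) = \bar{\tau}(e_jAe_j)$. First I would verify that $\tau$ is a well-defined algebra homomorphism. The assignment $e_i \mapsto E_{ii}$, $a \mapsto \bar{\tau}(a) E_{\operatorname{h}(a),\operatorname{t}(a)}$ makes matrix multiplication match path concatenation, so I only need $\tau(\partial_a W) = 0$ for each $a \in Q_1$. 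The key observation, obtained by direct inspection of figure \ref{labels}, is that every unit cycle of $Q$---whether a diagonal-free 4-cycle or a triangle with two axis-aligned sides and a diagonal labeled $x_{\alpha} y_{\beta}$---has $\bar{\tau}$-image equal to $\sigma = x_1 x_2 y_1 y_2$. Hence the two ``complements of $a$'' appearing in $\partial_a W$, one from the clockwise and one from the counterclockwise unit cycle through $a$, share head and tail and both have $\bar{\tau}$-image $\sigma/\bar{\tau}(a)$, so $\tau$ sends them to the same matrix and $\tau(\partial_a W) = 0$. Injectivity of $\tau$ is then Lemma \ref{injective}.

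Next I would produce the open dense $U$ and prove the corner equality. Take $U := \operatorname{Max}B \setminus V(x_1 x_2 y_1 y_2)$; for $\mathfrak{q} \in U$, each of $x_1, x_2, y_1, y_2$ is nonzero modulo $\mathfrak{q}$, so every monomial $\bar{\tau}(p)$ is nonzero there and $\tau_{\mathfrak{q}}(p)$ is a nonzero scalar multiple of $E_{\operatorname{h}(p),\operatorname{t}(p)}$. The quiver $Q$ is strongly connected---every edge of the grid lies on some unit cycle, furnishing paths between its endpoints in both directions---so for every pair $(i,j)$ there is a path from $i$ to $j$, putting every $E_{ji}$ into $\tau_{\mathfrak{q}}(A)$ and thus giving $\tau_{\mathfrak{q}}(A) = M_d(k)$, a simple representation. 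For the corner equality, fix paths $p : i \to j$ and $q : j \to i$ in $A$ using strong connectedness. Since $\bar{\tau}(e_iAe_i)$ is generated as a $k$-subalgebra of $B$ by the monomials $\bar{\tau}(c)$ for cycles $c$ at $i$, it suffices to show each such monomial $\mu = \bar{\tau}(c)$ lies in $\bar{\tau}(e_jAe_j)$. The composite $pcq$ is a cycle at $j$ with $\bar{\tau}(pcq) = \mu \cdot \bar{\tau}(pq)$, while $pq$ itself is a cycle at $j$; applying Lemma \ref{star} to the pair $(pcq, pq)$ produces a cycle $c' \in e_jAe_j$ with $\bar{\tau}(c') = \mu$ and $pcq \sim c' \cdot pq$. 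Symmetry finishes the equality.

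The main obstacle is the surjectivity of $\phi$. By (\ref{U}), $R \cong Z$ is identified via $\bar{\tau}$ with the subalgebra $\bar{\tau}(e_iAe_i) \subseteq B$, which is generated by monomials; hence $Z \cong k[S]$ for a finitely generated submonoid $S \subseteq \mathbb{Z}_{\geq 0}^4$ and $\phi$ is a toric morphism of affine toric varieties. A closed point $\mathfrak{m} \in \operatorname{Max}Z$ corresponds to a monoid homomorphism $\alpha : S \to (k,\cdot)$; its vanishing locus $P := \alpha^{-1}(0) \subseteq S$ is a prime monoid ideal cutting out a face of $\operatorname{cone}(S)$. On $S \setminus P$ the map $\alpha$ takes values in $k^*$ and extends uniquely to a group homomorphism on the sublattice $\langle S \setminus P \rangle \subseteq \mathbb{Z}^4$; since $k^*$ is divisible, this further extends to a group homomorphism $\mathbb{Z}^4 \to k^*$, and evaluating on the standard basis gives nonzero values for those among $x_1, x_2, y_1, y_2$ not ``forced to vanish by $P$'', with the remaining coordinates set to zero; this produces $\mathfrak{q} \in \operatorname{Max}B$ with $\phi(\mathfrak{q}) = \mathfrak{m}$. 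The subtle step, and the heart of the surjectivity argument, is verifying that every prime ideal of $S$ is the restriction to $S$ of a coordinate prime of $\mathbb{Z}_{\geq 0}^4$---equivalently, that every face of $\operatorname{cone}(S)$ is the intersection with $\operatorname{cone}(S)$ of a coordinate face of $\mathbb{R}_{\geq 0}^4$. This uses the specific combinatorial structure of square superpotential algebras: the monomial generators of $S$ arise from cycles in $Q$, and the labeling of figure \ref{labels} together with Lemma \ref{cycle in cover} forces them to carry sufficiently balanced factors in each variable to control which coordinate faces they meet.
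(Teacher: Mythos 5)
Most of your proposal tracks the paper's own proof: the well-definedness check (every unit cycle has $\bar{\tau}$-image $\sigma = x_1x_2y_1y_2$, so the two terms of each $\partial_a W$ have equal images), injectivity via Lemma \ref{injective}, the choice $U = \{x_1x_2y_1y_2 \neq 0\}$ with strong connectedness giving simplicity of $\tau_{\mathfrak{q}}$, and the corner equality obtained by applying Lemma \ref{star} to the pair $(pcq, pq)$ are all correct and essentially the paper's argument (the paper phrases the corner step in the cover $\widetilde{Q}$, using Lemma \ref{cycle in cover} to write $\bar{\tau}(rs) = \sigma^m$ before invoking Lemma \ref{star}; your version factoring out $\bar{\tau}(pq)$ directly is an equivalent repackaging).

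The genuine gap is in the surjectivity of $\phi$, and it sits exactly where you flag it. You reduce surjectivity to the claim that every prime monoid ideal $P \subseteq S$ (equivalently, every face of $\operatorname{cone}(S)$) is cut out by a set $T \subseteq \{x_1,x_2,y_1,y_2\}$ of coordinate variables, meaning every monomial in $P$ is divisible by some variable of $T$ while no monomial of $S \setminus P$ is -- and then you do not prove this claim; ``sufficiently balanced factors'' plus a vague appeal to Lemma \ref{cycle in cover} (which concerns cycles in $\widetilde{Q}$ and says nothing about faces of $\operatorname{cone}(S)$) is not an argument. The claim cannot be waved through, because it is false for general monomial subalgebras of $B$: for $R = k[x,xy] \subset k[x,y]$ the maximal ideal $(x,\, xy-1)$ of $R$ has empty fiber, precisely because $x \in P$ yet every variable dividing $x$ also divides the nonvanishing generator $xy$. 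So something specific to square superpotential algebras must be used, and note that your extension construction itself needs the claim to be well defined: after setting the variables in $T$ to zero you must know that no monomial of $S \setminus P$ involves a variable of $T$, or the constructed point of $\operatorname{Max}B$ does not map to $\mathfrak{m}$. (You also cannot simply quote later structural results such as Proposition \ref{astronaut}, since those are proved using this theorem.) By contrast, the paper's proof avoids the lattice combinatorics entirely: for $\mathfrak{m} \in \operatorname{Max}R$ it argues that $B\mathfrak{m}$ is a proper ideal of $B$, takes $\mathfrak{q} \in \operatorname{Max}B$ containing it, and gets $\mathfrak{q} \cap R = \mathfrak{m}$ by maximality of $\mathfrak{m}$. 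If you want to keep your toric route, the face claim is the statement you must actually establish; as it stands, the heart of the surjectivity argument is missing.
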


\begin{proof} 
We first show that $(\tau,B)$ is an impression of $A$.  By Lemma \ref{injective}, $\tau: A \rightarrow \operatorname{End}_A \left( B^{|Q_0|} \right)$ is an algebra monomorphism.       

Let $\mathfrak{q}$ be any point in the dense open subset
$$U := \left\{ x_1x_2y_1y_2 \not = 0 \right\} \subset \operatorname{Max}B.$$
Then for each $\mathfrak{q} \in U$, $\tau_{\mathfrak{q}}$ is a simple representation of $A$: each arrow $a$ is represented by a nonzero scalar multiple of $E_{\operatorname{h}(a), \operatorname{t}(a)}$, and there is a path from $i$ to $j$ for each $i,j \in Q_0$.

Finally, the map $\phi: \operatorname{Max}B \rightarrow \operatorname{Max}R$, $\mathfrak{q} \mapsto \mathfrak{q} \cap R$, is surjective: for any $\mathfrak{m} \in \operatorname{Max}R$, $B\mathfrak{m}$ is a (nonzero) proper ideal of $B$ since the only units of $B$ are the scalars.  Since $B$ is noetherian there is a maximal ideal $\mathfrak{q} \in \operatorname{Max}B$ containing $B\mathfrak{m}$.  But then $\mathfrak{q} \cap R \supseteq B\mathfrak{m} \cap R = \mathfrak{m}$, and since $\mathfrak{m}$ is a maximal ideal of $R$, $\mathfrak{q} \cap R \subseteq \mathfrak{m}$, so $\mathfrak{q} \cap R = \mathfrak{m}$.

We now show that $\bar{\tau}(e_iAe_i) = \bar{\tau}(e_jAe_j) \subset B$ for each $i,j \in Q_0$.  Since the cycles in $e_jAe_j$ generate $e_jAe_j$, it suffices to consider a cycle $p \in e_jkQe_j$.  Consider paths $r$ and $s$ in $\widetilde{Q}$ from $i' \in \pi^{-1}(i)$ to $j' \in \pi^{-1}(j)$ and $j'$ to $i'$, respectively.  By Lemma \ref{cycle in cover} $\bar{\tau}(rs) = \sigma^m$ for some $m \geq 0$.  Consequently $\sigma^m | \bar{\tau}(\pi(s)p\pi(r))$, so by Lemma \ref{star} there exists a cycle $p' \in e_ikQe_i$ such that $\bar{\tau}(p') = \bar{\tau}(p)$.
\end{proof}

By algebraic variety, we mean an irreducible affine variety.

\begin{Corollary} \label{square prime} 
Both a square superpotential algebra $A$ and its center $Z$ are prime noetherian rings, $\operatorname{Max}Z$ is a toric algebraic variety, and $A$ is a finitely-generated $Z$-module.  
\end{Corollary}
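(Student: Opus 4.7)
The plan is to assemble the corollary directly from the three results that immediately precede it, with only the toric statement requiring any extra argument. First I would invoke Theorem \ref{square impression}, which produces an impression $(\tau, B=k[x_1,x_2,y_1,y_2])$ of $A$ satisfying $\tau(e_i)=E_{ii}$ and $\bar{\tau}(e_iAe_i)=\bar{\tau}(e_jAe_j)$ for all $i,j\in Q_0$; in particular condition (\ref{ei}) holds. Applying Theorem \ref{center} (Theorem B) then gives at once that $A$ and $Z$ are noetherian rings and that $A$ is a finitely-generated $Z$-module, together with the explicit description
\[
Z = k\Bigl[\sum_{i\in Q_0}\gamma_i \in \bigoplus_{i\in Q_0} e_iAe_i \ \Big| \ \bar{\tau}(\gamma_i)=\bar{\tau}(\gamma_j)\text{ for all }i,j\Bigr] \ \cong \ e_iAe_i.
\]
Since $B=k[x_1,x_2,y_1,y_2]$ is a (prime) integral domain, Lemma \ref{prime} yields that both $A$ and $Z$ are prime rings.

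It remains to show that $\operatorname{Max}Z$ is a toric algebraic variety, by which the author means an irreducible affine toric variety. Irreducibility is free: $Z$ is prime, i.e.\ a domain, so $\operatorname{Max}Z=\operatorname{Max}Z/\!\sqrt{0}$ is irreducible. For the toric structure, the key observation is that by the definition of $\tau$ in figure \ref{labels}, every arrow of $Q$ is sent to a scalar multiple of $E_{\operatorname{h}(a),\operatorname{t}(a)}$ whose coefficient is a \emph{monomial} in $x_1,x_2,y_1,y_2$; consequently $\bar{\tau}(c)$ is a monomial for every cycle $c$ in $Q$. Since $e_iAe_i$ is generated as a $k$-algebra by cycles at $i$ (Lemma \ref{without cyclic proper subpaths}) and $\bar{\tau}:e_iAe_i\hookrightarrow B$ is an algebra monomorphism (Lemma \ref{bartau}), the image $\bar{\tau}(e_iAe_i)\subset B$ is a $k$-subalgebra generated by monomials. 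Such a monomial subalgebra of $k[x_1,x_2,y_1,y_2]$ is, by definition, the semigroup algebra $k[S]$ of the submonoid $S\subset \mathbb{Z}_{\geq 0}^{4}$ spanned by the exponent vectors of those cycle-monomials, and therefore its spectrum is an affine toric variety. Combining this with the isomorphism $Z\cong e_iAe_i$ from Theorem \ref{center} gives that $\operatorname{Max}Z$ is an affine toric variety, as required.

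The only real content beyond citation is the toric step, so that is the place to be careful. The nontrivial point is that every generator of $e_iAe_i$ maps to a monomial under $\bar{\tau}$, and this is exactly guaranteed by the arrow labeling specifying the impression. Irreducibility, primeness, noetherianity, and module-finiteness are then each one-line consequences of the previously established machinery, so I anticipate no obstacle beyond checking that the monomial description of $\bar{\tau}(e_iAe_i)$ is correctly read off from figure \ref{labels}.
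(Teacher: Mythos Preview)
Your proposal is correct and follows essentially the same route as the paper's proof: invoke Theorem \ref{square impression} to obtain the impression, apply Theorem \ref{center} for noetherianity and module-finiteness, use Lemma \ref{prime} for primeness, and observe that $\bar{\tau}(e_iAe_i)\cong Z$ is a monomial subalgebra of $B$ to conclude $\operatorname{Max}Z$ is toric. If anything, you are slightly more explicit than the paper about the irreducibility and the use of Lemma \ref{without cyclic proper subpaths}, but the argument is the same.
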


\begin{proof} 
By Theorems \ref{center} and \ref{square impression}, $A$ and $Z$ are noetherian and $A$ is module-finite over $Z$.  By Lemma \ref{prime}, $A$ and $Z$ are prime since $B =k[x_1,x_2,y_1,y_2]$ is prime.

We now show $Z$ is the coordinate ring for a toric algebraic variety: for each $i \in Q_0$, $e_iAe_i$ is generated by cycles, and the $\bar{\tau}$-image of a cycle is a monomial in $B$, so $\bar{\tau}(e_iAe_i) \subset B$ is generated by monomials in the polynomial ring $B$.  By Lemma \ref{bartau} and Theorem \ref{center}, $Z \cong e_iAe_i \cong \bar{\tau}(e_iAe_i)$.  $Z$ is therefore prime, noetherian, and isomorphic to a subalgebra of a polynomial ring generated by monomials.
\end{proof}

\section{3-dimensional normal Gorenstein centers} \label{Gorenstein centers}

Throughout this section $A = kQ/\partial W$ denotes a square superpotential algebra, $Z$ denotes its center, and $\widetilde{Q}$ denotes the covering quiver with projection $\pi: \widetilde{Q} \rightarrow Q$.  Recall that $Z$ is noetherian by Corollary \ref{square prime}.  

Recall that a vertex simple is a simple module in which every path, with the exception of a single vertex, is represented by zero.  
The $Z$-annihilators of the vertex simple $A$-modules are all equal, and we call this maximal ideal $\mathfrak{m}$ the \textit{origin} of $\operatorname{Max}Z$.  
We will show that $Z$ is a 3-dimensional normal domain, and that the localization $Z_{\mathfrak{m}}$ is Gorenstein.

\subsection{Transcendence basis}

In this section we show that the Krull dimension of the center of a square superpotential algebra is 3.  

\begin{Lemma} \label{h}
If $p$ and $p'$ are two paths in $\widetilde{Q}$ with the same tail such that $p \sim p'$, then $\operatorname{h}(p) = \operatorname{h}(p')$.
\end{Lemma}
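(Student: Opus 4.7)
The plan is to reduce the assertion to a displacement calculation in the planar embedding of $\widetilde{Q}$.  First I would record the following soft fact: by definition of $\sim$ for elements of $k\widetilde{Q}$, the assumption $p \sim p'$ means $\pi(p)=\pi(p')$ in $A=kQ/\partial W$.  Applying $\tau$ yields $\tau(\pi(p))=\tau(\pi(p'))$, and since $p$ and $p'$ share a tail (so $\pi(p)$ and $\pi(p')$ share a tail vertex in $Q$), this collapses to the single coordinate equality $\bar{\tau}(p)=\bar{\tau}(\pi(p))=\bar{\tau}(\pi(p'))=\bar{\tau}(p')$ in $B$.  Thus it suffices to show that, for a path $p$ in $\widetilde{Q}$, the head $\operatorname{h}(p)\in\mathbb{Z}^2$ is determined by $\operatorname{t}(p)$ together with $\bar{\tau}(p)$.

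The key observation is that the labeling in figure \ref{labels} was set up precisely so that the exponent vector of $\bar{\tau}(a)$ records the displacement of $a$ in the planar embedding.  Concretely, I would check by inspecting the eight cases of figure \ref{labels} that if $\bar{\tau}(a)=x_1^{a_1}x_2^{a_2}y_1^{b_1}y_2^{b_2}$ for an arrow $a\in\widetilde{Q}_1$, then
$$\operatorname{h}(a)-\operatorname{t}(a)=(a_1-a_2,\,b_1-b_2)\in\mathbb{Z}^2.$$
The horizontal cases give $(\pm1,0)$, the vertical cases give $(0,\pm1)$, and the four diagonal cases give the four vectors $(\pm1,\pm1)$, all in agreement with the monomial assigned.

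Next I would extend this additively to paths.  For $p=c_n\cdots c_1$ with $c_i\in\widetilde{Q}_1$, the displacement telescopes as $\operatorname{h}(p)-\operatorname{t}(p)=\sum_{i=1}^n(\operatorname{h}(c_i)-\operatorname{t}(c_i))$, while the exponent vector of $\bar{\tau}(p)=\prod_i\bar{\tau}(c_i)$ is the sum of the exponent vectors of the $\bar{\tau}(c_i)$.  Combining with the per-arrow identity, the exponent vector of $\bar{\tau}(p)$ equals $\operatorname{h}(p)-\operatorname{t}(p)$, with the length-zero case $\bar{\tau}(e_i)=1$ and zero displacement providing the trivial base.  Therefore $\operatorname{t}(p)=\operatorname{t}(p')$ and $\bar{\tau}(p)=\bar{\tau}(p')$ force $\operatorname{h}(p)=\operatorname{h}(p')$, completing the argument.

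There is no real obstacle here; the lemma is essentially a bookkeeping exercise verifying that the choice of monomial labels in figure \ref{labels} is compatible with the lattice translation structure on $\widetilde{Q}_0=\mathbb{Z}^2$.  The only care needed is to cite Lemma \ref{injective} (or the well-definedness of $\tau$ on $A$) for the implication $p\sim p'\Rightarrow \bar{\tau}(p)=\bar{\tau}(p')$, and to handle vertices as length-zero paths so that the induction/telescoping is uniform.
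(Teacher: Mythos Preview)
Your proposal is correct and follows essentially the same route as the paper: both arguments observe that the displacement $\operatorname{h}(p)-\operatorname{t}(p)$ is recovered from $\bar{\tau}(p)$ as the pair (exponent of $x_1$ minus exponent of $x_2$, exponent of $y_1$ minus exponent of $y_2$), and combine this with the fact that $p\sim p'$ forces $\bar{\tau}(p)=\bar{\tau}(p')$ (which, as you note, needs only the well-definedness of $\tau$ on $A$, not its injectivity). The paper's proof just packages the same computation more tersely by writing $\bar{\tau}(p)=x_{n(v_1)}^{|v_1|}y_{n(v_2)}^{|v_2|}(x_1x_2)^s(y_1y_2)^t$.
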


\begin{proof}
Set $(v_1,v_2) = \operatorname{h}(p) - \operatorname{t}(p) \in \widetilde{Q}_0 = \mathbb{Z}^2$.  There is some $s,t \geq 0$ such that $$\bar{\tau}(p) = x_{n(v_1)}^{|v_1|}y_{n(v_2)}^{|v_2|} (x_1x_2)^s(y_1y_2)^t,$$ 
where $n(v_i) = 1$ or $2$ if $\operatorname{sign}(v_i) > 0$ or $\operatorname{sign}(v_i)<0$, respectively.
\end{proof}

\begin{Lemma} \label{star star} 
Modulo $\partial W$, there is a unique path $p$ without cyclic proper subpaths between any two vertices in the covering quiver $\widetilde{Q}$.
\end{Lemma}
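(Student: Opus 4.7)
The plan is to prove existence and uniqueness separately.

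\emph{Existence.} The covering quiver $\widetilde{Q}$ is strongly connected since $Q$ is, so some path from $u$ to $v$ exists; choose one of minimum length and call it $p$. If $p$ decomposed as $p_2 c p_1$ with $c$ a cyclic proper subpath, then $p_2 p_1$ would be a strictly shorter path from $u$ to $v$, contradicting minimality. Hence $p$ has no cyclic proper subpath.

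\emph{Uniqueness.} Let $p,p'$ be two paths from $u$ to $v$ without cyclic proper subpaths. By Lemma \ref{injective} it suffices to show $\bar\tau(p)=\bar\tau(p')$. By the formula in the proof of Lemma \ref{h}, both images have the shape $x_{n(d_1)}^{|d_1|}y_{n(d_2)}^{|d_2|}(x_1x_2)^s(y_1y_2)^t$, where $d = v-u$ and $s,t\geq 0$ depend on the path, so their ratio in $\operatorname{Frac}(B)$ is a monomial in $x_1x_2$ and $y_1y_2$. I would then induct on $\operatorname{length}(p)+\operatorname{length}(p')$. If the initial arrows (those leaving $u$) of $p$ and $p'$ agree, say both equal $a$, write $p = qa$ and $p' = q'a$ for strictly shorter paths $q,q'$ from $\operatorname{h}(a)$ to $v$. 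Neither $q$ nor $q'$ admits a cyclic proper subpath, since any such subpath would also be one of $p$ or $p'$; so the inductive hypothesis gives $q\sim q'$ and hence $p\sim p'$. If the initial arrows differ, I would invoke the superpotential relation $\partial_a W = 0$ for one of them to equate the two sides of the unit cycle at $u$ containing both initial arrows, rewriting $p$ modulo $\partial W$ into a path whose initial arrow matches that of $p'$, and then reducing to the previous case.

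The principal obstacle is in the second case of the inductive step. Each $\partial_a W = 0$ relation is a diamond move that ``flips'' a path across one unit cycle of $\widetilde{Q}$, and these moves must be organized geometrically: using the planar embedding $\widetilde{Q}\subset\mathbb{R}^2$, the underlying union of $p$ and $p'$ bounds a finite region that decomposes into finitely many unit cycles of $\widetilde Q$, and each flip reduces the enclosed area by one. Carrying out a flip requires Lemma \ref{appendix} to commute arrows along the path so that the relation becomes applicable at the initial segment, and Lemma \ref{star} to absorb the monomial discrepancies that arise. The delicate bookkeeping is ensuring that the intermediate paths remain free of cyclic proper subpaths so that the inductive hypothesis continues to apply; organizing the induction on the enclosed planar area rather than on total length sidesteps this concern and is likely the cleanest route.
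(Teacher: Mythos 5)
There is a genuine gap in your uniqueness argument. You correctly reduce, via Lemma \ref{injective}, to showing $\bar\tau(p)=\bar\tau(p')$, and you correctly observe from Lemma \ref{h} that the ratio of the two images is a monomial in $x_1x_2$ and $y_1y_2$ --- but you never close this loop. Instead you switch to an induction in which paths are rewritten by ``diamond moves,'' and the entire difficulty is concentrated in the step you flag yourself: when the initial arrows of $p$ and $p'$ differ, there is no guarantee that the two arrows lie on a common unit cycle, nor that $p$ begins by traversing a full half of a unit cycle so that a relation $\partial_aW$ is applicable; making the relation applicable ``using Lemma \ref{appendix} and Lemma \ref{star}'' and keeping the intermediate paths free of cyclic proper subpaths (or setting up a well-defined area induction for the possibly non-simple closed walk $p\cdot(p')^{-1}$, with unit cells traversed in compatible orientations) is precisely the nontrivial content, and it is left as an unverified plan. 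As written, the proposal is a strategy, not a proof.

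The finishing move was already available from the quoted lemmas, and it is what the paper does. Since $p$ has no cyclic proper subpath, $\sigma=x_1x_2y_1y_2$ cannot divide $\bar\tau(p)$ (otherwise Lemma \ref{star} splits off a unit-cycle subpath), so $\bar\tau(p)=x_1^ay_1^b(x_1x_2)^c(y_1y_2)^d$ with $c=0$ or $d=0$, and likewise for $p'$. If, say, $d=d'=0$ and $c\le c'$, then $\bar\tau(p)\mid\bar\tau(p')$ and Lemma \ref{star} produces a cycle in $\widetilde Q$ with $\bar\tau$-image $(x_1x_2)^{c'-c}$; by Lemma \ref{cycle in cover} every cycle in $\widetilde Q$ has image $\sigma^m$, forcing $c'=c$. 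The mixed case $c=d'=0$ is handled by first multiplying $p'$ by unit cycles so that divisibility holds, and the same argument forces $c'=d=0$. Then Lemma \ref{injective} gives $p\sim p'$. Separately, your existence step is fine in substance but the justification ``$\widetilde Q$ is strongly connected since $Q$ is'' is not valid as stated (a cover of a strongly connected quiver need not be strongly connected); here strong connectivity holds because every arrow of $\widetilde Q$ lies on a unit cycle \emph{of $\widetilde Q$}, so each arrow can be reversed by a path in the cover.
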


\begin{proof}
Recall that $\sigma := x_1x_2y_1y_2$.  Suppose $p$ is a path in $\widetilde{Q}$ without cyclic subpaths and $\bar{\tau}(p) = x_1^ay_1^b(x_1x_2)^c(y_1y_2)^d$.  Then $c = 0$ or $d= 0$, since otherwise $\sigma | \bar{\tau}(p)$ whence $p$ has a cyclic subpath by Lemma \ref{star}.  
Let $p'$ be another path in $\widetilde{Q}$ from $\operatorname{t}(p)$ to $\operatorname{h}(p)$ without cyclic subpaths; then similarly $\bar{\tau}(p') = x_1^ay_1^b(x_1x_2)^{c'}(y_1y_2)^{d'}$ with $c' = 0$ or $d' = 0$.  
We claim $c=c'$ and $d=d'$; the lemma will then follow from Lemma \ref{injective}.  It suffices to consider the following two cases.

(i) Suppose $d = d' = 0$ and $c \leq c'$.  
Then $\bar{\tau}(p) | \bar{\tau}(p')$.
Thus by Lemma \ref{star} there is a cycle in $\widetilde{Q}$ with $\bar{\tau}$-image $(x_1x_2)^{c'-c}$.
Since the underlying graph of $\widetilde{Q}$ embeds into the plane, we must have $c'-c = 0$.
Therefore $\bar{\tau}(p') = \bar{\tau}(p)$.

(ii) Now suppose $c = d' = 0$ and $d \leq c'$.  
Then $\bar{\tau}(p) | \sigma^d \bar{\tau}(p') = \bar{\tau}(u^d p')$, where $u$ is a unit cycle at $\operatorname{h}(p)$.
Thus by Lemma \ref{star} there is a cycle in $\widetilde{Q}$ with $\bar{\tau}$-image $(x_1x_2)^{c'+d}$.
Again since the underlying graph of $\widetilde{Q}$ embeds into the plane, we must have $c' + d = 0$.
Since $c', d \geq 0$, this implies $c' = d = 0$, and so $\bar{\tau}(p') = \bar{\tau}(p)$.
\end{proof}

Fix $i \in Q_0$ and consider the sublattice $\pi^{-1}(i) \subset \widetilde{Q}_0 = \mathbb{Z}^2$.  
Let $u,v \in \pi^{-1}(i)$ be $\mathbb{Z}$-generators of $\pi^{-1}(i)$ with respect to a fixed origin $(0,0) \in \pi^{-1}(i)$,
\begin{equation} \label{uv}
\pi^{-1}(i) = \mathbb{Z}u \oplus \mathbb{Z}v.
\end{equation}
Let $\alpha \in e_iAe_i$ (resp.\ $\beta \in e_iAe_i$) be the $\pi$-image modulo $\partial W$ of a path of minimal length from $(0,0)$ to $u$ (resp.\ $v$) in $\widetilde{Q}$; these paths are unique by Lemma \ref{star star}.   
Further, by abuse of notation let $\sigma \in e_iAe_i$ be the (unique) cycle satisfying $\bar{\tau}(\sigma)= x_1x_2y_1y_2$.

\begin{Proposition} \label{Krull dim} 
Let $Z$ be the center of a square superpotential algebra $A$.  Then $Ze_i$ has transcendence basis $\left\{ \alpha, \beta, \sigma \right\}$ over $k$, and so the Krull dimension of $Z \cong Ze_i$ is 3.
\end{Proposition}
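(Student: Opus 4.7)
The plan is to establish algebraic independence of $\{\alpha, \beta, \sigma\}$ and algebraicity of the remaining generators of $Ze_i$ over $k[\alpha, \beta, \sigma]$ by pushing down to the polynomial ring $B = k[x_1, x_2, y_1, y_2]$ via the injection $\bar\tau : e_i A e_i \hookrightarrow B$ of Lemma \ref{bartau}, identifying each monomial with its exponent vector in $\mathbb{Z}^4$ and working additively throughout. The central auxiliary tool is the displacement map $\phi : \mathbb{Z}^4 \to \mathbb{Z}^2$, $(a, b, c, d) \mapsto (a - b, c - d)$, under which $\bar\tau(\sigma) \mapsto (0,0)$, $\bar\tau(\alpha) \mapsto u$, and $\bar\tau(\beta) \mapsto v$.

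Algebraic independence is immediate: any $\mathbb{Z}$-relation $r_1 \bar\tau(\alpha) + r_2 \bar\tau(\beta) + r_3 \bar\tau(\sigma) = 0$ in $\mathbb{Z}^4$, applied under $\phi$, yields $r_1 u + r_2 v = 0$, whence $r_1 = r_2 = 0$ by $\mathbb{Z}$-independence of $u, v$ in $\pi^{-1}(i)$, and then $r_3 = 0$. Since $\mathbb{Z}$-independence of exponent vectors is equivalent to algebraic independence of the corresponding monomials in $B$, I conclude that $\alpha, \beta, \sigma$ are algebraically independent in $Ze_i$ via $\bar\tau$.

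For the upper bound, Lemma \ref{without cyclic proper subpaths} implies that $\bar\tau(e_i A e_i) \subset B$ is a finitely generated monoid algebra $k[M]$, where $M \subset \mathbb{Z}^4$ is the additive monoid of exponent vectors of $\bar\tau$-images of cycles at $i$ in $A$. Its Krull dimension equals the rank of the group $G \subset \mathbb{Z}^4$ generated by $M$, so it suffices to show $G = \tilde G := \mathbb{Z}\bar\tau(\alpha) + \mathbb{Z}\bar\tau(\beta) + \mathbb{Z}\bar\tau(\sigma)$, which has rank $3$ by the previous paragraph. The inclusion $\tilde G \subseteq G$ is trivial; the reverse inclusion will follow from the key divisibility statement: for any path $\tilde c$ in $\widetilde Q$ from $(0,0)$ to $w$, $\bar\tau(p_w)$ divides $\bar\tau(\tilde c)$ in $B$, where $p_w$ is the unique cyclic-proper-subpath-free path produced by Lemma \ref{star star}.

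I would prove this divisibility by strong induction on the number of arrows in $\tilde c$. If $\tilde c$ has no cyclic subpath then $\tilde c \sim p_w$ by Lemma \ref{star star} and Lemma \ref{injective} yields $\bar\tau(\tilde c) = \bar\tau(p_w)$. Otherwise write $\tilde c = \tilde c_2 \delta \tilde c_1$ with $\delta$ a cyclic subpath; Lemma \ref{cycle in cover} gives $\bar\tau(\delta) = \sigma^m$, and by commutativity in $B$ we have $\bar\tau(\tilde c) = \sigma^m \bar\tau(\tilde c_2 \tilde c_1)$, so induction applied to the strictly shorter path $\tilde c_2 \tilde c_1$ finishes the step. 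With this in hand, Lemma \ref{star} produces $\tilde c \sim q p_w$ for some cycle $q$ at $w$, and Lemma \ref{cycle in cover} forces $\bar\tau(q) = \sigma^k$, so $\bar\tau(\tilde c) = \sigma^k \bar\tau(p_w)$ for some $k \geq 0$. Applying this simultaneously to a cycle $T$ at $i$ in $A$ with endpoint $w_T = mu + nv$ and, when $m, n \geq 0$, to $\alpha^m \beta^n$ (whose lift from $(0,0)$ also ends at $mu + nv$) yields $\bar\tau(T) = \sigma^{k_1 - k_2} \bar\tau(\alpha)^m \bar\tau(\beta)^n \in \tilde G$; the cases $m < 0$ or $n < 0$ reduce to the previous case by pre-composing $T$ with enough copies of $\alpha$ and/or $\beta$ to translate the endpoint into the first quadrant, or, when both are negative, all the way to the origin (where Lemma \ref{cycle in cover} gives $\bar\tau \in \mathbb{Z}\bar\tau(\sigma) \subset \tilde G$ directly). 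The main obstacle is the divisibility lemma; everything else is a routine rank computation once that is in place.
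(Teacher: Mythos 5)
Your proposal is correct and follows essentially the same route as the paper's proof: both arguments rest on the injectivity of $\bar{\tau}$ on $e_iAe_i$, the uniqueness of cyclic-proper-subpath-free paths in $\widetilde{Q}$ (Lemma \ref{star star}), the factorization Lemma \ref{star}, and the fact that cycles in the cover have $\bar{\tau}$-image $\sigma^m$ (Lemma \ref{cycle in cover}), with your divisibility induction and displacement map $\phi$ being a lattice-language repackaging of the paper's ``remove all cyclic proper subpaths'' step and of Lemma \ref{h}, respectively. The only substantive additions are cosmetic: you make explicit the reduction of negative coordinates $m,n$ by pre-composing with $\alpha,\beta$, which the paper dispatches with ``it suffices to suppose $m,n\geq 0$.''
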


\begin{proof} 
We claim that the set $\left\{ \alpha, \beta, \sigma \right\}$ is algebraically independent over $k$.  
Consider a nonzero polynomial
$$f(\alpha, \beta, \sigma) = \sum_{\ell = 1}^n b_{\ell} \alpha^{r_{\ell}} \beta^{s_{\ell}} \sigma^{t_{\ell}}$$
with coefficients $b_{\ell} \in k$.  
Fix $1 \leq \ell \leq n$, and let $d_{\ell}$ be the lift of $\alpha^{r_{\ell}} \beta^{s_{\ell}} \sigma^{t_{\ell}}$ in $\widetilde{Q}$ with tail at $(0,0) \in \pi^{-1}(i) \subset \widetilde{Q}_0 = \mathbb{Z}^2$.
We may assume, without loss of generality, that $\operatorname{h}(d_{\ell}) = \operatorname{h}(d_1)$ in $\widetilde{Q}$ since otherwise there is no relation between $d_{\ell}$ and $d_1$ by Lemma \ref{h}. 
This yields $r_{\ell}u + s_{\ell}v = \operatorname{h}(d_{\ell}) = \operatorname{h}(d_1) = r_1 u + s_1 v$.
Thus, since $u$ and $v$ are linearly independent, $r_{\ell} = r_1$ and $s_{\ell} = s_1$. 
Therefore $d_{\ell} \sim \alpha^{r_1} \beta^{s_1} \sigma^{t_{\ell}}$.
Since this holds for each $\ell$, we have
$$f(\alpha, \beta, \sigma) = \alpha^{r_1} \beta^{s_1} \sum_{\ell} b_{\ell} \sigma^{t_{\ell}}.$$
But $\bar{\tau}(\sigma)$ is algebraically independent over $k$, whence $f(\alpha, \beta, \sigma) \not = 0$.

We now claim that if $g \in Ze_i = e_iAe_i$ is a cycle, then $\left\{ \alpha, \beta, \sigma, g \right\}$ is algebraically dependent over $k$.  
Let $g^+$ be the lift of $g$ with tail at $(0,0) \in \pi^{-1}(i) \subset \widetilde{Q}_0$.
Then $\operatorname{h}(g^+) = mu + nv$ for some $m,n \in \mathbb{Z}$.  
It suffices to suppose $m,n \geq 0$.  
Denote by $h$ the path in $\widetilde{Q}$ obtained by removing all cyclic proper subpaths of $g^+$, and denote by $h'$ the lift of $\alpha^m \beta^n$ with tail at $(0,0)$.
Since the lifts of $\alpha$ and $\beta$ have no cyclic proper subpaths by definition, $h'$ also has no cyclic proper subpaths.
Therefore, since $h$ and $h'$ have coincident heads and tails in $\widetilde{Q}$, Lemma \ref{star star} implies $h \sim h'$.
$h$ and $g^+$ also have coincident heads and tails in $\widetilde{Q}$, and $\bar{\tau}(h) | \bar{\tau}(g^+)$, and so Lemma \ref{star} implies that there is a cycle $c$ in $\widetilde{Q}$ at $(0,0)$ such that $hc \sim g^+$.
Moreover, Lemma \ref{cycle in cover} implies that $\pi(c) \sim \sigma^r$ for some $r \geq 0$.
Therefore 
$$g = \pi(g^+) \sim \pi(hc) = \pi(h) \pi(c) \sim \pi(h') \pi(c) \sim \alpha^m \beta^n \sigma^r,$$
proving our claim.
  
If $g \in Ze_i$ is a linear combination of cycles, then we may apply this argument to each monomial summand of $g$ to conclude that the set $\left\{ \alpha, \beta, \sigma, g \right\}$ is algebraically dependent.
\end{proof}

Although $\left\{ \alpha, \beta, \sigma \right\}$ is algebraically independent over $k$, it does not in general form a $Ze_i$-regular sequence, so that is what we now determine.

\subsection{$Z$-regular sequence and socle}

Throughout, $\mathfrak{m}$ denotes the maximal ideal at the origin of $\operatorname{Max}Z$ (or $\operatorname{Max}(Ze_i)$, depending on context).  
If a prime, finitely-generated $k$-algebra is homologically homogeneous and module-finite over its center, then its center is Cohen-Macaulay \cite[Theorem 2.2]{SV} and normal \cite[Theorem 6.1]{BH}.  
It will then follow from Proposition \ref{q} below that the localization $Z_{\mathfrak{m}}$ is Cohen-Macaulay and normal.  
In this section we will show that $Z$ is normal and $Z_{\mathfrak{m}}$ is Gorenstein.  
To show Gorenstein, we will first determine an explicit $Z$-regular sequence $s$ in $\mathfrak{m}$ (thus providing a direct proof that $Z_{\mathfrak{m}}$ is Cohen-Macaulay), and then we will show that the zero-dimensional local ring $Z_{\mathfrak{m}}/(s)$ has a simple socle.

\begin{Lemma} \label{normal}
The center of a square superpotential algebra is normal.
\end{Lemma}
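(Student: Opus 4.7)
The plan is to realize $Z$ as an affine semigroup ring $k[S]$ and show that $S$ is saturated (``normal''); by the standard fact that normal affine semigroup rings are integrally closed domains (see e.g.\ Bruns--Herzog, \textit{Cohen--Macaulay Rings}, Thm.\ 6.1.4), this yields normality of $Z$.

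By Corollary \ref{square prime}, $Z\cong e_iAe_i\cong \bar\tau(e_iAe_i)\subset B=k[x_1,x_2,y_1,y_2]$ is generated as a $k$-algebra by the monomials $\bar\tau(c)$ for $c$ a cycle at $i$. Identifying monomials of $B$ with their exponents in $\mathbb{Z}_{\geq 0}^4$, let $S$ be the subsemigroup of such exponent vectors, so $Z\cong k[S]$, and set $L:=\mathbb{Z}S$; by Proposition \ref{Krull dim} the group $L$ has rank $3$.

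The task reduces to showing (a) $L$ is saturated in $\mathbb{Z}^4$, and (b) $S=L\cap \mathbb{Z}_{\geq 0}^4$; together these force $S$ to be saturated, because if $ns\in S$ for some $s\in L$ and integer $n\geq 1$, then $s\in \mathbb{Z}_{\geq 0}^4$, whence $s\in L\cap \mathbb{Z}_{\geq 0}^4=S$. The inclusion $S\subset L\cap \mathbb{Z}_{\geq 0}^4$ is automatic. For the reverse, given $(a,b,c,d)\in L\cap \mathbb{Z}_{\geq 0}^4$, the displacement $(a-b,c-d)$ lies in the deck lattice at $i$, so a lattice path in $\widetilde Q$ from $(0,0)\in \pi^{-1}(i)$ to $(a-b,c-d)\in \pi^{-1}(i)$ exists. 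Using Lemma \ref{star} to convert $\bar\tau$-divisibilities into path factorizations, Lemma \ref{cycle in cover} to absorb excess factors of $x_1x_2$ or $y_1y_2$ in the target monomial as insertions of the unit cycle $\sigma=x_1x_2y_1y_2$, and Lemma \ref{star star} for uniqueness of reduced paths, one then constructs a genuine cycle at $i$ with $\bar\tau$-image $x_1^ax_2^by_1^cy_2^d$. Saturation of $L$ follows because $L$ coincides with the kernel of a primitive integer linear form on $\mathbb{Z}^4$: the basic cycles $\alpha,\beta,\sigma$ of Proposition \ref{Krull dim} generate $L$, and since $\sigma$ has exponent vector $(1,1,1,1)$ while $\alpha$ and $\beta$ contribute linearly independent primitive directions, the quotient $\mathbb{Z}^4/L$ is torsion-free.

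\textbf{Main obstacle.} The principal difficulty is the combinatorial verification of (b): although the deck-lattice condition on the displacement is purely linear, one must still realize the given exponent vector by a quiver path whose arrow-orientations are compatible with the local building blocks of Figure \ref{squares}. The key flexibility comes from Lemma \ref{cycle in cover}, which confines the cycles of $\widetilde Q$ to powers of $\sigma$: this means that any excess of $x_1x_2$ or $y_1y_2$ beyond the minimal lattice path can be absorbed as unit-cycle insertions, provided such insertions are available at the relevant vertices. Checking this availability uniformly across the six building blocks of Figure \ref{squares} is the delicate step.
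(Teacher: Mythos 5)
Your reduction is the same as the paper's: by Corollary \ref{square prime}, $Z\cong \bar{\tau}(e_iAe_i)=k[S]$ for the semigroup $S\subseteq\mathbb{N}^4$ of exponent vectors of $\bar{\tau}$-images of cycles at $i$, and normality amounts to saturation of $S$ in $\mathbb{Z}S$. You are also right that your claim (b), $S=\mathbb{Z}S\cap\mathbb{N}^4$, by itself forces saturation; so claim (a) is superfluous for the lemma --- which is fortunate, because as stated it is not established: Proposition \ref{Krull dim} only shows that cycles whose head displacement has nonnegative coordinates in the basis $u,v$ reduce to $\alpha^m\beta^n\sigma^r$, and ``$\alpha,\beta$ are primitive directions'' does not imply $\mathbb{Z}^4/L$ is torsion-free (three primitive vectors can generate a non-saturated rank-$3$ lattice). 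The genuine gap is in (b), precisely at the step you yourself flag as the ``delicate step.'' As sketched, your construction uses only the condition that the displacement $(a-b,c-d)$ lies in $\pi^{-1}(i)$, and that condition is strictly weaker than $(a,b,c,d)\in\mathbb{Z}S$: for the conifold of Example \ref{conifold}, the vector $(2,0,0,0)$ (the monomial $x_1^2$) has displacement $(2,0)\in\pi^{-1}(1)$, yet it lies neither in $\mathbb{Z}S$ (every generator satisfies $a+b=c+d$) nor in $S$, since no cycle has $\bar{\tau}$-image $x_1^2$. Hence any argument that starts from the reduced path to $(a-b,c-d)$ and inserts unit cycles must use membership in $\mathbb{Z}S$ in an essential way, namely to show that the $x_1x_2$-excess and the $y_1y_2$-excess of the target monomial over the reduced path of Lemma \ref{star star} are equal and nonnegative; your sketch never extracts this from the hypothesis, so the claimed construction of a cycle with image $x_1^ax_2^by_1^cy_2^d$ is not justified.

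The paper closes exactly this gap with a short argument you could substitute for your case analysis over Figure \ref{squares}: given $w\in\mathbb{Z}S\cap\mathbb{N}^4$, write $w=\sum_i r_iu_i-\sum_j s_jv_j$ with $u_i,v_j\in S$ and $r_i,s_j\geq 0$, so that $m_w\prod_j m_{v_j}^{s_j}=\prod_i m_{u_i}^{r_i}$ as monomials in $B$. Both $\prod_j m_{v_j}^{s_j}$ and $\prod_i m_{u_i}^{r_i}$ are $\bar{\tau}$-images of cycles at the fixed vertex $i$ (concatenations of cycles at $i$), so a single application of Lemma \ref{star}, with these two cycles in the roles of $p'$ and $p$ and $m=m_w$, produces a cycle at $i$ with $\bar{\tau}$-image $m_w$, i.e.\ $w\in S$. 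This uses the hypothesis $w\in\mathbb{Z}S$ directly and requires no uniform check over the six building blocks, no minimal-path bookkeeping, and no statement about $\mathbb{Z}^4/\mathbb{Z}S$.
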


\begin{proof}
By Corollary \ref{square prime}, $\operatorname{Max}Z$ is a toric algebraic variety.
Thus we want to show that the semigroup $S \subseteq \mathbb{N}^4$ of $\operatorname{Max}Z$ is saturated in the lattice it generates $\mathbb{Z}S \subseteq \mathbb{Z}^4$, that is, $\mathbb{Z}S \cap \mathbb{N}^4 = S$ \cite[Theorem 1.3.5]{CLS}.
Let $w \in \mathbb{Z}S \cap \mathbb{N}^4$.  
Then 
$$w = \sum_{i = 1}^{n} r_iu_i - \sum_{j = 1}^{n'} s_j v_j \ \text{ for some } \ u_i, v_j \in S \ \text{ and } \ r_i,s_j \geq 0.$$
Let $m_w$, $m_{u_i}$, $m_{v_j} \in B$ be the monomials corresponding respectively to $w$, $u_i$, and $v_j$. 
Then 
$m_w m_{v_1}^{s_1} \cdots m_{v_{n'}}^{s_{n'}} = m_{u_1}^{r_1} \cdots m_{u_n}^{r_n}$.
Since the $u_i, v_j$ are in $S$, the monomials $m_{u_i}, m_{v_j}$ are $\bar{\tau}$-images of cycles.
Therefore by Lemma \ref{star}, $m_w$ is the $\bar{\tau}$-image of a cycle, so $w \in S$ as well, proving the lemma.
\end{proof}

\begin{Proposition} \label{triangular} 
Let $A$ be a square superpotential algebra.  
Suppose there is a cycle $c$ in $Q$ such that $\bar{\tau}(c) = x_{\alpha}^n$ or $y_{\beta}^n$ for some $n \geq 1$.
Then $A$ is the McKay quiver algebra of a representation in $\operatorname{SL}_3(k)$ of an abelian group, and we say that $Q$ is \textit{McKay}.  
In particular, $Z_{\mathfrak{m}}$ is Gorenstein.
Moreover, $A$ admits an impression $(\tau,B)$ where $B$ is the polynomial ring $k[x,y,z]$ in three variables.
\end{Proposition}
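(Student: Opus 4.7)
By the rotation and reflection symmetries of the torus embedding, we may assume without loss of generality that $\bar{\tau}(c) = y_2^n$ for some cycle $c$ and some $n \geq 1$. The strategy is threefold: first, use the hypothesis to rigidify $Q$ into the form of a McKay quiver having three arrow types; second, construct the claimed impression $(\tau', k[x,y,z])$ directly; third, deduce Gorenstein-ness of $Z_{\mathfrak{m}}$ by invoking the standard invariant theory of finite subgroups of $\operatorname{SL}_3(k)$.

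For the rigidity, first lift $c$ to a path $\tilde c$ in the covering $\widetilde Q$. Since the labels in Figure \ref{labels} involve at most one $y$-variable each, the equation $\bar{\tau}(\tilde c) = y_2^n$ forces every arrow of $\tilde c$ to be a $y_2$-arrow; hence $\tilde c$ is a purely vertical column and $(0,-n) \in \mathbb{Z}u \oplus \mathbb{Z}v$. The main step is a case analysis over the six building blocks of Figure \ref{squares}: in the column of unit squares adjacent to $\tilde c$, a pure 4-cycle building block is ruled out because its cyclic-orientation rule would force a $y_1$-arrow on one of its vertical sides (contradicting the $y_2$-structure of $\tilde c$ after translation by $u$), and among the four triangular building blocks, only those with $y_2$-oriented vertical sides are admissible. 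Propagating this locally via translational invariance, every unit square of $\widetilde Q$ carries a diagonal, and $Q$ has exactly three arrow types, which I call $x$, $y$, $z$.

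Having fixed the three types, define $\tau' \colon A \to M_{|Q_0|}(k[x,y,z])$ by $\tau'(e_i) = E_{ii}$ and by sending each arrow of type $x$, $y$, or $z$ to the corresponding variable times $E_{\operatorname{h}(a),\operatorname{t}(a)}$. Verifying that $(\tau', k[x,y,z])$ is an impression reproduces the approach of Section \ref{An impression}: the analogues of Lemmas \ref{injective}, \ref{star}, and \ref{cycle in cover} (with $\sigma = x_1x_2y_1y_2$ replaced by $xyz$) hold because the displacement vectors of $x,y,z$ sum to an element of the torus lattice; the open dense locus $\{xyz \neq 0\}$ witnesses simplicity; and surjectivity of the comparison map $\phi$ is routine. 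Under this impression, $A$ is the McKay quiver algebra of a finite abelian group $G \subset \operatorname{SL}_3(k)$ whose character group $\hat G \cong \mathbb{Z}^2/(\mathbb{Z}u \oplus \mathbb{Z}v)$ parameterizes the vertices of $Q$ and whose three weights are the displacement vectors of $x,y,z$. The sum-of-weights condition $\chi_x \chi_y \chi_z = 1$ (i.e., $G \subset \operatorname{SL}_3$) is precisely what the rigidity step guarantees. Then $Z \cong k[x,y,z]^G$ is Gorenstein by the Watanabe theorem on invariants of finite subgroups of $\operatorname{SL}_n$, and hence so is the localization $Z_{\mathfrak{m}}$.

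The main obstacle is the rigidity case analysis: systematically ruling out the pure-square building blocks and the $y_1$-verticaled triangular blocks in the presence of the $y_2$-column $\tilde c$ requires careful tracking of the cyclic-orientation rule for unit cycles in each of the six configurations of Figure \ref{squares}. Once this uniformity is established, both the construction of the new impression and the McKay identification are essentially direct.
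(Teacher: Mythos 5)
Your overall architecture matches the paper's (rigidify the quiver, construct an impression with $B=k[x,y,z]$, identify $Z$ with $k[x,y,z]^{G}$ for a finite abelian $G\subset \operatorname{SL}_3(k)$, and conclude Gorenstein-ness from standard invariant theory), but your rigidity step has a genuine gap. The propagation from the single column $\tilde c$ to the adjacent column of unit squares rests on ``translation by $u$,'' i.e.\ on lattice periodicity; however the covering quiver is only periodic under $\mathbb{Z}u\oplus\mathbb{Z}v$, and the column adjacent to $\tilde c$ need not be a lattice translate of the column containing $\tilde c$ (for the $Y^{p,q}$ quivers the fundamental domain has width $2$, so translation by $u$ skips the adjacent column entirely). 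Hence your exclusion of the pure $4$-cycle block --- whose right vertical side would be a $y_1$-arrow --- produces no contradiction from $\tilde c$ alone, and the same issue undercuts your restriction on the admissible triangular blocks, which tacitly assumes the neighbouring vertical edges are already known to be $y_2$-arrows. The paper closes exactly this gap globally rather than locally: by Theorem \ref{square impression}, $\bar{\tau}(e_iAe_i)=\bar{\tau}(e_jAe_j)$ for all $i,j$, so a cycle whose $\bar{\tau}$-image is a power of $y_\beta$ exists at \emph{every} vertex; therefore every vertex emits a pure $y_\beta$-arrow, every vertical edge of the grid is a $y_\beta$-arrow, and only then does the building-block analysis of figure \ref{squares} eliminate the diagonal-free squares everywhere.

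Even granting that, your conclusion that ``$Q$ has exactly three arrow types'' is too strong as stated: with all vertical arrows equal to $y_2$ and every unit square carrying a diagonal, different columns of building blocks may still carry oppositely oriented diagonals (one column with horizontals $x_2$ and diagonals $x_1y_1$, the next with horizontals $x_1$ and diagonals $x_2y_1$); for instance the McKay quiver of $\frac12(1,1,0)$ arises this way with five arrow orientations in the given embedding. This is precisely why the paper's proof contains the explicit redrawing (shearing) step of figure \ref{redrawn}, which produces a new embedding with only three orientations, and only afterwards defines the new $\bar{\tau}$ and reinvokes Theorem \ref{square impression} to obtain the impression $(\tau,k[x,y,z])$. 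Your construction of $\tau'$, the identification of the character group of $G$ with $\mathbb{Z}^2/\pi^{-1}(i)$, and the appeal to Watanabe's theorem for the Gorenstein property agree with the paper's argument once these two points are repaired.
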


\begin{proof}
If there is a cycle whose $\bar{\tau}$-image is only divisible by $x_{\alpha}$ (resp.\ $y_{\beta}$), then for each $i \in Q_0$ there is a cycle in $e_iAe_i$ whose $\bar{\tau}$-image is only divisible by $x_{\alpha}$ (resp.\ $y_{\beta}$) by Theorem \ref{square impression}.  Recalling figure \ref{squares}, it follows that that each row (resp.\ column) of building blocks in the covering quiver $\widetilde{Q}$ must consist of identical building blocks, and each building block must contain a diagonal arrow.  In this case $\widetilde{Q}$ can be redrawn so that there are only three orientations of arrows, namely one horizontal, one vertical, and one diagonal, as shown in figure \ref{redrawn}.
\begin{figure}
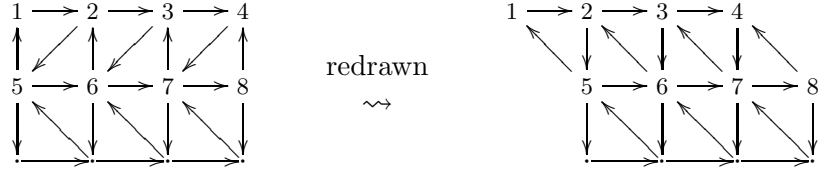

$$\xy (-15,10)*+{\text{\scriptsize{$1$}}}="1";(-5,10)*+{\text{\scriptsize{$2$}}}="2";(5,10)*+{\text{\scriptsize{$3$}}}="3";(15,10)*+{\text{\scriptsize{$4$}}}="4";
(-15,0)*+{\text{\scriptsize{$5$}}}="5";(-5,0)*+{\text{\scriptsize{$6$}}}="6";(5,0)*+{\text{\scriptsize{$7$}}}="7";(15,0)*+{\text{\scriptsize{$8$}}}="8";
(-15,-10)*{\cdot}="9";(-5,-10)*{\cdot}="10";(5,-10)*{\cdot}="11";(15,-10)*{\cdot}="12";
{\ar"1";"2"};{\ar"2";"3"};{\ar"3";"4"};
{\ar"5";"6"};{\ar"6";"7"};{\ar"7";"8"};
{\ar"9";"10"};{\ar"10";"11"};{\ar"11";"12"};
{\ar"5";"1"};{\ar"6";"2"};{\ar"7";"3"};{\ar"8";"4"};
{\ar"5";"9"};{\ar"6";"10"};{\ar"7";"11"};{\ar"8";"12"};
{\ar"2";"5"};{\ar"3";"6"};{\ar"4";"7"};
{\ar"10";"5"};{\ar"11";"6"};{\ar"12";"7"};
\endxy
\ \ \ \  \begin{array}{c} \text{\small{\begin{tabular}{c} redrawn \end{tabular}}} \\ \leadsto \end{array}
\ \ \ \
\xy
(-20,10)*+{\text{\scriptsize{$1$}}}="1";(-10,10)*+{\text{\scriptsize{$2$}}}="2";(0,10)*+{\text{\scriptsize{$3$}}}="3";(10,10)*+{\text{\scriptsize{$4$}}}="4";
(-10,0)*+{\text{\scriptsize{$5$}}}="5";(0,0)*+{\text{\scriptsize{$6$}}}="6";(10,0)*+{\text{\scriptsize{$7$}}}="7";(20,0)*+{\text{\scriptsize{$8$}}}="8";
(-10,-10)*{\cdot}="9";(0,-10)*{\cdot}="10";(10,-10)*{\cdot}="11";(20,-10)*{\cdot}="12";
{\ar"1";"2"};{\ar"2";"3"};{\ar"3";"4"};
{\ar"5";"6"};{\ar"6";"7"};{\ar"7";"8"};
{\ar"9";"10"};{\ar"10";"11"};{\ar"11";"12"};
{\ar"5";"1"};{\ar"6";"2"};{\ar"7";"3"};{\ar"8";"4"};
{\ar"5";"9"};{\ar"6";"10"};{\ar"7";"11"};{\ar"8";"12"};
{\ar"2";"5"};{\ar"3";"6"};{\ar"4";"7"};
{\ar"10";"5"};{\ar"11";"6"};{\ar"12";"7"};
\endxy$$
\caption{
Redrawing the quiver of a McKay square superpotential algebra.}
\label{redrawn}
\end{figure}

Let $\bar{\tau}$ now be defined by the orientation of the arrows in the redrawn covering quiver, and denote by $x$, $y$, and $z$ the respective $\bar{\tau}$-images of the horizontal, vertical, and diagonal arrows.
It follows from Theorem \ref{square impression} that $\bar{\tau}$ defines an impression of $A$, with $B = k[x,y,z]$.

Fix $i \in Q_0$ and recall the sublattice $\pi^{-1}(i) \subset \widetilde{Q}_0 = \mathbb{Z}^2$ in (\ref{uv}).
Suppose the horizontal and vertical arrows point left and up, respectively.
Let $\epsilon_x$ and $\epsilon_y$ be the standard basis vectors in $\mathbb{R}^2$.
Then
$$\mathbb{Z}^2 = \mathbb{N}\epsilon_x \oplus \mathbb{N}\epsilon_y \oplus \mathbb{N}(-\epsilon_x- \epsilon_y),$$
where
$$(a,b) \mapsto \left\{ \begin{array}{ll} (a,b,0) & \text{ if } \ a \geq 0, \ b \geq 0, \\ (a +|b|,0,|b|) & \text{ if } \ b<0, \ b \leq a, \\  (0,b + |a|,|a|) & \text{ if } \ a < 0, \ a \leq b. \end{array} \right.$$
Furthermore, elements of $\mathbb{N}\epsilon_x \oplus \mathbb{N}\epsilon_y \oplus \mathbb{N}(-\epsilon_x- \epsilon_y)$ may be viewed as monomials in $B$, where $(a,b,c) \mapsto x^ay^bz^c$.
Under this identification,
\begin{equation} \label{pi-1}
\bar{\tau}(e_iAe_i) = k[\pi^{-1}(i)][xyz],
\end{equation}
where $xyz$ has been adjoined since there is precisely one cycle in $\widetilde{Q}$ at $(0,0) \in \pi^{-1}(i)$ without cyclic proper subpaths by Lemma \ref{cycle in cover}, namely the unit cycle with $\bar{\tau}$-image $xyz$.

By the general theory of abelian groups, the quotient $G := \mathbb{Z}^2 /\pi^{-1}(i)$ is a finite abelian group.
Moreover, there is a faithful representation $\rho: G \rightarrow \operatorname{SL}_3(k)$, $g \mapsto \operatorname{diag}\left( \begin{array}{ccc} \omega_{g,x} & \omega_{g,y} & \omega_{g,x}^{-1}\omega_{g,y}^{-1} \end{array} \right)$, with $\omega_{g,x}, \omega_{g,y} \in k$ roots of unity, such that the ring of invariants under the diagonal action of $\rho(G)$ on $k[x,y,z]$ is
$$k[x,y,z]^{\rho(G)} = k[\pi^{-1}(i)][xyz].$$
Therefore by (\ref{pi-1}), 
$$Z \cong \bar{\tau}(e_iAe_i) = k[\pi^{-1}(i)][xyz] = k[x,y,z]^{\rho(G)}.$$

We remark that $A = kQ/\partial W$ is the standard McKay quiver algebra
$$A = kQ/\left\langle [ x,y], \ [y,z], \ [z, x] \right\rangle,$$
where $x$, $y$, and $z$ are the sums of all the horizontal, vertical, and diagonal arrows, respectively (in the projection of the redrawn covering quiver), of the representation $\rho$ of $G$.
\end{proof}

\begin{Proposition} \label{astronaut}
Suppose $Q$ is not McKay and let $i \in Q_0$.  Then for each $\alpha,\beta \in \{1,2\}$ there exists a unique cycle in $e_iAe_i$, without cyclic proper subpaths, whose $\bar{\tau}$-image is of the form $x_{\alpha}^sy_{\beta}^t$ for some $s,t \geq 1$.
\end{Proposition}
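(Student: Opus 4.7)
My plan is to prove uniqueness first by a direct application of Lemma \ref{star} together with Proposition \ref{triangular}, and then to address existence by a reduction argument exploiting the normality of the center (Lemma \ref{normal}) and Lemma \ref{without cyclic proper subpaths}.

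For uniqueness, let $c_1, c_2 \in e_iAe_i$ be two cycles without cyclic proper subpaths with $\bar{\tau}(c_j) = x_\alpha^{s_j}y_\beta^{t_j}$ and $s_j, t_j \geq 1$, and split into two cases based on how the vectors $(s_1, t_1), (s_2, t_2) \in \pi^{-1}(i)$ compare.  In the componentwise-comparable case, say $(s_2, t_2) \geq (s_1, t_1)$, one has $\bar{\tau}(c_2) = x_\alpha^{s_2-s_1}y_\beta^{t_2-t_1}\bar{\tau}(c_1)$; since $c_1, c_2$ share the tail $i$, Lemma \ref{star} furnishes a cycle $q \in e_iAe_i$ with $\bar{\tau}(q) = x_\alpha^{s_2-s_1}y_\beta^{t_2-t_1}$ satisfying $c_2 \sim qc_1$.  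If exactly one of the two exponents is positive, $\bar{\tau}(q)$ is a pure positive power of one of $x_\alpha, y_\beta$, contradicting non-McKay via Proposition \ref{triangular}; if both are positive, then $c_2 \sim qc_1$ decomposes as a nontrivial product of cycles at $i$, contradicting the primitivity of $c_2$; hence both are zero, $(s_1, t_1) = (s_2, t_2)$, and Lemma \ref{injective} yields $c_1 \sim c_2$.  In the remaining incomparable case, WLOG $s_2 > s_1$ and $t_1 > t_2$, so $s_2t_1 > s_1t_2$; the analogous computation $\bar{\tau}(c_2^{t_1}) = x_\alpha^{s_2t_1 - s_1t_2}\bar{\tau}(c_1^{t_2})$ combined with Lemma \ref{star} produces a cycle at $i$ whose $\bar{\tau}$-image is a positive power of $x_\alpha$ alone, again contradicting Proposition \ref{triangular}.

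For existence, choose any lattice point $w \in \pi^{-1}(i) \subset \mathbb{Z}^2$ in the open quadrant with signs $(\operatorname{sgn}(\alpha), \operatorname{sgn}(\beta))$, which exists since $\pi^{-1}(i)$ has rank $2$.  A path in $\widetilde{Q}$ from the origin to $w$ projects to a cycle $c$ at $i$ with $\bar{\tau}(c) = x_\alpha^{P_1}x_{\alpha'}^{P_2}y_\beta^{Q_1}y_{\beta'}^{Q_2}$, $P_1 > P_2$ and $Q_1 > Q_2$.  Factor out maximal powers of the unit cycle $\sigma = x_1x_2y_1y_2$ via Lemma \ref{star} to reduce until $\min(P_2, Q_2) = 0$, and then eliminate any residual off-quadrant exponent by pairing with a suitable cycle obtained from the analogous construction in a neighboring quadrant and applying Lemma \ref{star} once more; normality via Lemma \ref{normal} guarantees that the resulting pure lattice vector $(s, 0, t, 0)$ corresponds to a genuine cycle.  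If the cycle so obtained still has a cyclic proper subpath, Lemma \ref{without cyclic proper subpaths} allows factoring $c \sim b_2b_1$ with $b_j$ non-vertex cycles at $i$, and each factor again has pure $\bar{\tau}$-image in the $(\alpha, \beta)$-quadrant (else non-McKay fails by Proposition \ref{triangular}), so iterating this step terminates in a primitive cycle of the desired form.

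The main obstacle lies in the existence step, particularly in clearing the residual off-quadrant exponent after the maximal $\sigma$-reduction: this requires combining cycles from different quadrants in a way compatible with Lemma \ref{star}, which ultimately rests on the combinatorial structure of the square grid embedding and the saturation of the semigroup $\bar{\tau}(e_iAe_i)$.  The uniqueness proof, by contrast, is conceptually clean — essentially a three-line computation applying Lemma \ref{star} to the ratio of $\bar{\tau}$-images and invoking Proposition \ref{triangular}.
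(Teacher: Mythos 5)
Your uniqueness argument is correct and in fact takes a different route from the paper's: the paper lifts the two cycles to the covering quiver, joins their heads by a path $d$, verifies $\sigma\nmid\bar{\tau}(dp_1^+)$ so that Lemma \ref{cycle in cover} and Lemma \ref{star star} apply, and concludes that $\pi(d)$ is a vertex; you instead apply Lemma \ref{star} directly to the two cycles (or to the powers $c_2^{t_1}$, $c_1^{t_2}$ in the incomparable case) and exclude the quotient cycle using Proposition \ref{triangular} and primitivity. Both arguments rest on the same implicit convention that a cycle without cyclic proper subpaths cannot be equivalent modulo $\partial W$ to a nontrivial product of cycles at $i$, so your version is on equal footing with the paper's and is arguably cleaner, since it never leaves $Q$.

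The existence half, however, has a genuine gap, and it is exactly the step you flag yourself. After factoring out the maximal power of $\sigma$ via Lemma \ref{star} you have only cleared one of the two unwanted variables (you reach an image of the form $x_{\alpha}^{P_1'}x_{\alpha'}^{P_2'}y_{\beta}^{Q_1'}$, say), and ``pairing with a suitable cycle from a neighboring quadrant and applying Lemma \ref{star} once more'' is not an argument: multiplying by such a cycle reintroduces $y_{\beta'}$, and nothing controls whether the product is divisible by a high enough power of $\sigma$ to clear $x_{\alpha'}$ and $y_{\beta'}$ simultaneously. Normality does not rescue this as stated either: Lemma \ref{normal} (saturation of the exponent semigroup $S$) produces a cycle with image $x_{\alpha}^{s}y_{\beta}^{t}$ only after you have exhibited the corresponding vector as an element of $\mathbb{Z}S\cap\mathbb{N}^4$, i.e.\ as an explicit integer combination of images of cycles, and producing such a combination is essentially the content of the existence claim; note for instance that $(1,1,0,0)\notin\mathbb{Z}S$ already for the conifold, so the naive decomposition $v-a_2(1,1,0,0)-b_2(0,0,1,1)$ is unavailable in general. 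The paper's existence proof is entirely different and elementary: by inspection of the building blocks in figure \ref{squares}, every vertex is the tail of an arrow whose label is divisible by neither $x_{\alpha}$ nor $y_{\beta}$, so one builds an arbitrarily long path avoiding both variables, which must self-intersect since $|Q_0|<\infty$, yielding a cycle at some vertex $j$ whose image involves only $x_{\alpha'}$ and $y_{\beta'}$; Theorem \ref{square impression} ($\bar{\tau}(e_iAe_i)=\bar{\tau}(e_jAe_j)$) transfers it to vertex $i$, and Proposition \ref{triangular} (non-McKayness) forces both exponents to be at least $1$. To repair your existence step you would need input of exactly this combinatorial kind to control the off-quadrant exponents; the lattice/saturation framing alone does not supply it.
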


\begin{proof}
We first show existence.  We claim that for any choice of $\alpha, \beta \in \{1,2\}$, there is a cycle whose $\bar{\tau}$-image is \textit{not} divisible by $x_{\alpha}$ and $y_{\beta}$.
Indeed, each vertex is the tail of an arrow whose $\bar{\tau}$-image is not divisible by $x_{\alpha}$ or $y_{\beta}$; see figure \ref{squares}.  
We can therefore construct a path in $Q$ of arbitrary length whose $\bar{\tau}$-image is not divisible by $x_{\alpha}$ and $y_{\beta}$.  
Since $|Q_0| < \infty$ we can suppose this path intersects itself, say at vertex $j$, thereby forming a cycle at $j$ whose $\bar{\tau}$-image is not divisible by $x_{\alpha}$ and $y_{\beta}$.  
By Theorem \ref{square impression}, $\bar{\tau}(e_iAe_i) = \bar{\tau}(e_jAe_j)$, so there is a cycle at $i$ whose $\bar{\tau}$-image is not divisible by $x_{\alpha}$ and $y_{\beta}$, and therefore is of the form $x_{\alpha+1}^sy_{\beta+1}^t$ (with indices mod 2).
Since $Q$ is not McKay, Proposition \ref{triangular} implies $s, t \geq 1$.

We now show uniqueness.
Fix $i \in Q_0$.
Without loss of generality, consider two cycles $p_1, p_2 \in e_iAe_i$ without cyclic proper subpaths whose respective $\bar{\tau}$-images are $x_1^{s_1} y_2^{t_1}$ and $x_1^{s_2}y_2^{t_2}$, with $s_1 < s_2$.
Consider the lifts $p_1^+$ and $p_2^+$ of $p_1$ and $p_2$ with tails at $(0,0) \in \pi^{-1}(i) \subset \widetilde{Q}_0 = \mathbb{Z}^2$.
Since the $\bar{\tau}$-images of $p_1$ and $p_2$ are only divisible by $x_1$ and $y_1$, the head of $p_1^+$ is at $(s_1,t_1)$ and the head of $p_2^+$ is at $(s_2,t_2)$.
Clearly there exists a path $d$ in $\widetilde{Q}$ from $(s_1,t_1)$ to $(s_2,t_2)$ with $\bar{\tau}$-image $x_1^{s_2-s_1}y_1^{u_1}y_2^{u_2}$ for some $u_1, u_2 \geq 0$.
Since $B$ is a polynomial ring and $x_2$ does not divide $\bar{\tau}(d)$ or $\bar{\tau}(p_1^+)$, $x_2$ does not divide their product $\bar{\tau}(d)\bar{\tau}(p_1^+) = \bar{\tau}(dp_1^+)$. 
Thus $\sigma \nmid \bar{\tau}(dp_1^+)$, so $dp_1^+$ has no cyclic proper subpaths by Lemma \ref{cycle in cover}.
Therefore $dp_1^+$ have $p_2^+$ have coincident heads and tails in $\widetilde{Q}$ and no cyclic proper subpaths, whence $p_2 \sim \pi(d)p_1$ by Lemma \ref{star star}.
Since $p_2$ has no cyclic proper subpaths, $\pi(d)$ must be the vertex $\operatorname{h}(p_2)$.
Therefore $(s_1,t_1) = (s_2,t_2)$, contradicting our assumption that $s_1 < s_2$.
\end{proof}

Suppose $Q$ is not McKay and fix $i \in Q_0$.  Denote by $a,b,c,d \in e_iAe_i = Ze_i$ the unique cycles, without cyclic proper subpaths, of the form 
\begin{equation} \label{abcd}
\bar{\tau}(a) = x_1^{s_a}y_1^{t_a}, \ \ \ \bar{\tau}(b)= x_2^{s_b}y_2^{t_b}, \ \ \ \bar{\tau}(c) = x_1^{s_c} y_2^{t_c}, \ \ \ \bar{\tau}(d) = x_2^{s_d}y_1^{t_d},
\end{equation}
with $s_*, t_* \geq 1$.  Denote by $\sigma \in Ze_i$ the unique cycle satisfying $\bar{\tau}(\sigma) = x_1x_2y_1y_2$.

\begin{Lemma} \label{Z-reg} 
The sequence $(c-d,a,b)$ is a $Ze_i$-regular sequence.
\end{Lemma}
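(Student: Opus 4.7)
My plan is to verify the three defining conditions of a regular sequence: (1) $c-d$ is a non-zerodivisor on $Ze_i$; (2) $a$ is a non-zerodivisor on $Ze_i/(c-d)$; and (3) $b$ is a non-zerodivisor on $Ze_i/(c-d,a)$. The nondegeneracy $Ze_i/(c-d,a,b) \neq 0$ holds trivially since $c-d,a,b$ all lie in the proper ideal $\mathfrak{m}$ at the origin. The argument exploits the impression $\bar{\tau}\colon Ze_i \hookrightarrow B = k[x_1,x_2,y_1,y_2]$ from Theorem~\ref{square impression} together with two facts: unique factorization in the UFD $B$, and the normality of the affine semigroup $S$ underlying $\bar{\tau}(Ze_i) = k[S]$ (Lemma~\ref{normal}, which in its combinatorial form reads $\mathbb{Z}S \cap \mathbb{N}^4 = S$). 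The central observation is a coprimality pattern in $B$: any two of $\bar{\tau}(a),\bar{\tau}(b),\bar{\tau}(c)-\bar{\tau}(d)$ are coprime. Indeed $\bar{\tau}(a) = x_1^{s_a}y_1^{t_a}$ and $\bar{\tau}(b) = x_2^{s_b}y_2^{t_b}$ have disjoint variable supports, and each irreducible factor ($x_1$ or $y_1$) of $\bar{\tau}(a)$ divides exactly one of $\bar{\tau}(c) = x_1^{s_c}y_2^{t_c}$ and $\bar{\tau}(d) = x_2^{s_d}y_1^{t_d}$, so neither divides $\bar{\tau}(c)-\bar{\tau}(d)$; a symmetric argument handles the factors $x_2,y_2$ of $\bar{\tau}(b)$.

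Condition (1) is immediate: $Ze_i$ is commutative, prime, and noetherian by Corollary~\ref{square prime}, hence a domain, and $c-d \neq 0$ since $\bar{\tau}(c) \neq \bar{\tau}(d)$ as monomials in $B$. For condition (2), given $ax = (c-d)y$ in $Ze_i$, I would apply $\bar{\tau}$ and cancel in the UFD $B$ using the coprimality of $\bar{\tau}(a)$ and $\bar{\tau}(c)-\bar{\tau}(d)$ to conclude $\bar{\tau}(a) \mid \bar{\tau}(y)$ in $B$. Normality of $S$ then forces each monomial of $\bar{\tau}(y)/\bar{\tau}(a)$ to have exponent vector in $\mathbb{Z}S \cap \mathbb{N}^4 = S$, so $\bar{\tau}(y)/\bar{\tau}(a) \in \bar{\tau}(Ze_i)$; injectivity of $\bar{\tau}$ on corner rings (Lemma~\ref{bartau}) yields $y = ay'$ in $Ze_i$, and cancelling $a$ in the domain $Ze_i$ gives $x = (c-d)y'$.

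Condition (3) is the main obstacle: from $bx = (c-d)y + az$ one cannot cancel $\bar{\tau}(b)$ in $B$ directly, because the right-hand side has two terms. My plan is to reduce the equation modulo $\bar{\tau}(a)$ in $B$, obtaining $\bar{\tau}(b)\bar{\tau}(x) \equiv (\bar{\tau}(c)-\bar{\tau}(d))\bar{\tau}(y) \pmod{\bar{\tau}(a)}$, and to propagate divisibility by $\bar{\tau}(b)$ using $\gcd_B(\bar{\tau}(b),\bar{\tau}(c)-\bar{\tau}(d)) = 1$ together with normality of $S$, mirroring step~(2) but working monomial-by-monomial. The subtlety is that $B/(\bar{\tau}(a))$ is not a domain---its associated primes include $(x_1)$ and $(y_1)$---so the gcd argument does not apply globally; one must decompose along the primary components and recombine using the lattice structure of $S$. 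I expect this combinatorial bookkeeping, which amounts to showing that the extended ideal $(c-d,a)B$ contracts cleanly to $(c-d,a)Ze_i$ after the $\bar{\tau}(b)$-divisibility reduction, to be the main technical burden of the proof.
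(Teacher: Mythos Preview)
Your approach is correct but takes a genuinely different route from the paper's. The paper does not verify the three regularity conditions separately; instead it invokes a descent principle in one line: a sequence of elements of a subring $R\subset B$ that is $B$-regular and leaves $R$ modulo the generated ideal nonzero is automatically $R$-regular. Since $(\bar\tau(c)-\bar\tau(d),\,\bar\tau(a),\,\bar\tau(b))$ is clearly a regular sequence in the polynomial ring $B$, and the element $\bar\tau(abc)\sigma^{-1}\in R$ lies outside the ideal, the lemma follows immediately. Your direct verification is longer but more transparent about where the structure of $Ze_i$ actually enters: the descent principle as stated is \emph{not} valid for arbitrary subrings (it fails, for instance, for the non-normal monomial ring $R=k[x,xy,y^2,y^3]\subset k[x,y]$ with the $B$-regular pair $(x,y^2)$, where $y^2$ kills the nonzero class of $xy$ in $R/xR$), and the hidden hypothesis is precisely the ideal-contraction property $IB\cap R=IR$, which in the toric setting is exactly the saturation $\mathbb{Z}S\cap\mathbb{N}^4=S$ you invoke. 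In particular your worry in step~(3) is well-founded, but it is the same obstacle implicit in the paper's shortcut; once you use normality to establish the contraction $(\bar\tau(c)-\bar\tau(d),\,\bar\tau(a))B\cap R=(\bar\tau(c)-\bar\tau(d),\,\bar\tau(a))R$ directly, the coprimality reduction for $\bar\tau(b)$ goes through exactly as in your step~(2), and the primary decomposition of $B/(\bar\tau(a))$ becomes unnecessary.
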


\begin{proof}
If $\alpha$ and $\beta$ are elements of $R \subset B$ and $\beta$ is a zerodivisor on $R/(\alpha)R$, then $\beta$ will also be a zerodivisor on $B/(\alpha)B$.  
It follows by the contrapositive that if $\alpha_1, \ldots, \alpha_n \in R$ is a $B$-regular sequence and $R/(\alpha_1, \ldots, \alpha_n)R$ is nonzero, then $\alpha_1, \ldots, \alpha_n$ will be an $R$-regular sequence. 

Clearly the sequence $s := \left( \bar{\tau}(c)-\bar{\tau}(d), \bar{\tau}(a), \bar{\tau}(b) \right)$ is a $B$-regular sequence.
Furthermore, let $\gamma \in Ze_i$ be the cycle satisfying $\bar{\tau}(\gamma) = \bar{\tau}(abc)\bar{\tau}(\sigma)^{-1}$, which exists by Lemma \ref{star} and is unique by Lemma \ref{star star}.
Then clearly $\bar{\tau}(\gamma) \in R$ is not in the ideal $(s)R$, so $R/(s)R$ is nonzero.
Therefore, by the previous paragraph, $s$ is also an $R$-regular sequence. 
By Theorem \ref{square impression}, $R = \bar{\tau}(e_iZe_i) = \bar{\tau}(Ze_i)$, and so $s$ is a $\bar{\tau}(Ze_i)$-regular sequence.
Moreover, by Lemma \ref{injective}, $\bar{\tau}: Ze_i \rightarrow \bar{\tau}(Ze_i)$ is an algebra isomorphism.
Therefore $(c-d,a,b)$ is a $Ze_i$-regular sequence.
\end{proof} 

\begin{Remark} \label{Cohen-Macaulay} \rm{
A ring $R$ is Cohen-Macaulay if $\operatorname{depth}\mathfrak{m} = \operatorname{codim}\mathfrak{m}$ for every maximal ideal $\mathfrak{m}$ of $R$ \cite[section 18.2]{E}, and as noted above, $Z_{\mathfrak{m}}$ is Cohen-Macaulay since $A_{\mathfrak{m}}$ is homologically homogeneous.  However, this also follows directly from Lemma \ref{Z-reg} and Proposition \ref{Krull dim} since
$$3 \leq \operatorname{depth}\mathfrak{m} \leq \operatorname{codim} \mathfrak{m} = 3.$$
} \end{Remark}

Let $(R,\mathfrak{m})$ be a commutative noetherian local ring.  Recall that the socle of an $R$-module $M$ is the annihilator in $M$ of the unique maximal ideal $\mathfrak{m}$ of $R$.   Also recall that if $R$ is zero-dimensional, then $R$ is Gorenstein if $R \cong \omega$, where the canonical module $\omega$ is the injective hull of the residue field $R/\mathfrak{m}$ \cite[definition in section 21.2]{E}, and this is equivalent to the socle of $R$ being simple \cite[Proposition 21.5]{E}.  More generally, if $R$ is Cohen-Macaulay then $R$ is Gorenstein if there is a nonzerodivisor $x \in R$ such that $R/(x)$ is Gorenstein \cite[definition in section 21.3]{E}.

\begin{Proposition} \label{simple socle}
Denote by $I \subset Ze_i$ the ideal generated by the $Ze_i$-regular sequence $(c-d,a,b)$, and consider the zero-dimensional local ring $( R := (Ze_i)_{\mathfrak{m}} /I_{\mathfrak{m}}, \mathfrak{m}_{\mathfrak{m}}/I_{\mathfrak{m}} )$.  
The socle $\operatorname{ann}_R(\mathfrak{m}_{\mathfrak{m}}/I_{\mathfrak{m}})$ of $R$ is a simple $R$-module, and so $R$ is Gorenstein.
\end{Proposition}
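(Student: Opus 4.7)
The plan is to exhibit an explicit generator of the socle and verify that the socle is one-dimensional over $k$, using the impression to transport the computation to the monomial subring $\bar{\tau}(Ze_i) \subset B$.

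First, observe that $R$ is a zero-dimensional local Artinian $k$-algebra: by Proposition \ref{Krull dim} the ring $Ze_i$ has Krull dimension $3$, and Lemma \ref{Z-reg} produces a length-$3$ regular sequence whose quotient is $R$. For a zero-dimensional local ring, Gorenstein is equivalent to the socle being one-dimensional over $k$, so it is enough to prove the latter. Via the isomorphism $\bar{\tau}\colon Ze_i \xrightarrow{\sim} \bar{\tau}(Ze_i) = k[S] \subset B$ of Lemma \ref{bartau}, the problem becomes one about a semigroup ring on the saturated semigroup $S \subset \mathbb{N}^4$ (saturated by Lemma \ref{normal}), modulo the ideal generated by the two monomials $\bar{\tau}(a), \bar{\tau}(b)$ and the binomial $\bar{\tau}(c) - \bar{\tau}(d)$.

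As the socle generator I would take $\gamma \in Ze_i$ defined by $\bar{\tau}(\gamma) = \bar{\tau}(abc)\bar{\tau}(\sigma)^{-1}$, the element already used in the proof of Lemma \ref{Z-reg}; its existence and uniqueness follow from Lemmas \ref{star} and \ref{star star}, and its non-vanishing modulo $I$ was verified there. To see $\mathfrak{m}_{\mathfrak{m}}\gamma \subseteq I_{\mathfrak{m}}$, it suffices by Lemma \ref{without cyclic proper subpaths} to check $g\gamma \in I$ for each of the finitely many cycles $g$ without cyclic proper subpaths; under the non-McKay hypothesis these comprise $a,b,c,d,\sigma$ together with possibly further cycles whose $\bar{\tau}$-image involves three or four of the variables. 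The cases $g = a, b$ are immediate; the case $g = c$ reduces modulo $I$ to $g = d$; and for $g = d, \sigma$ (and the remaining cases) one computes $\bar{\tau}(g\gamma)$ explicitly, uses the inequalities $s_{\ast},t_{\ast} \geq 1$ from Proposition \ref{astronaut} to check that $\bar{\tau}(a)$ (or $\bar{\tau}(b)$) divides $\bar{\tau}(g\gamma)$ in $B$, and invokes the saturation of $S$ to lift this divisibility to $k[S]$, concluding $g\gamma \in (a) \subseteq I$ (resp.\ $(b) \subseteq I$).

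For the final step — that the socle is exactly $k\gamma$ — I would carry out a direct monomial basis computation for $R$. Passing first to $\bar{\tau}(Ze_i)/(\bar{\tau}(a),\bar{\tau}(b))$ produces a monomial Artinian quotient with an explicit finite basis indexed by lattice points of $S$ bounded in the $a$- and $b$-directions; modding out further by the binomial $\bar{\tau}(c) - \bar{\tau}(d)$ identifies paired basis vectors. A careful inspection should then identify the image of $\bar{\tau}(\gamma)$ as the unique `top' basis element annihilated by every generator of $\mathfrak{m}$. The main obstacle is this last step: because $I$ is not a monomial ideal, the quotient $R$ does not inherit a pure monomial basis, and the combinatorial bookkeeping — tracking simultaneously the toric structure of $S$ and the identifications forced by $c = d$ — is the delicate technical core of the argument.
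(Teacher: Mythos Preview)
Your choice of socle generator $\gamma$ with $\bar{\tau}(\gamma)=\bar{\tau}(abc)/\bar{\tau}(\sigma)$ matches the paper, and your argument that $\gamma$ lies in the socle is essentially correct (the paper's version is organized a bit differently: rather than enumerating generators of $\mathfrak{m}$, it observes that any monomial $m\in\mathfrak{m}$ is either divisible by $x_1y_1$, or by $x_2y_2$, or else equals $c^n=d^n$ in $S:=R_{\mathfrak{m}}/(c-d)$, and in each case $\gamma m\in(a,b)S$ directly).

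The gap is in the uniqueness direction. Your plan to ``compute a monomial basis for $R$ and identify $\gamma$ as the unique top element'' is not an argument, and you correctly flag it as the obstacle. The paper does \emph{not} compute a basis. Instead it argues as follows: take any monomial $\eta\notin(a,b,c-d)$ in the socle; since $\sigma\in\mathfrak{m}$, one has $\eta\sigma\in(a,b)S$, and because $\eta$ is a monomial this forces $\frac{a}{x_1y_1}\mid\eta$ or $\frac{b}{x_2y_2}\mid\eta$ in $B$. Assuming the former, write $\eta=\frac{a}{x_1y_1}x_2^{\alpha}y_1^{\beta_1}y_2^{\beta_2}$. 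The hypothesis $\eta\notin(a,b,c-d)$ then gives explicit non-divisibility constraints ($b\nmid x_2^{\alpha}y_2^{\beta_2}$ and $d\nmid x_2^{\alpha}y_1^{\beta_1-1}$), and a careful case analysis on the exponents---splitting $\alpha=\alpha_1+\alpha_2$ with $\alpha_1=\min(\alpha,s_b-1)$ and using Proposition~\ref{astronaut} and Lemma~\ref{star} repeatedly---shows first that $x_2^{\alpha_1}y_2^{\beta_2}\mid \frac{b}{x_2y_2}$ and then that $x_2^{\alpha_2}y_1^{\beta_1}=d$. This yields $\eta\mid\frac{abd}{\sigma}=\frac{abc}{\sigma}$ in $S$, whence $\eta=\gamma$. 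The passage to polynomials is then routine since $R$ sits inside the polynomial ring $B$.

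So the missing idea is: multiply by $\sigma$ to pin down a large factor $\frac{a}{x_1y_1}$ (or $\frac{b}{x_2y_2}$) of $\eta$, and then use the uniqueness in Proposition~\ref{astronaut} to control the residual factor. Without this, the ``basis inspection'' you propose has no structure to exploit and would amount to redoing the same case analysis in disguise.
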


\begin{proof}
By abuse of notation, in the following denote by $a,b,c,d \in B$ the respective $\bar{\tau}$-images of the cycles $a,b,c,d \in Ze_i$.
Set $S := R_{\mathfrak{m}}/(c - d)R_{\mathfrak{m}}$, which is nonzero since $c-d \in \mathfrak{m}$.
Clearly for $\eta \in R_{\mathfrak{m}}$,
$\eta \in \operatorname{ann}_{R_{\mathfrak{m}}}(\mathfrak{m}_{\mathfrak{m}}/(a,b,c-d))$ if and only if $\eta \mathfrak{m}_{\mathfrak{m}} \subseteq (a,b,c-d) R_{\mathfrak{m}}$, if and only if $\eta(\mathfrak{m}_{\mathfrak{m}}/(c-d)) \subseteq (a,b)S$.

(I) Denote by $\sigma := x_1x_2y_1y_2$ the $\bar{\tau}$-image of a unit cycle at vertex $i$.
Since $a$, $b$, $c$, and $\sigma$ are $\bar{\tau}$-images of cycles at $i$, Lemma \ref{star} implies that there is a cycle $\gamma$ at $i$ whose $\bar{\tau}$-image is $\frac{abc}{\sigma}$.
Thus, since $\frac{abc}{\sigma}$ is the $\bar{\tau}$-image of a cycle, it is in $\mathfrak{m}$.
Moreover, it is clear that $\frac{abc}{\sigma}$ is not in the ideal $(a,b,c-d)R$.
Therefore $\frac{abc}{\sigma}$ is nonzero in $\mathfrak{m}_{\mathfrak{m}}/(a,b,c-d)$.

(II) We claim that $\frac{abc}{\sigma} (\mathfrak{m}_{\mathfrak{m}}/(c-d)) \subseteq (a,b)S$.

First let $m \in \mathfrak{m}S$ be a monomial.
If $x_1y_1$ or $x_2y_2$ divides $m$ then $a$ or $b$ respectively divides $\frac{abc}{\sigma} m$, yielding $\frac{abc}{\sigma} m \in (a,b)S$.
Since $Q$ is not McKay, there is no monomial in $R_{\mathfrak{m}}$ of the form $x_{\alpha}^n$ or $y_{\beta}^n$ by Proposition \ref{triangular}.
Thus if both $x_1y_1$ and $x_2y_2$ do not divide $m$, then it must be that $m = c^n = d^n$ for some $n \geq 1$.
But $\sigma | cd$, and so we have 
$$\frac{abc}{\sigma} m = \frac{abc}{\sigma} c^n = c^{n-1} \frac{cd}{\sigma} ab \in (a,b)S.$$

Now consider a polynomial $p = \sum_j m_j \in \mathfrak{m}S$, with each $m_j$ a (nonconstant) monomial.
Then $\frac{abc}{\sigma} p = \sum_j \frac{abc}{\sigma} m_j$ is in $(a,b)S$ since each term $\frac{abc}{\sigma}m_j$ is in $(a,b)S$ by the previous paragraph. 
This proves (II).

(III.1) Suppose $\eta \in R$ is a monomial which is not in $(a,b,c-d)R$ and satisfies 
\begin{equation} \label{eta}
\eta (\mathfrak{m}_{\mathfrak{m}}/(c-d)) \subseteq (a,b)S.
\end{equation}
We claim that $\eta = \frac{abc}{\sigma}$.

Since $\sigma$ is in $\mathfrak{m}$, by (\ref{eta}) we may view $\eta \sigma$ as an element of $(a,b)S$.  
Since $\eta$ is a monomial, this implies $\frac{a}{x_1y_1}$ or $\frac{b}{x_2y_2}$ divides $\eta$.
Without loss of generality, we may therefore suppose there are integers $\alpha, \beta_1, \beta_2 \geq 0$ such that
$$\eta = \frac{a}{x_1y_1} x_2^{\alpha} y_1^{\beta_1} y_2^{\beta_2}.$$
Since $\eta$ is not in $(a,b,c-d)R$, we must have
\begin{equation} \label{b nmid}
b \nmid x_2^{\alpha}y_2^{\beta_2},
\end{equation}
\begin{equation} \label{d nmid}
d \nmid x_2^{\alpha}y_1^{\beta_1 -1},
\end{equation}
where (\ref{d nmid}) holds because otherwise $cy_1 = dy_1$ divides $x_2^{\alpha}y_1^{\beta_1}$ in $S$, whence $x_1y_1$ divides $x_2^{\alpha}y_1^{\beta_1}$ in $S$, yielding $a | \eta$ in $S$.

Set $\alpha_1 := \operatorname{min} \left\{ \alpha, s_b-1 \right\}$ and $\alpha_2 := \alpha - \alpha_1 \geq 0$.

(III.1.a) We first claim $x_2^{\alpha_1}y_2^{\beta_2} | \frac{b}{x_2y_2}$.  By condition (\ref{b nmid}) there are two possibilities:
$$\begin{array}{clcl}
(i) & x_2^{\alpha_1} y_2^{\beta_2} = \frac{b}{x_2}y_2^r = \frac{b}{x_2y_2}y_2^{r+1}, & \ \ \ \ & r \geq -t_b,\\
(ii) & x_2^{\alpha_1} y_2^{\beta_2} = \frac{b}{y_2} x_2^{r'}, & & r'  \geq -s_b.  
\end{array}$$

First consider (i): Suppose to the contrary that $r \geq 0$.
By assumption $x_2^{\alpha_1} y_2^{\beta_2} = \frac{b}{x_2}y_2^r$, so $\beta_2 = t_b + r$, and therefore $\beta_2 \geq t_b$. 

If $\alpha_2 \geq 1$ then $\alpha \geq 1 + \alpha_1 > \alpha_1$, so $\alpha_1 = s_b -1$, whence 
$$\alpha \geq 1 + \alpha_1 = 1 + (s_b -1) = s_b.$$
But then $\beta_2 \geq t_b$ and $\alpha \geq s_b$, which implies $b = x_2^{s_b}y_2^{t_b}| x_2^{\alpha}y_2^{\beta_2}$, contrary to (\ref{b nmid}).
Therefore the assumption $r \geq 0$ implies that $\alpha_2 = 0$, and so $\alpha = \alpha_1$.
In this case,
$$\eta = \frac{a}{x_1y_1} x_2^{\alpha} y_1^{\beta_1}y_2^{\beta_2} = \frac{a}{x_1y_1} x_2^{\alpha_1} y_1^{\beta_1}y_2^{\beta_2} = \frac{a}{x_1y_1}\frac{b}{x_2y_2} y_2^{r+1}y_1^{\beta_1} = \frac{ab}{\sigma} y_2^{r+1}y_1^{\beta_1}.$$ 
But $\eta$, $ab$, and $\sigma$ are $\bar{\tau}$-images of cycles, so Lemma \ref{star} implies there must exist a cycle with $\bar{\tau}$-image $y_2^{r+1}y_1^{\beta_1}$, a contradiction.
Therefore 
\begin{equation} \label{tb}
-t_b \leq r \leq -1.
\end{equation}

For (ii): By assumption $x_2^{\alpha_1}y_2^{\beta_2} = \frac{b}{y_2} x_2^{r'}$, so $\alpha_1 = s_b + r'$.
But $\alpha_1 < s_b$, so $r' \leq -1$, yielding
\begin{equation} \label{sb}
-s_b \leq r' \leq -1.
\end{equation}
By (\ref{tb}) and (\ref{sb}) we therefore have $x_2^{\alpha_1}y_2^{\beta_2} | \frac{b}{x_2y_2}$, proving (III.1.a). 

(III.1.b) We now claim $x_2^{\alpha_2} y_1^{\beta_1} | d$.
Since $ab$ and $\sigma$ are $\bar{\tau}$-images of cycles at $i$ and $\frac{a}{x_1y_1}x_2^{\alpha_1}y_2^{\beta_2}$ divides $\frac{ab}{x_1x_2y_1y_2} = \frac{ab}{\sigma}$ by (a), Lemma \ref{star} implies that there must be a cycle at $i$ whose $\bar{\tau}$-image is $\frac{a}{x_1y_1}x_2^{\alpha_1}y_2^{\beta_2}$.
But $\eta = \frac{a}{x_1y_1} x_2^{\alpha} y_1^{\beta_1} y_2^{\beta_2}$ is also the $\bar{\tau}$-image of a cycle, and so $x_2^{\alpha_2}y_1^{\beta_1}$ must be the $\bar{\tau}$-image of a cycle as well, again by Lemma \ref{star}.  
Therefore $x_2^{\alpha_2}y_1^{\beta_1} = d^n$ for some $n \geq 1$ by the uniqueness in Proposition \ref{astronaut}.
Furthermore, $n$ must equal 1 for otherwise (\ref{d nmid}) would not hold, proving (III.1.b).

By (III.1.a) and (III.1.b), we have 
$$\eta = \frac{a}{x_1y_1} x_2^{\alpha} y_1^{\beta_1}y_2^{\beta_2} = \frac{a}{x_1y_1} \left( x_2^{\alpha_1} y_2^{\beta_2} \right) \left( x_2^{\alpha_2} y_1^{\beta_1} \right) \ \mid  \ \frac{abd}{\sigma}.$$
Therefore, since $\eta$ is the $\bar{\tau}$-image of a cycle at $i$, Lemma \ref{star} implies that there is a cycle $h$ at $i$ whose $\bar{\tau}$-image is $\frac{abd}{\sigma}\eta^{-1}$.
If $h$ is a cycle of positive length, then $\bar{\tau}(h) \in \mathfrak{m}$, yielding
$$\eta = \frac{abc}{\sigma} \bar{\tau}(h) \in (a,b,c-d)R,$$ 
a contradiction to our choice of $\eta$.
Therefore $h$ must be a vertex, hence $\eta = \frac{abc}{\sigma}$, which is not in $(a,b,c-d)R$ by (I).
This proves our claim (III.1).

(III.2)
Now suppose $\eta = \sum_{\ell} \eta_{\ell} \in S$ is a polynomial which is not in $(a,b)S$ and satisfies (\ref{eta}). 
Then for any polynomial $p = \sum_j m_j$ in $\mathfrak{m}S$ with nonconstant monomials summands $m_j$, there are polynomials $\mu_a, \mu_b \in S$ such that 
\begin{equation} \label{pair}
\eta p = \mu_a a + \mu_b b.
\end{equation}
But any representative of a monomial in $S = R/(c-d)$ is a monomial in $R$, and $R$ is a subalgebra of the polynomial ring $B$.
Thus each monomial summand $\eta_{\ell}m_j$ on the left hand side of (\ref{pair}) equals some monomial summand on the right hand side, so $\eta_{\ell}m_j$ is in $(a)S$ or $(b)S$, and so is in $(a,b)S$.
Furthermore, since $m_j$ is a nonconstant monomial, $m_j \in \mathfrak{m}S$.
Therefore we may apply (III.1) to conclude that $\eta_{\ell}$ is a scalar multiple of $\frac{abc}{\sigma}$, so $\eta$ itself is a scalar multiple of $\frac{abc}{\sigma}$.

(IV)
By (III.1) and (III.2), the only nonzero elements of $\operatorname{ann}_R(\mathfrak{m}/(a,b,c-d))$ are scalar multiples of $\frac{abc}{\sigma}$.
Thus upon localizing at $\mathfrak{m}$ we have
$$\operatorname{ann}_{R_{\mathfrak{m}}}(\mathfrak{m}_{\mathfrak{m}}/(a,b,c-d)) = \left\{ \frac{f}{g} \frac{abc}{\sigma} \ | \ f, g \in R \setminus \mathfrak{m} \right\} \cup \{0 \}.$$
Therefore $\operatorname{ann}_{R_{\mathfrak{m}}}(\mathfrak{m}_{\mathfrak{m}}/(a,b,c-d))$ is a simple $R_{\mathfrak{m}}$-module generated by $\frac{abc}{\sigma}$.
\end{proof}

\subsection{Main result}

\begin{Theorem} \label{Gorenstein} 
Let $Z$ be the center of a square superpotential algebra.  Then $Z$ is a 3-dimensional normal domain and the localization $Z_{\mathfrak{m}}$ at the origin $\mathfrak{m}$ of $\operatorname{Max}Z$ is Gorenstein.
\end{Theorem}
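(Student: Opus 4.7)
The plan is to assemble the theorem from the pieces already established in the preceding subsections, with the Gorenstein conclusion obtained by lifting the Gorenstein property of a zero-dimensional Artinian quotient through a maximal regular sequence.

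First I would dispatch the easy assertions. By Corollary \ref{square prime}, $Z$ is a prime noetherian ring, hence an integral domain. By Proposition \ref{Krull dim}, the transcendence degree of $Z \cong Ze_i$ over $k$ is $3$, so $\dim Z = 3$. Normality is Lemma \ref{normal}. After the natural identification $Z \cong Ze_i$ I would pass to the localization at the origin and work with $(Ze_i)_{\mathfrak{m}}$, which is a local noetherian domain of dimension $3$.

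For the Gorenstein claim I would first deal with the McKay case separately: by Proposition \ref{triangular}, if $Q$ is McKay then $Z$ is the invariant ring $k[x,y,z]^{\rho(G)}$ for a finite abelian subgroup $\rho(G) \subset \operatorname{SL}_3(k)$, and such invariant rings are classically known to be Gorenstein (for instance by the Watanabe theorem), so $Z_{\mathfrak{m}}$ is Gorenstein. In the non-McKay case I would use the explicit cycles $a,b,c,d \in Ze_i$ of Proposition \ref{astronaut}. By Lemma \ref{Z-reg}, $(c-d,\,a,\,b)$ is a $Ze_i$-regular sequence contained in $\mathfrak{m}$, so after localizing it is a regular sequence in $\mathfrak{m}_{\mathfrak{m}} \subset (Ze_i)_{\mathfrak{m}}$ of length $3 = \dim (Ze_i)_{\mathfrak{m}}$. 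Hence $(Ze_i)_{\mathfrak{m}}$ is Cohen--Macaulay and $(c-d,\,a,\,b)$ is in fact a system of parameters.

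Now I would apply Proposition \ref{simple socle}: the zero-dimensional local quotient
\[
R := (Ze_i)_{\mathfrak{m}} / (c-d,\,a,\,b)(Ze_i)_{\mathfrak{m}}
\]
has simple socle, so by \cite[Proposition 21.5]{E} it is Gorenstein. Since a Cohen--Macaulay local ring modulo a regular sequence is Gorenstein if and only if the original ring is Gorenstein (the inductive definition of Gorenstein given in \cite[\S 21.3]{E}), we conclude that $(Ze_i)_{\mathfrak{m}} \cong Z_{\mathfrak{m}}$ is Gorenstein. The main substantive content of the argument is already packaged in Lemma \ref{Z-reg} and Proposition \ref{simple socle}; the remaining work here is simply to check that dimension $3$, normality, and Cohen--Macaulayness assemble correctly, with the only subtle point being to remember to treat the McKay case separately since there the cycles $a,b,c,d$ of Proposition \ref{astronaut} need not exist.
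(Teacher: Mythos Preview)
Your proof is correct and follows essentially the same route as the paper: dimension from Proposition \ref{Krull dim}, normality from Lemma \ref{normal}, domain from Corollary \ref{square prime}, and Gorenstein by lifting the simple-socle conclusion of Proposition \ref{simple socle} through the regular sequence of Lemma \ref{Z-reg}. Your explicit separation of the McKay case is a welcome bit of care---the paper's proof tacitly relies on Proposition \ref{triangular} for that case without citing it, whereas you make the case split explicit.
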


\begin{proof}
$Z$ is 3-dimensional by Proposition \ref{Krull dim}; normal by Lemma \ref{normal}; and prime by Corollary \ref{prime}.  Furthermore, $Z_{\mathfrak{m}} \cong Z_{\mathfrak{m}}e_i$ is Gorenstein since $Z_{\mathfrak{m}}$ is Cohen-Macaulay by remark \ref{Cohen-Macaulay}; $(c-d,a,b)$ is a $Z_{\mathfrak{m}}e_i$-regular sequence by Lemma \ref{Z-reg}; and the zero-dimensional local ring $R = Z_{\mathfrak{m}}e_i/(c-d,a,b)_{\mathfrak{m}}$ has a simple socle by Proposition \ref{simple socle}.
\end{proof}

\section{Classification of simple modules} \label{Classification of simples}

Let $A$ be a square superpotential algebra.  
In this section we classify all simple $A$-modules up to isomorphism, and describe their `noncommutative residue fields' $A/\operatorname{ann}_AV$ in terms of $V$ when $V$ is a vertex simple or large $A$-module.  
We note that the only simple modules over the localization $A_{\mathfrak{m}}$ at the origin $\mathfrak{m}$ of $\operatorname{Max}Z$ are the vertex simples (see Lemma \ref{localized vertex} and Remark \ref{nonnoetherian} below), whereas the non-localized algebra $A$ has at least an affine varieties worth of simples.

\begin{Lemma} \label{leq 1}
Let $A=kQ/I$ be a quiver algebra and suppose there exists an algebra monomorphism $\tau: A \rightarrow \operatorname{End}_B\left( B^{|Q_0|} \right)$ such that $\tau(e_i) = E_{ii}$ and $\bar{\tau}(e_iAe_i) = \bar{\tau}(e_jAe_j) \subset B$ for each $i,j \in Q_0$.  If $V$ is a simple $A$-module then $\operatorname{dim}_k e_iV \leq 1$ for each $i \in Q_0$.  Therefore $V$ may be identified with a vector space diagram on $Q$ where each arrow is represented by a scalar (possibly zero).
\end{Lemma}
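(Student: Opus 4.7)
The plan is to reduce to the commutative case by passing from $V$ to the $e_iAe_i$-module $e_iV$ and exploiting that this corner ring is commutative.

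First I would dispose of the trivial case $e_iV = 0$. Assuming $e_iV \neq 0$, I would invoke the standard Schur-type fact that $e_iV$ is a simple $e_iAe_i$-module: given any nonzero $e_iAe_i$-submodule $W \subseteq e_iV$, the $A$-submodule $AW$ of $V$ is nonzero, hence equals $V$ by simplicity, so
\[
e_iV \;=\; e_iAW \;=\; (e_iAe_i)(e_iW) \;=\; (e_iAe_i)W \;\subseteq\; W,
\]
using $e_iW = W$ and the stability of $W$ under $e_iAe_i$.

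Next I would use the hypothesis to conclude that $e_iAe_i$ is a \emph{commutative} finitely-generated $k$-algebra: commutativity comes from Lemma \ref{bartau}, which identifies $e_iAe_i$ with the subring $\bar{\tau}(e_iAe_i) \subset B$; and finite generation over $k$ is exactly the content of Lemma \ref{without cyclic proper subpaths}. Since a simple module over a commutative ring $R$ is isomorphic to $R/\mathfrak{m}$ for some maximal ideal $\mathfrak{m}$, we have $e_iV \cong e_iAe_i/\mathfrak{m}$ for some $\mathfrak{m} \in \operatorname{Max}(e_iAe_i)$. By Hilbert's Nullstellensatz (the version for finitely-generated algebras over an algebraically closed field, e.g.\ \cite[9.1.12]{MR}), this quotient is isomorphic to $k$, so $\dim_k e_iV = 1$, completing the main inequality.

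For the second sentence of the statement, I would simply note the Peirce decomposition $V = \bigoplus_{i \in Q_0} e_iV$, together with the fact that each arrow $a \in e_jAe_i$ acts by a $k$-linear map $e_iV \to e_jV$ between spaces of dimension at most one, which is therefore represented by a scalar (possibly zero) once bases are chosen. No serious obstacle is anticipated; the only subtle point is the reduction of simplicity from $A$-modules to $e_iAe_i$-modules, which is purely formal.
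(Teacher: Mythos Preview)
Your proposal is correct and follows essentially the same route as the paper: reduce to showing that $e_iV$ is a simple $e_iAe_i$-module (or zero), then invoke Lemmas \ref{bartau} and \ref{without cyclic proper subpaths} together with the Nullstellensatz. The paper phrases the simplicity step as a contradiction (picking $u\in e_iV\setminus W$ and showing $u=a_{ii}w\in W$), whereas you phrase it as the direct inclusion $e_iV=e_iAW=(e_iAe_i)W\subseteq W$; these are the same computation, hinging on $e_iAe_iW=e_iAW$ for $W\subseteq e_iV$.
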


\begin{proof}
Suppose $S$ is a finitely-generated $k$-algebra with a complete set of orthogonal idempotents $L$, $V$ is a simple $S$-module, and $e_i \in L$.  We first claim that $e_iV$ is a simple $e_iSe_i$-module or zero.  Suppose to the contrary that $e_iV \not = 0$ is not a simple $e_iSe_i$-module. Then there exists a nonzero proper $e_iSe_i$-submodule $W \subsetneq e_iV$.  Let $u \in e_iV \setminus W$ and $w \in W$, with $w$ nonzero.  Since $V$ is a simple $S$-module there is an $a \in S$ such that $aw = u$.  Since $L$ is a complete set of idempotents, $S = \sum_{e_j,e_k \in L} e_j Se_k$, so we may write $a = \sum_{j,k} a_{jk}$ with $a_{jk} \in e_jSe_k$.  But then by orthogonality,
$$u = e_iu = e_i(aw) = e_i \sum_{j,k} a_{jk} w = e_i \sum_{j,k} e_ja_{jk} e_k \left(e_iw \right) = a_{ii} w,$$
so $a_{ii}w = u$ and $a_{ii} \in e_iSe_i$, contradicting our choice of $w$.  
Consequently if $e_iSe_i$ is a commutative finitely-generated $k$-algebra and $k$ is algebraically closed, then $\operatorname{dim}_k e_iV \leq 1$.  
For the case $S = A$, $\{e_i\}_{i \in Q_0}$ is a complete set of orthogonal idempotents, and for each $i \in Q_0$, $e_iAe_i$ is commutative by Lemma \ref{bartau} and finitely-generated by Lemma \ref{without cyclic proper subpaths}. 
\end{proof}

\begin{Theorem} \label{simples} 
Let $A$ be a square superpotential algebra with impression $(\tau,B)$, and let $V$ be a simple $A$-module.  Set $\mathfrak{p} := \operatorname{ann}_AV$ and $\mathfrak{m} := \mathfrak{p} \cap Z \in \operatorname{Max}Z$.  Then $\operatorname{dim}_ke_iV \leq 1$ for each $i \in Q_0$.  
Furthermore, one of the following holds.
\begin{enumerate}
 \item $V$ is a vertex simple $A$-module, in which case $A/\mathfrak{p} \cong V$ as $A$-modules.
 \item $V$ is supported on a single cycle $c$ in $A$ up to cyclic permutation.  
   \begin{enumerate}
     \item If $Q$ is not McKay then $\bar{\tau}(c)$ is divisible by precisely two of $x_1,x_2,y_1,y_2$.
     \item If $Q$ is McKay with $\tau$ defined in Proposition \ref{triangular}, then $\bar{\tau}(c)$ divisible by precisely one of $x,y,z$.
   \end{enumerate}
 \item $V$ is a large $A$-module, in which case
  \begin{enumerate} 
   \item $A/\mathfrak{p} \cong V^{|Q_0|}$ as $A$-modules;
   \item there is a point $\mathfrak{q} \in \operatorname{Max}B$ such that $V \cong (B/\mathfrak{q})^{|Q_0|}$, where the module structure of $(B/\mathfrak{q})^{|Q_0|}$ is given by $av := \tau_{\mathfrak{q}}(a)v$; and
   \item the projective dimension of $V$ is determined by $\mathfrak{m}$: $$\operatorname{pd}_A(V) = \operatorname{pd}_{A_{\mathfrak{m}}}(A_{\mathfrak{m}}/\mathfrak{p}_{\mathfrak{m}}) = \operatorname{pd}_{Z_{\mathfrak{m}}}(Z_{\mathfrak{m}}/\mathfrak{m}_{\mathfrak{m}}).$$
  \end{enumerate}
\end{enumerate}
\end{Theorem}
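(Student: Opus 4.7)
The bound $\dim_k e_iV \leq 1$ is immediate from Lemma \ref{leq 1}, whose hypotheses are satisfied by the impression of Theorem \ref{square impression}. Setting $S := \{i \in Q_0 : e_iV \neq 0\}$, simplicity of $V$ implies that for any $i, j \in S$ some cycle in $e_iAe_i$ acts as a nonzero scalar on $e_iV$: picking $0 \neq v \in e_iV$, simplicity produces $p \in e_jAe_i$ with $pv \neq 0$ and then $q \in e_iAe_j$ with $qpv \neq 0$, so the cycle $qp \in e_iAe_i$ does the job. This yields the trichotomy: $V$ is a vertex simple when $|S| = 1$ and every arrow acts as zero, $V$ is large when $|S| = |Q_0|$, and $V$ is intermediate otherwise.

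In case (1), $\mathfrak{p}$ contains every arrow and every $e_j$ for $j \neq i$, so $A/\mathfrak{p}$ is spanned over $k$ by the image of $e_i$; the evaluation map $a + \mathfrak{p} \mapsto av$ then gives the $A$-module isomorphism $A/\mathfrak{p} \cong V$. In case (3), parts (3a), (3b), and (3c) are respectively Lemma \ref{A/p cong V}, Proposition \ref{(B/r)^d}, and Theorem \ref{projective dimensions}, whose hypotheses hold by Theorem \ref{square impression}.

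Case (2) is where the main work lies. Setting $\mathfrak{m} := \mathfrak{p} \cap Z$, Lemma \ref{Azumaya locus of square algebras} places $\mathfrak{m}$ outside the Azumaya locus, so by surjectivity of $\phi$ we may fix a lift $\mathfrak{q} = (x_1 - a_1, x_2 - a_2, y_1 - b_1, y_2 - b_2) \in \operatorname{Max} B$ with at least one of $a_1, a_2, b_1, b_2$ equal to zero. A cycle $c \in e_iAe_i$ (with $i \in S$) acts on $e_iV \cong k$ as the scalar $\bar{\tau}(c)(\mathfrak{q})$, so the active cycles are exactly those whose $\bar{\tau}$-image involves only the nonvanishing coordinates of $\mathfrak{q}$. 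Non-vertex-simplicity produces at least one active cycle; choose a representative $c$ at some $i \in S$ with no proper cyclic subpath. Lemma \ref{star} rules out $\bar{\tau}(c)$ being divisible by all four variables (which would force a unit cyclic subpath), and Proposition \ref{triangular} rules out $\bar{\tau}(c)$ being a pure power of a single variable in the non-McKay setting. Proposition \ref{astronaut} then identifies $c$ up to cyclic permutation as the unique minimal cycle at $i$ with image of the form $x_\alpha^s y_\beta^t$, so $\bar{\tau}(c)$ is divisible by exactly two of $x_1, x_2, y_1, y_2$, as required.

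The remaining task, which I expect to be the main obstacle, is to verify that $S$ coincides with the vertices of $c$ and that the arrows of $c$ are the only arrows acting nontrivially on $V$. The plan is to use Corollary \ref{ba not 0} to show that successive application of the arrows of $c$ to a generator $v \in e_iV$ produces nonzero vectors at each vertex of $c$, placing all of those vertices into $S$; conversely, any active arrow with tail on $c$ not belonging to $c$ would, together with a sub-arc of $c$, assemble a second minimal active cycle at a vertex of $S$ with $\bar{\tau}$-image incompatible with the uniqueness in Proposition \ref{astronaut}, and would also inflate $|S|$ beyond the vertex set of $c$. The McKay subcase (2b) follows the same template using Proposition \ref{triangular} and the three-variable impression, where minimal active cycles have $\bar{\tau}$-image a pure power of a single variable.
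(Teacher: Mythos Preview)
Your handling of the dimension bound, Case (1), and the citations for Case (3) match the paper. The real issue is in Case (2), where your argument has a genuine gap.

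You rule out $\bar{\tau}(c)$ being divisible by all four variables (via Lemma \ref{star}) and by a single variable (via Proposition \ref{triangular}), but you never rule out \emph{three} variables. Nothing you have said prevents the chosen minimal active cycle $c$ from having $\bar{\tau}(c)$ divisible by, say, $x_1,x_2,y_1$: even with $b_2=0$ at your lift $\mathfrak{q}$, such a cycle is perfectly active. Your invocation of Proposition \ref{astronaut} then begs the question, since that proposition only describes cycles whose $\bar{\tau}$-image is of the form $x_\alpha^s y_\beta^t$. (You also silently skip the $x_1^m x_2^n$ and $y_1^m y_2^n$ impossibilities, which the paper handles via the planar embedding.)

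The paper's proof is organized around exactly this missing step: it shows directly that if some non-annihilating cycle has $\bar{\tau}$-image divisible by at least three of the variables, then $V$ is large. The mechanism is Lemma \ref{star}: given three nonvanishing variables, for each $j\in Q_0$ one can write down a path $p$ from $i$ to $j$ using only those variables, and then $c^n = qp$ for large $n$ forces $e_jV\neq 0$. Only after this is established does the paper land in the two-variable situation and invoke Proposition \ref{astronaut}. Your $\mathfrak{q}$-lifting idea is a pleasant repackaging of the statement ``the active cycles are those avoiding the vanishing coordinates,'' but it does not sidestep this work: to close the gap you would still have to show that if only one coordinate of $\mathfrak{q}$ vanishes then $|S|=|Q_0|$, and that is precisely the paper's three-variable argument. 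One minor point: your claim that a cycle $c\in e_iAe_i$ acts by $\bar{\tau}(c)(\mathfrak{q})$ is correct but deserves a line of justification (it follows from $e_iAe_i=Ze_i$, Theorem \ref{center}, and the identification $Z\cong R$), and the assertion that the lift $\mathfrak{q}$ must have a zero coordinate needs the observation that any $\mathfrak{q}\in U$ maps into the Azumaya locus.
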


\begin{proof}
Lemma \ref{leq 1} applies since $(\tau,B)$ is an impression of $A$ by Theorem \ref{square impression}.

If $V$ is the vertex simple $A$-module at $i \in Q_0$, then there is an obvious $A$-module isomorphism $A/\mathfrak{p} \rightarrow V = kv$ given by $e_i \mapsto v$, where $0 \not = v \in V$.
  
So suppose $V$ is a non-vertex simple $A$-module.  
Then there is an arrow $g$ that does not annihilate $V$; set $i = \operatorname{t}(g)$.  
Since $V$ is simple, $g$ must be a subpath of a cycle $c \in e_iAe_i$ that does not annihilate $V$.  
But then $c^n \not \in \operatorname{ann}_AV$ for any $n \geq 1$ by Lemma \ref{leq 1}.
Therefore we may suppose $c$ has no cyclic proper subpaths.

We first consider the case where $Q$ is not McKay.
Suppose at least three of $x_1,x_2,y_1,y_2$ divide $\bar{\tau}(c)$, say $x_1$, $x_2$, and $y_1$.  
Pick $j \in Q_0$.  
Clearly there is a path $p$ from $i$ to $j$ whose $\bar{\tau}$-image is only divisible by $x_1$, $x_2$, and $y_1$.  
For $n \geq 1$ sufficiently large, $\bar{\tau}(c^n) = m\bar{\tau}(p)$ for some monomial $m \in B$.
Thus by Lemma \ref{star} there exists a path $q \in e_iAe_j$ such that $c^n = qp$ (and $\bar{\tau}(q) = m$).  
But then $qe_jp = c^n \not \in \operatorname{ann}_AV$, so $e_j \not \in \operatorname{ann}_AV$.  
Thus $\operatorname{dim}_kV \geq |Q_0|$.  
But $\operatorname{dim}_kV \leq |Q_0|$ by Lemma \ref{leq 1}, whence $\operatorname{dim}_kV = |Q_0|$, so $V$ is a large module.  
By Corollary \ref{square prime}, $A$ is prime, noetherian, and module-finite over $Z$.
Therefore (a) follows from Lemma \ref{A/p cong V}; (b) follows from Proposition \ref{(B/r)^d}; and (c) follows from Theorem \ref{projective dimensions}. 

Otherwise suppose that the $\bar{\tau}$-image of each cycle that does not annihilate $V$, including $c$, is divisible by at most two of $x_1,x_2,y_1,y_2$.   
Since we are assuming that $Q$ is not McKay, Proposition \ref{triangular} implies that $c$ is divisible by precisely two of $x_1,x_2,y_1,y_2$.
Since the underlying graph of $\widetilde{Q}$ embeds into the plane, it is not possible for $\bar{\tau}(c) = x_1^mx_2^n$ or $\bar{\tau}(c) = y_1^my_2^n$ for any $m,n \geq 1$.
Therefore $\bar{\tau}(c) = x_{\alpha}^m y_{\beta}^n$ for some $m,n \geq 1$ and $\alpha, \beta \in \{1,2\}$. 

If there is a cycle $d$ at $i$ whose $\bar{\tau}$-image is divisible by $x_{\alpha+1}$ or $y_{\beta+1}$ (indices modulo 2) and does not annihilate $V$, then the cycle $cd$ has $\bar{\tau}$-image divisible by three of $x_1,x_2,y_1,y_2$ and does not annihilate $V$, contrary to our assumption.
Moreover, by Proposition \ref{astronaut} $c$ is the only cycle at $i$ without cyclic proper subpaths whose $\bar{\tau}$-image is divisible by $x_{\alpha}$ and $y_{\beta}$ (modulo $\partial W$).
Therefore the only cycles at $i$ which do not annihilate $V$ are $c^n$ for $n \geq 0$, with $c^0 = e_i$.

If $e_j$ does not annihilate $V$, then $e_j$ and $e_i$ must be contained in a cycle that does not annihilate $V$ since $V$ is simple.
Thus $e_j$ must be a vertex subpath of $c^n$ for some $n \geq 0$.
$V$ will then be large if and only if each vertex is a subpath of $c^n$ (modulo $\partial W$) for sufficiently large $n$.

We now consider the case where $Q$ is Mckay.
Suppose at least two of $x,y,z$ divide $\bar{\tau}(c)$, say $x$ and $y$.
Pick $j \in Q_0$.
There is a path $p$ from $i$ to $j$ whose $\bar{\tau}$-image is only divisible by $x$ and $y$, so we may apply the argument in the non-McKay case to conclude that $V$ is a large module.

Otherwise suppose that the $\bar{\tau}$-image of each cycle that does not annihilate $V$, including $c$, is divisible by precisely one of $x,y,z$; say $\bar{\tau}(c) = x^n$ for some $n \geq 1$.   
If there is a cycle $d$ at $i$ whose $\bar{\tau}$-image is divisible by $y$ or $z$ and does not annihilate $V$, then the cycle $cd$ has $\bar{\tau}$-image divisible by two of $x,y,z$ and does not annihilate $V$, contrary to our assumption.
Moreover, $c$ is the only cycle at $i$ without cyclic proper subpaths whose $\bar{\tau}$-image is divisible by $x$.
Therefore the only cycles at $i$ which do not annihilate $V$ are $c^n$ for $n \geq 0$, again with $c^0 = e_i$.
As in the non-McKay case, $V$ will then be large if and only if each vertex is a subpath of $c^n$ for sufficiently large $n$.
\end{proof}

\section{Noncommutative crepant resolutions} \label{Noncommutative crepant resolutions}

We recall two definitions.  A \textit{noncommutative crepant resolution} of a normal Gorenstein domain $R$ is a homologically homogeneous $R$-algebra of the form $A = \operatorname{End}_R(M)$, where $M$ is a reflexive $R$-module \cite[Definition 4.1]{VdB}.  Furthermore, a ring $A$ which is a finitely-generated module over a central normal Gorenstein subdomain $R$ is \textit{Calabi-Yau of dimension} $n$ if (i) $\operatorname{gl.dim}A = \operatorname{K.dim}R = n$; (ii) $A$ is a maximal Cohen-Macaulay module over $R$; and (iii) $\operatorname{Hom}_R(A,R) \cong A$, as $A$-bimodules \cite[Introduction]{Braun}.  

Throughout $\mathfrak{m}$ will denote the origin of $\operatorname{Max}Z$, which is defined to be the $Z$-annihilator of the vertex simple $A$-modules.
The main result of this section is that the localization $A_{\mathfrak{m}}$ of a square superpotential algebra $A$ is a noncommutative crepant resolution of $Z_{\mathfrak{m}}$, and consequently a local Calabi-Yau algebra.  Section \ref{Endomorphism rings} is based on joint work with Alex Dugas.

\subsection{Homological homogeneity} \label{Homological homogeneity}

In this section we show that the localization $A_{\mathfrak{m}} = Z_{\mathfrak{m}} \otimes_Z A$ of a square superpotential algebra $A$ at the origin $\mathfrak{m}$ of $\operatorname{Max}Z$ is homologically homogenous with global dimension 3.  
Recall that if $S$ is a commutative noetherian equidimensional $k$-algebra and $A$ is a module-finite $S$-algebra, then $A$ is homologically homogeneous if all simple $A$-modules have the same projective dimension (see \cite{BH}, \cite[section 3]{VdB}).
We denote by $V^i$ the vertex simple $A$-module in which every path, with the exception of vertex $i$, is represented by zero.  
In physics terms, the vertex simples are (often) the fractional branes that probe the apex of a tangent cone on a singular Calabi-Yau variety.

\begin{Lemma} \label{localized vertex}
Let $A=kQ/I$ be a quiver algebra that admits a pre-impression $(\tau,B)$ such that (\ref{ei}) holds.
Then the only simple modules over the localization $A_{\mathfrak{m}}$ of $A$ at the origin $\mathfrak{m}$ of $\operatorname{Max}Z$ are the vertex simples.
\end{Lemma}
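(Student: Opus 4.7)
The plan is to identify simple $A_{\mathfrak{m}}$-modules with simple $A$-modules annihilated by $\mathfrak{m}$, and then to show that any such module must be a vertex simple by extracting from it a positive-length cycle whose corresponding central element will contradict the equality of central characters that $V$ and every vertex simple $V^i$ share through $Z/\mathfrak{m} = k$.

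First I would argue that a simple $A_{\mathfrak{m}}$-module $V$ is finite dimensional over $k$ and, restricted along $A \to A_{\mathfrak{m}}$, remains simple and is annihilated by $\mathfrak{m}$. Theorem \ref{center} gives that $A$ is module-finite over $Z$, hence $A_{\mathfrak{m}}$ is module-finite over the local ring $Z_{\mathfrak{m}}$; a Schur--Dixmier argument on the commutative image of $Z_{\mathfrak{m}}$ in $\operatorname{End}_k V$ forces it to act through $Z_{\mathfrak{m}}/\mathfrak{m}_{\mathfrak{m}} = k$, so $\mathfrak{m}V = 0$ and $V$ is a module over the finite-dimensional $k$-algebra $A/\mathfrak{m}A$. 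Any $A$-submodule is automatically an $A_{\mathfrak{m}}$-submodule, since $Z \setminus \mathfrak{m}$ acts on $V$ by nonzero scalars $\chi_V(s)$ and hence invertibly. The lemma thus reduces to: every simple $A$-module $V$ with $\operatorname{ann}_Z V = \mathfrak{m}$ is a vertex simple.

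Assume for contradiction that such a $V$ is not a vertex simple. By Lemma \ref{leq 1}, $V = \bigoplus_{i \in S} e_iV$ with each $\dim_k e_iV \leq 1$, where $S$ is the support. If $|S| \geq 2$, I would pick $i \neq j$ in $S$ and a nonzero $v_i \in e_iV$; since $V$ is simple and $e_jV \neq 0$, some single path $p \in e_jAe_i$ satisfies $pv_i \neq 0$, and similarly some path $q \in e_iAe_j$ gives $qpv_i = \nu v_i$ with $\nu \neq 0$. If instead $S = \{i\}$ but $V \not\cong V^i$, then every arrow from $i$ to a vertex outside $S$ kills $v_i$, so some loop $c$ at $i$ must act on $v_i$ by a nonzero scalar $\nu$. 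In both cases one extracts a positive-length cycle $c \in e_iAe_i$ with $cv_i = \nu v_i$ and $\nu \neq 0$.

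By Theorem \ref{center}, $e_iAe_i = Ze_i$, so there is a unique $z \in Z$ with $ze_i = c$. On $V$ this $z$ acts by the central character scalar $\chi_V(z) = \nu \neq 0$, while on the vertex simple $V^i = kv^i$ every arrow annihilates $v^i$, so $zv^i = cv^i = 0$ and $\chi_{V^i}(z) = 0$. But $V$ and $V^i$ both have $Z$-annihilator $\mathfrak{m}$, so their central characters both factor through $Z/\mathfrak{m} = k$ and must therefore agree on every element of $Z$, forcing $\nu = 0$ and yielding the contradiction. The main obstacle is not any hard computation but the careful setup of the first step: identifying simple $A_{\mathfrak{m}}$-modules with simple $A$-modules killed by $\mathfrak{m}$ requires the module-finiteness supplied by Theorem \ref{center} together with the Schur--Dixmier scalar action, after which the isomorphism $Z \cong e_iAe_i$ does all the work by promoting any cycle into a central element available for central character comparison.
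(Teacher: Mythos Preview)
Your proposal is correct and follows essentially the same route as the paper. Both arguments locate a positive-length cycle $c\in e_iAe_i$ acting by a nonzero scalar on $V$, invoke Theorem~\ref{center} to promote it to a central element $z\in Z$ (the paper writes this explicitly as $c=\sum_j c_j$ with $\bar\tau(c_j)=\bar\tau(c_i)$, you use the isomorphism $Z\cong Ze_i$), and then exploit the fact that $z\in\mathfrak{m}$ (it kills every vertex simple) while $z-\nu\cdot 1$ kills $V$; the paper phrases the contradiction as $c-\gamma\in Z\setminus\mathfrak{m}$ forcing $V_{\mathfrak{m}}=0$ in the localization, whereas you phrase it as a clash of central characters through $Z/\mathfrak{m}=k$, but these are the same computation.
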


\begin{proof}
Suppose $V_{\mathfrak{m}} = Z_{\mathfrak{m}} \otimes_Z V$ is a simple $A_{\mathfrak{m}}$-module which is not annihilated by $1 \otimes a$, where $a$ is an arrow.
Then clearly $V$ is a simple $A$-module not annihilated by $a$. 

By Theorem \ref{simples}, $\operatorname{dim}_k e_j V \leq 1$ for each $j \in Q_0$.
Thus, viewing $V$ as a vector space diagram on $Q$, $a$ is represented by a nonzero scalar.
Since $V$ is simple, $a$ must be contained in some cycle $c_i \in e_iAe_i$ that is also represented by a nonzero scalar $\rho(c_i)$.
By Theorem \ref{center} there is a central element $c = \sum_{j \in Q_0} c_j \in Z$, where each $c_j \in e_jAe_j$ is a cycle.
If $\rho(c_j)$ is nonzero, then since $V$ is simple there must be a path $q$ from $i$ to $j$ such that $\rho(q)$ is nonzero.
But then $\rho(q) \rho(c_i) = \rho(qc_i) = \rho(c_jq) = \rho(c_j) \rho(q)$.
Dividing both sides by $\rho(q)$ we find
\begin{equation} \label{cj}
\rho(c_j) = \left\{ \begin{array}{ll} \rho(c_i) & \text{ if } \ e_jV \not = 0, \\ 0 & \text{ if } \ e_jV = 0. \end{array} \right.
\end{equation}
Set $\gamma = \rho(c_i)1_A$.
Then (\ref{cj}) implies that $c- \gamma$ annihilates $V$.

Again by Theorem \ref{center}, $\bar{\tau}(c_j) = \bar{\tau}(c_i)$ for each $j \in Q_0$.
Thus, since $c_i$ is a cycle of nonzero length, each cycle summand $c_i$ of $c$ will have nonzero length.
Therefore $c$ annihilates each vertex simple, and so $c - \gamma$ annihilates no vertex simple.
In particular, $c - \gamma \in Z \setminus \mathfrak{m}$. 

Therefore, for any $1 \otimes v \in V_{\mathfrak{m}}$,
$$1 \otimes v = \frac{c- \gamma}{c-\gamma} \otimes v = \frac{1}{c-\gamma} \otimes (c-\gamma)v = 0,$$ 
whence $V_{\mathfrak{m}} = 0$.
\end{proof}

\begin{Remark} \label{nonnoetherian} \rm{
Lemma \ref{localized vertex} does not hold in general.
For example, if $A$ is a non-cancellative superpotential algebra obtained from a brane tiling then its center will be nonnoetherian, and the simple module isoclasses over the localization $A_{\mathfrak{m}}$ of $A$ at the origin $\mathfrak{m}$ of $\operatorname{Max}Z$ (i.e., the $Z$-annihilator of the vertex simples) will be parameterized by a positive dimensional affine variety; see \cite{B2}.
} \end{Remark}

We establish notation.  If $g,h \in Q_1$, set $\delta_{h,g}= e_{\operatorname{t}(g)} + e_{\operatorname{h}(g)}$ if $g =h$ and 0 otherwise.  For $p =g_n \cdots g_1 \in Q_{\geq 1}$, $g_i,h \in Q_1$, set
$$\begin{array}{ccl}
\stackrel{\rightarrow}{\delta}_h p & := & \delta_{h,g_n}g_{n-1} \cdots g_1,\\
p \stackrel{\leftarrow}{\delta}_h & := & g_n \cdots g_2 \delta_{h,g_1},
\end{array}$$
and for any $i \in Q_0$,
$$\stackrel{\rightarrow}{\delta}_h e_i = e_i \stackrel{\leftarrow}{\delta}_h := 0.$$
Extend $k$-linearly to $kQ$.

\begin{Lemma}  \label{nice} 
Let $Q$ be a quiver and $W \in \operatorname{tr}\left(kQ_{\geq 2}\right)$ a superpotential.  Then for each $i \in Q_0$, $g \in Q_1e_i$, and $h \in e_iQ_1$,
$$\stackrel{\rightarrow}{\delta}_h \left( \partial_g W \right) = \left( \partial_h W \right) \stackrel{\leftarrow}{\delta}_g =: W_{hg}.$$
Consequently
\begin{equation} \label{sum} \partial_h W = \sum_{g \in Q_1e_i} W_{hg}g  \ \ \text{ and } \ \ \partial_g W = \sum_{h \in e_iQ_1} h W_{hg}.
\end{equation}
\end{Lemma}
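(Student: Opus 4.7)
The plan is to unpack the definitions of $\partial_g$, $\partial_h$, $\stackrel{\rightarrow}{\delta}_h$ and $\stackrel{\leftarrow}{\delta}_g$ and observe that both sides of the claimed identity enumerate the same cyclic rotations. By $k$-linearity it suffices to treat a single cyclic monomial $W = b_n \cdots b_1$ with $b_1, \ldots, b_n \in Q_1$; everything then follows by summing.

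From the definition of the cyclic derivative, $\partial_g W = \sum_{j \, : \, b_j = g} e_{\operatorname{t}(b_j)} b_{j-1} \cdots b_1 b_n \cdots b_{j+1}$, a sum of cyclic rotations indexed by the positions at which $g$ appears in $W$. Each rotation lies in $e_i kQ e_{\operatorname{h}(g)}$ (using $\operatorname{t}(g)=i$) and its leftmost arrow is $b_{j-1}$, whose head is $\operatorname{t}(b_j) = i$. Consequently, applying $\stackrel{\rightarrow}{\delta}_h$ for $h \in e_iQ_1$ retains precisely those positions $j$ with $b_{j-1} = h$ and strips that arrow, yielding the complementary rotation $b_{j-2} \cdots b_1 b_n \cdots b_{j+1}$. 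Dually, $\partial_h W = \sum_{k \, : \, b_k = h} e_{\operatorname{t}(h)} b_{k-1} \cdots b_1 b_n \cdots b_{k+1}$, and applying $\stackrel{\leftarrow}{\delta}_g$ retains the positions $k$ with $b_{k+1} = g$ and strips $b_{k+1}$. Setting $k = j - 1$ identifies the two indexing sets (both parametrize the occurrences of the length-two cyclic subword $hg$ in $W$) and the surviving paths are identical, establishing $\stackrel{\rightarrow}{\delta}_h(\partial_g W) = (\partial_h W)\stackrel{\leftarrow}{\delta}_g$, which we then name $W_{hg}$.

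For the decomposition $\partial_h W = \sum_{g \in Q_1 e_i} W_{hg}\, g$, observe that each summand of $\partial_h W$ has rightmost arrow $b_{k+1}$ with tail $\operatorname{t}(b_{k+1}) = \operatorname{h}(b_k) = \operatorname{h}(h) = i$, hence $b_{k+1} \in Q_1 e_i$. Grouping the summands according to the value $g$ of this rightmost arrow, the corresponding subsum is exactly $(\partial_h W)\stackrel{\leftarrow}{\delta}_g \cdot g = W_{hg}\, g$, since $\stackrel{\leftarrow}{\delta}_g$ removes $g$ and then right-multiplication restores it. Summing over all $g \in Q_1 e_i$ reassembles $\partial_h W$. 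The mirror formula $\partial_g W = \sum_{h \in e_i Q_1} h\, W_{hg}$ follows by the symmetric grouping argument applied to the leftmost arrow of each summand of $\partial_g W$.

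The only delicate point is the bookkeeping of the cyclic indices and the vertex-idempotent prefixes $e_{\operatorname{t}(b_j)}$ that sit in front of each cyclic rotation; in particular one must verify that $\delta_{h,b_{j-1}}$ reduces to the correct single vertex when multiplied into the ensuing path, which is automatic from the compatibility $\operatorname{h}(b_{j-2}) = \operatorname{t}(b_{j-1})$. Beyond this clerical task there is no substantive obstacle.
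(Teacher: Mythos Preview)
Your proof is correct and follows essentially the same approach as the paper: reduce by linearity to a single cyclic monomial, compute both sides of the identity by tracking which cyclic rotations survive, and observe that both sides enumerate exactly the occurrences of the length-two cyclic subword $gh$ in $W$. The paper organizes the bookkeeping slightly differently---writing the monomial in the form $(d_n gh)\cdots(d_1 gh)$ to isolate the $gh$-occurrences up front---but the content of the argument and the derivation of the decomposition formulas are the same.
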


\begin{proof} Let $i \in Q_0$ and $p =  \left(d_n gh\right) \cdots \left(d_2gh\right) \left( d_1gh \right) \in e_i Q_{\geq 1}e_i$, with $g,h \in Q_1$ and $gh$ not a subpath of $d_j$ for each $1 \leq j \leq n$ (though $g$ or $h$ separately may be).  Then 
$$\partial_g \sum_{\text{cyc}} p = \left( hd_n gh \cdots d_1 \right) + \left( hd_{n-1}gh \cdots d_n \right) + \cdots + \left( hd_1gh \cdots d_2 \right) + A,$$
$$\partial_h \sum_{\text{cyc}} p = \left( d_ngh \cdots d_1g \right) + \left( d_{n-1}gh \cdots d_ng \right) + \cdots + \left( d_1gh \cdots d_2g \right) +B,$$
where $\stackrel{\rightarrow}{\delta}_h A = B \stackrel{\leftarrow}{\delta}_g =0$.  Thus 
$$\begin{array}{ccl}
\stackrel{\rightarrow}{\delta}_h \left( \partial_g \sum_{\text{cyc}} p \right) &=& \left( d_n gh \cdots d_1 \right) + \left( d_{n-1}gh \cdots d_n \right) + \cdots + \left( d_1gh \cdots d_2 \right)\\
& = & \left( \partial_h \sum_{\text{cyc}} p \right) \stackrel{\leftarrow}{\delta}_g.
\end{array}$$
\end{proof}

\begin{Lemma} \label{B-D prop} 
Let $A = kQ/\partial W$ be a square superpotential algebra with center $Z$, let $\mathfrak{m}$ be the maximal ideal at the origin of $\operatorname{Max}Z$, and let $V^i_{\mathfrak{m}}$ be the vertex simple $A_{\mathfrak{m}}$-module at $i \in Q_0$.  
Write $Q_1e_i = \{g_1, \ldots, g_n \}$, $e_iQ_1 = \{h_1, \ldots, h_n \}$, and set $\mathfrak{p}_{\mathfrak{m}}:= \operatorname{ann}_{A_{\mathfrak{m}}} V_{\mathfrak{m}}^i$.  
Then the sequence
\begin{equation} \label{B-D complex}
\begin{array}{c} 0 \rightarrow A_{\mathfrak{m}}e_i
 \stackrel{ \delta_2 := \cdot \left[ \begin{array}{ccc} h_1 & \ldots & h_n \end{array} \right]}{\longrightarrow}
\displaystyle \bigoplus_{1 \leq k \leq n} A_{\mathfrak{m}}e_{\operatorname{t}(h_k)} \\
\\
\stackrel{ \delta_1 := \cdot \left[ \begin{array}{c} W_{h_k g_j} \end{array} \right]_{k,j}}{\longrightarrow}
\displaystyle \bigoplus_{1 \leq j \leq n} A_{\mathfrak{m}} e_{\operatorname{h}(g_j)}
 \stackrel{ \delta_0 := \cdot \left[ \begin{array}{c} g_1 \\ \vdots \\ g_n \end{array} \right]}{\longrightarrow} A_{\mathfrak{m}} e_i
 \stackrel{\phi = \ \cdot 1}{\longrightarrow} A_{\mathfrak{m}}/\mathfrak{p}_{\mathfrak{m}} \cong V_{\mathfrak{m}}^i \rightarrow 0,
 \end{array}
\end{equation}
is a projective complex.
\end{Lemma}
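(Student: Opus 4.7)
The plan is a direct verification: each module in the complex is manifestly projective (a direct summand of $A_{\mathfrak{m}}$ via the idempotents $e_j$), so the only content is that the consecutive compositions vanish. The heart of the argument will be applying the two formulas of Lemma \ref{nice} to turn matrix compositions into expressions $a \cdot \partial_{h_k} W$ and $a \cdot \partial_{g_j} W$, which are zero by the very definition $A = kQ/\partial W$.

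First I would confirm well-definedness of each map in the sequence. Using the conventions that $Q_1 e_i$ consists of arrows with tail $i$ and $e_i Q_1$ consists of arrows with head $i$, one checks $a \mapsto a g_j$ sends $A_{\mathfrak{m}}e_{\operatorname{h}(g_j)}$ into $A_{\mathfrak{m}} e_i$, and $a \mapsto a h_k$ sends $A_{\mathfrak{m}}e_i$ into $A_{\mathfrak{m}}e_{\operatorname{t}(h_k)}$. For $\delta_1$, the entry $W_{h_k g_j} = (\partial_{h_k} W) \stackrel{\leftarrow}{\delta}_{g_j}$ lives in $e_{\operatorname{t}(h_k)} A e_{\operatorname{h}(g_j)}$ by direct inspection of the definitions of $\partial_{h_k} W$ and $\stackrel{\leftarrow}{\delta}_{g_j}$, so right multiplication by the matrix $[W_{h_k g_j}]$ carries $\bigoplus_k A_{\mathfrak{m}}e_{\operatorname{t}(h_k)}$ into $\bigoplus_j A_{\mathfrak{m}}e_{\operatorname{h}(g_j)}$.

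Next I would verify the three composition identities. For $\phi \circ \delta_0$, the image of $\delta_0$ lies in $\sum_j A_{\mathfrak{m}} g_j$; since each $g_j$ is an arrow it acts as zero on the vertex simple $V^i_{\mathfrak{m}}$, so $g_j \in \mathfrak{p}_{\mathfrak{m}}$ and $\phi \circ \delta_0 = 0$. For $\delta_0 \circ \delta_1$, on an element $(a_k) \in \bigoplus_k A_{\mathfrak{m}}e_{\operatorname{t}(h_k)}$ we compute
\begin{equation*}
\delta_0 \delta_1 \bigl( (a_k) \bigr) \;=\; \sum_j \Bigl( \sum_k a_k W_{h_k g_j} \Bigr) g_j \;=\; \sum_k a_k \Bigl( \sum_{g \in Q_1 e_i} W_{h_k g} \, g \Bigr) \;=\; \sum_k a_k \, \partial_{h_k} W,
\end{equation*}
which vanishes in $A_{\mathfrak{m}}$ by the second identity of Lemma \ref{nice} and the defining relations $\partial_{h_k} W = 0$. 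For $\delta_1 \circ \delta_2$ applied to $a \in A_{\mathfrak{m}}e_i$, the $j$-th component is
\begin{equation*}
\sum_k (a h_k) W_{h_k g_j} \;=\; a \sum_{h \in e_i Q_1} h \, W_{h g_j} \;=\; a \, \partial_{g_j} W \;=\; 0,
\end{equation*}
using the first identity of Lemma \ref{nice}.

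There is no genuine obstacle in this lemma; the content is purely bookkeeping, and the single nontrivial input is the symmetric identity of Lemma \ref{nice} that rewrites the matrix product in both directions as a single derivative $\partial W$. I would not address injectivity of $\delta_2$ or exactness at any spot, as the statement only claims this is a projective \emph{complex}; any exactness claims presumably come in subsequent results.
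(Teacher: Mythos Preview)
Your proof is correct and follows essentially the same approach as the paper: each term is projective as a direct summand of a free module, and the vanishing of the compositions follows from the two identities in Lemma \ref{nice}. The paper's proof simply cites Lemma \ref{nice} without spelling out the matrix computations, whereas you have written them out explicitly; the content is the same.
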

 
\begin{proof} 
Note that the modules $\bigoplus_{1 \leq k \leq n} A_{\mathfrak{m}}e_{\operatorname{t}(h_k)}$ and $\bigoplus_{1 \leq j \leq n} A_{\mathfrak{m}} e_{\operatorname{h}(g_j)}$ are considered as row spaces.
Each term of the sequence is a direct summand of a free $A$-module and so is projective.  
The sequence is a complex by Lemma \ref{nice}.
\end{proof}

We call (\ref{B-D complex}) the \textit{Berenstein-Douglas complex}.  In \cite[section 5.5]{BD}, Berenstein and Douglas constructed this complex for a general superpotential algebra $A$ and raised the question: under what conditions is this complex a projective resolution of a vertex simple $A$-module?  We will show by example that in general the complex may fail to be exact in both the second and third terms.  However, we will also show that when $A$ is a square superpotential algebra the complex is indeed a projective resolution of any vertex simple module.

\begin{Lemma} \label{imdelta2} 
Let $A = kQ/\partial W$ be a square superpotential algebra and $V$ a vertex simple $A$-module.  
Then $\operatorname{im}\delta_2 = \operatorname{ker}\delta_1$ and $\operatorname{im}\delta_1 = \operatorname{ker}\delta_0$ in the Berenstein-Douglas complex.
\end{Lemma}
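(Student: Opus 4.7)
The inclusions $\operatorname{im}\delta_2 \subseteq \operatorname{ker}\delta_1$ and $\operatorname{im}\delta_1 \subseteq \operatorname{ker}\delta_0$ are already contained in Lemma \ref{B-D prop}, so only the reverse inclusions need to be established. The plan is to use the impression $(\tau, B)$ of Theorem \ref{square impression} together with the uniqueness and factorization results of Section \ref{An impression}---in particular Lemmas \ref{injective}, \ref{star}, \ref{star star}, \ref{cycle in cover}, and \ref{h}.

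For $\operatorname{ker}\delta_0 \subseteq \operatorname{im}\delta_1$, start from $(a_j)_j$ with $\sum_j a_j g_j = 0$ in $A_{\mathfrak{m}}e_i$. Clearing denominators, we may work in $Ae_i$. By Lemma \ref{injective} each $a_j \in Ae_{\operatorname{h}(g_j)}$ has a well-defined expansion as a $k$-linear combination of $\sim$-classes of paths, indexed by their $\bar{\tau}$-images, so the relation $\sum_j a_j g_j = 0$ splits by $\sim$-class in $Ae_i$. It thus suffices to express each elementary difference $pg_{j_1} - qg_{j_2}$ (with $pg_{j_1} \sim qg_{j_2}$) as an element of $\operatorname{im}\delta_1$. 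Lifting both paths to the covering quiver $\widetilde{Q}$: by Lemma \ref{h} they share a common endpoint, and by Lemmas \ref{star star} and \ref{cycle in cover} they differ by a finite sequence of insertions and removals of unit cycles (since every cycle in $\widetilde{Q}$ is a power of $\sigma = x_1x_2y_1y_2$). Each such local move is precisely the application of a relation $\partial_{h_k}W = 0$ at the arrow $h_k$ incident to the swap, and hence appears as a left $A$-multiple of the elementary syzygy $(W_{h_k g_j})_j = \delta_1(e_{\operatorname{t}(h_k)})$. Concatenating the moves yields an explicit preimage of $pg_{j_1} - qg_{j_2}$ in $\operatorname{im}\delta_1$.

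For $\operatorname{ker}\delta_1 \subseteq \operatorname{im}\delta_2$, take $(a_k)_k$ satisfying $\sum_k a_k W_{h_k g_j} = 0$ in $A_{\mathfrak{m}}e_{\operatorname{h}(g_j)}$ for each $j$, and apply $\bar{\tau}$ to produce polynomial identities in $B = k[x_1,x_2,y_1,y_2]$. For a square superpotential algebra each $\partial_{h_k}W$ has exactly two monomial summands, one per unit cycle containing $h_k$, which makes the resulting system in $B$ tractable: the UFD property of $B$ forces $\bar{\tau}(a_k) = \bar{\tau}(h_k) f$ for a single common polynomial $f \in B$. Applying Lemma \ref{star} to each identity $\bar{\tau}(a_k) = f\bar{\tau}(h_k)$ realizes $f$ as the $\bar{\tau}$-image of a path $q_k$ from $i$ to the shared head vertex, and Lemma \ref{injective} forces all $q_k$ to coincide with a single $a \in A_{\mathfrak{m}}e_i$. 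The same lemma then yields $ah_k \sim a_k$ for each $k$, so $\delta_2(a) = (a_k)_k$.

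The principal obstacle is the combinatorial step in the first part: while Lemmas \ref{star star} and \ref{cycle in cover} guarantee that equivalent paths differ by unit cycles, assembling the individual swaps into a single coherent element of $\operatorname{im}\delta_1$---independent of which sequence of swaps is chosen---requires careful bookkeeping on the square grid structure in $\widetilde{Q}$. The second part is conceptually cleaner, thanks to the tight two-term structure of $\partial_{h_k}W$ and the strong uniqueness provided by the impression.
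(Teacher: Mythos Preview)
Your argument for $\operatorname{ker}\delta_1\subseteq\operatorname{im}\delta_2$ is essentially the paper's: the paper orders the $g_j,h_k$ clockwise so that $[W_{h_kg_j}]$ becomes a two-banded matrix with entries $a_\ell,-b_\ell$, reads off the relations $d_\ell a_\ell=d_{\ell+1}b_\ell$, and computes the ratios $\bar\tau(d_\ell)/\bar\tau(h_\ell)$ to be constant in $\operatorname{Frac}B$. Since two of the $\bar\tau(h_\ell)$ are coprime the common ratio lies in $B$, and Lemma~\ref{star} produces the single path $p$ with $d_\ell=ph_\ell$. Your ``UFD forces $\bar\tau(a_k)=\bar\tau(h_k)f$'' is exactly this computation.

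For $\operatorname{ker}\delta_0\subseteq\operatorname{im}\delta_1$ the paper does \emph{not} track unit-cycle moves; it reuses the very same ratio trick. After the binomial reduction (your ``split by $\sim$-class'') one has $d_\ell g_\ell+d_{\ell+1}g_{\ell+1}=0$ together with the structural relation $-b_\ell g_\ell+a_{\ell+1}g_{\ell+1}=0$; comparing $\bar\tau$-images gives $\bar\tau(d_\ell)/\bar\tau(b_\ell)=-\bar\tau(d_{\ell+1})/\bar\tau(a_{\ell+1})$, and since $\bar\tau(a_{\ell+1})$ and $\bar\tau(b_\ell)$ are coprime in $B$ this ratio is again a monomial, so Lemma~\ref{star} factors $d_\ell=-pb_\ell$, $d_{\ell+1}=pa_{\ell+1}$ for a single path $p$, placing the tuple in $\operatorname{im}\delta_1$. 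This completely sidesteps your ``principal obstacle''.

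Your unit-cycle approach, as written, has a genuine gap: when you say ``each such local move is the application of a relation $\partial_{h_k}W=0$ at the arrow $h_k$ incident to the swap, and hence appears as a left $A$-multiple of the elementary syzygy $(W_{h_kg_j})_j$'', you are conflating two different arrows. The rows of $\delta_1$ are indexed by $h_k\in e_iQ_1$, i.e.\ arrows \emph{out of the fixed vertex $i$}; but the F-term swaps connecting $pg_{j_1}$ to $qg_{j_2}$ in $\widetilde Q$ occur at interior vertices of the lifted path, not at $i$. Those swaps change $p$ and $q$ but leave the terminal arrows $g_{j_1},g_{j_2}$ fixed, so they do not visibly contribute to $\operatorname{im}\delta_1$. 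To salvage this route you would still need to show that any such binomial syzygy factors through the syzygies coming from $h_k\in e_iQ_1$---which is exactly what the paper's ratio argument accomplishes in one line.
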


\begin{proof} 
(i) We first show that $\operatorname{im}\delta_2 = \operatorname{ker}\delta_1$.
By Lemma \ref{B-D prop} it suffices to show that $\operatorname{im}\delta_2 \supseteq \operatorname{ker}\delta_1$.  
Order the sets $Q_1e_i = \{g_1, \ldots, g_n\}$ and $e_iQ_1 = \{h_1, \ldots, h_n\}$ both clockwise, such that $g_1h_1$ a subpath of a term of $W$.  Then
$$\delta_1 = \cdot \left[ \begin{array}{c} W_{h_k g_j} \end{array} \right]_{k,j}
=
\cdot \left[ \begin{array}{cccc} 
a_1    &0    &    \cdots & -b_m \\
-b_1    & a_2 &            &0            \\
0      & -b_2 &     &        0    \\
\vdots &     &     \ddots  & \\
0      & 0   &           & a_m
\end{array} \right],$$
where each $a_{\ell},b_{\ell}$ is nonzero.  
Suppose $\left[ \begin{array}{ccc} d_1 & \cdots & d_n \end{array} \right] \in \operatorname{ker}\delta_1$.  
Then $d_{\ell}a_{\ell}-d_{\ell+1}b_{\ell}=0$ for each $1 \leq \ell \leq n$ (indices modulo $n$).  
By Corollary \ref{ba not 0},  $d_{\ell}a_{\ell}$ and $d_{\ell+1}b_{\ell}$ are each nonzero.
Thus they must be in the same corner ring, hence $d_{\ell}, d_{\ell+1} \in e_jA$ for some $j \in Q_0$.
Since this holds for each $\ell$, we have $\{d_1, \ldots, d_n \} \subset e_jA$.  
Furthermore,
$$\bar{\tau}(d_{\ell})\bar{\tau}(a_{\ell}) E_{j, \operatorname{t}(a_{\ell})} = \tau\left(d_{\ell}a_{\ell} \right)=\tau\left(d_{\ell+1}b_{\ell}\right) = \bar{\tau}(d_{\ell+1})\bar{\tau}(b_{\ell})E_{j, \operatorname{t}(b_{\ell})},$$
so $\bar{\tau}(d_{\ell})\bar{\tau}(a_{\ell}) =  \bar{\tau}(d_{\ell+1})\bar{\tau}(b_{\ell})$ since $\operatorname{t}(a_{\ell})= \operatorname{h}(g_{\ell})=\operatorname{t}(b_{\ell})$.  
In addition, $h_{\ell} a_{\ell}-h_{\ell+1}b_{\ell} = 0$.
Therefore for each $\ell$,
$$\frac{\bar{\tau}(d_{\ell})}{\bar{\tau}(d_{\ell+1})} = \frac{\bar{\tau}(b_{\ell})}{\bar{\tau}(a_{\ell})} = \frac{\bar{\tau}(h_{\ell})}{\bar{\tau}(h_{\ell+1})}.$$
Thus
\begin{equation} \label{coffee}
\frac{\bar{\tau}(d_{\ell+1})}{\bar{\tau}(h_{\ell+1})} = \frac{\bar{\tau}(d_{\ell})}{\bar{\tau}(h_{\ell})} = \frac{r_1}{r_2} \in \operatorname{Frac}B,
\end{equation}
with $r_1,r_2 \in B$ coprime.
  
Since $\bar{\tau}(d_{\ell})$ and $\bar{\tau}(h_{\ell})$ are elements of $B$ and (\ref{coffee}) holds for each $\ell$, it must be that $r_2 |\bar{\tau}(h_{\ell})$ for each $\ell$.  
But there are (two sets of) two arrows in $e_iQ_1$ whose $\bar{\tau}$-images are coprime in $B$, so $r_2 = 1$.  
This, together with (\ref{coffee}) and $\operatorname{t}(d_{\ell}) = \operatorname{t}(h_{\ell})$, implies that there is a path $p \in e_{\operatorname{h}(d_{\ell})}Ae_{\operatorname{h}(h_{\ell})} = e_jAe_i$ such that $d_{\ell} = p h_{\ell}$ and $\bar{\tau}(p) = r_1$ by Lemma \ref{star}.  Since this holds for each $\ell$, we have
$$\left[ \begin{array}{ccc} d_1 & \cdots & d_n \end{array} \right] = p \left[ \begin{array}{ccc} h_1 & \cdots & h_n \end{array} \right] \in \operatorname{im}\delta_2.$$

(ii) We now show that $\operatorname{im}\delta_1 = \operatorname{ker}\delta_0$, and again by Lemma \ref{B-D prop} it suffices to show that $\operatorname{im}\delta_1 \supseteq \operatorname{ker}\delta_0$.  
Suppose $\left[ \begin{array}{ccc} d_1 & \cdots & d_n \end{array} \right] \in \operatorname{ker}\delta_0$, so $d_1g_1 + \cdots + d_ng_n = 0$. 
By Corollary \ref{ba not 0}, each $d_{\ell}g_{\ell}$ is nonzero, so we may assume $\{d_1, \ldots, d_n \} \subset e_jA$ for some $j \in Q_0$. 
Furthermore, since the relations $\partial W$ are generated by binomials, it suffices to suppose $d_{\ell}g_{\ell} + d_{\ell + 1}g_{\ell +1} = 0$.
In addition, $-b_{\ell}g_{\ell} + a_{\ell+1}g_{\ell+1} = 0$.
Thus, similar to (i) we have
$$\frac{\bar{\tau}(d_{\ell})}{\bar{\tau}(d_{\ell+1})} = \frac{-\bar{\tau}(g_{\ell+1})}{\bar{\tau}(g_{\ell})} = \frac{-\bar{\tau}(b_{\ell})}{\bar{\tau}(a_{\ell+1})}.$$
Therefore 
\begin{equation}\label{bnn}
\frac{\bar{\tau}(d_{\ell})}{\bar{\tau}(b_{\ell})} = \frac{-\bar{\tau}(d_{\ell+1})}{\bar{\tau}(a_{\ell+1})} = \frac{r_1}{r_2} \in \operatorname{Frac}B,
\end{equation}
with $r_1,r_2 \in B$ coprime.
Moreover,
$$\frac{-\bar{\tau}(d_{\ell+1})\bar{\tau}(b_{\ell})}{\bar{\tau}(a_{\ell+1})} = \bar{\tau}(d_{\ell}) \in B,$$
and since $\bar{\tau}(a_{\ell+1})$ and $\bar{\tau}(b_{\ell})$ are coprime in $B$, it must be that $\bar{\tau}(a_{\ell+1}) \mid \bar{\tau}(d_{\ell+1})$.
Thus $r_2 = 1$.
Therefore, similar to (i), (\ref{bnn}) implies that there is path $p$ such that $\bar{\tau}(p) = r_1$ and
$$\left[ \begin{array}{cccccc} 0 & \cdots & d_{\ell} & d_{\ell+1} & \cdots & 0 \end{array} \right] = p \left[ \begin{array}{cccccc} 0 & \cdots & -b_{\ell} & a_{\ell+1} & \cdots & 0 \end{array} \right] \in \operatorname{im}(\delta_1).$$
It follows that $\operatorname{ker} \delta_0 \subseteq \operatorname{im} \delta_1$, whence $\operatorname{ker} \delta_0 = \operatorname{im} \delta_1$. 
\end{proof}

Recall that a submodule $K$ of a module $M$ is superfluous if given any submodule $L \subseteq M$ satisfying $L + K = M$, we have $L = M$.
Furthermore, a module epimorphism $\delta: M \rightarrow N$ is a projective cover if $M$ is projective and $\operatorname{ker} \delta \subseteq M$ is a superfluous submodule.  

\begin{Lemma} \label{superfluous}
Let $A=kQ/I$ be a quiver algebra that admits a pre-impression $(\tau,B)$ such that (\ref{ei}) holds, let $\mathfrak{m} \in \operatorname{Max}Z$, and let $N$ be an $A_{\mathfrak{m}}$-module.
Suppose
$$\delta: \bigoplus_{j = 1}^n A_{\mathfrak{m}}e_{i(j)} \rightarrow N, \ \ \ i(j) \in Q_0,$$
is an $A_{\mathfrak{m}}$-module epimorphism.
If for each $j$, the intersection of $Z_{\mathfrak{m}}e_{i(j)}$ with the $j$-th summand of $\operatorname{ker}\delta$ is contained in $\mathfrak{m}_{\mathfrak{m}}e_{i(j)}$, then $\delta$ is a projective cover.
\end{Lemma}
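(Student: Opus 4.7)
The plan is to check the two clauses in the definition of a projective cover: that $P := \bigoplus_{j=1}^n A_{\mathfrak{m}} e_{i(j)}$ is projective, and that $K := \ker \delta$ is superfluous in $P$. The first is immediate, as each summand is a direct summand of $A_{\mathfrak{m}}$. For the second, I will verify the stronger inclusion $K \subseteq J(A_{\mathfrak{m}}) P$ and then invoke Nakayama's lemma: since $P$ is a finitely generated $A_{\mathfrak{m}}$-module, $J(A_{\mathfrak{m}}) P$ is a superfluous submodule of $P$, so any submodule contained in it is superfluous as well.

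The first step is to identify $J(A_{\mathfrak{m}})$ explicitly. By Lemma \ref{localized vertex}, every simple $A_{\mathfrak{m}}$-module is a vertex simple, so $J(A_{\mathfrak{m}}) = \bigcap_i \operatorname{ann}_{A_{\mathfrak{m}}}(V^i_{\mathfrak{m}})$. Using $e_i A_{\mathfrak{m}} e_i \cong Z_{\mathfrak{m}}$ from Theorem \ref{center}, the action of $e_i A_{\mathfrak{m}} e_i$ on $V^i_{\mathfrak{m}}$ factors through $Z_{\mathfrak{m}}/\mathfrak{m}_{\mathfrak{m}} \cong k$, so an element $a \in A_{\mathfrak{m}}$ annihilates $V^i_{\mathfrak{m}}$ precisely when $e_i a e_i \in \mathfrak{m}_{\mathfrak{m}}$, yielding
$$J(A_{\mathfrak{m}}) \;=\; \bigoplus_{j \neq k} e_j A_{\mathfrak{m}} e_k \;\oplus\; \bigoplus_i \mathfrak{m}_{\mathfrak{m}} e_i.$$

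Next I would show $K \subseteq J(A_{\mathfrak{m}}) P$. For $k \in K$, decompose $k = \sum_j k_j$ with $k_j \in P_j := A_{\mathfrak{m}} e_{i(j)}$, and further write $k_j = \sum_\ell e_\ell k_j$. For $\ell \neq i(j)$ the term $e_\ell k_j$ lies in $(e_\ell A_{\mathfrak{m}} e_{i(j)}) \cdot e_{i(j)} \subseteq J(A_{\mathfrak{m}}) P$, by the formula above. The diagonal term $e_{i(j)} k_j$ lies in $Z_{\mathfrak{m}} e_{i(j)}$, and this is where the hypothesis enters: since $K$ is an $A_{\mathfrak{m}}$-submodule, $e_{i(j)} k \in K$, and its $j$-th component equals $e_{i(j)} k_j$, which therefore lies in $Z_{\mathfrak{m}} e_{i(j)}$ inside the $j$-th summand of $\ker\delta$; the hypothesis then forces $e_{i(j)} k_j \in \mathfrak{m}_{\mathfrak{m}} e_{i(j)} \subseteq J(A_{\mathfrak{m}}) P$.

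The main obstacle is the identification of $J(A_{\mathfrak{m}})$; without Lemma \ref{localized vertex} one could not recognize every simple $A_{\mathfrak{m}}$-module as a vertex simple, and the off-diagonal contributions $e_\ell A_{\mathfrak{m}} e_{i(j)}$ could not be automatically absorbed into the radical. Once that description is in hand, the remaining analysis is essentially bookkeeping over the decomposition of $P$ by the vertex idempotents of $Q_0$.
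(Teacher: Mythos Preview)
There is a genuine gap: the lemma allows $\mathfrak{m}$ to be \emph{any} point of $\operatorname{Max}Z$, but your identification of $J(A_{\mathfrak{m}})$ rests on Lemma~\ref{localized vertex}, which is stated only for the origin. For $\mathfrak{m}$ away from the origin the vertex simples are not $A_{\mathfrak{m}}$-modules at all (their $Z$-annihilator is the origin, not $\mathfrak{m}$), and your displayed formula for the radical is false---for instance, when $\mathfrak{m}$ lies in the Azumaya locus one has $A_{\mathfrak{m}}/J(A_{\mathfrak{m}})\cong M_{|Q_0|}(k)$, so the off-diagonal blocks $e_jA_{\mathfrak{m}}e_k$ certainly do not sit inside $J(A_{\mathfrak{m}})$. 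The inclusion $K\subseteq J(A_{\mathfrak{m}})P$ that drives your Nakayama argument is therefore unavailable in general. Your argument does go through when $\mathfrak{m}$ is the origin, which is the only case invoked later (Proposition~\ref{q}), but it does not prove the lemma as stated.

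The paper's proof avoids this by never computing $J(A_{\mathfrak{m}})$. It checks superfluousness directly from the definition: given $L$ with $K+L=P$, for each $j$ it writes the generator $e_{i(j)}\in P_j$ as $-b+(e_{i(j)}+b)$ with $b\in K$ and $e_{i(j)}+b\in L$, uses the hypothesis to obtain $e_{i(j)}b = ze_{i(j)}$ with $z\in\mathfrak{m}_{\mathfrak{m}}$, and then multiplies $e_{i(j)}+b\in L$ by $(1+z)^{-1}e_{i(j)}$ to conclude $e_{i(j)}\in L$. The only input is that $1+z$ is a unit in $Z_{\mathfrak{m}}$ for $z\in\mathfrak{m}_{\mathfrak{m}}$, i.e.\ that $(Z_{\mathfrak{m}},\mathfrak{m}_{\mathfrak{m}})$ is local, so the argument works uniformly at every $\mathfrak{m}$.
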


\begin{proof}
Let $L$ be a submodule of $\bigoplus_{j} A_{\mathfrak{m}}e_{i(j)}$ such that $\operatorname{ker}\delta + L = \bigoplus_{j}A_{\mathfrak{m}}e_{i(j)}$.
We claim that $L = \bigoplus_{j} A_{\mathfrak{m}}e_{i(j)}$.
Fix $j$ and set $i:= i(j)$. 
Since $e_i \in Z_{\mathfrak{m}}e_i$ but $e_i \not \in \mathfrak{m}_{\mathfrak{m}}e_i$, our intersection assumption implies that $e_i \not \in \operatorname{ker} \delta$.
Thus there must be some $b \in \operatorname{ker}\delta$ such that $e_{i} = (-b) + (e_{i}+b) \in \operatorname{ker}\delta + L$ with $e_{i} + b \in L$.
Since $e_{i}$ lives in the $j$-th summand of $\bigoplus_{\ell}A_{\mathfrak{m}}e_{i(\ell)}$, we may assume $b$ also lives in the $j$-th summand.
Therefore by Theorem \ref{center} and our intersection assumption,
$$e_ib \in \left( \operatorname{ker}\delta \right)_j \cap e_iA_{\mathfrak{m}}e_i = \left( \operatorname{ker} \delta \right)_j \cap Z_{\mathfrak{m}}e_i \subseteq \mathfrak{m}_{\mathfrak{m}}e_i,$$
so $e_ib = ze_i$ for some $z \in \mathfrak{m}_{\mathfrak{m}}$.
Thus the element $(1+z)$ has an inverse in $A_{\mathfrak{m}}$, so 
$$e_{i} = (1+z)^{-1}(1+z)e_{i} = (1+z)^{-1}e_{i}(e_{i} +b) \in L.$$
Since this holds for each $j$, we have $L \supseteq \bigoplus_{j} A_{\mathfrak{m}}e_{i(j)}$, yielding $L = \bigoplus_{j} A_{\mathfrak{m}}e_{i(j)}$. 
Therefore $\operatorname{ker} \delta$ is superfluous. 
Finally, since $\bigoplus_{j}A_{\mathfrak{m}}e_{i(j)}$ is a projective $A_{\mathfrak{m}}$-module, $\delta$ is a projective cover.
\end{proof}

\begin{Proposition} \label{q} 
Let $A$ be a square superpotential algebra.  If $V$ is a vertex simple $A$-module with annihilator $\mathfrak{p}$ then
$$\operatorname{pd}_{A_{\mathfrak{m}}}(V_{\mathfrak{m}})= \operatorname{pd}_{A_{\mathfrak{m}}}(A_{\mathfrak{m}}/\mathfrak{p}_{\mathfrak{m}})=3,$$
and (\ref{B-D complex}) is a minimal projective resolution of $V_{\mathfrak{m}} \cong A_{\mathfrak{m}}/\mathfrak{p}_{\mathfrak{m}}$.
\end{Proposition}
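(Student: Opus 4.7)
By Lemma \ref{B-D prop} the sequence (\ref{B-D complex}) is a projective complex, and Lemma \ref{imdelta2} supplies exactness at the two middle terms. So my plan is: (i) identify $V_{\mathfrak{m}}$ with $A_{\mathfrak{m}}/\mathfrak{p}_{\mathfrak{m}}$, (ii) establish exactness at the two outer terms, and (iii) verify that each map in the complex is a projective cover, which will make the resolution minimal and force the projective dimension to equal $3$.

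Step (i) is immediate: the vertex simple $V$ is one-dimensional over $k$, so the $A$-action factors through an algebra map $A \to k$ whose kernel is $\mathfrak{p}$, giving $V \cong A/\mathfrak{p}$ and hence $V_{\mathfrak{m}} \cong A_{\mathfrak{m}}/\mathfrak{p}_{\mathfrak{m}}$. For (ii), at the right end $\operatorname{im}\delta_0$ is the $A_{\mathfrak{m}}$-submodule of $A_{\mathfrak{m}}e_i$ generated on the right by the arrows $\{g_j\}$: every path into $i$ of positive length literally begins on the right with some $g_j$, and by Theorem \ref{center} one identifies $e_iA_{\mathfrak{m}}e_i \cong Z_{\mathfrak{m}}$ so that $\mathfrak{m}_{\mathfrak{m}}e_i$ is generated over $Z_{\mathfrak{m}}$ by positive-length cycles at $i$; combining these facts yields $\operatorname{im}\delta_0 = \mathfrak{p}_{\mathfrak{m}}e_i = \operatorname{ker}\phi$. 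At the left end, to see $\delta_2$ is injective, write $a = \sum_j e_jae_i$; if $\delta_2(a) = 0$ then each component $(e_jae_i)h_k$ vanishes, and Corollary \ref{ba not 0} — which carries over to $A_{\mathfrak{m}}$ since it remains prime with $e_jA_{\mathfrak{m}}e_j \cong Z_{\mathfrak{m}}$ — forces every $e_jae_i$ to be zero.

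For (iii), I would apply Lemma \ref{superfluous} to each of the four surjections onto $V_{\mathfrak{m}}$, $\mathfrak{p}_{\mathfrak{m}}e_i$, $\operatorname{ker}\delta_0$, and $\operatorname{ker}\delta_1$. For $\phi$, the intersection $\operatorname{ker}\phi \cap Z_{\mathfrak{m}}e_i$ equals $\mathfrak{m}_{\mathfrak{m}}e_i$ because $\mathfrak{p} \cap Z = \mathfrak{m}$. For $\delta_0$, a kernel element supported in summand $j$ forces $d_j g_j = 0$ with $g_j$ a nonzero arrow, so Corollary \ref{ba not 0} yields $d_j = 0$. For $\delta_1$, the relation matrix $[W_{h_kg_j}]$ recorded in the proof of Lemma \ref{imdelta2} shows that a kernel element supported in summand $k$ satisfies $d_ka_k = 0 = d_kb_{k-1}$ with $a_k, b_{k-1}$ nonzero, again giving $d_k = 0$ by Corollary \ref{ba not 0}. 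For $\delta_2$ the kernel has already been shown to vanish in step (ii), so the condition holds vacuously.

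The main obstacle I expect is the precise identification $\operatorname{im}\delta_0 = \mathfrak{p}_{\mathfrak{m}}e_i$ at the right end: the inclusion $\subseteq$ is formal, but the reverse inclusion needs both that every positive-length path ending at $i$ factors on the right through some $g_j$ (a literal statement about paths in $kQ$, before passing to relations) and the identification of $\mathfrak{m}_{\mathfrak{m}}e_i$ as the $Z_{\mathfrak{m}}$-span of positive-length cycles at $i$, coming from Theorem \ref{center} together with the definition of the origin $\mathfrak{m}$ as the common $Z$-annihilator of the vertex simples. Once this identification is in hand the rest becomes a tidy bookkeeping exercise with Corollary \ref{ba not 0} and Lemma \ref{superfluous}.
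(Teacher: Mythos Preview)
Your proposal is correct and follows essentially the same route as the paper: exactness from Lemmas \ref{B-D prop} and \ref{imdelta2} together with Corollary \ref{ba not 0} for injectivity of $\delta_2$, and minimality from Lemma \ref{superfluous}. You are more explicit than the paper about $\operatorname{im}\delta_0 = \operatorname{ker}\phi$ and about the summand-by-summand verification of Lemma \ref{superfluous}, but the paper treats these as routine (``the kernels of $\delta_0$, $\delta_1$, and $\delta_2$ are generated by certain sums of paths of nonzero length''), so your worry about the ``main obstacle'' is unfounded---that step is the easy one.
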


\begin{proof}
The Berenstein-Douglas sequence (\ref{B-D complex}) is a projective resolution since it is a complex by Lemma \ref{B-D prop}; $\operatorname{im}\delta_2 = \operatorname{ker}\delta_1$ and $\operatorname{im}\delta_1 = \operatorname{ker}\delta_0$ by Lemma \ref{imdelta2}; and $\operatorname{ker}\delta_2 = 0$ by Corollary \ref{ba not 0}.

Furthermore, $\operatorname{ker}\phi = \mathfrak{p}_{\mathfrak{m}}e_i$ is generated by paths of nonzero length that start at $i$, and the kernels of $\delta_0$, $\delta_1$, and $\delta_2$ are generated by certain sums of paths of nonzero length.
It follows that the hypotheses of Lemma \ref{superfluous} are satisfied in each case, and so the boundary homomorphisms $\phi$, $\delta_0$, $\delta_1$, $\delta_2$ are projective covers.
Therefore $\operatorname{pd}_{A_{\mathfrak{m}}}(V_{\mathfrak{m}}) \geq 3$, whence $\operatorname{pd}_{A_{\mathfrak{m}}}(V_{\mathfrak{m}}) = 3$, and so the Berenstein-Douglas resolution is a minimal projective resolution of $V_{\mathfrak{m}}$.
\end{proof}

We note that unlike the localized case we are considering, the $A$-module homomorphism $Ae_i \stackrel{\cdot 1}{\rightarrow} A/\mathfrak{p}$ may not be a projective cover.

The following three superpotential algebras are examples where exactness fails in the second term, $\operatorname{im}\delta_2 \subsetneq \operatorname{ker}\delta_1$, of the Berenstein-Douglas complex.  Each algebra has a nontrivial noetherian center--specifically 1 dimensional--and infinite global dimension.  In each case we show how the second connecting map may be realized in two different ways, which is why the exactness fails.  $V^i$ denotes the vertex simple at $i \in Q_0$, and the map $Ae_i \stackrel{\cdot 1}{\longrightarrow} V^i$ is defined via the $A$-module isomorphism $V^i \cong A/\operatorname{ann}_AV^i$.

\begin{Example} \label{delta2examples} \rm{\ 
\begin{itemize}
 \item $Q$: $\xymatrix{\cdot \ar@(ul,dl)[]|a \ar@(ur,dr)[]|b}$ \ \ $W = a^2b$, \ \ so $Z = k[b^2]$.\\ \\
$A$ has infinite global dimension since the vertex simple $V$ has projective resolution
   $$\cdots
\stackrel{ \cdot \scriptsize{ \left[ \begin{array}{cc} 0 &a\\ a&b \end{array} \right]}}{\longrightarrow}
A^2
\stackrel{ \cdot \scriptsize{ \left[ \begin{array}{cc} a & b\\ 0&a \end{array} \right] }}{\longrightarrow}
A^2
\stackrel{ \cdot \scriptsize{ \left[ \begin{array}{cc} b &a\\ a&0 \end{array} \right]}}{\longrightarrow}
A^2
\stackrel{ \cdot \scriptsize{ \left[ \begin{array}{c} a \\ b \end{array} \right]}}{\longrightarrow}
A \stackrel{ \cdot 1}{\longrightarrow} V \rightarrow 0.$$

The second map, $\delta_1$, satisfies
$$\left[ \begin{array}{cc} W_{aa} & W_{ab} \\ W_{ba} & W_{bb} \end{array} \right]= \left[ \begin{array}{cc} * & * \\ W_{ab} & 0 \end{array} \right]$$

 \item $Q$: $\xymatrix{
   \text{\scriptsize{1}} \ar@(ul,dl)[]|{c_1} \ar@/^/[r]|{a} & \text{\scriptsize{2}} \ar@(ur,dr)[]|{c_2} \ar@/^/[l]|{b}}$ \ \ $W = c_1ba -c_2ab$, \ \ so $Z = k[c_1 +c_2]$.\\ \\
$A$ has infinite global dimension:
$$\cdots
\stackrel{ \cdot \scriptsize{ \left[ \begin{array}{cc} a & -c_2\\ 0 & b \end{array} \right]}}{\longrightarrow}
Ae_1 \oplus Ae_2
\stackrel{\cdot \scriptsize{ \left[ \begin{array}{cc} b & c_1 \\ 0&a \end{array} \right]}}{\longrightarrow}
Ae_2 \oplus Ae_1 
\stackrel{ \cdot \scriptsize{ \left[ \begin{array}{cc} a & -c_2\\ 0 & b \end{array} \right]}}{\longrightarrow}
Ae_1 \oplus Ae_2 
\stackrel{\cdot \scriptsize{ \left[ \begin{array}{c} c_1 \\ a \end{array} \right] }}{\longrightarrow}
 Ae_1 \stackrel{ \cdot 1}{\longrightarrow} V^1 \rightarrow 0.$$

$\delta_1$ satisfies
 $$\left[ \begin{array}{cc} W_{bc_1} & W_{ba} \\ W_{c_1c_1} & W_{c_1a} \end{array} \right]= \left[ \begin{array}{cc} * & * \\ 0 & W_{ac_2} \end{array} \right]$$

 \item \ \\
 $\xy
(0,0)*{}="1";(0,25)*{}="2";(0,12.5)*{Q:}="3";
\endxy
\ \ \ \
\xy
(0,25)*{\cdot}="1";(0,0)*{\cdot}="2";(-12,12.5)*{\cdot}="6";(-4,8.5)*{\cdot}="3";(4,16.5)*{\cdot}="5";(12,12.5)*{\cdot}="4";
{\ar@{->}"3";"1"};{\ar@{->}"3";"2"};{\ar@{->}"2";"4"};{\ar@{->}"1";"4"};{\ar@{->}"5";"1"};{\ar@{->}"1";"6"};{\ar@{->}"2";"6"};{\ar@{..>}"5";"2"};
{\ar@{->}"6";"3"};{\ar@{->}"4";"3"};{\ar@{..>}"4";"5"};{\ar@{..>}"6";"5"};
\endxy$ \ \ \ 
$\xy
(0,0)*{}="1";(0,25)*{}="2";(0,12.5)*{W \text{ given by (\ref{square superpotential}), with}}="3";
\endxy \ \ \ \
\xy
(0,0)*+{\text{\scriptsize{$2$}}}="1";(12.5,0)*+{\text{\scriptsize{$3$}}}="2";(25,0)*+{\text{\scriptsize{$2$}}}="3";
(0,12.5)*+{\text{\scriptsize{$6$}}}="4";(12.5,12.5)*+{\text{\scriptsize{$1$}}}="5";(25,12.5)*+{\text{\scriptsize{$4$}}}="6";
(0,25)*+{\text{\scriptsize{$2$}}}="7";(12.5,25)*+{\text{\scriptsize{$5$}}}="8";(25,25)*+{\text{\scriptsize{$2$}}}="9";
{\ar@{->}^{x_3}"2";"1"};{\ar@{->}_{x_3}"2";"3"};{\ar@{->}^{x_2}"1";"4"};{\ar@{->}|-{x_6}"4";"2"};{\ar@{->}|-{y_1}"5";"4"};{\ar@{->}|-{y_3}"2";"5"};{\ar@{->}|-{x_1}"5";"6"};{\ar@{->}_{y_2}"3";"6"};{\ar@{->}|-{y_4}"6";"2"};{\ar@{->}_{x_2}"7";"4"};{\ar@{->}_{y_5}"8";"7"};{\ar@{->}|-{y_6}"4";"8"};{\ar@{->}|-{x_5}"8";"5"};{\ar@{->}^{y_5}"8";"9"};{\ar@{->}^{y_2}"9";"6"};{\ar@{->}|-{x_4}"6";"8"};
\endxy$ \\
$A$ has infinite global dimension:
$$\cdots \rightarrow Ae_5 \oplus Ae_3 \stackrel{\cdot \left[ \begin{array}{cc} y_6 & x_4 \\ x_6 & y_4 \end{array} \right]}{\longrightarrow} Ae_6 \oplus Ae_4 \stackrel{\cdot \left[ \begin{array}{cc} y_1 & -x_2 \\ -x_1 & y_2 \end{array} \right]}{\longrightarrow} Ae_1 \oplus Ae_2 \stackrel{\cdot \left[ \begin{array}{cc} x_5 & y_3 \\ y_5 & x_3 \end{array} \right]}{\longrightarrow}$$ 
$$Ae_5 \oplus Ae_3 \stackrel{\cdot \left[ \begin{array}{cc} x_4 & -y_6 \\ -y_4 & x_6 \end{array} \right]}{\longrightarrow} Ae_4 \oplus Ae_6 \stackrel{\cdot \left[ \begin{array}{c} x_1 \\ y_1 \end{array} \right]}{\longrightarrow} Ae_1 \stackrel{\cdot 1}{\longrightarrow} V^1 \rightarrow 0.$$

$\delta_1$ satisfies
$$\left[ \begin{array}{cc} W_{x_5x_1} & W_{x_5y_1} \\ W_{y_3x_1} & W_{y_3y_1} \end{array} \right]= \left[ \begin{array}{cc} W_{y_5y_2} & W_{y_5x_2} \\ W_{x_3y_2} & W_{x_3x_2} \end{array} \right]$$
\end{itemize}
} \end{Example}

Next we consider a family of superpotential algebras where exactness fails in the third term, $\operatorname{ker}\delta_2 \not = 0$.  Each algebra has a nontrivial noetherian center (again 1 dimensional) and infinite global dimension.

\begin{Example} \label{examples 2} \rm{\
Let $Q$ be the cycle quiver, consisting of a single oriented cycle $c=a_n\cdots a_2a_1$, $a_i \in Q_1$, up to cyclic equivalence, and let $W \in k[c].$  Then $Z \cong k[c]/\left(\frac{dW}{dc} \right)$.\\
If not both $n =1$ and $W = c^2$ then the global dimension of $A$ is infinite:
$$\cdots \rightarrow Ae_{\operatorname{h}(a_{n-1})}
\stackrel{a_{n-1}}{\longrightarrow}
Ae_{\operatorname{t}(a_{n-1})}
\stackrel{\cdot W_{a_{n-1},a_n}}{\longrightarrow}
Ae_{\operatorname{h}(a_n)}
\stackrel{\cdot a_n}{\longrightarrow}
Ae_{\operatorname{t}(a_n)}$$
$$\stackrel{\cdot W_{a_n,a_1}}{\longrightarrow}
Ae_{\operatorname{h}(a_1)}
\stackrel{\cdot a_1}{\longrightarrow}
Ae_{\operatorname{t}(a_1)} \stackrel{ \cdot 1}{\longrightarrow} V^{\operatorname{t}(a_1)} \rightarrow 0.$$
} \end{Example}

\subsection{Endomorphism rings} \label{Endomorphism rings}

This section is based on joint work with Alex Dugas.  We show that a square superpotential algebra is an endomorphism ring of a reflexive module over its center.  For motivation, see \cite[section 4]{VdB}.\footnote{We give a partial account: A generalization of birationality is needed in order to view a homologically smooth noncommutative algebra as a resolution of its center.  Two varieties are birational precisely when they have isomorphic function fields; we may take the `function field' of a noncommutative algebra $A$ with prime center $Z$ to be  
$\operatorname{Frac}(Z) \otimes_Z A$.  If $X$ is an algebraic variety then $A$ and $k[X]$ are said to be \textit{birational} if their respective function fields are Morita equivalent, that is, $\operatorname{Frac}(Z) \otimes_Z A \cong \operatorname{End}_{k(X)}(k(X)^n)$ for some $n< \infty$, since requiring they be isomorphic is clearly too strong.  Morita equivalence therefore holds if (and only if) (i) $\operatorname{Frac}(Z) \cong k(X)$ (by comparing centers), and (ii) there exists a finitely-generated $Z$-module $M$ such that 
$$k(X) \otimes_Z A \cong \operatorname{End}_{k(X)} \left(k(X) \otimes_Z M\right),$$
since $k(X) \otimes_Z M$ is a finite dimensional $k(X)$-vector space, and this holds if $A \cong \operatorname{End}_Z \left( M\right)$.}

Note that $e_iAe_k$ is a $Z$-module for each $i \in Q_0$: if $z \in Z$, $a \in e_iAe_k$, then $za = ze_ia=e_iza \in e_iAe_k$.

\begin{Lemma} \label{isomorphism} 
Let $A$ be a square superpotential algebra.  Then for each $i,j,k \in Q_0$, there is an isomorphism
\begin{equation} \label{asteroid}
\begin{array}{ccc}
e_jAe_i & \stackrel{\cong}{\longrightarrow} & \operatorname{Hom}_Z\left(e_iAe_k,e_jAe_k\right)\\
d & \mapsto & f_d
\end{array}
\end{equation}
where $f_d(a)=da$. 
\end{Lemma}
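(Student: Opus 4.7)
The map $d \mapsto f_d$ is $Z$-linear by centrality of $Z$, so the main work is to establish injectivity and surjectivity. Injectivity is immediate from Corollary~\ref{ba not 0}: the quiver of a square superpotential algebra is strongly connected (its embedding into a torus as a square grid together with the compatible orientation giving unit cycles forces oriented paths between any two vertices), so $e_iAe_k$ contains the class of some path from $k$ to $i$, which is nonzero in $A$ by Lemma~\ref{injective}; Corollary~\ref{ba not 0} then yields $da \neq 0$ for any nonzero $d \in e_jAe_i$, and so $f_d \neq 0$.

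For surjectivity I would first localize generically. By Corollary~\ref{square prime}, $A$ is prime, noetherian, and module-finite over $Z$, and by Lemma~\ref{maximal} its PI-degree equals $|Q_0|$; hence
\[
A \otimes_Z \operatorname{Frac}(Z) \cong M_{|Q_0|}(\operatorname{Frac}(Z)),
\]
and each corner $e_iAe_k \otimes_Z \operatorname{Frac}(Z)$ is a one-dimensional $\operatorname{Frac}(Z)$-vector space. Any $f \in \operatorname{Hom}_Z(e_iAe_k, e_jAe_k)$ therefore extends to a $\operatorname{Frac}(Z)$-linear map between rank-one spaces, hence to multiplication by a unique $\tilde d \in (e_jAe_i) \otimes_Z \operatorname{Frac}(Z)$. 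Since $e_jAe_k$ is torsion-free over $Z$, it then remains only to prove the integrality statement $\tilde d \in e_jAe_i$, after which $f$ and $f_{\tilde d}$ agree generically and so everywhere.

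To prove the integrality step I would work via the impression. Pick a path $p \in e_iAe_k$ lifting to $\widetilde Q$ without cyclic proper subpaths (such a lift exists by Lemma~\ref{star star}), so $\bar\tau(p)$ is a monomial in $B$. Write $q := f(p) \in e_jAe_k$ as $\sum_\ell \alpha_\ell q_\ell$ with $q_\ell$ representatives of distinct path classes modulo $\partial W$; by Lemma~\ref{injective} their $\bar\tau$-images are pairwise distinct monomials. The identity $\bar\tau(\tilde d)\bar\tau(p) = \sum_\ell \alpha_\ell \bar\tau(q_\ell)$, combined with a lattice analysis in $\widetilde Q$ (absorbing excess cyclic factors of $\sigma$ via Lemma~\ref{cycle in cover} and comparing lifts of $p$ and $q_\ell$), forces $\bar\tau(p) \mid \bar\tau(q_\ell)$ in $B$ for every $\ell$. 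Lemma~\ref{star} then produces $d_\ell \in e_jAe_i$ with $q_\ell \sim d_\ell p$ and $\bar\tau(d_\ell) = \bar\tau(q_\ell)/\bar\tau(p)$; setting $d := \sum_\ell \alpha_\ell d_\ell \in e_jAe_i$ gives $dp \sim q$ and $\bar\tau(d) = \bar\tau(\tilde d)$, as required.

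The main obstacle is precisely the divisibility $\bar\tau(p) \mid \bar\tau(q_\ell)$: it amounts to a reflexivity statement saying that $\bar\tau(e_jAe_i)$ is saturated in $B$ with respect to multiplication by $\bar\tau(e_iAe_k)$, and unlike the rank-one reduction it requires the concrete square-grid geometry encoded in Lemmas~\ref{star}, \ref{star star}, and~\ref{cycle in cover}, rather than only the formal PI-theoretic input used to produce $\tilde d$ in the fraction module.
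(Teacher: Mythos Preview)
Your injectivity argument is the same as the paper's. Your surjectivity plan---produce a candidate $\tilde d$ generically, then prove integrality---is structurally sound, but the step you yourself flag as the obstacle is not actually carried out, and your proposed ``lattice analysis comparing lifts of $p$ and $q_\ell$'' cannot work as stated. The map $f$ is an abstract $Z$-linear map; nothing ties $q_\ell = f(p)_\ell$ to any particular lift of $p$ in $\widetilde Q$, so Lemmas~\ref{star star} and~\ref{cycle in cover} alone say nothing about the individual monomials $\bar\tau(q_\ell)$. The single identity $\bar\tau(\tilde d)\bar\tau(p)=\sum_\ell\alpha_\ell\bar\tau(q_\ell)$ for one fixed $p$ does not force $\bar\tau(p)\mid\bar\tau(q_\ell)$ termwise: $\bar\tau(\tilde d)$ is a priori only a Laurent polynomial.

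What you are missing is a second use of $Z$-linearity, and this is exactly what the paper exploits. Fix a path $h\in e_kAe_i$; for any two paths $a,a'\in e_iAe_k$ one has $ah,\,ha'\in Ze_i=Ze_k$, so $(ha')f(a)=(ah)f(a')$ and hence $\bar\tau(f(a))/\bar\tau(a)=\bar\tau(f(a'))/\bar\tau(a')$ in $\operatorname{Frac}(B)$, independent of the path chosen. Now the torus embedding gives the simple topological fact that for each variable $w\in\{x_1,x_2,y_1,y_2\}$ there is a path $a'$ from $k$ to $i$ with $w\nmid\bar\tau(a')$; running over all four $w$ forces this common ratio $m$ to lie in $B$. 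Lemma~\ref{star} then produces $d\in e_jAe_i$ with $\bar\tau(d)=m$ and $f(a)=da$ for every path $a$, without ever passing through $\operatorname{Frac}(Z)$. So the paper's route is both shorter than yours and fills precisely the gap you identified: the divisibility comes not from lift geometry but from varying the input path.
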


\begin{proof}
\textit{Surjectivity:} Suppose $f \in \operatorname{Hom}_Z\left(e_iAe_k,e_jAe_k\right)$.  We want to show that there is some $d \in e_jAe_i$ such that $f = f_d$ is left multiplication by $d$, which then implies (\ref{asteroid}) is surjective.  

Fix an element $a \in e_iAe_k$ and a path $h \in e_kAe_i$.
By Theorem \ref{center}, $e_iAe_i = Ze_i$, so there is some $z \in Z$ such that $ze_i = ah$.
Similarly, for any $a' \in e_iAe_k$ there is some $z' \in Z$ such that $z'e_k = ha'$, whence
$$z'f(a) - zf(a') = f(z'a - za') = f(az' - za') = f(a(ha') - (ah)a') = f(0) = 0.$$
Thus, since $B$ is a domain,
\begin{equation} \label{space ship}
\frac{\bar{\tau}(a')\bar{\tau}(f(a))}{\bar{\tau}(a)} = \frac{\bar{\tau}(z')\bar{\tau}(f(a))}{\bar{\tau}(z)} = \bar{\tau}(f(a')) \in B.
\end{equation} 
It is clear that for each $w \in \left\{x_1,x_2,y_1,y_2\right\}$ there is a path from $k$ to $i$ whose $\bar{\tau}$-image is not divisible by $w$, since $Q$ embeds into a torus. 
Therefore, since (\ref{space ship}) holds for all $a' \in e_iAe_k$, it must be that $\bar{\tau}(a) | \bar{\tau}(f(a))$.
Set $m := \bar{\tau}(f(a))/ \bar{\tau}(a)$.

Write $a = \sum_{\ell = 1}^s \alpha_{\ell}$ and $f(a) = \sum_{\ell = 1}^t \beta_{\ell}$, where $\alpha_{\ell}, \beta_{\ell}$ are (scalar multiples of) paths.
Since $\bar{\tau}$ is $k$-linear,
$$m \bar{\tau}(\alpha_1) + \cdots + m \bar{\tau}(\alpha_s) = m \bar{\tau}(a) = \bar{\tau}(f(a)) = \bar{\tau}(\beta_1) + \cdots + \bar{\tau}(\beta_t).$$
Since $B$ is a polynomial ring and the $\bar{\tau}$-image of a path is a monomial, we have $s = t$, and by possibly re-indexing, $m \bar{\tau}(\alpha_{\ell}) = \bar{\tau}(\beta_{\ell})$. 
Since $\operatorname{t}(\alpha_{\ell}) = k = \operatorname{t}(\beta_{\ell})$, by Lemma \ref{star} there is some $d_{\ell} \in e_jAe_i$ such that $d_{\ell} \alpha_{\ell} = \beta_{\ell}$ and $\bar{\tau}(d_{\ell}) = m$.
By the injectivity of $\tau$, there is a unique path in $e_jAe_i$ with $\bar{\tau}$-image $m$, so it must be that $d_1 = \cdots = d_s =: d$.
Therefore $f(a) = da$.
Since $a$ was arbitrary, for any $b \in e_iAe_k$ we similarly have $f(a-b) = d(a-b)$.
This yields $f(b) = f(a-(a-b)) = f(a) - f(a-b) = db = f_d(b)$, proving our claim.

\textit{Injectivity:} Let $d \in e_jAe_i$ be nonzero.  Since $B$ is an integral domain, $da \not = 0$ for any nonzero $d \in e_iA$ by Corollary \ref{ba not 0}, so $f_d$ is injective, and in particular $f_d \not = 0$.
\end{proof}

\begin{Proposition} \label{endomorphism ring} 
Let $A$ be a square superpotential algebra.  Then for any $i \in Q_0$, $Ae_i$ is a reflexive $Z$-module and
$$A \cong \operatorname{End}_Z\left( Ae_i \right).$$
\end{Proposition}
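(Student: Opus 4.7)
The plan is to derive both assertions from Lemma \ref{isomorphism} and Theorem \ref{center}, working along the vertex decomposition $Ae_i = \bigoplus_{\ell \in Q_0} e_\ell A e_i$. For the endomorphism-ring statement, Lemma \ref{isomorphism} with free parameter $k=i$ applied summand-wise identifies $\operatorname{Hom}_Z(e_\ell A e_i, e_j A e_i) \cong e_j A e_\ell$ via left multiplication $d \mapsto f_d$, and summing gives
\[
\operatorname{End}_Z(Ae_i) \;=\; \bigoplus_{j,\ell} \operatorname{Hom}_Z(e_\ell A e_i,\, e_j A e_i) \;\cong\; \bigoplus_{j,\ell} e_j A e_\ell \;=\; A.
\]
This is an algebra isomorphism because $f_d \circ f_{d'} = f_{dd'}$.

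For reflexivity, set $M^* := \operatorname{Hom}_Z(M, Z)$ and compute both duals via Lemma \ref{isomorphism}, using \emph{different} choices of the free parameter in each case. First, using $Z \cong e_iAe_i$ (Theorem \ref{center}) and Lemma \ref{isomorphism} with $k=i$,
\[
(Ae_i)^* \;\cong\; \bigoplus_j \operatorname{Hom}_Z(e_jAe_i,\,e_iAe_i) \;\cong\; \bigoplus_j e_iAe_j \;=\; e_iA,
\]
with $d \in e_iA$ mapped to the functional $x \mapsto dx$. Second, using $Z \cong e_jAe_j$ for each $j$ and Lemma \ref{isomorphism} with $k=j$,
\[
(e_iA)^* \;\cong\; \bigoplus_j \operatorname{Hom}_Z(e_iAe_j,\,e_jAe_j) \;\cong\; \bigoplus_j e_jAe_i \;=\; Ae_i,
\]
where $a_j \in e_jAe_i$ maps to the functional $d_j \mapsto a_jd_j$.

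The main obstacle is verifying that the composite $Ae_i \to (Ae_i)^{**} \to Ae_i$ of the canonical biduality map with these two identifications is the identity, since the two duality steps invoke different identifications of $Z$ with corner rings. Tracing through, for $a = \sum_j a_j \in Ae_i$ and $d = \sum_j d_j \in e_iA$, biduality followed by the first iso produces the functional $d \mapsto \sum_j d_j a_j \in e_iAe_i$, while under the second iso this corresponds to the element of $Ae_i$ whose components contribute $a_j d_j \in e_jAe_j$ at each $j$. The agreement of these two central elements reduces, via Theorem \ref{center} and the injectivity of $\bar\tau$ on $e_iAe_i$ (Lemma \ref{bartau}), to the identity $\sum_j \bar\tau(d_j a_j) = \sum_j \bar\tau(a_j d_j)$, which holds because $B$ is a commutative polynomial ring. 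Hence the composite is the identity, $Ae_i$ is reflexive, and combined with the first computation this yields $A \cong \operatorname{End}_Z(Ae_i)$.
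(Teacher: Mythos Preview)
Your argument is correct and follows essentially the same route as the paper: both the endomorphism-ring statement and reflexivity are derived from Lemma~\ref{isomorphism} applied to the vertex decomposition, with Theorem~\ref{center} supplying the identifications $Z \cong e_\ell A e_\ell$. The only structural difference is that the paper dualizes each summand $e_jAe_i$ separately while you dualize $Ae_i$ as a whole; since the sum is finite these are equivalent. Your treatment is in fact more careful than the paper's on one point: you explicitly verify that the canonical biduality map $Ae_i \to (Ae_i)^{**}$ agrees with the composite of the two Lemma~\ref{isomorphism} identifications, reducing it to $\bar\tau(d_j a_j)=\bar\tau(a_j d_j)$ in the commutative ring $B$, whereas the paper simply computes $(e_jAe_i)^{**}\cong e_jAe_i$ abstractly and declares reflexivity. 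One small correction: the check should be termwise ($\bar\tau(d_j a_j)=\bar\tau(a_j d_j)$ for each $j$) rather than on the sums, since the second identification is applied summand-by-summand with a different corner-ring model of $Z$ at each $j$; your argument already contains this, but the phrasing at the end obscures it slightly.
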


\begin{proof} 
We first claim that for any $j \in Q_0$, $e_jAe_i$ is a reflexive $Z$-module, and so $Ae_i$ is a reflexive $Z$-module.  For $i,j \in Q_0$, 
$$\operatorname{Hom}_Z\left(e_iAe_j, Z \right) = \operatorname{Hom}_Z\left(e_iAe_j, Ze_j \right) = \operatorname{Hom}_Z\left(e_iAe_j,e_jAe_j \right) \cong e_jAe_i,$$
where the last isomorphism follows from Lemma \ref{isomorphism} with $k=j$.  Thus
$$\operatorname{Hom}_Z\left( e_iAe_j,Z \right) \cong e_jAe_i \ \ \ \text{ and } \ \ \ 
\operatorname{Hom}_Z\left( e_jAe_i, Z \right) \cong e_iAe_j,$$
proving our claim.  Furthermore,
$$\begin{array}{cclc}
A & = & \bigoplus_{j,k \in Q_0} e_kAe_j & \\
& \cong & \bigoplus_{j,k \in Q_0}\operatorname{Hom}_Z \left(e_jAe_i,e_kAe_i \right) & \text{ by Lemma \ref{isomorphism}}\\
& \cong & \operatorname{Hom}_Z \left( \bigoplus_j e_jAe_i, \bigoplus_k e_kAe_i \right) & \\
& = & \operatorname{End}_Z \left( Ae_i \right). &
\end{array}$$
\end{proof}

Note that Proposition \ref{endomorphism ring} holds after localization: 
$$A_{\mathfrak{m}} \cong Z_{\mathfrak{m}} \otimes_Z \operatorname{End}_Z(Ae_i) \cong \operatorname{End}_{Z_{\mathfrak{m}}}\left( A_{\mathfrak{m}}e_i \right).$$
In the following examples, recall that $R \subset B$ is isomorphic to $Z$.  We will denote by $R\left\{b_1,\ldots,b_n \right\}$ the (indecomposable) $R$-module minimally generated by $b_1,\ldots,b_n \in B$.

\begin{Example} \rm{
Consider the $Y^{4,0}$ algebra $A$ given in figure \ref{Y40}.  
$A$ is isomorphic to the endomorphism ring of the direct sum of the reflexive $R$-modules $R_i := \bar{\tau}(e_iAe_1) \subset B$, $i \in Q_0$, given in the figure.  
Note that the free $R$-module $R$ can be placed at any vertex.
\begin{figure}
$$\begin{array}{ccc}
\xy (-18.478,7.654)*+{8}="8";(-18.478,-7.654)*+{7}="7";(-7.654,-18.478)*+{6}="6";(7.654,-18.478)*+{5}="5";(18.478,-7.654)*+{4}="4";(18.478,7.654)*+{3}="3";(7.654,18.478)*+{2}="2";(-7.654,18.478)*+{1}="1";
{\ar@/^1pc/"1";"7"};{\ar@{->}"1";"3"};
{\ar@/^/"2";"3"};{\ar@/_.1pc/"2";"3"};{\ar@{->}"2";"4"};
{\ar@/_/"3";"8"};{\ar@/^/"3";"8"};{\ar@{->}"3";"5"};
{\ar@/^/"4";"5"};{\ar@/_.1pc/"4";"5"};
{\ar@/^/"5";"6"};{\ar@/_/"5";"6"};{\ar@/_/"5";"2"};{\ar@/^/"5";"2"};
{\ar@/^/"6";"7"};{\ar@/_/"6";"7"};{\ar@/^1pc/"6";"4"};
{\ar@/^/"7";"8"};{\ar@/_/"7";"8"};{\ar@/^1pc/"7";"5"};
{\ar@/^/"8";"1"};{\ar@/_.1pc/"8";"1"};{\ar@/^1pc/"8";"6"};{\ar@{->}"8";"2"};
\endxy & 
\ \ \ \ 
&
{\begin{array}{ccl} 
R_1 & := & R \\
R_2 & := & R\left\{x_1y_1^2y_2,x_2y_1^2y_2,x_1^3y_1^5,x_1^2x_2y_1^5,x_1x_2^2y_1^5,x_2^3y_1^5\right\}\\
R_3 & := & R\left\{y_1,x_1^2y_2^3,x_1x_2y_2^3,x_2^2y_2^3\right\}\\
R_4 & := & R\left\{x_1y_1y_2^3,x_2y_1y_2^3,x_1y_1^3y_2,x_2y_1^3y_2\right\}\\
R_5 & := & R\left\{y_1^2,y_2^2\right\}\\
R_6 & := & R\left\{x_1y_1y_2^2,x_2y_1y_2^2,x_1y_1^3,x_2y_1^3\right\}\\
R_7 & := & R\left\{y_2,x_1^2y_1^3,x_1x_2y_1^3,x_2^2y_1^3\right\}\\
R_8 & := & R\left\{x_1y_1y_2,x_2y_1y_2,x_1^3y_1^4,x_1^2x_2y_1^4,x_1x_2^2y_1^4,x_2^3y_1^4\right\}
\end{array}}
\end{array}$$
\caption{A $Y^{4,0}$ quiver and its corresponding $R$-modules $R_i:=\bar{\tau}(e_iAe_1) \subset B$.}
\label{Y40}
\end{figure}
}\end{Example}

\begin{Example} \rm{
The conifold quiver algebra $A$ given in example \ref{conifold} is the $Y^{1,0}$ algebra, with center
$$Z \cong R = k[x_1y_1, x_2y_2, x_1y_2, x_2y_1] \cong k[a,b,c,d]/(ab-cd).$$
It is standard \cite[section 1, example]{VdB2} to view $A$ as the endomorphism ring
$$A \cong \operatorname{End}_R(R \oplus I) = \left( \begin{array}{cc} R & I \\ I^{-1} & R \end{array} \right),$$
where $I = (a,c)=(x_1y_1,x_1y_2)$ and $I^{-1}= (a,d) = (x_1y_1,x_2y_1)$.
Our method realizes $A$ as a slightly different endomorphism ring:
$$A \cong \operatorname{End}_R\left(R \oplus R\left\{x_1,x_2\right\}\right) \cong \operatorname{End}_R\left(R \oplus R\left\{y_1,y_2\right\}\right).$$
} \end{Example}

We now prove the main result of this section.

\begin{Theorem} \label{nccr} 
Let $A$ be a square superpotential algebra, $Z$ its center, and $\mathfrak{m}$ the maximal ideal at the origin of $\operatorname{Max}Z$.  Then the localization $A_{\mathfrak{m}}$ is a noncommutative crepant resolution of $Z_{\mathfrak{m}}$, and consequently a local Calabi-Yau algebra of dimension 3.
\end{Theorem}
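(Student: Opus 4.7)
The plan is to assemble the pieces already established in Sections \ref{Gorenstein centers}--\ref{Endomorphism rings}, verifying the defining conditions of NCCR and of a local Calabi-Yau algebra in turn.

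First, for the noncommutative crepant resolution claim: by Theorem \ref{Gorenstein}, $Z_{\mathfrak{m}}$ is a 3-dimensional normal local Gorenstein domain. By Proposition \ref{endomorphism ring} together with the localization remark immediately following it, $A_{\mathfrak{m}} \cong \operatorname{End}_{Z_{\mathfrak{m}}}(A_{\mathfrak{m}}e_i)$, and the summand $A_{\mathfrak{m}}e_i = \bigoplus_j e_jA_{\mathfrak{m}}e_i$ is reflexive over $Z_{\mathfrak{m}}$ (each corner $e_jAe_i$ is shown reflexive within the proof of that proposition, and reflexivity is preserved by localization and finite direct sum). So it remains to establish homological homogeneity. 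By Lemma \ref{localized vertex} the only simple $A_{\mathfrak{m}}$-modules are the vertex simples $V_{\mathfrak{m}}^i$, and Proposition \ref{q} asserts $\operatorname{pd}_{A_{\mathfrak{m}}}(V_{\mathfrak{m}}^i)=3$ uniformly for each $i \in Q_0$ (the Berenstein-Douglas resolution construction is symmetric in the choice of vertex). Therefore $A_{\mathfrak{m}}$ is homologically homogeneous of global dimension $3$, yielding the NCCR.

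Second, for the local Calabi-Yau claim, I will check the three conditions from the definition in the opening of Section \ref{Noncommutative crepant resolutions}. Condition (i), $\operatorname{gl.dim}A_{\mathfrak{m}} = \operatorname{K.dim}Z_{\mathfrak{m}} = 3$, follows from the preceding paragraph combined with Proposition \ref{Krull dim}. Condition (ii), that $A_{\mathfrak{m}}$ is a maximal Cohen-Macaulay $Z_{\mathfrak{m}}$-module, is the standard consequence of homological homogeneity (Brown-Hajarnavis): since every simple $A_{\mathfrak{m}}$-module has projective dimension equal to $\operatorname{K.dim}Z_{\mathfrak{m}}$, the Auslander-Buchsbaum formula applied to each projective cover $A_{\mathfrak{m}}e_i \twoheadrightarrow V_{\mathfrak{m}}^i$ gives $\operatorname{depth}_{Z_{\mathfrak{m}}} A_{\mathfrak{m}}e_i = 3$, hence the same for their sum $A_{\mathfrak{m}}$. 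Condition (iii), the bimodule isomorphism $\operatorname{Hom}_{Z_{\mathfrak{m}}}(A_{\mathfrak{m}}, Z_{\mathfrak{m}}) \cong A_{\mathfrak{m}}$, comes from decomposing
\[
\operatorname{Hom}_{Z}(A, Z) \cong \bigoplus_{i,j \in Q_0} \operatorname{Hom}_{Z}(e_jAe_i, Ze_i) = \bigoplus_{i,j} \operatorname{Hom}_Z(e_jAe_i, e_iAe_i) \cong \bigoplus_{i,j} e_iAe_j \cong A,
\]
where the middle identifications use Theorem \ref{center} ($Ze_i = e_iAe_i$) and Lemma \ref{isomorphism} (the map $d \mapsto f_d$ of left multiplication); localizing at $\mathfrak{m}$ then gives the desired isomorphism.

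The main obstacle is condition (iii): one must verify that the pointwise $k$-linear isomorphisms $e_iAe_j \xrightarrow{\sim} \operatorname{Hom}_Z(e_jAe_i, Z)$ assemble into an $A$-bimodule isomorphism. The $A$-bimodule structure on $\operatorname{Hom}_Z(A,Z)$ is by pre- and post-composition on the source $A$, while on $A$ itself it is by two-sided multiplication; the needed compatibility $f_{a d b}(x) = (a f_d b)(x)$ for $a,b \in A$, $d,x$ in appropriate corners reduces to associativity $a d b \cdot x = a \cdot d(bx)$ in $A$, once one tracks which vertex idempotents line up. Modulo this bookkeeping, every ingredient is already in place, so the theorem follows on localizing at $\mathfrak{m}$.
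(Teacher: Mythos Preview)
Your NCCR argument is essentially identical to the paper's: Theorem~\ref{Gorenstein} for the Gorenstein base, Proposition~\ref{endomorphism ring} (localized) for the endomorphism/reflexive description, and Lemma~\ref{localized vertex} + Proposition~\ref{q} for homological homogeneity.

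Where you diverge is the Calabi--Yau consequence. The paper does not verify (i)--(iii) directly; it simply invokes Braun \cite[example~2.22]{Braun}, which says that over an algebraically closed field a noncommutative crepant resolution of a normal Gorenstein finitely-generated $k$-algebra is automatically locally Calabi--Yau. Your route is more hands-on, which is fine, but two of your three checks are shakier than you indicate. For (ii), the sentence ``the Auslander--Buchsbaum formula applied to each projective cover $A_{\mathfrak m}e_i\twoheadrightarrow V^i_{\mathfrak m}$ gives $\operatorname{depth}_{Z_{\mathfrak m}}A_{\mathfrak m}e_i=3$'' is not a valid application of the classical formula (you have $\operatorname{pd}_{A_{\mathfrak m}}$, not $\operatorname{pd}_{Z_{\mathfrak m}}$); what actually carries the weight is the Brown--Hajarnavis theorem you cite, so drop the parenthetical sketch. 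For (iii), your reduction ``$f_{adb}(x)=(af_db)(x)$ reduces to associativity $adb\cdot x=a\cdot d(bx)$'' is wrong as written: the bimodule action on $\operatorname{Hom}_Z(A,Z)$ is $(a\,f_d\,b)(x)=f_d(bxa)=d\,bxa$, not $adbx$, so it does not match left multiplication by $adb$. The cornerwise isomorphisms from Lemma~\ref{isomorphism} identify $\operatorname{Hom}_Z(e_jAe_i,Z)$ with $e_iAe_j$ using a choice of $Z\cong Ze_i$, and these choices must be made coherently across all $(i,j)$ before the assembled map becomes a bimodule map; this bookkeeping is exactly what Braun's theorem packages for you. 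Either carry it out carefully or, as the paper does, cite Braun.
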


\begin{proof} 
The only simple $A_{\mathfrak{m}}$-modules are the vertex simples by Lemma \ref{localized vertex}, and so the theorem follows from Theorem \ref{Gorenstein}, Proposition \ref{q}, and Proposition \ref{endomorphism ring}.  
Moreover, by a result of Braun \cite[example 2.22]{Braun}, a noncommutative crepant resolution $A$ is locally Calabi-Yau if $k$ is algebraically closed and $Z$ is a normal Gorenstein finitely-generated $k$-algebra.
\end{proof}

\section{The $Y^{p,q}$ algebras} \label{The Ypq algebras}

We now consider a particular class of square superpotential algebras in detail, namely the $Y^{p,q}$ algebras defined in example \ref{Ypq}.  These algebras are conjecturally related to certain Sasaki-Einstein manifolds in the $\mathcal{N}=1$, $d=4$ AdS/CFT correspondence in string theory.

\subsection{Azumaya loci and (non-local) global dimensions} \label{Azumaya loci}

\begin{Proposition} \label{center prop} 
Let $Z$ be the center of a $Y^{p,q}$ algebra $A$.  Then there is some $0 \leq r \leq 2p$ such that
\begin{equation} \label{Ypq center} 
Z \cong k\left[ x_{\alpha_1}x_{\alpha_2}y_1y_2, \ y_1^p \prod_{\ell = 1}^{2p-r}x_{\beta_{\ell}}, \ y_2^p \prod_{\ell = 1}^{r}x_{\gamma_{\ell}} \ \mid \ \alpha_{\ell}, \beta_{\ell}, \gamma_{\ell} \in \{1,2\} \right],
\end{equation}
where in the McKay cases $r \in \{ 0, 2p \}$, set $y_{\alpha}^p \prod_{\ell = 1}^0 x_{\beta_{\ell}} := y_{\alpha}^p$.
\end{Proposition}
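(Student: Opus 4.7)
My plan is to use Theorem~\ref{square impression} to identify $Z \cong \bar{\tau}(e_iAe_i) \subset B = k[x_1,x_2,y_1,y_2]$ as a monomial subalgebra, and then to explicitly compute its generators from the lattice $\pi^{-1}(i)$ together with the transcendence basis furnished by Proposition~\ref{Krull dim}.

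First I would fix a vertex $i \in Q_0$ and determine the sublattice $\pi^{-1}(i) \subset \widetilde{Q}_0 = \mathbb{Z}^2$. Because the $Y^{p,q}$ quiver identifies $(0,j) = (2,j)$ and $(i,0) = (i+i_0,p)$, we have $\pi^{-1}(i) = \mathbb{Z}(2,0) \oplus \mathbb{Z}(i_0,p)$ for some $i_0 \in \{0,1\}$. By Proposition~\ref{Krull dim}, $Ze_i$ has transcendence basis $\{\alpha, \beta, \sigma\}$, where $\alpha$ and $\beta$ lift in $\widetilde{Q}$ to shortest paths from $(0,0)$ to $(2,0)$ and to $(i_0,p)$ respectively, and $\bar{\tau}(\sigma) = x_1x_2y_1y_2$. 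Inspecting the three building blocks of figure~\ref{2} together with the arrow labeling in figure~\ref{labels}, I expect $\bar{\tau}(\alpha) = x_{\alpha_1}x_{\alpha_2}$ for some $\alpha_1,\alpha_2 \in \{1,2\}$, while a shortest vertical path traverses $p$ rows, each contributing either $y_1$ or a diagonal $x_\nu y_1$ according to the building block in that row; summing these contributions gives $\bar{\tau}(\beta) = y_1^p \prod_{\ell=1}^{2p-r} x_{\beta_\ell}$ for some $r \in \{0,\ldots,2p\}$ determined by the pattern of diagonals.

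Next, I would assemble the full generating set. Multiplying $\alpha$ by $\sigma$ (and using Lemma~\ref{star} to realize $\sigma/\alpha$ as the $\bar{\tau}$-image of a genuine cycle) produces the family $x_{\alpha_1}x_{\alpha_2}y_1y_2$; these are precisely the $\bar{\tau}$-images of the four possible unit cycles at $i$ (cf.\ Theorem~\ref{square impression}). The choice of $\beta$ gives the second family $y_1^p \prod x_{\beta_\ell}$, and the third family $y_2^p \prod x_{\gamma_\ell}$ arises from the opposite vertical direction: by Lemma~\ref{star}, $\bar{\tau}(\sigma)^p = \bar{\tau}(\beta) \cdot m$ for some monomial $m$, and Lemma~\ref{star star} shows $m$ itself is the $\bar{\tau}$-image of a (unique) cycle, which is $y_2^p$ times the complementary $r$ factors of $x$. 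Finally, using Proposition~\ref{Krull dim} and Lemma~\ref{star star}, any cycle at $i$ has $\bar{\tau}$-image that is a monomial in these three families modulo multiplication by $\sigma$, so they generate $Z$. The McKay cases $r \in \{0,2p\}$ correspond exactly to all diagonals being parallel, matching Proposition~\ref{triangular}.

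The main obstacle will be the combinatorial bookkeeping for $\bar{\tau}(\beta)$: for each of the three building-block types in figure~\ref{2} I must verify how its presence in a given row contributes to the horizontal $x$-shifts accumulated in a shortest vertical path, and confirm that these shifts add up consistently across the $p$ rows under the twist identification $(i,0)=(i+i_0,p)$. Getting the precise value of $r$, and its parity compatibility with $i_0$, is the delicate point.
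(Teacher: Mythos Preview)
Your plan has the right overall architecture (identify $Z$ with $\bar{\tau}(e_iAe_i)$, then compute the monomial generators), but two of the key computational steps do not go through as stated.

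First, the claim $\bar{\tau}(\alpha) = x_{\alpha_1}x_{\alpha_2}$ is false in general.  The minimal path in $\widetilde{Q}$ from $(0,0)$ to $(2,0)$ is usually \emph{not} horizontal: in most rows of a $Y^{p,q}$ quiver the two horizontal arrows point toward the middle vertex, so one cannot traverse two columns without picking up $y$-factors.  Relatedly, the family $x_{\alpha_1}x_{\alpha_2}y_1y_2$ for \emph{all} $\alpha_1,\alpha_2\in\{1,2\}$ does not arise from ``the four possible unit cycles at $i$'' (there is only one, with image $x_1x_2y_1y_2$), nor from any single $\alpha$.  What produces the full family---and likewise the full families $y_1^p\prod x_{\beta_\ell}$ and $y_2^p\prod x_{\gamma_\ell}$ for \emph{all} index choices---is the width-2 reflection symmetry of the $Y^{p,q}$ quiver: whenever $x_\alpha\mid\bar{\tau}(c)$ for a cycle $c$, there is a cycle $c'$ with $\bar{\tau}(c')=\bar{\tau}(c)\,x_{\alpha+1}/x_\alpha$.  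This swap is the mechanism you are missing, and without it you only get a particular representative in each family, not the whole family.

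Second, the reverse inclusion (``these monomials generate'') cannot be read off from Proposition~\ref{Krull dim} and Lemma~\ref{star star} alone.  A transcendence basis is not a generating set: knowing that every cycle satisfies a polynomial relation with $\alpha,\beta,\sigma$ does not show its $\bar{\tau}$-image lies in the subalgebra generated by your three families.  What is actually needed is to bound the lattice endpoints of cycles without cyclic proper subpaths: one must show that if such a cycle lifts to a path from $(0,0)$ to $j_1u+j_2v$, then $|j_2|\leq 1$ (and $|j_1|\leq p$).  The case $|j_2|\geq 2$ is ruled out by decomposing the lift at an intermediate preimage of $i$ into two pieces whose projections are themselves cycles, contradicting the absence of cyclic proper subpaths; this uses the width-2 constraint in an essential way.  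Your plan sketches nothing along these lines, so the ``these suffice'' direction is currently a gap.
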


\begin{proof} 
For any $i \in Q_0$, $Z \cong Ze_i = e_iAe_i \cong \bar{\tau}(e_iAe_i) =: R$ by Theorem \ref{center} and Lemma \ref{bartau}.  
$R$ is therefore generated by the $\bar{\tau}$-images of cycles in $e_iAe_i$ without cyclic proper subpaths.
Denote by $R'$ the algebra on the right hand side of (\ref{Ypq center}).
Fix $i \in Q_0$ and $(0,0) \in \pi^{-1}(i) \subset \widetilde{Q}_0 = \mathbb{Z}^2$.

We first claim that $R' \subseteq R$.
As is clear from figure \ref{cases}, a vertex is the tail of an arrow with $\bar{\tau}$-image $x_1$ (resp.\ $x_1y_{\beta}$) if and only if it is also the tail of an arrow with $\bar{\tau}$-image $x_2$ (resp.\ $x_2y_{\beta}$).
Therefore, since the width of the fundamental domain of $Q$ is 2, if $c$ is a cycle whose $\bar{\tau}$-image is divisible by $x_{\alpha}$, then there is a cycle $c'$ satisfying
\begin{equation} \label{c'}
\bar{\tau}(c') = \bar{\tau}(c) \frac{x_{\alpha +1}}{x_{\alpha}}.
\end{equation}

Since the width of the fundamental domain of $Q$ is 2, either $(0, p) \in \pi^{-1}(i)$ or $(1,p) \in \pi^{-1}(i)$.
Suppose $(0,p) \in \pi^{-1}(i)$ (resp.\ $(1,p) \in \pi^{-1}(i)$), and let $c$ be a path in $\widetilde{Q}$ from $(0,0)$ to $(0,p)$ (resp.\ $(1,p)$) without cyclic proper subpaths.
Then $\pi(c) \in e_iAe_i$ is a cycle with $\bar{\tau}$-image $y_1^p x_1^s x_2^t$, where $s = t \leq p$ (resp.\ $s = t+1 \leq p$). 
Set $r := s+t \leq 2p$.
Since $s,t \leq p$, we have $\bar{\tau}(c) \mid \sigma^p$, and so by Lemma \ref{star} there exists a cycle $d$ in $Q$ satisfying $\bar{\tau}(d) = y_2^p x_1^{p-s}x_2^{p-t}$.
Furthermore, $(p - s) + (p-t) = 2p -r$.
Thus, by (\ref{c'}) there are cycles in $Q$ with $\bar{\tau}$-images $y_1^p \prod_{\ell = 1}^{2p-r} x_{\beta_{\ell}}$ and $y_2^p \prod_{\ell = 1}^rx_{\gamma_{\ell}}$ for each $\beta_{\ell}, \gamma_{\ell} \in \left\{ 1,2 \right\}$.

Finally, since the unit cycle at $i$ has $\bar{\tau}$-image $x_1x_2y_1y_2$, (\ref{c'}) implies that there are cycles with $\bar{\tau}$-images $x_1^2y_1y_2$ and $x_2^2y_1y_2$.

We now claim that $R \subseteq R'$.
It is sufficient to determine the $\bar{\tau}$-images of all the cycles in $e_iAe_i$ without cyclic proper subpaths, as these images form a generating set for $R$.
Denote by $J$ the set of vertices $j \in \pi^{-1}(i)$ for which there is a path $c^+$ in $\widetilde{Q}$ from $(0,0)$ to $j$ whose projection $c$ is a cycle in $Q$ without cyclic proper subpaths.
Since the lift of a cycle in $Q$ without cyclic proper subpaths is a path in $\widetilde{Q}$ without cyclic proper subpaths, such a path $c^+$ from $(0,0)$ to $j$ is unique (modulo $\partial W$) by Lemma \ref{star star}.
Therefore there is a bijection between the vertices in $J$ and the cycles in $e_iAe_i$ without cyclic proper subpaths, minus the unit cycle at $i$.

Set $u := (2,0)$ and $v := (v_1,p)$, with $v_1 \in \{0,1\}$ chosen so that $\pi^{-1}(i) = \mathbb{Z}u \oplus \mathbb{Z}v$.  
For $j \in \pi^{-1}(i)$, write $j = j_1 u + j_2 v$.
We claim that 
$$J = \left\{ \pm u, \ j_1u \pm v \ \mid \ -p \leq j_1 \leq p \right\}.$$

If $j_2 = 0$ then it is clear that $j_1 = \pm 1$.

Now suppose $j_2 \geq 1$.
Then there is a cycle $c$ without cyclic proper subpaths whose lift $c^+$ has height $j_2p$.
For each $0 \leq m \leq j_2$, there is some $s_m \in \mathbb{Z}$ such that $(s_m, mp) \in \widetilde{Q}_0$ is a vertex subpath of $c^+$.
Since the width of the fundamental domain of $Q$ is 2 and $c$ has no cyclic proper subpaths, 
\begin{equation} \label{pm 1}
\pi((s_m \pm 1,mp)) = \pi((0,0)) = \operatorname{t}(c) \ \text{ for } \ 0 < m < j_2.
\end{equation}
Therefore $j_2 \leq 2$.

Suppose to the contrary that $j_2 = 2$.
Let $d_1$ and $d_2$ be paths in $\widetilde{Q}$ without cyclic proper subpaths, respectively from $(0,0)$ to $(s_1+1,p)$, and from $(s_1+1,p)$ to $(s_2,2p)$.  
By (\ref{pm 1}), the projections $\pi(d_1)$ and $\pi(d_2)$ are cycles in $e_iAe_i$ whose $\bar{\tau}$-images are of the form $y_1^p \prod_{\ell = 1}^{2p-r}x_{\beta_{\ell}}$ in $R'$.
Since $B$ is a polynomial ring and $y_2$ does not divide $\bar{\tau}(d_1)$ and $\bar{\tau}(d_2)$ in $B$, $y_2$ does not divide their product $\bar{\tau}(d_2d_1) = \bar{\tau}(d_1) \bar{\tau}(d_2)$.
Therefore $\sigma$ does not divide $\bar{\tau}(d_2d_1)$.
Thus, by Lemma \ref{cycle in cover} $d_2d_1$ has no cyclic proper subpaths (in $\widetilde{Q}$).
Therefore, since $c^+$ and $d_2d_1$ have coincident heads and tails in $\widetilde{Q}$ and no cyclic proper subpaths, Lemma \ref{star star} implies
$$c \sim \pi(d_2d_1) = \pi(d_2)\pi(d_1).$$ 
This contradicts our assumption that $c$ has no cyclic proper subpaths.
Therefore $j_2$ must equal 1, and so for some $s,t \geq 0$,
$$\bar{\tau}(c) = y_1^p x_1^s x_2^t.$$

Suppose $t = 0$, so that $j = \frac s2 u + v$, and in particular $j_1 = \frac s2$.
Consider a subpath $ba$ of $c$, where $a$ and $b$ are arrows.
If $\bar{\tau}(a) = x_1$ then $\bar{\tau}(b) \not = x_1$ since the width of the fundamental domain of $Q$ is 2 and $\widetilde{Q}$ has vertical symmetry.
Thus $s$ can be at most $2p$; the maximum $s = 2p$ occurs if there is a path $c = b_pa_p\cdots b_2a_2b_1a_1$, where $a_{\ell}$ and $b_{\ell}$ are arrows satisfying $\bar{\tau}(a_{\ell}) = x_1$ and $\bar{\tau}(b_{\ell}) = x_1y_1$.
The cases $t \not = 0$ are obtained similarly by applying (\ref{c'}).

Finally, the case $j_2 \leq -1$ is similar to the case $j_2 \geq 1$, proving our claim.
\end{proof}

\begin{Lemma} \label{isolated singularity} 
If $p \not = q$, then the only singular point in $\operatorname{Max}Z$ is the origin.
\end{Lemma}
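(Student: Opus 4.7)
The approach is via the toric geometry of $\operatorname{Max}Z$. By Corollary \ref{square prime} together with Lemma \ref{normal} and Proposition \ref{Krull dim}, $Z$ is a normal affine $3$-dimensional toric $k$-algebra, so $\operatorname{Max}Z = U_{\sigma}$ for some strongly convex rational polyhedral full-dimensional cone $\sigma$ in a rank-$3$ lattice $N$. The origin $\mathfrak{m}_0$ (the $Z$-annihilator of any vertex simple) corresponds, under the orbit-cone correspondence, to $\sigma$ itself, i.e.\ to the unique torus-fixed point of $U_{\sigma}$. Since the singular locus of $U_{\sigma}$ is the union of the orbit closures $V(\tau)$ over the non-smooth faces $\tau \preceq \sigma$, and the zero face and every ray are automatically smooth, the statement reduces to showing that each $2$-dimensional face of $\sigma$ is smooth, that is, its two primitive ray generators form a $\mathbb{Z}$-basis of the rank-$2$ saturation they span in $N$.

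To set up this check I would use Proposition \ref{center prop} to describe the dual cone $\sigma^{\vee}$ directly. The monoid $\sigma^{\vee} \cap M$ is generated, inside a rank-$3$ sublattice $M \subset \mathbb{Z}^4$, by the exponents of the monomials listed in (\ref{Ypq center}): the three points $(2,0,1,1)$, $(1,1,1,1)$, $(0,2,1,1)$; the points $(j, 2p-r-j, p, 0)$ for $0 \leq j \leq 2p-r$; and the points $(j, r-j, 0, p)$ for $0 \leq j \leq r$. Among these only the six ``corner'' lattice points $(2,0,1,1)$, $(0,2,1,1)$, $(2p-r,0,p,0)$, $(0,2p-r,p,0)$, $(r,0,0,p)$, $(0,r,0,p)$ are extremal; the remaining generators lie interior to $2$-faces of $\sigma^{\vee}$ but are needed to saturate the monoid (as in Lemma \ref{normal}). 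Dualizing gives the explicit rays and $2$-faces of $\sigma$.

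The central step is then the $2$-face smoothness verification. The hypothesis $p \neq q$ translates, via Proposition \ref{triangular} and the construction of the $Y^{p,q}$ quiver in Example \ref{Ypq}, to $Q$ being non-McKay, i.e.\ $0 < r < 2p$. Under this inequality the primitive ray generators of each $2$-face of $\sigma$ (computed by dualizing against the corner rays of $\sigma^{\vee}$ above) satisfy the required $\gcd$-primitivity to form a $\mathbb{Z}$-basis of the saturated rank-$2$ sublattice they span, so every $2$-face is smooth and the singular locus collapses to the torus-fixed point $\mathfrak{m}_0$. By contrast, in the McKay cases $r \in \{0,2p\}$ (which correspond to $p = q$), one $2$-face degenerates to a cone of type $\frac{1}{p}(1,-1)$, producing a positive-dimensional singular stratum along a toric coordinate curve and so a non-isolated singularity.

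The main obstacle is the explicit toric bookkeeping in a rank-$3$ sublattice of $\mathbb{Z}^4$: pruning the list of monoid generators to the extremal rays of $\sigma^{\vee}$, describing the dual rays and $2$-faces of $\sigma$, and then verifying the $\mathbb{Z}$-basis condition face by face. The numerical content of the argument is entirely controlled by the single inequality $0 < r < 2p$; it is exactly at the boundary $r \in \{0,2p\}$ that smoothness of the critical $2$-face fails, which is why the hypothesis $p \neq q$ is both necessary and sufficient.
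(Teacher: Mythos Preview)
Your strategy is sound in principle but differs from the paper's, and the execution has a real gap.

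\textbf{Comparison of approaches.} The paper does not use the toric orbit--cone machinery at all. Instead it works directly with the explicit presentation coming from Proposition \ref{center prop}: writing the generators $t_0,\dots,t_s$, $u_0,\dots,u_r$, $v_1,v_2,v_3$ of $R\cong Z$, it observes that for $p\neq q$ (equivalently $0<r<2p$) every generator vanishes when $x_1=x_2=0$ or $y_1=y_2=0$, so the only point where all four $x_\alpha,y_\beta$ ``collapse'' is the origin. At any other point one of $t_0,t_s,u_0,u_r$ is nonzero, and the paper exhibits an explicit $(2p+2)\times(2p+2)$ lower-triangular submatrix of the Jacobian with that generator on the diagonal, forcing maximal rank $|\mathcal G|-3$. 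This is a direct Jacobian-criterion argument and avoids dualizing cones altogether.

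\textbf{The gap in your argument.} Two issues. First, your list of six extremal generators of $\sigma^\vee$ is incorrect: from $p\cdot(2,0,1,1)=(2p-r,0,p,0)+(r,0,0,p)$ and $p\cdot(0,2,1,1)=(0,2p-r,p,0)+(0,r,0,p)$ one sees that $(2,0,1,1)$ and $(0,2,1,1)$ lie in the relative interiors of $2$-faces of $\sigma^\vee$, not on rays. The cone $\sigma^\vee$ has exactly four extremal rays, namely $t_0,t_s,u_0,u_r$, consistent with the $Y^{p,q}$ toric diagram being a quadrilateral. This miscount is symptomatic of the second, more serious, problem: you never actually carry out the dualization or the $2$-face smoothness check. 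The sentence ``the primitive ray generators of each $2$-face of $\sigma$ \dots\ satisfy the required $\gcd$-primitivity'' is precisely the content of the lemma, and you have only asserted it. To make your approach a proof you would need to compute the four primitive ray generators of $\sigma\subset N$ (by intersecting the hyperplanes normal to adjacent pairs among $t_0,t_s,u_0,u_r$), identify the four $2$-faces, and verify for each that its two primitive generators extend to a $\mathbb Z$-basis of $N$. That computation is where the inequality $0<r<2p$ actually does work, and it is missing.
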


\begin{proof} 
Referring to (\ref{Ypq center}), set $s := 2p - r$ and denote by $\mathcal{G}$ the set of generators of $R = \bar{\tau}(e_iAe_i) \subset B$,
$$\begin{array}{rcl}
t_0 & = & y_1^p x_1^s \\
t_1 & = & y_1^p x_1^{s-1}x_2 \\
 & \vdots & \\
t_s & = & y_1^p x_2^s
\end{array} 
\ \ \ \ 
\begin{array}{rcl}
u_0 & = & y_2^p x_1^r \\
u_1 & = & y_2^p x_1^{r-1}x_2 \\
& \vdots & \\
u_r & = & y_2^p x_2^r
\end{array}
\ \ \ \ 
\begin{array}{rcl}
v_1 & = & x_1^2y_1y_2 \\
v_2 & = & x_1x_2 y_1 y_2 \\
v_3 & = & x_2^2 y_1y_2
\end{array}$$
Since $p \not = q$, we have $0 < r < 2p$.
Thus all the coordinate functions $g(x_1,x_2,y_1,y_2) \in \mathcal{G}$ vanish if $x_1 = x_2 = 0$ or $y_1 = y_2 = 0$.
In particular, the only ideal $\mathfrak{m}$ in $\operatorname{Max}R$ containing both $x_1 B \cap R$ and $x_2 B \cap R$, or both $y_1 B \cap R$ and $y_2 B \cap R$, is the origin $\left( g \ | \ g \in \mathcal{G} \right)R \in \operatorname{Max}R$.
Therefore it suffices to show that any point $\mathfrak{m} \in \operatorname{Max}R$ is smooth if it does not contain $x_{\alpha} B \cap R$ and $y_{\beta} B \cap R$ for some $\alpha, \beta \in \{1,2\}$.  
So without loss of generality suppose $\mathfrak{m}$ does not contain $x_1 B \cap R$ and $y_1 B \cap R$; in particular, $t_0 = x_1^sy_1^p \not \in \mathfrak{m}$, that is, $t_0(\mathfrak{m}) \not = 0$.

Denote by $\mathcal{R}$ a minimal generating set for the relations among the coordinate variables $g \in \mathcal{G}$.
Then by abuse of notation, $R \cong k\left[ \mathcal{G} \right]/\left( \mathcal{R} \right)$.
Consider the submatrix $K$ of the Jacobian of $R$ at $\mathfrak{m}$,
$$J(\mathfrak{m}) = \left[ \frac{\partial g}{\partial r} (\mathfrak{m}) \right]_{(g, r) \in \mathcal{G} \times \mathcal{R}},$$
given by Table (\ref{partial}).
In the table, for each $0 \leq n \leq r$ the indices $i_n, j_n \in \left\{ 1,2,3 \right\}$ are suitably chosen and the exponents $k_n, \ell_n$ satisfy $k_n + \ell_n = 2p$.
\begin{table}
$$\begin{array}{c||ccc|c|ccc|}
& \partial_{t_2} & \cdots & \partial_{t_s} & 
\begin{array}{cc} \partial_{v_2} & \partial_{v_3} \end{array} & 
\partial_{u_0} & \cdots & \partial_{u_r}\\
\hline
\hline
t_0t_2 -t_1t_1      & t_0   &        & 0   &   & &   & \\
\vdots              &       & \ddots &     & 0 & & 0 & \\
t_0t_s - t_1t_{s-1} & \star &        & t_0 &   & &   & \\
\hline
\begin{array}{r} t_0v_2 - t_1v_1 \\ t_0v_3 - t_2v_1 \end{array} 
                    &       &  \star &     & 
\begin{array}{cc} t_0 & 0 \\ 0 & t_0 \end{array}
                                               & & 0 & \\ 
\hline
t_0u_0-v_{i_0}^{k_0}v_{j_0}^{\ell_0} & &   & &       & t_0 &        & 0 \\
\vdots                               & & 0 & & \star &     & \ddots &     \\
t_0u_r-v_{i_r}^{k_r}v_{j_r}^{\ell_r} & &   & &       &  0  &        & t_0   
\end{array}$$
\caption{The partial derivatives of relations specifying the square submatrix $K$ of the Jacobian.}
\label{partial}
\end{table}
Since $K$ is a lower triangular $(2p + 2) \times (2p + 2)$ square matrix with nonzero diagonal entries $t_0(\mathfrak{m})$, the rank of $K$ is $2p + 2$.
But the rank of $J(\mathfrak{m})$ is at most the dimension of the ambient space $k[\mathcal{G}]$ minus the dimension of $R$, namely $(2p+5) - 3 = 2p+2$, so the rank of $J(\mathfrak{m})$ is precisely $2p+2$.
Therefore $\mathfrak{m}$ is a smooth point of $\operatorname{Max}R$.
The case where $x_1$ and $y_2$ (resp.\ $x_2, y_1$; $x_2, y_2$) are nonzero is similar with $u_0$ (resp.\ $t_s$; $u_r$) in place of $t_0$.
\end{proof}

\begin{figure}
$$(i) \ \ \ \ 
\xy (-10,-5)*+{\text{\scriptsize{$s_1$}}}="1";(0,-5)*+{\text{\scriptsize{$s_2$}}}="2";(10,-5)*+{\text{\scriptsize{$s_1$}}}="3";(-10,5)*+{\text{\scriptsize{$t_1$}}}="4";(0,5)*+{\text{\scriptsize{$t_2$}}}="5";(10,5)*+{\text{\scriptsize{$t_1$}}}="6";
{\ar"1";"2"};{\ar@{.>}"3";"2"};{\ar@{.>}"4";"1"};{\ar@{.>}"6";"3"};{\ar@{.>}"5";"2"};{\ar@{.>}"2";"4"};{\ar"2";"6"};{\ar"4";"5"};{\ar@{.>}"6";"5"};
\endxy
\ \ \ \ \ \ \
(ii) \ \ \ \
\xy (-10,-5)*+{\text{\scriptsize{$s_1$}}}="1";(0,-5)*+{\text{\scriptsize{$s_2$}}}="2";(10,-5)*+{\text{\scriptsize{$s_1$}}}="3";(-10,5)*+{\text{\scriptsize{$t_1$}}}="4";(0,5)*+{\text{\scriptsize{$t_2$}}}="5";(10,5)*+{\text{\scriptsize{$t_1$}}}="6";
{\ar"1";"4"};{\ar"4";"5"};{\ar@{.>}"5";"2"};{\ar@{.>}"2";"1"};{\ar@{.>}"2";"3"};{\ar"3";"6"};{\ar@{.>}"6";"5"};
\endxy
\ \ \ \ \ \ \
(iii) \ \ \ \ 
\xy (-10,-5)*+{\text{\scriptsize{$s_1$}}}="1";(0,-5)*+{\text{\scriptsize{$s_2$}}}="2";(10,-5)*+{\text{\scriptsize{$s_1$}}}="3";(-10,5)*+{\text{\scriptsize{$t_1$}}}="4";(0,5)*+{\text{\scriptsize{$t_2$}}}="5";(10,5)*+{\text{\scriptsize{$t_1$}}}="6";
{\ar"1";"4"};{\ar"4";"5"};{\ar@{.>}"5";"1"};{\ar@{.>}"1";"2"};{\ar@{.>}"2";"5"};{\ar@{.>}"5";"3"};{\ar@{.>}"3";"2"};{\ar"3";"6"};{\ar@{.>}"6";"5"};
\endxy$$
$$(iv) \ \ \ \
\xy (-10,-5)*+{\text{\scriptsize{$s_1$}}}="1";(0,-5)*+{\text{\scriptsize{$s_2$}}}="2";(10,-5)*+{\text{\scriptsize{$s_1$}}}="3";(-10,5)*+{\text{\scriptsize{$t_1$}}}="4";(0,5)*+{\text{\scriptsize{$t_2$}}}="5";(10,5)*+{\text{\scriptsize{$t_1$}}}="6";
{\ar@{<.}"1";"2"};{\ar@{<=}"3";"2"};{\ar@{<-}"4";"1"};{\ar@{<-}"6";"3"};{\ar@{<=}"5";"2"};{\ar@{<.}"2";"4"};{\ar@{<.}"2";"6"};{\ar@{<.}"4";"5"};{\ar@{<=}"6";"5"};
\endxy
\ \ \ \ \ \ \
(v) \ \ \ \ 
\xy (-10,-5)*+{\text{\scriptsize{$s_1$}}}="1";(0,-5)*+{\text{\scriptsize{$s_2$}}}="2";(10,-5)*+{\text{\scriptsize{$s_1$}}}="3";(-10,5)*+{\text{\scriptsize{$t_1$}}}="4";(0,5)*+{\text{\scriptsize{$t_2$}}}="5";(10,5)*+{\text{\scriptsize{$t_1$}}}="6";
{\ar@{<.}"1";"4"};{\ar@{<.}"4";"5"};{\ar@{<-}"5";"2"};{\ar@{<-}"2";"1"};{\ar@{<.}"2";"3"};{\ar@{<.}"3";"6"};{\ar@{<-}"6";"5"};
\endxy
\ \ \ \ \ \ \
(vi) \ \ \ \
\xy (-10,-5)*+{\text{\scriptsize{$s_1$}}}="1";(0,-5)*+{\text{\scriptsize{$s_2$}}}="2";(10,-5)*+{\text{\scriptsize{$s_1$}}}="3";(-10,5)*+{\text{\scriptsize{$t_1$}}}="4";(0,5)*+{\text{\scriptsize{$t_2$}}}="5";(10,5)*+{\text{\scriptsize{$t_1$}}}="6";
{\ar@{<.}"1";"4"};{\ar@{<.}"4";"5"};{\ar@{<-}"5";"1"};{\ar@{<.}"1";"2"};{\ar@{<.}"2";"5"};{\ar@{<.}"5";"3"};{\ar@{<.}"3";"2"};{\ar@{<.}"3";"6"};{\ar@{<-}"6";"5"};
\endxy$$
\caption{}
\label{cases}
\end{figure}

\begin{Theorem} \label{hom3} 
Let $A$ be a (non-localized) $Y^{p,q}$ algebra.  Then the following hold.
\begin{enumerate}
 \item If $p \not = q$ and $V$ is a simple $A$-module, then $V$ is either a vertex simple module or a large module.
 \item The Azumaya locus of $A$ coincides with the smooth locus of $Z$.
 \item $A$ is homologically homogeneous of global dimension 3.
\end{enumerate}
\end{Theorem}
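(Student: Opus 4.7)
The plan is to prove (1) first, derive (2) as a direct consequence, and then establish (3) by combining the classification of simples with the projective-dimension machinery already developed.

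For (1), I apply Theorem~\ref{simples}. The hypothesis $p \neq q$ together with the center description of Proposition~\ref{center prop} (which forces $0 < r < 2p$) excludes the McKay case of Proposition~\ref{triangular}, so only case (2a) of Theorem~\ref{simples} remains to be ruled out. Suppose $V$ is supported at a vertex $i$ on the unique reduced cycle $c$ with $\bar{\tau}(c) = x_\alpha^s y_\beta^t$, $s,t \geq 1$ (Proposition~\ref{astronaut}). By the criterion derived inside the proof of Theorem~\ref{simples}, $V$ is large iff every vertex of $Q$ appears as a vertex subpath of some $\partial W$-representative of $c^N$. The combinatorial task is therefore to show that, in the covering quiver $\widetilde{Q}$, the lifts of the iterates $c^N$ travelling from $(0,0)$ to $N\cdot(\pm s, \pm t) \in \pi^{-1}(i) = \mathbb{Z}(2,0)\oplus \mathbb{Z}(v_1,p)$ jointly visit every residue class in $\widetilde{Q}_0/\pi^{-1}(i)$ as $N$ grows. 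This uses the explicit form of the $Y^{p,q}$ lattice and the fact that the four pairs $(\alpha,\beta)\in\{1,2\}^2$ yield four basic cycles whose displacement vectors cover the lattice in a way that fails exactly when $p = q$.

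For (2), Lemma~\ref{Azumaya locus of square algebras} identifies the Azumaya locus with the $Z$-annihilators of large simples. Every $\mathfrak{m}\in\operatorname{Max}Z$ distinct from the origin $\mathfrak{m}_0$ carries some simple $A$-module with $Z$-annihilator $\mathfrak{m}$ (namely a simple factor of $A/\mathfrak{m}A$); by (1) such a simple must be large, placing $\mathfrak{m}$ in the Azumaya locus. Conversely $\mathfrak{m}_0$ is not Azumaya, since vertex simples have $k$-dimension $1 < |Q_0|$. Combining with Lemma~\ref{isolated singularity} yields Azumaya locus $= \operatorname{Max}Z \setminus \{\mathfrak{m}_0\} = $ smooth locus.

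For (3), I verify that every simple $A$-module has projective dimension $3$, which yields homological homogeneity; since $A$ is noetherian and module-finite over its center, this forces $\operatorname{gldim}A = 3$. For a large simple $V$ with $\mathfrak{m} = \operatorname{ann}_ZV$, Theorem~\ref{simples}(3c) gives $\operatorname{pd}_A(V) = \operatorname{pd}_{Z_{\mathfrak{m}}}(Z_{\mathfrak{m}}/\mathfrak{m}_{\mathfrak{m}}) = 3$, since by (2) the point $\mathfrak{m}$ is smooth and $Z$ has Krull dimension $3$ by Proposition~\ref{Krull dim}. For a vertex simple $V^i$, the Berenstein--Douglas complex of Lemma~\ref{B-D prop}, read over $A$ in place of $A_{\mathfrak{m}_0}$, is a projective resolution of $V^i$ of length $3$: middle exactness is Lemma~\ref{imdelta2} (whose proof is local-free), injectivity of $\delta_2$ follows from Corollary~\ref{ba not 0}, and the cokernel $Ae_i/\sum_j Ag_j$ is manifestly $V^i$. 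Hence $\operatorname{pd}_A(V^i)\leq 3$; since localization is exact, $\operatorname{pd}_A(V^i)\geq \operatorname{pd}_{A_{\mathfrak{m}_0}}(V^i_{\mathfrak{m}_0}) = 3$ by Proposition~\ref{q}, so equality holds. The main obstacle is the combinatorial step in (1); once that geometric/combinatorial assertion is settled, parts (2) and (3) follow by routine recombination of results from Sections~\ref{Impressions}, \ref{Gorenstein centers}, and~\ref{Noncommutative crepant resolutions}.
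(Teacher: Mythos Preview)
Your overall strategy matches the paper's: rule out the ``intermediate'' case~(2) of Theorem~\ref{simples}, deduce Azumaya~$=$~smooth from the classification plus Lemma~\ref{isolated singularity}, and then read off projective dimensions. Two points need correction.

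First, you omit the case $p=q$ entirely in parts~(2) and~(3), but the theorem carries no such restriction there. When $p=q$ the quiver is McKay (Proposition~\ref{triangular}), Lemma~\ref{isolated singularity} does not apply (the singular locus is the one-dimensional image $\phi(\{x_1=x_2=0\})$, not just the origin), and part~(1) is unavailable as input. The paper handles this case separately: for~(2) it observes directly that over $\phi(\{x_1=x_2=0\})$ there are non-isomorphic simples of dimension~$<|Q_0|$, so this locus is exactly the complement of the Azumaya locus, and identifies it with the singular locus; for~(3) it simply defers to the well-known McKay-quiver result.

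Second, your framing of the combinatorial step in~(1) is not quite the right one. What is needed is not that the displacement vectors of the four cycles $c_{\alpha\beta}$ generate the lattice $\pi^{-1}(i)$, but that some $\partial W$-representative of the single cycle $c$ (with $\bar\tau(c)=x_\alpha^s y_\beta^t$, no cyclic proper subpaths) passes through \emph{every} vertex of $Q$. The paper establishes this by a row-by-row case analysis: the fundamental domain has width~$2$, so in each horizontal row of $\widetilde{Q}$ at least one of the two vertices $s_1,s_2$ lies on $c$; then for each of the six building-block types in Figure~\ref{cases} one exhibits an explicit path, with $\bar\tau$-image divisible only by $x_\alpha$ and $y_\beta$, from that vertex through both vertices $t_1,t_2$ of the row above, and invokes the uniqueness in Proposition~\ref{astronaut} to force this path to be a subpath of $c$ modulo~$\partial W$. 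One configuration (case~(iv)) requires looking one row further down and inducting, and this is precisely where $p\neq q$ enters: the quiver cannot consist entirely of type-(iv) rows. Your lattice-theoretic phrasing about iterates $c^N$ and displacement vectors does not obviously deliver this ``every-vertex-is-visited'' conclusion.

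Your treatment of the vertex simples in~(3)---running the Berenstein--Douglas complex over $A$ for the upper bound and localizing via Proposition~\ref{q} for the lower bound---is correct and in fact slightly more explicit than the paper's own citation.
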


\begin{proof}
(1) We only need to consider case (2) in Theorem \ref{simples}.  
Suppose the cycle $c \in e_iAe_i$ does not annihilate $V$, and suppose $c$ has no cyclic proper subpaths.  
Since $c$ is a path, $\bar{\tau}(c)$ cannot be of the form $x_1^sx_2^t$ or $y_1^sy_2^t$ since the underlying graph of $Q$ embeds into a surface.
Furthermore, since $p \not = q$, $Q$ is not McKay, and so without loss of generality we may assume $\bar{\tau}(c) = x_1^sy_1^t$ for some $s,t \geq 1$ by Proposition \ref{triangular}; the other cases are similar.  

We claim that each vertex in $Q$ is a subpath of $c$ modulo $\partial W$.
Referring to figure \ref{cases}, in all 6 cases at least one of $s_1$ or $s_2$ is a vertex subpath of $c$ since the width of the fundamental domain of $Q$ is 2.
So suppose $s_1$ is a vertex subpath of $c$.
The cyclic permutation $c_{s_1}$ of $c$ at $s_1$ also has $\bar{\tau}$-image $\bar{\tau}(c) = x_1^sy_1^t$.
Observe that in all cases except (iv), there is a path $a$ denoted by solid arrows from $s_1$ that passes through both vertices $t_1$ and $t_2$, whose $\bar{\tau}$-image is only divisible by $x_1$ and $y_1$.

(a) Consider all cases except (iv).
By Proposition \ref{astronaut} there is a unique cycle in $e_{s_1}Ae_{s_1}$ without cyclic proper subpaths whose $\bar{\tau}$-image is of the form $x_1^sy_1^t$, namely $c_{s_1}$, so $a$ must be a subpath of $c_{s_1}$ (modulo $\partial W$).
Thus $a$ is also a subpath of the cyclic permutation $c$ of $c_{s_1}$.  
Therefore $t_1$ and $t_2$ are both vertex subpaths of $c$.

(b) Now consider case (iv).
Observe that (iv), (v), or (vi) must be directly below (iv).

First suppose either (v) or (vi) is directly below (iv) in $\widetilde{Q}$.
By (a), in both (v) and (vi) the arrow from $t_2$ to $t_1$ is a subpath of $c$.  
But since (iv) is directly above, this arrow is the arrow from $s_2$ to $s_1$ in (iv), and so all the bold arrows in (iv), as well as the arrow from $s_1$ to $t_1$, are subpaths of $c$.
Therefore the vertices $t_1$ and $t_2$ are subpaths of $c$ as well.  

Now suppose (iv) is directly below (iv).
Since $Q$ is not McKay, $Q$ cannot consist entirely of `building blocks' of the form (iv).
Thus there is a row of the form (v) or (vi) below (iv) in $\widetilde{Q}$.
It therefore follows by induction that both $t_1$ and $t_2$ are subpaths of $c$.

(c) By (a) and (b), for each $j \in Q_0$ there are paths $c_1$ and $c_2$ such that $c = c_2e_jc_1$.
Thus if $c$ does not annihilate an $A$-module $V$ then $e_j$ also does not annihilate $V$. 
This implies $\operatorname{dim}_ke_jV \geq 1$.  
Therefore $\operatorname{dim}_ke_jV = 1$ for each $j \in Q_0$ by Lemma \ref{leq 1}, whence $\operatorname{dim}_kV = |Q_0|$, and so $V$ is large.

(2) First suppose $p \not =q$.  
Let $V$ be a simple $A$-module and set $\mathfrak{m} := \operatorname{ann}_ZV \in \operatorname{Max}Z$.  
We have just shown that $V$ is either a vertex simple module or a large module.  
Therefore, if $V$ is (not) a large module then $\mathfrak{m}$ is (not) in the Azumaya locus of $A$ by Lemma \ref{Azumaya locus of square algebras}, and $\mathfrak{m}$ is (not) a smooth point of $\operatorname{Max}Z$ by Lemma \ref{isolated singularity}.  
Thus the Azumaya locus is the open dense subset $\operatorname{Max}Z \setminus \{0\}$.

The case $p=q$ is similar, noting that there are clearly two distinct isoclasses of simple modules whose $Z$-annihilators are in the locus $\phi\left( \{x_1 = x_2 =0\} \right)$ (with $\phi$ as in Lemma \ref{impression}(\ref{U'})).  
In this case the Azumaya locus is $\operatorname{Max}Z \setminus \phi \left( \{x_1=x_2=0\} \right)$.

(3) The $Y^{p,p}$ algebras are McKay quiver algebras for certain finite abelian subgroups of $\operatorname{SL}_3(\mathbb{C})$, and the claim is well known in this case.  
So suppose $p \not = q$.  
If $V$ is a vertex simple $A$-module then $\operatorname{pd}_A(V) = 3$ by Proposition \ref{q}.  
If $V$ is a non-vertex simple, then $V$ is large by (1), so $\mathfrak{m} = \operatorname{ann}_ZV$ is in the Azumaya locus by Lemma \ref{Azumaya locus of square algebras}, so $\mathfrak{m}$ is in the smooth locus of $\operatorname{Max}Z$ by (2).
Thus $\operatorname{pd}_{Z_{\mathfrak{m}}}(Z_{\mathfrak{m}}/\mathfrak{m}_{\mathfrak{m}}) = 3$ by Proposition \ref{Krull dim}, whence $\operatorname{pd}_A(V) = 3$ by Corollary \ref{Large modules}(3).  
By \cite[Proposition III.6.7(a)]{Ba}, if $S$ is a noetherian ring module-finite over its center then $\operatorname{gl.dim}(S) = \operatorname{sup} \left\{ \operatorname{pd}_S(M) \ | \ M \text{ simple} \right\}$.  
But $A$ has these properties by Corollary \ref{square prime}, so $A$ has global dimension 3.  Also by Corollary \ref{square prime}, $\operatorname{Max}Z$ is irreducible, hence equidimensional, and thus $A$ is homologically homogeneous.
\end{proof}

To conclude this section, we show that the `$R$-charge' of an arrow determined by $a$-maximization is consistent with its impression given in Theorem \ref{square impression}.  Let $A$ be a superpotential algebra module-finite over its center $Z$; then the $R$-charge of an arrow $a \in Q_1$ is conjectured to be the volume of the `zero locus' of $a$ in $\operatorname{Max}Z$, that is, the locus consisting of the maximal ideals $\mathfrak{m} \in \operatorname{Max}Z$ such that $a \in \mathfrak{m}_{\mathfrak{m}}A_{\mathfrak{m}}$.  In physics terms, the $R$-charge of a field is conjectured to be the volume of the locus where symmetry is not broken in the vev moduli space.  This has been explored in \cite{BFZ, BB}, for example.  We verify that when $A$ is a $Y^{p,q}$ algebra the labeling of arrows given in figure \ref{labels} is consistent with the numerical $R$-charge assignments determined by $a$-maximization \cite{IW}, as first computed for the $Y^{2,1}$ quiver in \cite{BBC}, and then for general $(p,q)$ in \cite{BFHMS}.  

\begin{Proposition} 
The (numerical) $R$-charge assignments of the arrows in a $Y^{p,q}$ quiver determined by $a$-maximization are consistent with the labels given in figure \ref{labels}.
\end{Proposition}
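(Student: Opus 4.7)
The plan is to verify that the numerical $R$-charges produced by $a$-maximization are determined by the monomial labels in Figure \ref{labels} via the ansatz
\[
R(a) \;=\; n_1 R_{x_1} + n_2 R_{x_2} + n_3 R_{y_1} + n_4 R_{y_2}
\quad \text{when} \quad
\bar\tau(a) = x_1^{n_1}x_2^{n_2}y_1^{n_3}y_2^{n_4},
\]
for some base values $R_{x_1},R_{x_2},R_{y_1},R_{y_2}\in\mathbb{R}$. So the first step is to recast the claim as: the $R$-charges returned by $a$-maximization for a $Y^{p,q}$ quiver depend only on $\bar\tau(a)$, and they are additive in the label factors.

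Next, I would verify that this ansatz is consistent with the linear constraints imposed on any superconformal $R$-symmetry. Because every unit cycle $d\in\Gamma_c\cup\Gamma_{cc}$ has $\bar\tau(d)=\sigma = x_1x_2y_1y_2$ (which follows from Lemma~\ref{cycle in cover} together with the fact that a unit cycle lifts to a cycle of area one in the cover), the marginality constraint $\sum_{a\subset d}R(a)=2$ collapses to the single equation
\[
R_{x_1} + R_{x_2} + R_{y_1} + R_{y_2} \;=\; 2,
\]
which is independent of whether $d$ is a square or a triangle, since on a triangle the diagonal contributes $R_{x_i}+R_{y_j}$. Then I would check the anomaly (beta-function) conditions at each vertex $i$: the contribution $\sum_{a\in e_iQ_1}(1-R(a))+\sum_{a\in Q_1e_i}(1-R(a))=2$ reduces, under the ansatz, to a statement about how many arrows of each label type touch $i$. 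Since the quiver of $Y^{p,q}$ is built from the three building blocks in Figure~\ref{2} with matching orientations, a direct combinatorial count (enumerating for each vertex the multiplicities of incident arrows labeled $x_1,x_2,y_1,y_2,x_1y_1,x_2y_2,x_1y_2,x_2y_1$) will show that these anomaly equations are automatically satisfied once the marginality relation holds --- equivalently, they are consequences of the toric (i.e., $\bar\tau$-image) character of the charges.

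Having reduced the system to three free parameters constrained by $R_{x_1}+R_{x_2}+R_{y_1}+R_{y_2}=2$, I would then run $a$-maximization on the trial central charge $a(R)=\tfrac{3}{32}(3\operatorname{Tr}R^3-\operatorname{Tr}R)$. Because the trace sums split into contributions indexed by the eight label types weighted by the arrow multiplicities (which for $Y^{p,q}$ are $p+q$ arrows with label $x_\alpha$, $p-q$ with label $y_\alpha$, and $q$ of each diagonal type --- read off from the building blocks), $a(R)$ becomes a polynomial in the four base variables, and its extremum subject to the single linear constraint can be computed explicitly. Comparing the resulting base values $R_{x_1},R_{x_2},R_{y_1},R_{y_2}$ and the induced diagonal charges with the BFHMS $R$-charges $R_Y$, $R_Z$, $R_{U_\alpha}$, $R_{V_\alpha}$ should yield the identifications $R_Y=R_{x_\alpha}$, $R_Z=R_{y_\beta}$, $R_{U_\alpha}=R_{x_i}+R_{y_j}$ (diagonals of one slope) and $R_{V_\alpha}=R_{x_i}+R_{y_j}$ (diagonals of the other slope).

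The main obstacle is the final numerical identification: the BFHMS charges involve $\sqrt{4p^2-3q^2}$, and extracting the four quantities $R_{x_1},R_{x_2},R_{y_1},R_{y_2}$ from the extremisation of $a(R)$ --- and then showing they produce exactly the BFHMS expressions --- is an algebraic manipulation that one must carry out with some care. An alternative (and in my view cleaner) route that should shortcut this computation is to invoke the Martelli--Sparks--Yau dictionary: the $R$-charge of an arrow equals the Sasakian volume of the toric divisor cut out by $\bar\tau(a)=0$ in $\operatorname{Max}Z$. Since the toric fan of $Z$ is described explicitly in Proposition~\ref{center prop}, and the Reeb vector is precisely the one fixed by $a$-maximization, this would identify $R(a)$ as a linear function of the exponents of $\bar\tau(a)$ on the nose, giving the desired additivity --- and reducing the verification to checking that this fan agrees with the toric data of the $Y^{p,q}$ Sasaki--Einstein cone, which is standard.
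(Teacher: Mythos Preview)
Your proposal takes a much more elaborate route than the paper does, and in doing so slightly misreads what is being asked. The proposition does not ask you to \emph{derive} the $R$-charges via $a$-maximization; it treats those charges as already computed in the physics literature (\cite{BHK}, \cite{BFHMS}) and asks only whether the resulting numbers are compatible with the labeling scheme of Figure~\ref{labels}. Concretely, ``consistent with the labels'' means that two arrows with the same $\bar\tau$-image receive the same $R$-charge, and that a diagonal arrow labeled $x_\alpha y_\beta$ has $R$-charge equal to $R(x_\alpha)+R(y_\beta)$. The paper's proof is therefore a one-line verification: it tabulates the known closed-form $R$-charges (involving $\sqrt{4p^2-3q^2}$) and checks the four identities $R(x_\alpha y_\beta)=R(x_\alpha)+R(y_\beta)$ by direct substitution.

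Your approach---setting up the additive ansatz, reducing the superpotential marginality constraints to $R_{x_1}+R_{x_2}+R_{y_1}+R_{y_2}=2$, checking the anomaly conditions vertex by vertex, and then extremizing $a(R)$ or invoking the Martelli--Sparks--Yau volume formula---would, if carried through, yield a self-contained rederivation of the $R$-charges together with their additivity. That is a legitimate and more conceptual argument, and your reduction of the marginality constraint via $\bar\tau(d)=\sigma$ is correct. However, you leave both the extremization and the MSY identification at the level of ``should yield'' and ``should shortcut,'' so neither route is actually completed; the arrow-multiplicity count you quote also needs care (the distribution among the eight label types depends on the particular stacking of blocks, not just on $p$ and $q$). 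For the purpose of this proposition, the paper's direct numerical check is both sufficient and far shorter.
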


\begin{proof}  
Denote an arrow $a$ by its label $\bar{\tau}(a)$ given in figure \ref{labels}.  The $R$-charge assignments as computed in \cite{BHK} are shown in table \ref{R charge}.
\begin{table}
$$\begin{array}{rcl}
R(x_1 y_1) = R(x_2 y_1) & = & (3q-2p+\sqrt{4p^2-3q^2})/3q\\
 &=& \frac 13 \left( -1 + \sqrt{13} \right)\\
R(x_1) = R(x_2) = & = & 2p(2p-\sqrt{4p^2-3q^2})/3q^2\\
 & = & \frac 43 \left( 4 - \sqrt{13} \right)\\
R(y_2) & = & (-4p^2+3q^2+2pq+(2p-q)\sqrt{4p^2-3q^2})/3q^2\\
 & = & -3 + \sqrt{13}\\
R(y_1) &= &(-4p^2+3q^2-2pq+(2p+q)\sqrt{4p^2-3q^2})/3q^2\\
 & = & \frac 13 \left( -17+ 5 \sqrt{13} \right)\\
R(x_1 y_2) = R(x_2y_2) & = & (3q +2p-\sqrt{4p^2-3q^2})/3q
\end{array}$$
\caption{The $Y^{p,q}$ $R$-charge assignments determined from $a$-maximization.}
\label{R charge}
\end{table}
To check consistency, one verifies that $R\left(x_{\alpha}y_{\beta}\right) = R\left(x_{\alpha}\right) + R\left(y_{\beta}\right)$ in each of the four cases $\alpha, \beta \in \{1,2\}$. 
\end{proof}

\subsection{Exceptional loci with zero volume: a proposal} \label{Exceptional divisors with zero volume}

Let $A$ be a $Y^{p,q}$ algebra with center $Z$ and let $V^i$ be the vertex simple $A$-module at $i \in Q_0$.  The origin $\mathfrak{m}$ of $\operatorname{Max}Z$ is then in the compliment of the Azumaya locus--the \textit{ramification locus}--by Theorem \ref{hom3}, and clearly $A/\mathfrak{m}A \cong \bigoplus_{i \in Q_0} V^i$.  In Theorem \ref{nccr} we found that $A_{\mathfrak{m}}$ is a noncommutative crepant resolution of $Z_{\mathfrak{m}}$, and so the points in the exceptional locus of the noncommutative resolution should in principle be the simple $A_{\mathfrak{m}}$-modules, which are the vertex simples $V^i_{\mathfrak{m}}$.  It is important to note that quiver stability does not appear sufficient to capture these points since $\bigoplus_{i \in Q_0} V^i$ is not stable for any stability parameter with dimension vector $(1,\ldots,1)$.  

In Proposition \ref{q} we showed that the vertex simple $A_{\mathfrak{m}}$-modules are smooth in the sense that $\operatorname{pd}_{A_{\mathfrak{m}}}(V_{\mathfrak{m}}^i)=3 = \operatorname{dim}Z_{\mathfrak{m}}$ for each $i \in Q_0$.  In this section we introduce a proposal that provides a geometric reason for this behavior.  Specifically, we propose that certain points in $\operatorname{Max}A$ that sit over the ramification locus of $\operatorname{Max}Z$ are the irreducible components of the exceptional locus of a resolution $Y \rightarrow \operatorname{Max}Z$ shrunk to zero size.  In physics terms, fractional branes probing a singularity see the variety they are immersed in as smooth since they are wrapping exceptional divisors that have been shrunk to point-like spheres.  In what follows we use the symplectic quotient construction on the impression of a $Y^{p,q}$ algebra, and set $k = \mathbb{C}$.

In Corollary \ref{square prime}, $\operatorname{Max}Z$ was shown to be a toric algebraic variety.  $Z$ is therefore the ring of invariants of some torus action on $B=\mathbb{C}[x_1,x_2,y_1,y_2]$, which we determine in the following lemma.  It is straightforward to verify with Proposition \ref{center prop}.

\begin{Lemma} 
The center $Z \subset B$ of a $Y^{p,q}$ algebra is the ring of invariants of the torus action
\begin{equation} \label{torus action}
\left(x_1,x_2,y_1,y_2 \right) \mapsto \left( \lambda^{-p}\omega^{-1}x_1, \ \lambda^{-p}\omega^{-1}x_2, \ \lambda^{2p-r} \omega^2y_1, \ \lambda^{r}y_2 \right)
\end{equation}
with torus $\mathbb{C}^* \times \mu_r \ni (\lambda, \omega)$, for some $0 \leq r \leq 2p$.
\end{Lemma}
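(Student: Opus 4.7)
The plan is to show that $Z$ equals the subring $B^T \subseteq B$ of invariants under the action (\ref{torus action}) by comparing monomial generators, using the explicit description of $Z$ from Proposition \ref{center prop}. Both $Z$ and $B^T$ are monomial subalgebras of $B = \mathbb{C}[x_1,x_2,y_1,y_2]$---the former because $\operatorname{Max}Z$ is toric (Corollary \ref{square prime}) and is realized by $\bar{\tau}$ as a subring of $B$ generated by monomials, and the latter because the torus acts diagonally on the coordinates. Hence the verification reduces to a comparison of exponent lattices in $\mathbb{Z}^4$.

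First I would verify the inclusion $Z \subseteq B^T$ by computing the $(\lambda,\omega)$-weight of each generator in (\ref{Ypq center}). For $x_{\alpha_1}x_{\alpha_2}y_1y_2$ the $\lambda$-weight is $-p - p + (2p-r) + r = 0$ and the $\omega$-weight is $-1 - 1 + 2 + 0 = 0$. For $y_1^p\prod_{\ell=1}^{2p-r}x_{\beta_\ell}$ the $\lambda$-weight is $p(2p-r) - p(2p-r) = 0$ and the $\omega$-weight is $2p - (2p-r) = r \equiv 0 \pmod{r}$. For $y_2^p\prod_{\ell=1}^r x_{\gamma_\ell}$ the $\lambda$-weight is $pr - pr = 0$ and the $\omega$-weight is $-r \equiv 0 \pmod{r}$. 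Thus every generator of $Z$ is $T$-invariant, giving $Z \subseteq B^T$.

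For the reverse inclusion $B^T \subseteq Z$, I would take a $T$-invariant monomial $m = x_1^{a_1}x_2^{a_2}y_1^{b_1}y_2^{b_2}$, whose exponent vector satisfies $-p(a_1+a_2) + (2p-r)b_1 + rb_2 = 0$ together with $(a_1+a_2) - 2b_1 \equiv 0 \pmod{r}$, and argue by induction on total degree. Since $\sigma = x_1x_2y_1y_2 \in Z$, repeated division by $\sigma$ reduces to the case $\min(b_1,b_2) = 0$. Setting $s := a_1+a_2$, the $\lambda$-condition becomes $p s = (2p-r)b_1 + r b_2$; combined with the congruence $s \equiv 2b_1 \pmod{r}$, this pins $m$ down as a product of the two remaining generator families, with the distribution of the factor $x_1^{a_1}x_2^{a_2}$ across the flexible indices $\beta_\ell,\gamma_\ell \in \{1,2\}$ recording the choice of $(a_1,a_2)$.

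The main obstacle is the combinatorial bookkeeping in this reverse inclusion: after stripping $\sigma$-factors, one must verify that the two weight constraints leave no $T$-invariant monomial outside the subring generated by the three families in (\ref{Ypq center}), and in particular that the freedom in choosing $\beta_\ell,\gamma_\ell \in \{1,2\}$ is exactly enough to recover every admissible $(a_1,a_2)$-splitting compatible with a given value of $s$. Once this matching of free indices against coordinate weights is handled, the equality $Z = B^T$ follows.
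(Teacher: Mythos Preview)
Your approach is correct and coincides with the paper's: the paper's proof consists of the single line ``It is straightforward to verify with Proposition \ref{center prop},'' and your proposal spells out exactly that verification---checking that the generators of $Z$ in (\ref{Ypq center}) are $T$-invariant, and conversely that the two weight conditions force every invariant monomial into $Z$. The weight computations you give are correct, and the sketch of the reverse inclusion (stripping $\sigma$-factors, then matching the remaining monomial against products of the second and third generator families) is the right bookkeeping to finish.
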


In the special case $(p,q)=(1,0)$, $r = 1$ by Proposition \ref{center prop}, whence $\omega=1$.
Therefore $Z$ is the coordinate ring for the conifold (i.e., quadric cone) given in example \ref{conifold}.  
Moreover, in the case $(p,q)=(2,1)$ again $r = 1$, yielding $\omega =1$, and it is straightforward to check that $Z$ is the complex cone over the first del Pezzo surface $dP_1$ (i.e., $\mathbb{CP}^2$ blownup at one point), verifying an argument that this should indeed be the case given in \cite[section 2]{BHOP}.\footnote{Martelli and Sparks proved that the real cone over the $Y^{2,1}$ manifold is the complex cone over $dP_1$ \cite{MS}, and so it follows that the real cone over the $Y^{2,1}$ manifold coincides with the maximal spectrum of the $Y^{2,1}$ algebra away from the origin.}

Before considering the associated $Y^{p,q}$ moment map, recall the following standard construction.  The symplectic manifold $\left(\mathbb{C}^2, \omega = \frac {i}{2} \left( dx \wedge d\bar{x} + dy \wedge d\bar{y} \right) \right)$ admits a hamiltonian action of the maximal compact subgroup $\mathbb{T} := \{ t \in \mathbb{C}^* \ | \ |t| =1\}$ of $\mathbb{C}^*$ given by $(x,y) \mapsto (tx,ty)$.  The dual $\mathtt{g}^*$ of the Lie algebra of $\mathbb{T}$ is then $\mathbb{R}$, so there is a moment map
$$\mu: \mathbb{C}^2 \rightarrow \mathtt{g}^* \cong \mathbb{R}, \ \ \ \ \ \mu(x,y) = \frac 12 \left( |x|^2 + |y|^2 \right).$$
It follows that
$$\mu^{-1}(1/2)/\mathbb{T} = \left\{ (x,y) \in M \ | \ |x|^2+|y|^2=1 \right\}/\mathbb{T} = \left\{ \mathbb{CP}^1 \text{ with radius } 1 \right\},$$
and more generally
$$\mu^{-1}(|a|^2/2 )/\mathbb{T} = \left\{ (a x, a y) \in M \ | \ |x|^2 + |y|^2 = 1 \right\}/\mathbb{T} = \left\{ \mathbb{CP}^1 \text{ with radius } |a| \right\}.$$
Varying $|a|$ is then equivalent to varying the radius of the $\mathbb{CP}^1$.\footnote{In physics terms, in the Lagrangian of an $\mathcal{N}=1$ physical theory, $a$ is a Fayet-Iliopoulos parameter and the moment map constraints are the D-terms; see for example \cite{MP}, and in the case of the $Y^{p,q}$ manifolds (starting from a metric), \cite{MS}.}  In particular, $|a| \rightarrow 0$ is equivalent to the radius vanishing.

Now since the center of a $Y^{p,q}$ algebra is a normal toric variety, it is also a symplectic variety with a (non-degenerate) symplectic form obtained by pulling back the standard symplectic form on $\operatorname{Max}B = \mathbb{C}^4$.  There is a hamiltonian action on $\operatorname{Max}B$ by the maximal compact subgroup $\mathbb{T}:=U(1) \times \mu_r \ni (t, \omega)$,
$$(x_1,x_2,y_1,y_2) \mapsto \left( t^{-p} \omega^{-1} x_1,t^{-p}\omega^{-1} x_2,t^{2p-r}\omega^2 y_1,t^{r}y_2 \right).$$
Again the dual of the Lie algebra of $\mathbb{T}$ is $\mathtt{g}^* \cong \mathbb{R}$, and so there is a moment map
$$\mu: \operatorname{Max}B \rightarrow \mathbb{R}, \ \ \ \ \mu(x_1,x_2,y_1,y_2) = \frac 12 \left(-p|x_1|^2 -p|x_2|^2 +(2p-r)|y_1|^2 +(r)|y_2|^2 \right).$$
The singular variety $\operatorname{Max}Z$ is then the symplectic reduction at the origin, 
$$\operatorname{Max}Z = \mu^{-1}(0)/\mathbb{T},$$
and two different blowups of $\operatorname{Max}Z$ are given by $\mu^{-1}(\xi /2)/\mathbb{T}$ for $\xi > 0$ and $\xi < 0$, respectively.  From the previous example, $\sqrt{|\xi|} \in \mathtt{g}^*$ may be viewed as the radius of the exceptional locus in the respective blowup of $\operatorname{Max}Z$.

For the following, suppose $p\not =q$ (the case $p=q$ is similar).  Let $\mathcal{M}_{\xi}$ denote the space of all representations $\tau_{\mathfrak{m}}$ where $m \in \mu^{-1}(\xi^2/2) \subset \operatorname{Max}B$.  We find the following:
\begin{itemize}
 \item If $\xi >0$, $\mathcal{M}_{\xi}$ is parameterized by a blowup of $\operatorname{Max}Z$ at the origin (a $\mathbb{CP}^1$ family together with $\operatorname{Max}Z\setminus\{0\}$);
 \item If $\xi < 0$, $\mathcal{M}_{\xi}$ is parameterized by the flopped blowup; and
 \item If $\xi = 0$, $\mathcal{M}_0$ is parameterized $\operatorname{Max}Z\setminus\{0\}$, together with the direct sum of vertex simples.
\end{itemize}
Specifically, $\xi = 0$ determines the constraints
$$\begin{array}{cclcc}
x_1 = x_2 = 0 & \Longleftrightarrow & y_1 = y_2 = 0 & \text{ when } & p \not = q,\\
x_1 = x_2 = 0 & \Longleftrightarrow & y_1=0 & \text{ when } & p = q,
\end{array}$$
and we claim that these are the same constraints obtained by requiring the $A$-modules be simple.
Indeed, consider the case $p \not = q$.
If $a$ and $b$ are arrows with respective $\bar{\tau}$-images $x_1$ and $x_2$ that annihilate $V$, then each arrow with $\bar{\tau}$-image divisible by $x_1$ or $x_2$ annihilates $V$ by Proposition \ref{(B/r)^d}.
Thus, since the only simple modules are vertex simples or large by Theorem \ref{hom3}, $V$ must be a vertex simple.
Therefore every arrow annihilates $V$, and in particular any arrow whose $\bar{\tau}$-image is divisible by $y_1$ or $y_2$ annihilates $V$.
The case $p = q$ is similar.

As an example of what happens when $\xi \not = 0$, consider the $Y^{4,2}$ algebra given in example \ref{Ypq}.  
The supporting subquivers for the two $\mathbb{CP}^1$-families $\mathcal{M}_{\xi >0}$ and $\mathcal{M}_{\xi <0}$ are given in figure \ref{r not = 0}, where the dotted arrows are represented by zero.
The $\bar{\tau}$-images of the solid arrows give explicit coordinates on each $\mathbb{CP}^1$, which are respectively $[y_1:y_2^3]$ and $[x_1:x_2]$.  
\begin{figure}
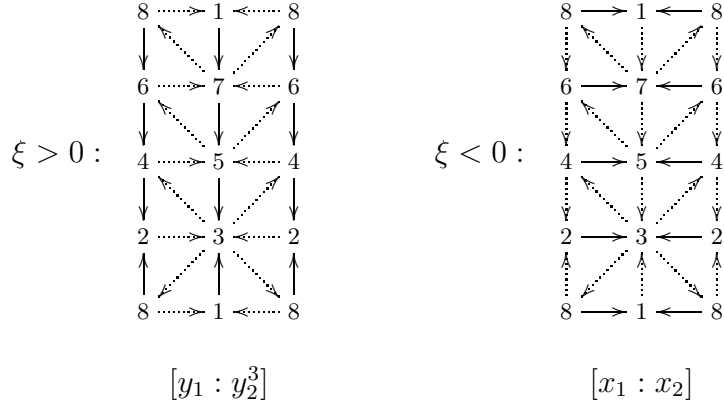

$$\begin{array}{ccccc}
\xi >0: & \xy
(-10,-20)*+{\text{\scriptsize{$8$}}}="a1";(-10,-10)*+{\text{\scriptsize{$2$}}}="a2";(-10,0)*+{\text{\scriptsize{$4$}}}="a3";(-10,10)*+{\text{\scriptsize{$6$}}}="a4";(-10,20)*+{\text{\scriptsize{$8$}}}="a5";
(0,-20)*+{\text{\scriptsize{$1$}}}="b1";(0,-10)*+{\text{\scriptsize{$3$}}}="b2";(0,0)*+{\text{\scriptsize{$5$}}}="b3";(0,10)*+{\text{\scriptsize{$7$}}}="b4";(0,20)*+{\text{\scriptsize{$1$}}}="b5";
(10,-20)*+{\text{\scriptsize{$8$}}}="c1";(10,-10)*+{\text{\scriptsize{$2$}}}="c2";(10,0)*+{\text{\scriptsize{$4$}}}="c3";(10,10)*+{\text{\scriptsize{$6$}}}="c4";(10,20)*+{\text{\scriptsize{$8$}}}="c5";
{\ar@{.>}"a1";"b1"};{\ar@{<.}"b1";"c1"};{\ar@{.>}"a2";"b2"};{\ar@{<.}"b2";"c2"};{\ar@{.>}"a3";"b3"};{\ar@{<.}"b3";"c3"};{\ar@{.>}"a4";"b4"};{\ar@{<.}"b4";"c4"};{\ar@{.>}"a5";"b5"};{\ar@{<.}"b5";"c5"};
{\ar@{->}"a1";"a2"};{\ar@{->}"b1";"b2"};{\ar@{->}"c1";"c2"};
{\ar@{<-}"a2";"a3"};{\ar@{<-}"b2";"b3"};{\ar@{<-}"c2";"c3"};
{\ar@{<-}"a3";"a4"};{\ar@{<-}"b3";"b4"};{\ar@{<-}"c3";"c4"};
{\ar@{<-}"a4";"a5"};{\ar@{<-}"b4";"b5"};{\ar@{<-}"c4";"c5"};
{\ar@{.>}"b2";"a1"};{\ar@{.>}"b2";"c1"};
{\ar@{.>}"b2";"a3"};{\ar@{.>}"b2";"c3"};
{\ar@{.>}"b3";"a4"};{\ar@{.>}"b3";"c4"};
{\ar@{.>}"b4";"a5"};{\ar@{.>}"b4";"c5"};
\endxy &
\ \ \ \ \ \ \  &
\xi < 0: & \xy
(-10,-20)*+{\text{\scriptsize{$8$}}}="a1";(-10,-10)*+{\text{\scriptsize{$2$}}}="a2";(-10,0)*+{\text{\scriptsize{$4$}}}="a3";(-10,10)*+{\text{\scriptsize{$6$}}}="a4";(-10,20)*+{\text{\scriptsize{$8$}}}="a5";
(0,-20)*+{\text{\scriptsize{$1$}}}="b1";(0,-10)*+{\text{\scriptsize{$3$}}}="b2";(0,0)*+{\text{\scriptsize{$5$}}}="b3";(0,10)*+{\text{\scriptsize{$7$}}}="b4";(0,20)*+{\text{\scriptsize{$1$}}}="b5";
(10,-20)*+{\text{\scriptsize{$8$}}}="c1";(10,-10)*+{\text{\scriptsize{$2$}}}="c2";(10,0)*+{\text{\scriptsize{$4$}}}="c3";(10,10)*+{\text{\scriptsize{$6$}}}="c4";(10,20)*+{\text{\scriptsize{$8$}}}="c5";
{\ar@{->}"a1";"b1"};{\ar@{<-}"b1";"c1"};{\ar@{->}"a2";"b2"};{\ar@{<-}"b2";"c2"};{\ar@{->}"a3";"b3"};{\ar@{<-}"b3";"c3"};{\ar@{->}"a4";"b4"};{\ar@{<-}"b4";"c4"};{\ar@{->}"a5";"b5"};{\ar@{<-}"b5";"c5"};
{\ar@{.>}"a1";"a2"};{\ar@{.>}"b1";"b2"};{\ar@{.>}"c1";"c2"};
{\ar@{<.}"a2";"a3"};{\ar@{<.}"b2";"b3"};{\ar@{<.}"c2";"c3"};
{\ar@{<.}"a3";"a4"};{\ar@{<.}"b3";"b4"};{\ar@{<.}"c3";"c4"};
{\ar@{<.}"a4";"a5"};{\ar@{<.}"b4";"b5"};{\ar@{<.}"c4";"c5"};
{\ar@{.>}"b2";"a1"};{\ar@{.>}"b2";"c1"};
{\ar@{.>}"b2";"a3"};{\ar@{.>}"b2";"c3"};
{\ar@{.>}"b3";"a4"};{\ar@{.>}"b3";"c4"};
{\ar@{.>}"b4";"a5"};{\ar@{.>}"b4";"c5"};
\endxy\\
\\
& \left[y_1:y_2^3\right] & & & \left[x_1:x_2\right]
\end{array}$$
\caption{The supporting subquivers for two $\mathbb{CP}^1$-families of modules over a $Y^{4,2}$ algebra, related by a flop.}
\label{r not = 0}
\end{figure}

Our proposal provides a geometric view of Van den Bergh's idea that the noncommutative resolution, loosely speaking, lies at the intersection between the various flops \cite{VdB2}.  However, the identification still needs to be made precise; progress in this direction is given in \cite{me}, but many questions remain.

\appendix
\section{Proof of Lemma \ref{appendix}}

Without loss of generality, take $\mathsf{u} = \mathsf{y}_1$ and suppose $\operatorname{h}(a) = \operatorname{h}(p)$. 
We proceed in a case-by-case analysis.  
The following argument will be used repeatedly: if $w = \mathsf{y}_{\alpha}\mathsf{x}_{\beta}\mathsf{y}_{\gamma}$ is a path with $\alpha \not = \gamma$, that is, $w$ equals say 
$\xy (-5,-5)*+{ \ }="1";(-5,5)*{\cdot}="3";(5,5)*{\cdot}="4";(5,-5)*+{ \ }="2";{\ar"1";"3"};{\ar"3";"4"};{\ar"4";"2"};(-5,-6)*{}="1a";(5,-6)*{}="2a";{\ar@{}|-1"1a";"1a"};{\ar@{}|-2"2a";"2a"};\endxy$,
$\xy (-5,-5)*+{ \ }="1";(-5,5)*{\cdot}="3";(5,5)*{}="4";(5,-5)*+{ \ }="2";{\ar"1";"3"};{\ar"3";"2"};(-5,-6)*{}="1a";(5,-6)*{}="2a";{\ar@{}|-1"1a";"1a"};{\ar@{}|-2"2a";"2a"};\endxy$, 
or $\xy (-5,-5)*+{ \ }="1";(-5,5)*{}="3";(5,5)*{\cdot}="4";(5,-5)*+{ \ }="2";{\ar"1";"4"};{\ar"4";"2"};(-5,-6)*{}="1a";(5,-6)*{}="2a";{\ar@{}|-1"1a";"1a"};{\ar@{}|-2"2a";"2a"};\endxy$,
 then modulo $\partial W$, $w$ also equals 
$\xy (-5,-5)*{\cdot}="1";(-5,5)*+{ \ }="3";(5,5)*+{ \ }="4";(5,-5)*{\cdot}="2";{\ar"3";"1"};{\ar"1";"2"};{\ar"2";"4"};(-5,6)*{}="1a";(5,6)*{}="2a";{\ar@{}|-1"1a";"1a"};{\ar@{}|-2"2a";"2a"};\endxy$, 
$\xy (-5,-5)*{}="1";(-5,5)*+{ \ }="3";(5,5)*+{ \ }="4";(5,-5)*{}="2";{\ar"3";"1"};{\ar"1";"4"};(-5,6)*{}="1a";(5,6)*{}="2a";{\ar@{}|-1"1a";"1a"};{\ar@{}|-2"2a";"2a"};\endxy$,
 or $\xy (-5,-5)*{}="1";(-5,5)*+{ \ }="3";(5,5)*+{ \ }="4";(5,-5)*{}="2";{\ar"3";"2"};{\ar"2";"4"};(-5,6)*{}="1a";(5,6)*{}="2a";{\ar@{}|-1"1a";"1a"};{\ar@{}|-2"2a";"2a"};\endxy$,
 so 
\begin{equation}
\label{diagonal}
\mathsf{y}_{\alpha}\mathsf{x}_{\beta}\mathsf{y}_{\gamma} \sim \mathsf{y}_{\gamma}\mathsf{x}_{\beta}\mathsf{y}_{\alpha}.
\end{equation}
Additionally, if say the factor $\mathsf{x}_{\beta}\mathsf{y}_{\gamma}$ is a diagonal arrow, then we also have $$\mathsf{y}_{\alpha}\mathsf{y}_{\gamma}\mathsf{x}_{\beta} \sim \mathsf{y}_{\alpha}\mathsf{x}_{\beta}\mathsf{y}_{\gamma} \sim \mathsf{y}_{\gamma}\mathsf{x}_{\beta}\mathsf{y}_{\alpha}.$$  
Similarly for $\mathsf{x} \leftrightarrow \mathsf{y}$.
\begin{itemize}
 \item $\mathsf{t}_1=\mathsf{y}_2$:
  \begin{itemize}
    \item If $n = 1$ or $\mathsf{t}_2= \mathsf{y}_2$ then $\mathsf{s}_1 = \mathsf{x}_{\alpha}$, hence $\mathsf{y}_1\mathsf{x}_{\alpha}$ is a diagonal arrow and $\mathsf{t}_1=\mathsf{y}_2$ is a vertical arrow.  Apply (\ref{diagonal}).
    \item If $\mathsf{t}_2= \mathsf{x}_{\alpha}$ then at least one of the factors $(\mathsf{x}_{\alpha}\mathsf{y}_2)$, $(\mathsf{y}_1\mathsf{s}_m)$, must be a diagonal arrow.  Both factors cannot be diagonal arrows since $\xy (-10,-5)*{\cdot}="1";(0,-5)*{\cdot}="2";(0,5)*{\cdot}="3";(10,-5)*{\cdot}="4";{\ar^{\mathsf{y}_1\mathsf{s}_m}"1";"3"};{\ar"3";"2"};{\ar"2";"1"};{\ar^{\mathsf{x}_{\alpha}\mathsf{y}_2}"3";"4"};\endxy$ is not a possible configuration.  Apply (\ref{diagonal}).
  \end{itemize}
 \item $\mathsf{t}_1 = \mathsf{x}_{\alpha}$:
   \begin{itemize}
    \item If $\mathsf{t}_2= \mathsf{y}_2$, either $\mathsf{y}_1$ or $\mathsf{t}_2=\mathsf{y}_2$ is a vertical arrow since otherwise either the configuration $\xy (-10,-5)*{\cdot}="1";(0,-5)*{\cdot}="2";(0,5)*{\cdot}="3";(10,-5)*{\cdot}="4";{\ar^{\mathsf{y}_1\mathsf{s}_m}"1";"3"};{\ar"3";"2"};{\ar"2";"1"};{\ar^{\mathsf{y}_2\mathsf{x}_{\alpha}}"3";"4"};\endxy$ 
    or
    $\xy (-10,-5)*{\cdot}="1";(0,-5)*{\cdot}="2";(0,5)*{\cdot}="3";(10,-5)*{\cdot}="4";{\ar^{\mathsf{x}_{\alpha}\mathsf{y}_1}"1";"3"};{\ar"3";"2"};{\ar"2";"1"};{\ar^{\mathsf{t}_3\mathsf{y}_2}"3";"4"};\endxy$
    would occur.
    \begin{itemize}
       \item If $\mathsf{y}_1$ is a vertical arrow, apply (\ref{diagonal}).  
       \item Suppose $\mathsf{y}_1$ is not a vertical arrow.  
         \begin{itemize} 
           \item If $\mathsf{x}_{\alpha}\not =\mathsf{s}_m = \mathsf{x}_{\beta}$ then $\mathsf{y}_2\mathsf{x}_{\alpha}\mathsf{y}_1\mathsf{s}_m$ is a unit cycle and we are done.  
           \item If $\mathsf{x}_{\alpha}=\mathsf{s}_m$ then the configuration $\xy (-10,-5)*{\cdot}="1";(0,-5)*{\cdot}="2";(0,5)*{\cdot}="3";(10,-5)*{\cdot}="4";(10,5)*{\cdot}="5";{\ar^{\mathsf{y}_1\mathsf{s}_m}"1";"3"};{\ar^{\mathsf{x}_{\alpha}}"3";"5"};{\ar^{\mathsf{y}_2}"5";"4"};{\ar"4";"3"};{\ar"2";"4"};{\ar"3";"2"};{\ar"2";"1"};\endxy$ 
       must occur since 
       $\xy (-10,-5)*{\cdot}="1";(0,-5)*{\cdot}="2";(0,5)*{\cdot}="3";(10,-5)*{\cdot}="4";(10,5)*{\cdot}="5";{\ar^{\mathsf{y}_1\mathsf{s}_m}"1";"3"};{\ar^{\mathsf{x}_{\alpha}}"3";"5"};{\ar^{\mathsf{y}_2}"5";"4"};{\ar"4";"2"};{\ar"2";"3"};\endxy$ 
       is not possible, and so
       $$\mathsf{y}_2\mathsf{x}_{\alpha}\mathsf{y}_1\mathsf{s}_m \sim \mathsf{x}_{\alpha}\mathsf{y}_2\mathsf{y}_1\mathsf{s}_m \sim \mathsf{x}_{\alpha}\mathsf{y}_2\mathsf{s}_m\mathsf{y}_1 \sim \mathsf{x}_{\alpha}\mathsf{y}_1\mathsf{s}_m\mathsf{y}_2.$$
       \end{itemize}
    \end{itemize}
    \item Suppose $\mathsf{t}_2 = \mathsf{x}_{\beta}$.  
      \begin{itemize}
        \item If $\mathsf{y}_1\mathsf{s}_m$ is a diagonal arrow then $\mathsf{t}_1=\mathsf{x}_{\alpha}$ must be a horizontal arrow.  Apply (\ref{diagonal}).  
        \item If $\mathsf{x}_{\alpha}\mathsf{y}_1$ is a diagonal arrow then $\mathsf{t}_2 = \mathsf{x}_{\beta}$ must be a horizontal arrow since otherwise the configuration $\xy (-10,-5)*{\cdot}="1";(0,-5)*{\cdot}="2";(0,5)*{\cdot}="3";(10,-5)*{\cdot}="4";{\ar^{\mathsf{x}_{\alpha}\mathsf{y}_1}"1";"3"};{\ar"3";"2"};{\ar"2";"1"};{\ar^{\mathsf{y}_2\mathsf{x}_{\beta}}"3";"4"};\endxy$ would occur.  Apply (\ref{diagonal}).
        \item Suppose $\mathsf{y}_1$ is a vertical arrow (that is, it is not `half of a diagonal arrow').
           \begin{itemize}
             \item If $\alpha \not = \beta$ then $\mathsf{t}_3\mathsf{x}_{\beta}$ is a diagonal arrow, hence $\mathsf{t}_3 = \mathsf{y}_2$.  Apply (\ref{diagonal}) twice: 
             $$(\mathsf{y}_2\mathsf{x}_{\beta})\mathsf{x}_{\alpha}\mathsf{y}_1 \sim (\mathsf{x}_{\beta}\mathsf{y}_2)\mathsf{x}_{\alpha}\mathsf{y}_1 \sim \mathsf{x}_{\alpha}\mathsf{y}_2\mathsf{x}_{\alpha}\mathsf{y}_1 \sim \mathsf{x}_{\alpha}\mathsf{y}_1\mathsf{x}_{\alpha}\mathsf{y}_2,$$
             where the last equality holds since $\mathsf{y}_1$ is a vertical arrow.
             \item Suppose $\alpha = \beta$, so $\bar{\tau}(\mathsf{t}_2\mathsf{t}_1\mathsf{u})=x_{\alpha}^2y_1$.  If the path $\mathsf{t}_2\mathsf{t}_1\mathsf{u}$ is in the configuration
             $\xy (-10,-5)*{\cdot}="1";(-10,5)*{\cdot}="2";(0,-5)*{\cdot}="3";(0,5)*{\cdot}="4";(10,5)*{\cdot}="5";{\ar^{\mathsf{u}}"1";"2"};{\ar^{\mathsf{t}_1}"2";"4"};{\ar^{\mathsf{t}_2}"4";"5"};{\ar"4";"1"};{\ar"1";"3"};{\ar"3";"4"};\endxy$
              then $\mathsf{x}_{\alpha}\mathsf{y}_1 \sim \mathsf{y}_1\mathsf{x}_{\alpha}$.  Otherwise $\mathsf{t}_2\mathsf{t}_1\mathsf{u}$ is in the configuration
              $\xy (-10,-5)*{\cdot}="1";(-10,5)*{\cdot}="2";(0,-5)*{\cdot}="3";(0,5)*{\cdot}="4";(10,-5)*{\cdot}="5";(10,5)*{\cdot}="6";{\ar^{\mathsf{u}}"1";"2"};{\ar^{\mathsf{t}_1}"2";"4"};{\ar^{\mathsf{t}_2}"4";"6"};{\ar"6";"5"};{\ar"5";"4"};{\ar"4";"3"};{\ar"3";"5"};{\ar"3";"1"};\endxy$
               since $\xy (-10,-5)*{\cdot}="1";(-10,5)*{\cdot}="2";(0,-5)*{\cdot}="3";(0,5)*{\cdot}="4";(10,-5)*{\cdot}="5";(10,5)*{}="6";{\ar^{\mathsf{u}}"1";"2"};{\ar^{\mathsf{t}_1}"2";"4"};{\ar^{\mathsf{t}_2}"4";"5"};{\ar"4";"3"};{\ar"3";"1"};\endxy$ 
               is not possible, and we may assume $\mathsf{u}$ is the leftmost $\mathsf{y}_1$ variable in $p$.  Repeating this argument we find that $p=\mathsf{x}_{\alpha}^n\mathsf{y}_1\mathsf{s}_m \cdots \mathsf{s}_1$ with $\mathsf{x}_{\alpha}^n\mathsf{y}_1$ in the configuration
               \begin{equation} \label{configuration} \xy
(-37,5)*{\cdot}="0b";(-27,5)*{\cdot}="1b";(-17,5)*{\cdot}="2b";(-7,5)*{\cdot}="3b";(0,5)*{\cdots}="mb";
(37,5)*{}="7b";(27,5)*{\cdot}="6b";(17,5)*{\cdot}="5b";(7,5)*{\cdot}="4b"; (-37,-5)*{\cdot}="0a";(-27,-5)*{\cdot}="1a";(-17,-5)*{\cdot}="2a";(-7,-5)*{\cdot}="3a";(0,-5)*{\cdots}="m";
(37,-5)*{\cdot}="7a";(27,-5)*{\cdot}="6a";(17,-5)*{\cdot}="5a";(7,-5)*{\cdot}="4a";
  (31,8)*{}="j";
 {\ar@{}^{\operatorname{h}(p)}"j";"j"};
 {\ar^{\mathsf{u}}"0a";"0b"};{\ar^{\mathsf{t}_1}"0b";"1b"};{\ar@{->}"1b";"1a"};{\ar@{->}"1a";"0a"};
 {\ar^{\mathsf{t}_2}"1b";"2b"};{\ar^{\mathsf{t}_3}"2b";"3b"};{\ar^{\mathsf{t}_{n-1}}"4b";"5b"};{\ar^{\mathsf{t}_n}"5b";"6b"};
 {\ar@{->}"1a";"2a"};{\ar@{->}"2a";"3a"};{\ar@{->}"4a";"5a"};{\ar@{->}"5a";"6a"};
 {\ar@{->}"2a";"1b"};{\ar@{->}"3a";"2b"};{\ar@{->}"5a";"4b"};{\ar@{->}"6a";"5b"};{\ar_a"7a";"6b"};
 {\ar@{->}"2b";"2a"};{\ar@{->}"3b";"3a"};{\ar@{->}"4b";"4a"};{\ar@{->}"5b";"5a"};{\ar@{->}"6b";"6a"};
 \endxy \end{equation}
By assumption there is an arrow $a$ with head at $\operatorname{h}(p)$ satisfying $u = y_1 \mid \bar{\tau}(a)$ and $\bar{\tau}(a) \mid \bar{\tau}(p)$.
From (\ref{configuration}) we see that $\bar{\tau}(a) = x_{\gamma}y_1$ with $\gamma \not = \alpha$.
Therefore $x_{\gamma} \mid \bar{\tau}(p)$.  
Let $b$ be the leftmost arrow in $p$ such that $x_{\gamma}$ divides $\bar{\tau}(b)$.  We may apply the arguments in all the above cases with $\mathsf{y}_1 \mapsto \mathsf{x}_{\gamma}$, with the exception of configuration (\ref{configuration}), to show that $\mathsf{x}_{\gamma}$ can be moved leftward so that it is adjacent to $\mathsf{u} = \mathsf{y}_1$ in $p$ modulo $\partial W$, and the result follows.  But it is not possible that both $\mathsf{y}_1$ and $\mathsf{x}_{\gamma}$ are in the configuration (\ref{configuration}):
$$\xy
(-37,27)*{\cdot}="0b";(-27,27)*{\cdot}="1b";(-17,27)*{\cdot}="2b";(-7,27)*{\cdot}="3b";(0,27)*{\cdots}="mb";
(37,27)*{}="7b";(27,27)*{\cdot}="6b";(17,27)*{\cdot}="5b";(7,27)*{\cdot}="4b"; (-37,17)*{\cdot}="0a";(-27,17)*{\cdot}="1a";(-17,17)*{\cdot}="2a";(-7,17)*{\cdot}="3a";(0,17)*{\cdots}="m";
(37,17)*{\cdot}="7a";(27,17)*{\cdot}="6a";(17,17)*{\cdot}="5a";(7,17)*{\cdot}="4a";
  (31,30)*{}="j";
(-37,-27)*{\cdot}="1";(-27,-27)*{\cdot}="2";
(-37,-17)*{\cdot}="3";(-27,-17)*{\cdot}="4";
(-37,-7)*{\cdot}="5";(-27,-7)*{\cdot}="6";
(-37,0)*{\vdots}="md";(-27,0)*{\vdots}="md2";
(-37,7)*{\cdot}="7";(-27,7)*{\cdot}="8";
 {\ar@{}^{\operatorname{h}(p)}"j";"j"};
 {\ar^{\mathsf{u}}"0a";"0b"};{\ar^{\mathsf{t}_1}"0b";"1b"};{\ar@{->}"1b";"1a"};{\ar@{->}"1a";"0a"};
 {\ar^{\mathsf{t}_2}"1b";"2b"};{\ar^{\mathsf{t}_3}"2b";"3b"};{\ar^{\mathsf{t}_{n-1}}"4b";"5b"};{\ar^{\mathsf{t}_n}"5b";"6b"};
 {\ar@{->}"1a";"2a"};{\ar@{->}"2a";"3a"};{\ar@{->}"4a";"5a"};{\ar@{->}"5a";"6a"};
 {\ar@{->}"2a";"1b"};{\ar@{->}"3a";"2b"};{\ar@{->}"5a";"4b"};{\ar@{->}"6a";"5b"};{\ar_a"7a";"6b"};
 {\ar@{->}"2b";"2a"};{\ar@{->}"3b";"3a"};{\ar@{->}"4b";"4a"};{\ar@{->}"5b";"5a"};{\ar@{->}"6b";"6a"};
{\ar^b"2";"1"};{\ar^{\mathsf{s}_{\ell}}"1";"3"};{\ar@{.>}|-b"2";"3"};{\ar"4";"2"};{\ar"3";"4"};{\ar"6";"3"};{\ar^{\mathsf{s}_{\ell+1}}"3";"5"};{\ar"5";"6"};{\ar"4";"6"};{\ar"7";"8"};{\ar^{\mathsf{s}_m}"7";"0a"};{\ar"8";"1a"};{\ar"1a";"7"};{\ar@/_/"0a";"1a"};
 \endxy$$
           \end{itemize}
        \end{itemize}
      \item Suppose $n=1$.
        \begin{itemize}
          \item If $\mathsf{x}_{\alpha}\mathsf{y}_1$ is a diagonal arrow then $\mathsf{x}_{\alpha}\mathsf{y}_1 \sim \mathsf{y}_1\mathsf{x}_{\alpha}$.
          \item If $\mathsf{y}_1\mathsf{s}_m$ is a diagonal arrow, then apply (\ref{diagonal}).
          \item If $\mathsf{y}_1$ is a vertical arrow, then apply the above case $p \sim \mathsf{x}_{\alpha}^n\mathsf{y}_1\mathsf{s}_m \cdots \mathsf{s}_1$ with $n=1$.
        \end{itemize}
   \end{itemize}
\end{itemize}
This completes the proof.

\section{A math-physics dictionary for quivers}

In reverse geometric engineering \cite{DB1}, a type of quiver algebra called a \textit{superpotential algebra} is constructed from the (classical) vacuum equations of motion of an $\mathcal{N}=1$ supersymmetric quiver gauge theory.  In the original physics proposal/conjecture of Berenstein, Douglas, and Leigh (see \cite{DB1,BD} and references therein), the center of a superpotential algebra is the coordinate ring for an affine tangent cone (or at least some affine chart) on a 3 complex-dimensional singular\footnote{The Calabi-Yau variety need not be singular, but often theories with singularities are able to more closely model nature by, for example, breaking supersymmetry; see \cite{BHOP}.} Calabi-Yau variety--the hypothesized hidden internal space of our universe.\footnote{According to the AdS/CFT correspondence, this variety does not necessarily need to be actual physical space, but may instead just be a parameter space for something similar to mass (`vacuum expectation values') for certain fields that live in our $(3+1)$-dimensional spacetime manifold.}  The algebra itself is then viewed as a noncommutative ring of functions on the space of its simple modules, just as is the case in commutative algebraic geometry (when $k = \bar{k}$).  
They conjectured that, at least in physically relevant examples, this space is a \textit{noncommutative resolution} of the algebra's singular center since D-branes supposedly see the variety they are embedded in as smooth \cite{DGM}.

The $Y^{p,q}$ quivers are of interest to physicists since they encode the gauge theory in the conjectured AdS/CFT correspondence when the horizon is a $Y^{p,q}$ Sasaki-Einstein 5-manifold, given by metric data on the topological space $S^2 \times S^3$.  The $Y^{p,q}$ quiver gauge theories were constructed to model these geometries using symmetry arguments in a process known as geometric engineering, by Benvenuti, Franco, Hanany, Martelli, Sparks, and Kazakopoulos \cite{BFHMS, BHK}.  
In this paper we instead start with the $Y^{p,q}$ quiver gauge theories and derive their dual geometries by the methods of reverse geometric engineering; such a geometry is conjectured to coincide with the real cone over a $Y^{p,q}$ manifold (the horizon), but this is still unknown for $p >2$.  

The following is a partial dictionary between quiver gauge theories, specifically in regards to the mesonic branch since that is the focus of this paper, and quiver representation theory.  We begin with the following:
\begin{itemize}
  \item quiver gauge theory $\Leftrightarrow$ a quiver algebra and its representations;\\
        in particular, a $d=4$, $\mathcal{N}=1$ supersymmetric quiver gauge theory $\Leftrightarrow$ a path algebra modulo $F$-flatness constraints, i.e., a superpotential algebra
  \item complexified $U(n)$ gauge group $\Leftrightarrow$ general linear group\\
   (In this context, by $U(n)$ physicists usually mean $U(n)$
 \textit{complexified}, that is, if $H_1$ and $H_2$ are elements
 of the Lie algebra \textsl{u}$(n)$, then $\text{exp}(H_1 +
 iH_2) \in GL_n(\mathbb{C})$, and for any $L \in GL_n(\mathbb{C})$
 there is some such $H_1$ and $H_2$ such that $L =
 \text{exp}(H_1 + iH_2)$.)
  \item gauge invariance (under complexified gauge group)
 $\Leftrightarrow$ isomorphism classes of quiver representations
  \item Seiberg dual gauge theories, that is, different gauge theories
 in the UV which flow to the same fixed point in the IR
 $\Leftrightarrow$ different superpotential algebras that have the same
centers \textit{or} different superpotential algebras whose bounded derived categories of modules are equivalent \cite{BD}
\end{itemize}
In table \ref{table} we sketch an $\mathcal{N} = 1$, $d=4$ AdS/SCFT correspondence, or more generally a procedure for geometric and reverse geometric engineering, for a superpotential algebra $A$.  Note that the universe is thought to be a product $\mathcal{M} \times X$, where $\mathcal{M}$ is $3+1$ dimensional Minkowski space and $X$ is a compact 3 complex-dimensional (possibly singular) Calabi-Yau variety.  A D$n$-brane (with $n$ odd) fixes the endpoints of a string; mathematically it is a sheaf, or a complex of sheaves, supported on an $n+1$ real dimensional subvariety of $\mathcal{M} \times X$.  Here we only consider D3-branes which extend into $\mathcal{M}$ and are point-like, i.e., sky scraper sheaves, on $X$.  More generally though one also includes various $5$- and $7$-branes (such as in the physical realizations of dimer models), and D3-branes are allowed to wrap nontrivial cycles in $X$.  

\begin{table} 
\begin{center}\begin{tabular}{|l|l|l|}
\hline
\textit{Gauge theory on }$\mathcal{M}$ & \textit{Geometry and physics of }$X$ & \textit{Quiver representations}\\
\hline \hline
                        & a stack of $|Q_0|$   & vertices of $Q$\\
                        & fractional branes at &          \\
                        & the apex of the      &          \\
                        & tangent cone $C_p(X)$&          \\
                        & at a point $p \in X$&            \\
\hline
$U(1)$ gauge group on   &                     & the vertex simple    \\
the fractional brane at &                     & $A$-module $V^i$ (or \\
vertex $i$              &                     & its annihilator)  \\
\hline
(complexified) $U(n)$   &                     & $A$-module $V_{\rho}$ with\\
gauge group at vertex $i$&                     & $\operatorname{dim}_{\mathbb{C}} e_iV_{\rho} = n$\\
\hline
bifundamental field     & open oriented string& an arrow with tail at \\
transforming in the     & stretching from the & $i$ and head at $j$ \\
fundamental representation& fractional brane at &  \\
of the gauge group at   & vertex $i$ to the   &  \\
vertex $j$ and the anti-& fractional brane at & \\
fundamental representation& vertex $j$          & \\
of the gauge group at   &&\\
vertex $i$              &&\\
\hline
vev of a bifundamental &                     & matrix representation of\\
field                  &                     & the corresponding arrow\\
\hline
a possible configuration& a point in $C_p(X)$& an isoclass of simple \\
of vev's modulo the $F$-& (or a bulk D3-brane & $A$-modules (or the \\
flatness constraints   &  at a point in $C_p(X)$) & corresponding primitive\\
                       &                         & ideal)\\
\hline 
(mesonic) chiral ring  & coordinate ring for & center of $A$\\
                       & $C_p(X)$        & \\
\hline
mesonic field          &                     & cycle in the quiver\\                       
\hline
\end{tabular}\end{center}
\caption{A math-physics quiver dictionary for geometric engineering.}
\label{table}
\end{table}

\bibliographystyle{hep}
\def\cprime{$'$} \def\cprime{$'$}

\end{document}